\DeclareRobustCommand*{\lyxarrow}{%
\@ifstar
{\leavevmode\,$\triangleleft$\,\allowbreak}
{\leavevmode\,$\triangleright$\,\allowbreak}}
\numberwithin{equation}{section}
\numberwithin{figure}{section}
\theoremstyle{plain}
\newtheorem{thm}{\protect\theoremname}
  \theoremstyle{remark}
  \newtheorem{rem}[thm]{\protect\remarkname}
  \theoremstyle{plain}
  \newtheorem*{assumption*}{\protect\assumptionname}
  \theoremstyle{plain}
  \newtheorem{lem}[thm]{\protect\lemmaname}
  \theoremstyle{plain}
  \newtheorem{cor}[thm]{\protect\corollaryname}
  \theoremstyle{plain}
  \newtheorem{prop}[thm]{\protect\propositionname}
  \providecommand{\assumptionname}{Assumption}
  \providecommand{\corollaryname}{Corollary}
  \providecommand{\lemmaname}{Lemma}
  \providecommand{\propositionname}{Proposition}
  \providecommand{\remarkname}{Remark}
\providecommand{\theoremname}{Theorem}
\begin{document}

\title{A high order time discretization of the solution of the non-linear
filtering problem}

\date{\today}

\author{D. Crisan\thanks{The work of D. C. was partially supported by the EPSRC Grant EP/H0005500/1.}
\thanks{Department of Mathematics, Imperial College London, Huxley's Building,180
Queen's Gate, London SW7 2AZ, United Kingdom, E-mail: dcrisan@imperial.ac.uk } and S. Ortiz-Latorre\thanks{The work of S. O-L. was supported by the BP-DGR 2009 grant and the
project \emph{Energy Markets: Modeling, Optimization and Simulation
(EMMOS)}, funded by the Norwegian Research Council under grant Evita/205328.} \thanks{Department of Mathematics, University of Oslo, P.O. Box 1053 Blindern,
N-0316 Oslo, Norway, E-mail: salvadoo@math.uio.no}}
\maketitle
\begin{abstract}
The solution of the continuous time filtering problem can be represented
as a ratio of two expectations of certain functionals of the signal
process that are parametrized by the observation path. We introduce
a class of discretization schemes of these functionals of arbitrary
order. The result generalizes the classical work of Picard, who introduced
first order discretizations to the filtering functionals. For a given
time interval partition, we construct discretization schemes with
convergence rates that are proportional with the $m$-power of the
mesh of the partition for arbitrary $m\in\mathbb{N}$. The result
paves the way for constructing high order numerical approximation
for the solution of the filtering problem.

\textbf{MSC 2010}: 60G35, 60F05, 60F25, 60H35, 60H07, 93E11.

\textbf{Key words}: Non-linear filtering, Kallianpur-Striebel's formula,
high order time discretization.
\end{abstract}
\tableofcontents{}

\section{Introduction}

Partially observed dynamical systems are ubiquitous in a multitude
of real-life phenomena. The dynamical system is typically modelled
by a continuous time stochastic process called the signal process
$X$. The signal process cannot be measured directly, but only via
a related process $Y$, called the observation process. The filtering
problem is that of estimating the current state of the dynamical system
at the current time given the observation data accumulated up to that
time. Mathematically the problem entails computing the conditional
distribution of the signal process $X_{t}$, denoted by $\pi_{t}$,
given $\mathcal{Y}_{t},$ the $\sigma$-algebra generated by $Y$.
In a few special cases, $\pi_{t}$ can be expressed in closed form
as a functional of the observation path. For example, the celebrated
Kalman-Bucy filter does this in the linear case. In general, an explicit
formula for $\pi_{t}$ is not available and inferences can only be
made by numerical approximations of $\pi_{t}$. As expected the problem
has attracted a lot of attention in the last fifty years (see Chapter
8 of \cite{BaCr08} for a survey of existing numerical methods for
approximating $\pi_{t}$.

The basis of this class of numerical methods is the representation
of $\pi_{t}$ given by the Kallianpur\textendash Striebel formula
(see (\ref{eq: Kallianpur-Striebel}) below). In the case when the
signal process is modelled by the solution of a stochastic differential
equation (SDE) and the observation process is a function of the signal
perturbed by white noise (see Section \ref{sec: Main Result} below
for further details), the formula entails the computation of expectations
of functionals of the solution of the signal SDE that are parametrized
by the observation path. The numerical approximation of $\pi_{t}$
requires three procedures:\medskip{}

$\bullet$ the discretization of the functionals (corresponding to
a partition of the interval $\left[0,t\right]$).\smallskip{}

$\bullet$ the approximation of the law of the signal with a discrete
measure.\smallskip{}

$\bullet$ the control of the computational effort. \medskip{}

The first step is typically achieved by the discretization scheme
introduced by Picard in \cite{Pi84}. This offers a first order approximation
for the functionals appearing in formula (\ref{eq: Kallianpur-Striebel}).
More precisely, the $L^{1}$-rate of convergence of the approximation
is proportional with the mesh of the partition of the time interval
$\left[0,t\right]$ (see Theorem 21.5 in \cite{Cris11}). The second
and the third step are achieved by a combination of an Euler approximation
of the solution of the SDE, a Monte Carlo step that gives a sample
from the law of the Euler approximation and a re-sampling step that
acts as a variance reduction method and keeps the computational effort
in control. There are a variety of algorithms that follow this template.
Further details can be found, for instance, in Part VII of \cite{CrRo11}.
It is worth pointing out that once the functional discretization and
the Euler approximation have been applied, the problem can be reduced
to one where the signal evolves and is observed in discrete time.
The discrete version of the filtering problem is popular both with
practitioners and with theoreticians. The majority of the existing
theoretical results and the numerical algorithms are constructed and
analyzed in the discrete framework. For more details, the interested
reader can consult the comprehensive theoretical monograph \cite{Delm04}
and the reference therein and the equally comprehensive methodological
volume \cite{DFG01} and the references therein with some updates
in Part VII of \cite{CrRo11}.

The first order discretization introduced by Picard creates a bottleneck:
There exist higher order schemes for approximating the law of the
signal that can be used, but which won't bring any substantial improvements
because of this. For example, in the recent paper \cite{CrOr2013},
the authors employ high order cubature methods to approximate the
law of the signal with only minimal improvements due to the low order
discretization of the required functionals. The aim of this paper
is to address this issue. More precisely, we introduce a class of
high order discretizations of the functionals. As we shall see, we
prove that the $L^{1}$-rate of convergence of the approximations
is proportional with the $m$-power of the mesh of the partition of
the time interval $\left[0,t\right]$. For details, see Theorem \ref{thm: Main Filtering_2}
below. In a work in progress, this discretization procedure is employed
to produce a second order particle filter. It is hoped that this discretization
will be used in conjunction with other high order approximations of
the law of the signal, in particular with cubature methods. We are
not aware of any other similar high order discretization schemes.

The paper is organized as follows: In Section \ref{sec: Main Result}
we introduce some basic definitions and state the main result of the
paper, Theorem \ref{thm: Main Filtering_2}. Section \ref{sec: ProofMainResult}
is devoted to prove our main result. We start by proving several auxiliary
results on iterated stochastic integrals and on the integrability
of the likelihood function and its discretizations. These lead to
the two main results of the section, Proposition \ref{prop: Main1}
and Proposition \ref{prop: Main2}, from which we will deduce our
main result. In Section \ref{sec:Technical-Lemmas} we address the
most technical aspects of the paper. We first introduce some technical
tools on Malliavin calculus (subsection \ref{subsec:Malliavin-calculus}),
the Stroock-Taylor formula (subsection \ref{subsec:MartingaleRep})
and backward martingales (subsection \ref{subsec:BackwardMart}).
Then, with the aid of the these tools, we prove in subsection \ref{subsec:CondExpEst}
the estimates on the conditional expectation with respect to $\mathcal{Y}_{t}$
that are essential in proving Proposition \ref{prop: Main1}.

\section{Basic framework and statement of the main result\label{sec: Main Result}}

Let $(\Omega,\mathcal{F},P)$ be a probability space together with
a filtration $(\mathcal{F}_{t})_{t\geq0}$ which satisfies the usual
conditions. On $(\Omega,\mathcal{F},P)$ we consider a $d_{X}\times d_{Y}$-dimensional
partially observed system $(X,Y)$ satisfying 
\begin{align*}
X_{t} & =X_{0}+\int_{0}^{t}f(X_{s})ds+\int_{0}^{t}\sigma(X_{s})dV_{s},\\
Y_{t} & =\int_{0}^{t}h(X_{s})ds+W_{t},
\end{align*}
where $V$ is a standard $\mathcal{F}_{t}$-adapted $d_{V}$-dimensional
Brownian motion and and $W$ is a a standard $\mathcal{F}_{t}$-adapted
$d_{Y}$-dimensional Brownian motion, independent of each other. We
also assume that $X_{0}$ is a random variable independent of $V$
and $W$ and denote by $\pi_{0}$ its law. We assume that $f=(f_{i})_{i=1,...,d_{X}}:\mathbb{R}^{d_{X}}\rightarrow\mathbb{R}^{d_{X}}$
and $\sigma=(\sigma_{i,j})_{i=1,...d_{X},j=1,...,d_{V}}:\mathbb{R}^{d_{X}}\rightarrow\mathbb{R}^{d_{X}\times d_{V}}$
are globally Lipschitz continuous. In addition, we assume that $h=\left(h_{i}\right)_{i=1,...,d_{Y}}:\mathbb{R}^{d_{X}}\rightarrow\mathbb{R}^{d_{Y}}$
is measurable and has linear growth.

Let $\mathbb{Y}=\{\mathcal{Y}_{t}\}_{t\geq0}$ be the usual augmentation
of the filtration generated by the process $Y,$ that is, $\mathcal{Y}_{t}=\sigma\left(Y_{s},s\in\left[0,t\right]\right)\vee\mathcal{N},$
where $\mathcal{N}$ are all the $P$-null sets of $(\Omega,\mathcal{F},P)$.
We are interested in determining $\pi_{t},$ the conditional law of
the signal $X$ at time $t$ given the information accumulated from
observing $Y$ in the interval $[0,t].$ More precisely, for any Borel
measurable and bounded function $\varphi,$ we want to compute $\pi_{t}\left(\varphi\right)=\mathbb{E}[\varphi\left(X_{t}\right)|\mathcal{Y}_{t}].$
By an application of Girsanov's theorem (see, for example, Chapter
3 in \cite{BaCr08}) one can construct a new probability measure $\tilde{P}$,
absolutely continuous with respect to $P$, under which $Y$ becomes
a Brownian motion independent of $X$ and the law of $X$ remains
unchanged. The Radon-Nikodym derivative of $\tilde{P}$ with respect
to $P$ is given by the process $Z(X,Y)=\left(Z_{t}(X,Y)\right)_{t\geq0}$
given by 
\begin{equation}
Z_{t}(X,Y)=\exp\left(\sum_{i=1}^{d_{Y}}\int_{0}^{t}h_{i}(X_{s})dY_{s}^{i}-\frac{1}{2}\sum_{i=1}^{d_{Y}}\int_{0}^{t}h_{i}^{2}\left(X_{s}\right)ds\right),\quad t\geq0,\label{eq: Likelihood}
\end{equation}
which is an $\mathcal{F}_{t}$-adapted martingale under $\tilde{P}$
under the assumptions introduced above. We will denote by $\mathbb{\tilde{E}}$
to be the expectation with respect to $\tilde{P}$. In the following
we will make use of the measure valued process $\rho=\left(\rho_{t}\right)_{t\geq0},$
defined by the formula $\rho_{t}\left(\varphi\right)=\mathbb{\tilde{E}}\left[\varphi\left(X_{t}\right)Z_{t}|\mathcal{Y}_{t}\right],$
for any bounded Borel measurable function $\varphi$.\ The processes
$\pi$ and $\rho$ are connected through the Kallianpur-Striebel's
formula: 
\begin{align}
\pi_{t}\left(\varphi\right)=\frac{\rho_{t}\left(\varphi\right)}{\rho_{t}\left(\boldsymbol{1}\right)} & =\frac{\mathbb{\tilde{E}}\left[\varphi\left(X_{t}\right)\exp\left.\left(\sum_{i=1}^{d_{Y}}\int_{0}^{t}h_{i}(X_{s})dY_{s}^{i}-\frac{1}{2}\sum_{i=1}^{d_{Y}}\int_{0}^{t}h_{i}^{2}\left(X_{s}\right)ds\right)\right\vert \mathcal{Y}_{t}\right]}{\mathbb{\tilde{E}}\left[\exp\left.\left(\sum_{i=1}^{d_{Y}}\int_{0}^{t}h_{i}(X_{s})dY_{s}^{i}-\frac{1}{2}\sum_{i=1}^{d_{Y}}\int_{0}^{t}h_{i}^{2}\left(X_{s}\right)ds\right)\right\vert \mathcal{Y}_{t}\right]},\label{eq: Kallianpur-Striebel}
\end{align}
$P$-a.s., where $\boldsymbol{1}$ is the constant function. As a
result, $\rho$ is called the unnormalized conditional distribution
of the signal. For further details on the filtering framework, see
\cite{BaCr08}.

It follows from (\ref{eq: Kallianpur-Striebel}) that $\pi_{t}\left(\varphi\right)$
is a ratio of two conditional expectations of functionals of the signal
that depend on the stochastic integrals with respect to the process
$Y.$ In the following we will introduce a class of time discretization
schemes for these conditional expectations which, in turn, will generate
time discretisation schemes $\pi_{t}$ (of any order). This is the
main result of the paper and is stated Theorem \ref{thm: Main Filtering_2}
below.

We first introduce some useful notation and definitions. We denote
by: 
\begin{itemize}
\item $\mathcal{B}_{b}$ the space of bounded Borel-measurable functions. 
\item $\mathcal{B}_{P}$ the space of Borel-measurable functions with polynomial
growth. 
\item $C_{b}^{k}$ the space of continuously differentiable functions up
to order $k\in\mathbb{Z}_{+}$ with bounded derivatives of order greater
or equal to one. 
\item $C_{P}^{k}$ the space of continuously differentiable functions up
to order $k\in\mathbb{Z}_{+}$ such that the function and its derivatives
have at most polynomial growth. 
\item $L^{p}(\Omega,\mathcal{F},\tilde{P})$ the space of $p$-integrable
random variables (with respect to $\tilde{P}$) and denote by $\left\vert \left\vert \cdot\right\vert \right\vert _{p}$
the corresponding norm on $L^{p}(\Omega,\mathcal{F},\tilde{P})$,
i.e., for $\xi\in L^{p}(\Omega,\mathcal{F},\tilde{P})$,\ $\left\vert \left\vert \xi\right\vert \right\vert _{p}\triangleq\mathbb{\tilde{E}}[\left\vert \xi\right\vert ^{p}]^{1/p}$. 
\end{itemize}
In the following, we will use the notation introduced in Section 5.4
in Kloeden and Platen \cite{KlPl92}. More precisely, let $S$ be
a subset of $\mathbb{Z}_{+}$ and denote by $\mathcal{M}^{\ast}(S)$
the set of all multi-indices with values in $S.$ In addition, define
$\mathcal{M}(S)\triangleq\mathcal{M}^{\ast}(S)\cup\{v\},$ where $v$
denotes the multi-index of lenght zero . For $\alpha=(\alpha_{1},...,\alpha_{k})\in\mathcal{M}(S)$
define the following operations 
\begin{align*}
\left\vert \alpha\right\vert  & \triangleq k,\\
\left\vert \alpha\right\vert _{0} & \triangleq\#\{j:\alpha_{j}=0,j=1...,k\},\\
\alpha- & \triangleq(\alpha_{1},...,\alpha_{k-1}),\\
-\alpha & \triangleq(\alpha_{2},...,\alpha_{k}),
\end{align*}
where $\left|v\right|=0,-v=v-=v$. Given two multi-indices $\alpha,\beta\in\mathcal{M}(S)$
we denote its concatenation by $\alpha\ast\beta$ . Itô-Taylor expansions
are usually done with a particular subsets of multi-indices, the so
called hierarchical sets. We call a subset $\mathcal{A}\subset\mathcal{M\left(S\right)}$
a hierarchical set if $\mathcal{A}$ is nonempty, $\sup_{\alpha\in\mathcal{A}}\left|\alpha\right|<\infty,$
and 
\[
-\alpha\in\mathcal{A\mbox{ \quad if}\quad}\alpha\in\mathcal{A}\setminus\{v\}.
\]
For any given hierarchichal set $\mathcal{A}$ we define the remainder
set $\mathcal{R\left(A\right)}$ of $\mathcal{A}$ by 
\[
\mathcal{R\left(A\right)}\triangleq\left\{ \alpha\in\mathcal{M\left(S\right)}\setminus\mathcal{A}:-\alpha\in\mathcal{A}\right\} .
\]
We will consider the hierarchical set $\mathcal{M}_{m}(S)$ and its
associated remainder set $\mathcal{R}\left(\mathcal{M}_{m}(S)\right),$
that is, 
\[
\mathcal{M}_{m}(S)\triangleq\{\alpha\in\mathcal{M}(S):\left\vert \alpha\right\vert \leq m\},
\]
and 
\[
\mathcal{R}\left(\mathcal{M}_{m}(S)\right)\triangleq\{\alpha\in\mathcal{M}(S):\left\vert \alpha\right\vert =m+1\}.
\]
Observe that $\mathcal{R}\left(\mathcal{M}_{m}(S)\right)=\mathcal{M}_{m+1}(S)\setminus\mathcal{M}_{m}(S)$.
We shall use the sets of multi-indices with values in the sets $S_{0}=\{0,1,...,d_{V}\}$
and $S_{1}=\{1,...,d_{V}\}$. Note also that the set $\mathcal{R}\left(\mathcal{M}_{m}(S_{0})\right)$
can be partioned in the following way 
\[
\mathcal{R}\left(\mathcal{M}_{m}(S_{0})\right)={\displaystyle \biguplus\limits _{k=0}^{m+1}}\mathcal{R}\left(\mathcal{M}_{m}(S_{0})\right)_{k},
\]
where $\mathcal{R}\left(\mathcal{M}_{m}(S_{0})\right)_{k}=\{\alpha\in\mathcal{R}\left(\mathcal{M}_{m}(S_{0})\right):\left\vert \alpha\right\vert _{0}=k\},k=0,...,m+1,$
that is, $\mathcal{R}\left(\mathcal{M}_{m}(S_{0})\right)_{k}$ is
the set of multi-indices of lenght $m+1$ with values in $S_{0}$
which contains $k$ zeros.

To simplify the notation, it is convenient to add an additional component
to the Brownian motion $V.$ Let $V_{s}^{0}=s,$ for all $s\geq0$
and consider the $(d_{V}+1)$-dimensional process $V=(V^{i})_{i=0}^{d_{V}}.$
We will consider the filtration $\mathbb{F}^{0,V}=\{\mathcal{F}_{s}^{0,V}\}_{s\geq0}$
defined to be the usual augmentation of the filtration generated by
the process $V$ and initially enlarged with the random variable $X_{0}.$
Moreover, for fixed $t$, we will also consider the filtration $\mathbb{H}^{t}=\{\mathcal{H}_{s}^{t}\triangleq\mathcal{F}_{s}^{0,V}\lor\mathcal{Y}_{t}\}_{s\geq0}$.
For $\alpha\in\mathcal{M}(S_{0}),$ denote by $I_{\alpha}(h_{\cdot})_{s,t}$
the following Itô iterated integral 
\[
I_{\alpha}(h_{\cdot})_{s,t}=\left\{ \begin{array}{ccc}
h_{t} & \text{if} & \alpha=v\\
\int_{s}^{t}I_{\alpha-}(h_{\cdot})_{s,u}dV_{u}^{\alpha_{|\alpha|}} & \text{if} & \left\vert \alpha\right\vert \geq1
\end{array}\right.,
\]
where $h=\{h_{s}\}_{s\geq0}$ is an $\mathbb{H}^{t}$-adapted process
(satisfying appropriate integrability conditions). We introduce the
differential operators $L^{0},L^{r},r=1,...,d_{V}$ defined by 
\begin{align*}
L^{0}g(x) & \triangleq\sum_{k=1}^{d_{X}}f^{k}(x)\frac{\partial g}{\partial x^{k}}(x)+\frac{1}{2}\sum_{k,l=1}^{d_{X}}\sum_{r=1}^{d_{V}}\sigma_{k,r}(x)\sigma_{l,r}(x)\frac{\partial^{2}g}{\partial x^{k}\partial x^{l}}(x).\\
L^{r}g(x) & \triangleq\sum_{k=1}^{d_{X}}\sigma_{k,r}(x)\frac{\partial g}{\partial x^{k}}(x),\quad r=1,...,d_{V},
\end{align*}
where $g:\mathbb{R}^{d_{X}}\rightarrow\mathbb{R}$ belongs to $C_{P}^{2}\left(\mathbb{R}^{d_{X}};\mathbb{R}\right).$
For $\alpha\in\mathcal{M}(S_{0}),$ with $\alpha=(\alpha_{1},...,\alpha_{k}),$
and the differential operator $L^{\alpha}$ is defined by 
\begin{align*}
L^{\alpha}g & =L^{\alpha_{1}}\circ L^{\alpha_{2}}\circ\cdots\circ L^{\alpha_{k}}g,
\end{align*}
and, by convention $L^{v}g=g$. Finally, let $\tau\triangleq\{0=t_{0}<\cdots<t_{i}<\cdots<t_{n}=t\}$
be a partition of $[0,t].$ Associated to $\tau$ we define the following
elements 
\begin{align*}
\delta_{i} & \triangleq t_{i}-t_{i-1},\quad i=1,...,n,\\
\delta & \triangleq\max_{i=1,...,n}\delta_{i},\\
\delta_{\mbox{min}} & \triangleq\min_{i=1,...,n}\delta_{i},\\
\tau(s) & \triangleq t_{i-1},\quad s\in[t_{i-1},t_{i}),i=1,...,n,\\
\eta(s) & \triangleq t_{i},\quad s\in[t_{i-1},t_{i}),i=1,...,n.
\end{align*}
We will only consider partitions satisfying the following condition
\begin{equation}
\delta\leq C\delta_{\mbox{min}},\label{eq: UnifPartition}
\end{equation}
for some finite constant $C\geq1$. We denote by $\Pi(t)$ the set
of all partitions of $[0,t]$ satisfying $\left(\ref{eq: UnifPartition}\right)$
and such that $\delta$ converges to zero when $n$ tends to infinity.
We denote by $\Pi(t,\delta_{0})$ the set of all partitions of $[0,t]$
satisfying $\left(\ref{eq: UnifPartition}\right)$, such that $\delta$
converges to zero when $n$ tends to infinity and $\delta<\delta_{0}$. 
\begin{rem}
\label{rem: Uniform}Under the assumption $\left(\ref{eq: UnifPartition}\right)$
one has that 
\begin{equation}
n\leq t\delta_{\mbox{min}}^{-1}\leq Ct\delta^{-1}.\label{eq: UnifPartition2}
\end{equation}
\end{rem}
To simplify the notation, we will add an additional component to the
Brownian motion $Y.$ Let $Y^{0}$ be the process $Y_{s}^{0}=s,$
for all $s\geq0$ and consider the $(d_{Y}+1)$-dimensional process
$Y=(Y^{i})_{i=0}^{d_{Y}}.$ Then the martingale $Z=\left(Z_{t}\right)_{t\geq0}$
defined in (\ref{eq: Likelihood}) can be written as $Z_{t}=\exp\left(\xi_{t}\right),t\geq0,$
where 
\[
\xi_{t}=\sum_{i=0}^{d_{Y}}\int_{0}^{t}h_{i}(X_{s})dY_{s}^{i},\quad t\geq0,
\]
and $h_{0}=-\frac{1}{2}\sum_{i=1}^{d_{Y}}h_{i}^{2}.$ For $\tau\in\Pi(t)$
and $m\in\mathbb{N}$ we consider the processes 
\begin{align*}
\xi_{t}^{\tau,m} & \triangleq\sum_{j=0}^{n-1}\xi_{t}^{\tau,m}\left(j\right)\triangleq\sum_{j=0}^{n-1}\sum_{i=0}^{d_{Y}}\sum_{\alpha\in\mathcal{M}_{m-1}(S_{0})}L^{\alpha}h_{i}(X_{t_{j}})\int_{t_{j}}^{t_{j+1}}I_{\alpha}(\boldsymbol{1})_{t_{j},s}dY_{s}^{i}\\
 & =\sum_{i=0}^{d_{Y}}\int_{0}^{t}\left\{ \sum_{\alpha\in\mathcal{M}_{m-1}(S_{0})}L^{\alpha}h_{i}(X_{\tau(s)})I_{\alpha}(\boldsymbol{1})_{\tau(s),s}\right\} dY_{s}^{i}.
\end{align*}
For $m>2$, we can write

\[
\xi_{t}^{\tau,m}=\xi_{t}^{\tau,2}+\sum_{j=0}^{n-1}\mu^{\tau,m}\left(j\right),
\]
where 
\[
\mu^{\tau,m}\left(j\right)\triangleq\sum_{i=0}^{d_{Y}}\sum_{\alpha\in\mathcal{M}_{1,m-1}(S_{0})}L^{\alpha}h_{i}(X_{t_{j}})\int_{t_{j}}^{t_{j+1}}I_{\alpha}(\boldsymbol{1})_{t_{j},s}dY_{s}^{i},
\]
and 
\begin{align*}
\mathcal{M}_{1,m-1}(S_{0}) & \triangleq\mathcal{M}_{m-1}(S_{0})\setminus\mathcal{M}_{1}(S_{0})\\
 & =\left\{ \alpha:\left\vert \alpha\right\vert \in\left[2,m-1\right],\alpha_{k}\in\{0,...,d_{V}\},k=1,...,\left\vert \alpha\right\vert \right\} .
\end{align*}
The processes $\xi^{\tau,m}$ are obtained by replacing $h_{i}(X_{s})$
in the formula for the process $\xi$ with the truncation of degree
$(m-1)$ of the corresponding stochastic Taylor expansion of $h_{i}(X_{s})$.
They are used to produce discretization schemes of order 1 and 2 for
$\pi_{t}(\varphi)$. They \emph{cannot} be used to produce discretization
schemes of order $m>2$ as they don't have finite exponential moments
(required to define the discretization schemes). More precisely, the
quantities $\mu^{\tau,m}\left(j\right)$ \ do not have finite exponential
moments because of the high order iterated integral involved. For
this, we need to introduce a truncation of $\mu^{\tau,m}\left(j\right)$
resulting in a (partial) taming procedure to the stochastic Taylor
expansion of $h_{i}(X)$. We define the processes 
\[
\bar{\xi}_{t}^{\tau,m}\triangleq\sum_{j=0}^{n-1}\bar{\xi}_{t}^{\tau,m}\left(j\right),
\]
where 
\[
\bar{\xi}_{t}^{\tau,i}\left(j\right)=\left\{ \begin{array}{ll}
\xi_{t}^{\tau,i}\left(j\right) & \mathrm{if}\ \ i=1,2\\
\xi_{t}^{\tau,2}\left(j\right)+\Gamma_{m-\frac{1}{2},\delta_{j}}\left(\mu^{\tau,m}\left(j\right)\right) & \mathrm{if}\ \ i>2
\end{array}\right.,~~j=0,...,n-1
\]
with the truncation function $\Gamma$ being defined as 
\[
\Gamma_{q,\delta}\left(z\right)=\frac{z}{1+\left(z/\delta\right)^{2q}},\qquad z\in\mathbb{R}
\]
for some $\delta>0$ and $q\in\mathbb{N}$. Finally, for $\tau\in\Pi(t)$
and $m\in\mathbb{N}$ consider the processes $Z^{\tau,m}=$$\left(Z_{t}^{\tau,m}\right)_{t\geq0}$
given by 
\begin{align}
Z_{t}^{\tau,m} & =\exp\left(\bar{\xi}_{t}^{\tau,m}\right).\label{eq: Discrete Z}
\end{align}
For any Borel measurable function $\varphi$ such that $\varphi\left(X_{t}\right)Z_{t}^{\tau,m}\in L^{1}(\Omega,\mathcal{F},\tilde{P})$
define the $m$-th order discretizations 
\begin{align*}
\rho_{t}^{\tau,m}\left(\varphi\right) & \triangleq\mathbb{\tilde{E}}\left[\varphi\left(X_{t}\right)Z_{t}^{\tau,m}|\mathcal{Y}_{t}\right],
\end{align*}
and 
\[
\pi_{t}^{\tau,m}\left(\varphi\right)\triangleq\rho_{t}^{\tau,2}\left(\varphi\right)/\rho_{t}^{\tau,m}\left(\boldsymbol{1}\right),
\]
of $\rho_{t}$ and $\pi_{t}$, respectively.

Let $m\in\mathbb{N}$, our main assumption is the following: 
\begin{assumption*}[\textbf{\emph{H$\left(m\right)$}}]
 We have that:\\
$\bullet$ $f=\left(f_{i}\right){}_{i=1,...,d_{X}}:\mathbb{R}^{d_{X}}\rightarrow\mathbb{R}^{d_{X}}\in\mathcal{B}_{b}\cap C_{b}^{2\vee(2m-1)},$\\
 $\bullet$ $\sigma=\left(\sigma_{i,j}\right){}_{i=1,...d_{X},j=1,...,d_{V}}:\mathbb{R}^{d_{X}}\rightarrow\mathbb{R}^{d_{X}\times d_{V}}\in\mathcal{B}_{b}\cap C_{b}^{2m},$\\
 $\bullet$ $h=\left(h_{i}\right){}_{i=0,...,d_{Y}}:\mathbb{R}^{d_{X}}\rightarrow\mathbb{R}^{d_{Y}+1}\in\mathcal{B}_{b}\cap C_{b}^{2m+1},$\\
 $\bullet$ $X_{0}$ has moments of all orders. 
\end{assumption*}
Note that if assumption \textbf{H}$(m)$ holds for some $m\in\mathbb{N},$
then it also holds for any $n\leq m$. 
\begin{thm}
\label{thm: Main Filtering_2}Let assumption \textbf{$\mathbf{H}\left(m\right)$}
be satisfied. Then, there exists constants $\delta_{0},C>0$ not depending
on the choice of the partition $\tau\in\Pi(t,\delta_{0}),$ such that
\[
\left\Vert \rho_{t}\left(\varphi\right)-\rho_{t}^{\tau,m}\left(\varphi\right)\right\Vert _{2}\leq C\delta^{m},
\]
for $\varphi\in C_{P}^{m+1}$. Moreover, if $\sup_{\tau\in\Pi(t,\delta_{0})}\left\Vert \pi_{t}^{\tau,m}(\varphi)\right\Vert _{2+\varepsilon}<\infty,$
for some $\varepsilon>0,$ then 
\[
\mathbb{E}\left[\left\vert \pi_{t}\left(\varphi\right)-\pi_{t}^{\tau,m}\left(\varphi\right)\right\vert \right]\leq\bar{C}\delta^{m},
\]
where $\bar{C}$ is another constant independent of $\tau\in\Pi(t,\delta_{0})$. 
\end{thm}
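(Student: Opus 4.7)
The plan is to reduce the $\rho$-estimate to an $L^2$ bound on $Z_t - Z_t^{\tau,m}$ weighted by $\varphi(X_t)$, invoke the paper's Propositions \ref{prop: Main1} and \ref{prop: Main2} (already announced in the introduction as the core technical tools from Section \ref{sec: ProofMainResult}), and then deduce the $\pi$-estimate from the $\rho$-estimate by a standard Kallianpur--Striebel ratio argument.

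The first step is the tower property and conditional Jensen, which give $\|\rho_t(\varphi) - \rho_t^{\tau,m}(\varphi)\|_2 \le \|\varphi(X_t)(Z_t - Z_t^{\tau,m})\|_2$. Since $\varphi \in C_P^{m+1}$ and $X$ has moments of all orders under $\tilde P$ (by \textbf{H}$(m)$ and standard SDE estimates), H\"older reduces matters to a bound on $\|Z_t - Z_t^{\tau,m}\|_r$ for some $r$ slightly above $2$. The identity $e^a - e^b = e^b(e^{a-b}-1)$ combined with $|e^x-1| \le |x|e^{|x|}$ yields
\[
|Z_t - Z_t^{\tau,m}| \le e^{\bar{\xi}_t^{\tau,m}}\,|\xi_t - \bar{\xi}_t^{\tau,m}|\,e^{|\xi_t - \bar{\xi}_t^{\tau,m}|},
\]
and a further H\"older split isolates the central factor from exponential factors whose $L^p$ norms must be bounded uniformly in $\tau \in \Pi(t,\delta_0)$. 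This uniform exponential-moment bound is precisely the raison d'\^etre of the taming function $\Gamma_{q,\delta}$: the untamed higher-order iterated integrals $\mu^{\tau,m}(j)$ are not sub-Gaussian and $\exp(\xi^{\tau,m})$ would fail to have all moments for $m>2$, whereas $|\Gamma_{q,\delta_j}(\cdot)| \le C_q \delta_j$ makes $\bar{\xi}_t^{\tau,m}$ comparable to $\xi_t^{\tau,2}$ modulo a bounded deterministic term.

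Having localized the problem, one decomposes $\xi_t - \bar{\xi}_t^{\tau,m} = (\xi_t - \xi_t^{\tau,m}) + (\xi_t^{\tau,m} - \bar{\xi}_t^{\tau,m})$. For the first summand, the It\^o--Taylor expansion (Section 5.5 of \cite{KlPl92}) replaces $h_i(X_s)$ on each subinterval by its degree-$(m-1)$ expansion about $X_{\tau(s)}$, leaving a remainder of iterated integrals indexed by $\mathcal{R}(\mathcal{M}_{m-1}(S_0))$ with coefficients $L^\alpha h_i(X_{\tau(s)})$. The delicate point is that $\mathcal{Y}_t$ sees the entire observation path and not merely the past, so the obvious martingale-increment cancellation is not directly available. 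This is exactly what Proposition \ref{prop: Main1} delivers, via the Malliavin, Stroock--Taylor and backward-martingale machinery of Section \ref{sec:Technical-Lemmas}; summing the per-subinterval conditional-$L^2$ estimates over the $n \le Ct\delta^{-1}$ intervals of a partition satisfying (\ref{eq: UnifPartition}) yields the $\delta^m$ rate.

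The main obstacle is the taming error $\xi_t^{\tau,m} - \bar{\xi}_t^{\tau,m} = \sum_{j=0}^{n-1}\bigl(\mu^{\tau,m}(j) - \Gamma_{m-1/2,\delta_j}(\mu^{\tau,m}(j))\bigr)$. Using the pointwise inequality $|z - \Gamma_{q,\delta}(z)| \le |z|^{2q+1}/\delta^{2q}$ with $q = m-1/2$, each summand is dominated by $|\mu^{\tau,m}(j)|^{2m}/\delta_j^{2m-1}$; combined with the It\^o-isometry estimate $\|\mu^{\tau,m}(j)\|_p \lesssim \delta_j^{3/2}$ (the dominant contribution coming from $|\alpha|=2$, $|\alpha|_0=0$) available under \textbf{H}$(m)$, the per-subinterval $L^r$-error is of order $\delta_j^{3m}/\delta_j^{2m-1} = \delta_j^{m+1}$, and summation over $n \lesssim \delta^{-1}$ subintervals preserves the target $\delta^m$ rate. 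This is the content of Proposition \ref{prop: Main2}; the smallness requirement $\delta < \delta_0$ is used here to absorb combinatorial constants uniformly in $\tau$. Finally, from the algebraic identity
\[
\pi_t(\varphi) - \pi_t^{\tau,m}(\varphi) = \frac{\rho_t(\varphi) - \rho_t^{\tau,m}(\varphi)}{\rho_t(\mathbf{1})} - \pi_t^{\tau,m}(\varphi)\,\frac{\rho_t(\mathbf{1}) - \rho_t^{\tau,m}(\mathbf{1})}{\rho_t(\mathbf{1})},
\]
passing to expectation under $P$ via $\mathbb{E}[f] = \tilde{\mathbb{E}}[Z_t f]$ produces the cancellation $\tilde{\mathbb{E}}[Z_t g(\mathcal{Y}_t)] = \tilde{\mathbb{E}}[\rho_t(\mathbf{1}) g(\mathcal{Y}_t)]$ that removes the denominator; H\"older applied with the assumed $L^{2+\varepsilon}$-boundedness of $\pi_t^{\tau,m}(\varphi)$ (using conjugate exponent $(2+\varepsilon)/(1+\varepsilon)<2$) converts the $L^2$-estimate for $\rho$ into the stated $L^1(P)$-bound for $\pi$.
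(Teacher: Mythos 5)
Your reduction in the very first step is where the argument breaks. Applying conditional Jensen to obtain $\left\Vert \rho_{t}(\varphi)-\rho_{t}^{\tau,m}(\varphi)\right\Vert _{2}\leq\left\Vert \varphi(X_{t})(Z_{t}-Z_{t}^{\tau,m})\right\Vert _{2}$ and then using the pointwise bound $|Z_{t}-Z_{t}^{\tau,m}|\leq e^{\bar{\xi}_{t}^{\tau,m}}|\xi_{t}-\bar{\xi}_{t}^{\tau,m}|e^{|\xi_{t}-\bar{\xi}_{t}^{\tau,m}|}$ together with H\"older reduces everything to a bound on $\left\Vert \xi_{t}-\bar{\xi}_{t}^{\tau,m}\right\Vert _{r}$ for some $r>2$. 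That quantity is only $O(\delta^{m/2})$, not $O(\delta^{m})$: Lemma \ref{lem: Difference Moment Estimate} gives $\mathbb{\tilde{E}}[|\xi_{t}-\xi_{t}^{\tau,m}|^{2p}]\leq C\delta^{pm}$, i.e.\ an $L^{2p}$-norm of order $\delta^{m/2}$, and this order is sharp (already for $m=1$ the It\^o isometry yields $\left\Vert \int_{0}^{t}(h_{i}(X_{s})-h_{i}(X_{\tau(s)}))dY_{s}^{i}\right\Vert _{2}\sim\delta^{1/2}$). The entire point of the theorem---and the reason Proposition \ref{prop: Main1} is stated as a bound on $\mathbb{\tilde{E}}[|\mathbb{\tilde{E}}[(\xi_{t}-\xi_{t}^{\tau,m})\varphi(X_{t})e^{\xi_{t}}|\mathcal{Y}_{t}]|^{2}]$ with the conditional expectation \emph{inside} the square---is that conditioning on $\mathcal{Y}_{t}$ upgrades the rate from $\delta^{m/2}$ to $\delta^{m}$. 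Your first step discards that conditional expectation, and your subsequent appeal to Proposition \ref{prop: Main1} is then inconsistent with your own reduction: by that stage the factor $\varphi(X_{t})e^{\xi_{t}}$ has been split off by H\"older and there is no conditional expectation left for the cancellation to act in. No later estimate can recover the lost half power of $\delta$.

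The repair is the paper's decomposition: expand the exponential to first order,
\[
\rho_{t}(\varphi)-\rho_{t}^{\tau,m}(\varphi)=\mathbb{\tilde{E}}\left[(\xi_{t}-\bar{\xi}_{t}^{\tau,m})\varphi(X_{t})e^{\xi_{t}}|\mathcal{Y}_{t}\right]+\mathbb{\tilde{E}}\left[\varphi(X_{t})\left(e^{\xi_{t}}-e^{\bar{\xi}_{t}^{\tau,m}}-(\xi_{t}-\bar{\xi}_{t}^{\tau,m})e^{\xi_{t}}\right)|\mathcal{Y}_{t}\right],
\]
keep the linear term inside the conditional expectation so that Proposition \ref{prop: Main1} applies (together with Lemma \ref{lem: SumEpsilon} for the taming error, which your per-subinterval estimate $\delta_{j}^{3m}/\delta_{j}^{2m-1}=\delta_{j}^{m+1}$ correctly anticipates), and apply the crude bound $|e^{x}-e^{y}-(x-y)e^{x}|\leq\tfrac{1}{2}(e^{x}+e^{y})(x-y)^{2}$ only to the quadratic remainder, where the square of a $\delta^{m/2}$-sized quantity has the required order $\delta^{m}$ (Proposition \ref{prop: Main2}). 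Your treatment of the normalised estimate---the ratio identity, the change of measure $\mathbb{E}[f]=\mathbb{\tilde{E}}[Z_{t}f]$, the cancellation of $\rho_{t}(\boldsymbol{1})$ via the tower property, and H\"older with the $L^{2+\varepsilon}$ hypothesis---matches the paper and is fine.
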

\begin{rem}
The assumption $\sup_{\tau\in\Pi(t,\delta_{0})}\mathbb{\tilde{E}}\left[\left\vert \pi_{t}^{\tau,m}(\varphi)\right\vert ^{2+\varepsilon}\right]<\infty$
for some $\varepsilon>0$ is satisfied if $\varphi$ is bounded. If
$\varphi$ is unbounded, note that by using Jensen's inequality one
has 
\begin{align*}
\mathbb{\tilde{E}}\left[\left\vert \pi_{t}^{\tau,m}(\varphi)\right\vert ^{2+\varepsilon}\right] & =\mathbb{\tilde{E}}\left[\left\vert \mathbb{\tilde{E}}\left[\frac{\varphi(X_{t})Z_{t}^{\tau,m}}{\mathbb{\tilde{E}}\left[Z_{t}^{\tau,m}|\mathcal{Y}_{t}\right]}|\mathcal{Y}_{t}\right]\right\vert ^{2+\varepsilon}\right]\\
 & \leq\mathbb{\tilde{E}}\left[|\varphi(X_{t})|^{2+\varepsilon}\mathbf{\exp}((2+\varepsilon)(\bar{\xi}_{t}^{\tau,m}-\mathbb{\tilde{E}}[\bar{\xi}_{t}^{\tau,m}|\mathcal{Y}_{t}]))\right].
\end{align*}
Hence, one can reason as in Lemma \ref{lem: Xi_Tau2^p_Integrability}
to justify that $\sup_{\tau\in\Pi(t,\delta_{0})}\mathbb{\tilde{E}}\left[\left\vert \pi_{t}^{\tau,m}(\varphi)\right\vert ^{2+\varepsilon}\right]<\infty.$ 
\end{rem}
\begin{rem}
\label{rem: High Order} $\left.\right.$\\
 i. In the case $m=1$ we can consider any partition $\tau\in\Pi(t)$.
For $m\geq2,$ we must consider partitions $\tau$ with mesh $\delta$
smaller than 
\begin{equation}
\delta_{0}=\frac{1}{2\left\Vert Lh\right\Vert _{\infty}\sqrt{d_{Y}d_{V}}},\label{eq: Delta_0}
\end{equation}
where 
\[
\left\Vert Lh\right\Vert _{\infty}\triangleq\max_{i=1,...d_{Y}\ r=1,...,d_{V}}\left\Vert L^{r}h^{i}\right\Vert _{\infty}.
\]

ii. The functional discretization given in (\ref{eq: Discrete Z})
is recursive. More precisely, if $\tau^{\prime}\in\Pi(t+s)$ is a
partition that includes $t$ as an intermediate point, for example
$\tau^{\prime}\triangleq\{0=t_{0}<\cdots<t_{k}=t<t_{k+1}\cdots<t_{n}=t+s\}$
with $0<k<n$, then 
\begin{align*}
Z_{t+s}^{\tau^{\prime},m} & =Z_{t}^{\tau,m}\prod_{j=k}^{n-1}\exp\left(\bar{\xi}_{t}^{\tau,m}\left(j\right)\right).
\end{align*}
This property is essential for implementation purposes as at every
discretization time we only need to use the previous functional discretization
and the term corresponding to the next interval to obtain the new
functional discretization.

iii. The discretization introduced by Picard in \cite{Pi84} corresponds
to the case $m=1$. In this case, $\rho_{t}^{\tau,m}$ can be explicitly
written as 
\begin{equation}
\rho_{t}^{\tau,m}\left(\varphi\right)\triangleq\mathbb{\tilde{E}}\left[\varphi\left(X_{t}\right)\left.\exp\left(\sum_{j=0}^{n-1}\sum_{i=0}^{d_{Y}}h_{i}(X_{t_{j}})\left(Y_{t_{j+1}}^{i}-Y_{t_{j}}^{i}\right)\right)\right\vert \mathcal{Y}_{t}\right],\label{PicardFilter}
\end{equation}
This discretization scheme leads to a wealth of numerical methods
that can be used to approximate $\pi_{t}$. Among them, particle methods\footnote{Also known as \emph{particle filters }or \emph{sequential Monte Carlo
methods}.} are algorithms which approximate $\pi_{t}$ with discrete random
measures of the form $\sum_{i}a_{i}(t)\delta_{v_{i}(t)},$ in other
words with empirical distributions associated with sets of randomly
located particles of stochastic masses $a_{1}(t)$,$a_{2}(t)$, \dots ,
which have stochastic positions $v_{1}(t),v_{2}(t),\ldots$\ . These
methods are currently among the most successful and versatile for
numerically solving the filtering problem. Based on \eqref{PicardFilter},
the ``garden variety\char`\"{} particle filter uses particles that
evolve according to the signal equation (or, rather, the Euler approximation
of the signal) and carry exponential weights. These weights are proportional
with 
\[
\exp\left(\sum_{i=0}^{d_{Y}}h_{i}(v_{t_{n}}^{j})\left(Y_{t_{n+1}}^{i}-Y_{t_{n}}^{i}\right)\right),
\]
where $v^{j}$ is the process modelling the trajectory of the particle
and $t_{n}$ is the update time. The method also involves a variance
reduction procedure (for further details, see for example Chapter
9 in \cite{BaCr08}). Alternatively one can use a cubature method
to approximate the law of the signal, see \cite{CrOr2013}. In both
cases, higher order approximations of the signal can be used, but
this would not improve the rate of convergence of the method as Picard's
discretisation has an error of order 1. The remedy is to exploit the
result in this paper and use a higher order discretisation. The second
author is working on a particle filter that uses the second order
discretisation presented in this paper. 
\end{rem}

\section{Proof of the main result \label{sec: ProofMainResult}}

We start by recalling and introducing some basic results on iterated
integrals and martingale representations. Throughout the rest of the
paper we will be assuming that \textbf{H}$(m)$ holds, without recalling
it in each result statement. Moreover, $C$ will denote a constant
that usually depends on $d_{V},$ $d_{X},$ $d_{Y},$ $f,$ $\sigma,$
$h$ and possibly other parameters but NOT on the partition $\tau.$
As we are interested in showing a rate of convergence for our approximations,
the particular form of dependence of $C$ with respect to these parameters
is not relevant and, hence, omitted. Of course, the choice of the
constant $C$ may change from line to line. 
\begin{rem}
\label{rem: M} Some immediate consequences of assumption \textbf{H}$(m)$
are the following: 
\begin{enumerate}
\item The signal process $X$ has moments of all orders and for any $p\geq1,$
we have 
\[
\mathbb{\tilde{E}}\left[\sup_{s\in[0,t]}\left|X_{s}^{i}\right|{}^{p}\right]<\infty,
\]
for all $i\in\{1,...,d_{X}\}$. 
\item If $\Upsilon:\mathbb{R}^{d_{X}}\rightarrow\mathbb{R}$ is a function
with polynomial growth we have 
\[
\mathbb{\tilde{E}}\left[\sup_{s\in[0,t]}\left|\Upsilon(X_{s})\right|{}^{p}\right]<\infty,
\]
in particular, 
\[
\mathbb{\tilde{E}}\left[\sup_{s\in[0,t]}\left|L^{\alpha}h_{i}(X_{s})\right|^{p}\right]<\infty,
\]
for $i=0,...,d_{Y}$ and $\alpha\in\mathcal{M}_{m}\left(S_{0}\right)=\mathcal{M}_{m-1}\left(S_{0}\right)\biguplus\mathcal{R}\left(\mathcal{M}_{m-1}\left(S_{0}\right)\right).$ 
\item The processes $\xi_{t}$ and $\xi_{t}^{\tau,m},m\in\mathbb{N},$ as
defined above have finite moments of all orders. 
\end{enumerate}
\end{rem}
\begin{rem}
\label{rem:TruncProc}Consider the truncation function 
\[
\Gamma_{q,\delta}(z)=\frac{z}{1+(z/\delta)^{2q}}.
\]
defined as above corresponding to the real parameters $q\geq1$ and
$\delta>0$. 
\begin{enumerate}
\item For any $z\in\mathbb{R}$, $\left\vert \Gamma_{q,\delta}(z)\right\vert \le\delta$.
To check this observe that if $\left\vert z\right\vert \leq\delta$
we have that $1+(z/\delta)^{2q}\geq1$ and then 
\[
\left\vert \Gamma_{q,\delta}(z)\right\vert =\frac{\left\vert z\right\vert }{1+(z/\delta)^{2q}}\leq\left\vert z\right\vert \leq\delta.
\]
On the other hand, if $\left\vert z\right\vert >\delta$ we have that
$\left\vert z/\delta\right\vert ^{-1}+\left\vert z/\delta\right\vert ^{2q-1}>1$
and then 
\[
\left\vert \Gamma_{q,\delta}(z)\right\vert =\frac{\left\vert z\right\vert }{1+\left\vert z\right\vert ^{2q}\delta^{-2q}}=\frac{1}{\left\vert z\right\vert ^{-1}+\left\vert z\right\vert ^{2q-1}\delta^{-2q}}=\frac{\delta}{\left\vert z/\delta\right\vert ^{-1}+\left\vert z/\delta\right\vert ^{2q-1}}\leq\delta.
\]
\item Moreover, if we define 
\[
\mathcal{E}_{q,\delta}(z)\triangleq\Gamma_{q,\delta}(z)-z,
\]
we get that 
\begin{align}
\left|\mathcal{E}_{q,\delta}(z)\right| & =\left\vert \Gamma_{q,\delta}(z)-z\right\vert =\left\vert \frac{z}{1+(z/\delta)^{2q}}-z\right\vert =\frac{\left\vert z\right\vert ^{2q+1}\delta^{-2q}}{1+(z/\delta)^{2q}}=\frac{\left\vert z\right\vert ^{2q+1}}{\delta^{2q}+z^{2q}},\nonumber \\
 & =\delta^{-2q}\frac{\left\vert z\right\vert ^{2q+1}}{1+\left(z/\delta\right)^{2q}}\leq\delta^{-2q}\left\vert z\right\vert ^{2q+1},\quad\forall z\in\mathbb{R}.\label{eq:EpsilonInequality}
\end{align}
\item Finally, note that for $m$$\geq3$ we can write 
\begin{align}
\bar{\xi}_{t}^{\tau,m} & \triangleq\xi_{t}^{\tau,2}+\sum_{j=0}^{n-1}\Gamma_{m-\frac{1}{2},\delta_{j}}\left(\mu^{\tau,m}\left(j\right)\right)\nonumber \\
 & =\xi_{t}^{\tau,m}+\sum_{j=0}^{n-1}\mathcal{E}_{m-\frac{1}{2},\delta_{j}}\left(\mu^{\tau,m}\left(j\right)\right)\label{eq:BigEpsilon}
\end{align}
\end{enumerate}
\end{rem}

\subsection{\label{subsec:IteratedIntegrals}Iterated integrals}

The following two results are well known and can be found in Kloeden
and Platen \cite{KlPl92}, Theorem 5.5.1 and Lemma 5.7.5, respectively. 
\begin{thm}
\label{thm: Ito Taylor Expansion}Let $\rho_{1}$ and $\rho_{2}$
be two stopping times with $0\leq\rho_{1}\leq\rho_{2}\leq t,$ a.s.,
let $\mathcal{A}\subset\mathcal{M}(S_{0})$ be a hierarchical set
and $g:\mathbb{R}^{d_{X}}\rightarrow\mathbb{R}.$ Then, the Itô-Taylor
expansion 
\begin{equation}
g(X_{\rho_{2}})=\sum_{\alpha\in\mathcal{A}}L^{\alpha}g(X_{\rho_{1}})I_{\alpha}(\boldsymbol{1})_{\rho_{1},\rho_{2}}+\sum_{\alpha\in\mathcal{R}\left(\mathcal{A}\right)}I_{\alpha}(L^{\alpha}g(X_{\cdot}))_{\rho_{1},\rho_{2}},\label{eq: Ito Taylor Expansion}
\end{equation}
holds, provided all of the derivatives of $g,f$ and $\sigma$ and
all of the iterated Itô integrals appearing in $\left(\ref{eq: Ito Taylor Expansion}\right)$
exist. 
\end{thm}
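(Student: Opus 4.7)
The plan is to prove the identity by induction on the hierarchical set $\mathcal{A}$, using Itô's formula as the one-step tool. The whole argument reduces to the following single identity, which I will call the \emph{one-step expansion}: for every $\alpha\in\mathcal{M}(S_{0})$ (and with $\rho_{1},\rho_{2}$ as in the statement),
\begin{equation}
I_{\alpha}\bigl(L^{\alpha}g(X_{\cdot})\bigr)_{\rho_{1},\rho_{2}} = L^{\alpha}g(X_{\rho_{1}})\,I_{\alpha}(\boldsymbol{1})_{\rho_{1},\rho_{2}}+\sum_{i\in S_{0}}I_{(i)\ast\alpha}\bigl(L^{(i)\ast\alpha}g(X_{\cdot})\bigr)_{\rho_{1},\rho_{2}}.\label{eq:onestep}
\end{equation}
To establish \eqref{eq:onestep} one unrolls the definition of $I_{\alpha}$ to its innermost layer, where $L^{\alpha}g(X_{r})\,dV_{r}^{\alpha_{1}}$ appears, and applies Itô's formula to $L^{\alpha}g(X_{r})$:
\[
L^{\alpha}g(X_{r})=L^{\alpha}g(X_{\rho_{1}})+\sum_{i\in S_{0}}\int_{\rho_{1}}^{r}L^{i}L^{\alpha}g(X_{s})\,dV_{s}^{i},
\]
using the convention $V_{s}^{0}=s$, so that the drift and diffusion parts of Itô's formula are both subsumed in the $i\in S_{0}$ sum. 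Substituting this back and using $L^{i}L^{\alpha}=L^{(i)\ast\alpha}$ together with the Fubini-type rearrangement of iterated integrals yields the two terms in \eqref{eq:onestep}. The regularity hypotheses on $g,f,\sigma$ and the stated existence of the iterated integrals are exactly what justifies Itô's formula and the exchange of the integrals at this layer.

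The base case is $\mathcal{A}=\{v\}$: here $\mathcal{R}(\mathcal{A})=\{(i):i\in S_{0}\}$, the main sum collapses to $g(X_{\rho_{1}})I_{v}(\boldsymbol{1})_{\rho_{1},\rho_{2}}=g(X_{\rho_{1}})$, and \eqref{eq:onestep} applied to $\alpha=v$ (with $I_{v}(L^{v}g(X_{\cdot}))_{\rho_{1},\rho_{2}}=g(X_{\rho_{2}})$) gives exactly the claimed formula
\[
g(X_{\rho_{2}})=g(X_{\rho_{1}})+\sum_{i\in S_{0}}I_{(i)}\bigl(L^{i}g(X_{\cdot})\bigr)_{\rho_{1},\rho_{2}}.
\]

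For the inductive step, I will build an arbitrary finite hierarchical set $\mathcal{A}$ from $\{v\}$ by adding one multi-index at a time. Since $\sup_{\alpha\in\mathcal{A}}|\alpha|<\infty$ and $S_{0}$ is finite, $\mathcal{A}$ is finite; enumerate $\mathcal{A}\setminus\{v\}$ as $\alpha^{(1)},\alpha^{(2)},\ldots,\alpha^{(N)}$ in any order of nondecreasing $|\alpha|$, so that at each stage the partially assembled set $\mathcal{A}_{k}=\{v,\alpha^{(1)},\ldots,\alpha^{(k)}\}$ is hierarchical and $\alpha^{(k+1)}\in\mathcal{R}(\mathcal{A}_{k})$ (by the hierarchical property of $\mathcal{A}$, $-\alpha^{(k+1)}\in\mathcal{A}$, and it was added earlier because of the length ordering). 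Suppose the claim holds for $\mathcal{A}_{k}$. Then the remainder of the $\mathcal{A}_{k}$-expansion contains the term $I_{\alpha^{(k+1)}}(L^{\alpha^{(k+1)}}g(X_{\cdot}))_{\rho_{1},\rho_{2}}$; apply \eqref{eq:onestep} to it. The resulting main contribution $L^{\alpha^{(k+1)}}g(X_{\rho_{1}})I_{\alpha^{(k+1)}}(\boldsymbol{1})_{\rho_{1},\rho_{2}}$ is exactly the extra term needed in the main sum for $\mathcal{A}_{k+1}$, and the new remainders $I_{(i)\ast\alpha^{(k+1)}}(L^{(i)\ast\alpha^{(k+1)}}g(X_{\cdot}))_{\rho_{1},\rho_{2}}$, $i\in S_{0}$, belong to $\mathcal{R}(\mathcal{A}_{k+1})$ because $(i)\ast\alpha^{(k+1)}\notin\mathcal{A}_{k+1}$ (if it were, the hierarchical property would force $\alpha^{(k+1)}\in\mathcal{A}_{k}$, a contradiction). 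Combined with the unchanged terms of $\mathcal{R}(\mathcal{A}_{k})\setminus\{\alpha^{(k+1)}\}$, this is precisely the $\mathcal{A}_{k+1}$-expansion. After $N$ steps one reaches $\mathcal{A}$.

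The only real obstacle is the bookkeeping verifying that $\mathcal{R}(\mathcal{A}_{k+1})$ is recovered exactly — i.e. that no newly produced multi-index $(i)\ast\alpha^{(k+1)}$ has slipped into $\mathcal{A}_{k+1}$ and that no other remainder indices are lost or duplicated. This is purely combinatorial and follows from the hierarchical condition applied to $\mathcal{A}$, as sketched above. Everything analytic has been collapsed into the single application of Itô's formula that underlies \eqref{eq:onestep}.
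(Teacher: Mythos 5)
Your argument is essentially correct, but note that the paper does not prove this theorem at all: it is quoted verbatim from Kloeden and Platen (Theorem 5.5.1 of \cite{KlPl92}), and your induction over hierarchical sets built up one multi-index at a time, with It\^o's formula applied to the innermost integrand supplying the one-step expansion, is precisely the standard proof given there. Two small remarks. First, the ``Fubini-type rearrangement'' you invoke is not needed: substituting the It\^o expansion of $L^{\alpha}g(X_{u_{1}})$ into the innermost layer and using linearity of the iterated integral already produces $L^{\alpha}g(X_{\rho_{1}})I_{\alpha}(\boldsymbol{1})_{\rho_{1},\rho_{2}}+\sum_{i\in S_{0}}I_{(i)\ast\alpha}(L^{(i)\ast\alpha}g(X_{\cdot}))_{\rho_{1},\rho_{2}}$, since prepending an integral in the innermost position is exactly what prepending $i$ to the multi-index means under the paper's recursive definition of $I_{\alpha}$. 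Second, your parenthetical justification that $(i)\ast\alpha^{(k+1)}\notin\mathcal{A}_{k+1}$ is not right as stated: hierarchicality applied to $(i)\ast\alpha^{(k+1)}$ would only force $-\bigl((i)\ast\alpha^{(k+1)}\bigr)=\alpha^{(k+1)}\in\mathcal{A}_{k+1}$, which is true and hence no contradiction. The correct (and immediate) reason is your length ordering: every element of $\mathcal{A}_{k+1}$ has length at most $\left|\alpha^{(k+1)}\right|$, while $\left|(i)\ast\alpha^{(k+1)}\right|=\left|\alpha^{(k+1)}\right|+1$. With that repair the bookkeeping identity $\mathcal{R}(\mathcal{A}_{k+1})=\bigl(\mathcal{R}(\mathcal{A}_{k})\setminus\{\alpha^{(k+1)}\}\bigr)\uplus\{(i)\ast\alpha^{(k+1)}:i\in S_{0}\}$ holds and the induction closes.
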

\begin{lem}
\label{lem: Moments Iterated Integral} Let $\alpha\in\mathcal{M}(S_{0}),$
let $\theta=\{\theta_{s}\}_{s\in[0,t]}$ be an $\mathbb{H}^{t}$-adapted
process, let $p\geq1$ and let $\rho_{1}$ and $\rho_{2}$ be two
stopping times with $0\leq\rho_{1}\leq\rho_{2}\leq t$ and $\rho_{2}$
being $\mathcal{H}_{\rho_{1}}^{t}$-measurable. Then, 
\[
\mathbb{\tilde{E}}\left[\left\vert I_{\alpha}(\theta_{\cdot})_{\rho_{1},\rho_{2}}\right\vert ^{2p}|\mathcal{H}_{\rho_{1}}^{t}\right]\leq CR(\theta)\left(\rho_{2}-\rho_{1}\right)^{p\{|\alpha|+|\alpha|_{0}\}},
\]
where 
\[
R(\theta)=\mathbb{\tilde{E}}\left[\sup_{\rho_{1}\leq s\leq\rho_{2}}\left\vert \theta_{s}\right\vert ^{2p}|\mathcal{H}_{\rho_{1}}^{t}\right].
\]
\end{lem}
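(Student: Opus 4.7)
The plan is to prove the bound by induction on $|\alpha|$, peeling off one stochastic integral at a time and using the recursive definition $I_{\alpha}(\theta)_{\rho_{1},\rho_{2}}=\int_{\rho_{1}}^{\rho_{2}}I_{\alpha-}(\theta)_{\rho_{1},u}dV_{u}^{\alpha_{|\alpha|}}$. The base case $\alpha=v$ is immediate since $I_{v}(\theta)_{\rho_{1},\rho_{2}}=\theta_{\rho_{2}}$ and $|\alpha|+|\alpha|_{0}=0$, so
\[
\mathbb{\tilde{E}}\left[|\theta_{\rho_{2}}|^{2p}\,|\,\mathcal{H}_{\rho_{1}}^{t}\right]\leq \mathbb{\tilde{E}}\left[\sup_{\rho_{1}\leq s\leq\rho_{2}}|\theta_{s}|^{2p}\,|\,\mathcal{H}_{\rho_{1}}^{t}\right]=R(\theta).
\]

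For the inductive step, I would split according to whether the last index $\alpha_{|\alpha|}$ equals $0$ (a Lebesgue integration) or lies in $\{1,\dots,d_{V}\}$ (a genuine stochastic integration). In the Lebesgue case, Jensen's (or Hölder's) inequality gives
\[
\left|I_{\alpha}(\theta)_{\rho_{1},\rho_{2}}\right|^{2p}\leq(\rho_{2}-\rho_{1})^{2p-1}\int_{\rho_{1}}^{\rho_{2}}\left|I_{\alpha-}(\theta)_{\rho_{1},u}\right|^{2p}du;
\]
taking $\mathcal{H}_{\rho_{1}}^{t}$-conditional expectation, pulling the $\mathcal{H}_{\rho_{1}}^{t}$-measurable factor $(\rho_{2}-\rho_{1})^{2p-1}$ outside, applying Fubini and the inductive hypothesis to the integrand yields the exponent $2p-1+p(|\alpha-|+|\alpha-|_{0})+1=p(|\alpha|+|\alpha|_{0})$, since passing from $\alpha-$ to $\alpha$ by appending a $0$ raises both $|\alpha|$ and $|\alpha|_{0}$ by one. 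In the Brownian case, the conditional Burkholder–Davis–Gundy inequality bounds
\[
\mathbb{\tilde{E}}\!\left[\left|I_{\alpha}(\theta)_{\rho_{1},\rho_{2}}\right|^{2p}\,|\,\mathcal{H}_{\rho_{1}}^{t}\right]\leq C\,\mathbb{\tilde{E}}\!\left[\Bigl(\int_{\rho_{1}}^{\rho_{2}}\left|I_{\alpha-}(\theta)_{\rho_{1},u}\right|^{2}du\Bigr)^{\!p}\,|\,\mathcal{H}_{\rho_{1}}^{t}\right];
\]
another application of Jensen with exponent $p$ followed by the inductive hypothesis produces the exponent $p-1+p(|\alpha-|+|\alpha-|_{0})+1=p(|\alpha|+|\alpha|_{0})$, now because appending a nonzero component raises $|\alpha|$ by one but leaves $|\alpha|_{0}$ unchanged. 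In both cases the prefactor $R(\theta)$ propagates untouched through the induction.

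The delicate point, and what I expect to require most care, is the use of stopping times rather than deterministic endpoints. The hypothesis that $\rho_{2}$ is $\mathcal{H}_{\rho_{1}}^{t}$-measurable is exactly what is needed: it allows $(\rho_{2}-\rho_{1})$ to be treated as a constant under conditioning on $\mathcal{H}_{\rho_{1}}^{t}$, so factors like $(\rho_{2}-\rho_{1})^{2p-1}$ or $(\rho_{2}-\rho_{1})^{p-1}$ coming from Jensen/Hölder commute with the conditional expectation. For the BDG step one works with the stopped martingale $t\mapsto I_{\alpha}(\theta)_{\rho_{1},t\wedge\rho_{2}}$ (started at time $\rho_{1}$) in the filtration $(\mathcal{H}_{\rho_{1}+s}^{t})_{s\geq 0}$; the $\mathbb{H}^{t}$-adaptedness of $\theta$ together with $\rho_{2}\in\mathcal{H}_{\rho_{1}}^{t}$ ensures that this is a legitimate martingale in the shifted filtration, after which the standard conditional BDG applies. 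Once both cases of the inductive step are in place, the estimate follows. Since the stated lemma is quoted verbatim from Lemma 5.7.5 of Kloeden–Platen \cite{KlPl92}, one may equivalently simply invoke that reference, but the self-contained inductive argument above records the structure being used implicitly throughout the later estimates.
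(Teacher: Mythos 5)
Your proposal is correct. Note that the paper itself gives no proof of this lemma: it is quoted as a known result (Lemma 5.7.5 of Kloeden and Platen \cite{KlPl92}), so there is no in-paper argument to compare against. Your induction on $\left\vert \alpha\right\vert$, splitting on whether the last index is $0$ (Jensen/H\"older, contributing $2p$ to the exponent since both $\left\vert \alpha\right\vert$ and $\left\vert \alpha\right\vert _{0}$ increase) or nonzero (conditional Burkholder--Davis--Gundy plus Jensen, contributing $p$), is exactly the standard argument behind that citation, and your bookkeeping of the exponents is right. You also correctly identify the only delicate point, namely that the $\mathcal{H}_{\rho_{1}}^{t}$-measurability of $\rho_{2}$ is what lets the factors $(\rho_{2}-\rho_{1})^{2p-1}$ and $(\rho_{2}-\rho_{1})^{p-1}$ pass through the conditional expectation and lets the stopped iterated integral be treated as a martingale in the shifted filtration. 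The only cosmetic remark is that the constant $C$ produced by the induction depends on $p$ and on $\left\vert \alpha\right\vert$ through the repeated applications of Burkholder--Davis--Gundy, which is harmless here since all multi-indices used later have bounded length.
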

The following lemma gives a basic estimate on the difference between
the log likelihood functional $\xi_{t}$ and its $m$-th order discretization.
Its proof relies on Theorem \ref{thm: Ito Taylor Expansion} and Lemma
\ref{lem: Moments Iterated Integral}. 
\begin{lem}
\label{lem: Difference Moment Estimate}We have that 
\begin{equation}
\xi_{t}-\xi_{t}^{\tau,m}=\sum_{i=0}^{d_{Y}}\int_{0}^{t}\left\{ \sum_{\alpha\in\mathcal{R}\left(\mathcal{M}_{m-1}(S_{0})\right)}I_{\alpha}(L^{\alpha}h_{i}(X_{\cdot}))_{\tau(s),s}\right\} dY_{s}^{i},\label{eq: Difference Taylor Expansion}
\end{equation}
and 
\[
\mathbb{\tilde{E}}\left[\left\vert \xi_{t}-\xi_{t}^{\tau,m}\right\vert ^{2p}\right]\leq C\delta^{pm}.
\]
\end{lem}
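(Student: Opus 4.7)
The plan is to derive the identity (\ref{eq: Difference Taylor Expansion}) by a term-by-term application of the It\^o--Taylor expansion and then obtain the moment bound by combining the Burkholder--Davis--Gundy inequality applied to the outer $Y$-integrals with the iterated-integral moment estimate provided by Lemma \ref{lem: Moments Iterated Integral}.

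First I would establish the identity. On each subinterval $[t_j, t_{j+1}]$, Theorem \ref{thm: Ito Taylor Expansion} applied to $g = h_i$ with hierarchical set $\mathcal{A} = \mathcal{M}_{m-1}(S_0)$ and stopping times $\rho_1 = \tau(s) = t_j$, $\rho_2 = s \in [t_j, t_{j+1}]$ gives
\[
h_i(X_s) = \sum_{\alpha\in\mathcal{M}_{m-1}(S_0)} L^\alpha h_i(X_{\tau(s)})\, I_\alpha(\mathbf{1})_{\tau(s),s} + \sum_{\alpha\in\mathcal{R}(\mathcal{M}_{m-1}(S_0))} I_\alpha(L^\alpha h_i(X_\cdot))_{\tau(s),s}.
\]
The hypotheses of Theorem \ref{thm: Ito Taylor Expansion} are met under $\mathbf{H}(m)$ since $h_i \in C_b^{2m+1}$, $f \in C_b^{2m-1}$, $\sigma \in C_b^{2m}$, and the iterated integrals have moments by Lemma \ref{lem: Moments Iterated Integral} combined with Remark \ref{rem: M}. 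Integrating against $dY^i_s$, summing over $i$ and over $j = 0, \ldots, n-1$, and subtracting the definition of $\xi_t^{\tau,m}$, the first sum reproduces $\xi_t^{\tau,m}$ exactly, yielding (\ref{eq: Difference Taylor Expansion}).

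For the moment estimate, write the right-hand side of (\ref{eq: Difference Taylor Expansion}) as $\sum_{i=0}^{d_Y}\int_0^t \Phi^i_s\, dY^i_s$ with
\[
\Phi^i_s \triangleq \sum_{\alpha\in\mathcal{R}(\mathcal{M}_{m-1}(S_0))} I_\alpha(L^\alpha h_i(X_\cdot))_{\tau(s),s}.
\]
For $i \geq 1$, under $\tilde P$ the process $Y^i$ is a Brownian motion independent of $V$, and $\Phi^i$ is adapted to the enlarged filtration on which $V$ and $Y$ are jointly Brownian; the Burkholder--Davis--Gundy inequality followed by H\"older gives
\[
\mathbb{\tilde E}\left[\left|\int_0^t \Phi^i_s\, dY^i_s\right|^{2p}\right] \leq C_p\, t^{p-1}\int_0^t \mathbb{\tilde E}\bigl[|\Phi^i_s|^{2p}\bigr]\, ds.
\]
For $i = 0$ the outer integral is pathwise and H\"older yields an analogous bound with $t^{2p-1}$ in place of $C_p t^{p-1}$. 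It remains to estimate $\mathbb{\tilde E}[|\Phi^i_s|^{2p}]$. Using that $|\mathcal{R}(\mathcal{M}_{m-1}(S_0))|$ is a finite constant and applying Lemma \ref{lem: Moments Iterated Integral} to each term (with $\theta = L^\alpha h_i(X_\cdot)$, which is $\mathbb{H}^t$-adapted and has $R(\theta)$ bounded in $L^1$ by Remark \ref{rem: M}) gives, for every $\alpha$ with $|\alpha| = m$,
\[
\mathbb{\tilde E}\bigl[|I_\alpha(L^\alpha h_i(X_\cdot))_{\tau(s),s}|^{2p}\bigr] \leq C\, (s - \tau(s))^{p(|\alpha|+|\alpha|_0)} \leq C\, \delta^{pm},
\]
where the last inequality uses $|\alpha|_0 \geq 0$. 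Plugging back and integrating in $s$ yields the claimed bound $C\delta^{pm}$ for every $i$, and summing over $i$ concludes.

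I do not expect a serious obstacle here; the argument is a clean combination of the It\^o--Taylor expansion with the standard moment bound. The only point that needs a bit of care is the distinction between drift ($i = 0$) and martingale ($i \geq 1$) components of the outer $dY$-integral, since BDG is used only for the latter, while the drift term is handled by H\"older directly. Both routes happen to give the same rate $\delta^{pm}$ because the bottleneck is the $|\alpha| = m$ inner iterated integral.
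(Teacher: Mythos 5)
Your proposal is correct and follows essentially the same route as the paper: the identity via the It\^o--Taylor expansion on each subinterval, then the $2p$-th moment bound by controlling the outer $dY$-integral (martingale part via BDG/It\^o isometry plus Jensen, drift part via Jensen) and invoking Lemma \ref{lem: Moments Iterated Integral} together with $|\alpha|+|\alpha|_{0}\geq m$ for $\alpha\in\mathcal{R}\left(\mathcal{M}_{m-1}(S_{0})\right)$. No gaps.
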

\begin{proof}
By Remark \ref{rem: M}, we can apply Theorem \ref{thm: Ito Taylor Expansion}
and get equation \ref{eq: Difference Taylor Expansion}. Applying
the Itô isometry and Jensen's inequality (or Jensen's inequality directly
if $i=0$), we obtain the following bound 
\begin{align*}
\mathbb{\tilde{E}}\left[\left|\xi_{t}-\xi_{t}^{\tau,m}\right|^{2p}\right] & =\mathbb{\tilde{E}}\left[\left\vert \sum_{i=0}^{d_{Y}}\int_{0}^{t}\left\{ \sum_{\alpha\in\mathcal{R}\left(\mathcal{M}_{m-1}(S_{0})\right)}I_{\alpha}(L^{\alpha}h_{i}(X_{\cdot}))_{\tau(s),s}\right\} dY_{s}^{i}\right\vert ^{2p}\right]\\
 & \leq C\sum_{\alpha\in\mathcal{R}\left(\mathcal{M}_{m-1}(S_{0})\right)}\int_{0}^{t}\mathbb{\tilde{E}}\left[\left\vert I_{\alpha}(L^{\alpha}h_{i}(X_{\cdot}))_{\tau(s),s}\right\vert ^{2p}\right]ds.
\end{align*}
Let $\alpha\in\mathcal{R}\left(\mathcal{M}_{m-1}(S_{0})\right),$
by Lemma \ref{lem: Moments Iterated Integral} and Remark \ref{rem: M}
we get that 
\begin{align*}
\mathbb{\tilde{E}}\left[\left\vert I_{\alpha}(L^{\alpha}h_{i}(X_{\cdot}))_{\tau(s),s}\right\vert ^{2p}\right] & \leq C\mathbb{\tilde{E}}\left[\sup_{\tau(s)\leq u\leq s}\left\vert L^{\alpha}h_{i}(X_{u})\right\vert ^{2p}\right]\left(s-\tau(s)\right)^{p\{|\alpha|+|\alpha|_{0}\}}\leq C\delta^{pm},
\end{align*}
where in the last inequality we have used that $|\alpha|+|\alpha|_{0}\geq m$
for $\alpha\in\mathcal{R}\left(\mathcal{M}_{m-1}(S_{0})\right).$
From the previous inequality the result follows easily. 
\end{proof}
\begin{lem}
\label{lem: SumEpsilon}Let $p,q\geq1$ and $m\geq3.$ Then, we have
that 
\[
\mathbb{\tilde{E}}\left[\left\vert \sum_{j=0}^{n-1}\mathcal{E}_{q,\delta_{j}}\left(\mu^{\tau,m}\left(j\right)\right)\right\vert ^{2p}\right]\leq C\left(t,d_{Y},p,m\right)\delta^{p\left(2q+1\right)}.
\]
\end{lem}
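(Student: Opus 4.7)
The plan is to combine the pointwise error bound on the truncation, as encapsulated in equation (\ref{eq:EpsilonInequality}), with moment estimates on each $\mu^{\tau,m}(j)$, and to exploit the fact that only indices $\alpha$ with $|\alpha|\ge 2$ enter $\mu^{\tau,m}(j)$.

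First I would reduce to a per-summand estimate. By equation (\ref{eq:EpsilonInequality}),
\[
\left|\mathcal{E}_{q,\delta_{j}}\!\left(\mu^{\tau,m}(j)\right)\right|^{2p}\le\delta_{j}^{-4pq}\,\left|\mu^{\tau,m}(j)\right|^{2p(2q+1)},
\]
and the discrete power-mean inequality $|\sum_{j=0}^{n-1}a_{j}|^{2p}\le n^{2p-1}\sum_{j=0}^{n-1}|a_{j}|^{2p}$ reduces the problem to bounding $\mathbb{\tilde{E}}[|\mu^{\tau,m}(j)|^{2p'}]$ for the single index $p'\triangleq p(2q+1)$.

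The core computation is to show that $\mathbb{\tilde{E}}[|\mu^{\tau,m}(j)|^{2p'}]\le C\,\delta_{j}^{3p'}$. Each summand in $\mu^{\tau,m}(j)$ has the form $L^{\alpha}h_{i}(X_{t_{j}})\int_{t_{j}}^{t_{j+1}}I_{\alpha}(\boldsymbol{1})_{t_{j},s}\,dY_{s}^{i}$ with $\alpha\in\mathcal{M}_{1,m-1}(S_{0})$, so $|\alpha|\ge 2$. Under \textbf{H}$(m)$ (with $m\ge 3$) the coefficients $L^{\alpha}h_{i}$ are bounded. Under $\tilde{P}$, $Y$ is a Brownian motion independent of $X$ and $V$, and so I would condition on $\mathcal{F}_{t}^{0,V}$ and apply the Burkholder--Davis--Gundy inequality to the $dY^{i}$ integral for $i\ge 1$, and Jensen for the $ds$ integral when $i=0$, combined with Lemma \ref{lem: Moments Iterated Integral} applied to the inner iterated integral $I_{\alpha}(\boldsymbol{1})_{t_{j},s}$. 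This yields an upper bound of order $\delta_{j}^{p'(|\alpha|+|\alpha|_{0}+1)}$ for $i\ge 1$ and $\delta_{j}^{p'(|\alpha|+|\alpha|_{0}+2)}$ for $i=0$; in either case the exponent is at least $3p'$, the minimum being achieved when $|\alpha|=2$, $|\alpha|_{0}=0$ and $i\ge 1$. Summing over the finitely many $(\alpha,i)$ gives the claimed $\delta_{j}^{3p'}$ bound.

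Assembling the pieces, each summand satisfies
\[
\mathbb{\tilde{E}}\!\left[\left|\mathcal{E}_{q,\delta_{j}}(\mu^{\tau,m}(j))\right|^{2p}\right]\le\delta_{j}^{-4pq}\cdot C\,\delta_{j}^{3p(2q+1)}=C\,\delta_{j}^{p(2q+3)},
\]
so using $n\le Ct\delta^{-1}$ from (\ref{eq: UnifPartition2}),
\[
\mathbb{\tilde{E}}\!\left[\Bigl|\sum_{j=0}^{n-1}\mathcal{E}_{q,\delta_{j}}(\mu^{\tau,m}(j))\Bigr|^{2p}\right]\le n^{2p-1}\cdot n\cdot C\,\delta^{p(2q+3)}\le C\,\delta^{-2p}\cdot\delta^{p(2q+3)}=C\,\delta^{p(2q+1)},
\]
as required. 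The only genuinely non-mechanical step is the bookkeeping for the mixed iterated integrals establishing the $3p'$ exponent; in particular, correctly exploiting that $Y$ is independent of the $(X,V)$-adapted integrand under $\tilde{P}$ so that Wiener-integral moment bounds apply cleanly, and verifying that the worst-case multi-index gives exactly the threshold exponent $3$ rather than anything smaller.
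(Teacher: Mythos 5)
Your proposal is correct and follows essentially the same route as the paper: the power-mean inequality to reduce to per-summand estimates, the bound $\left|\mathcal{E}_{q,\delta_{j}}(z)\right|\leq\delta_{j}^{-2q}\left|z\right|^{2q+1}$ from (\ref{eq:EpsilonInequality}), the moment estimate $\mathbb{\tilde{E}}\left[\left|\mu^{\tau,m}(j)\right|^{r}\right]\leq C\delta^{\frac{r}{2}(1+|\alpha|+|\alpha|_{0})}\leq C\delta^{\frac{3}{2}r}$ obtained as in Lemma \ref{lem: Difference Moment Estimate} using $|\alpha|\geq2$, and finally $n\leq Ct\delta^{-1}$ from Remark \ref{rem: Uniform}. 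Your exponent bookkeeping (worst case $|\alpha|=2$, $|\alpha|_{0}=0$, $i\geq1$ giving the threshold $3p'$) matches the paper's computation exactly.
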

\begin{proof}
We have that 
\begin{align*}
\mathbb{\tilde{E}}\left[\left\vert \sum_{j=0}^{n-1}\mathcal{E}_{q,\delta_{j}}\left(\mu^{\tau,m}\left(j\right)\right)\right\vert ^{2p}\right] & \leq C\left(p\right)n^{2p-1}\sum_{j=0}^{n-1}\mathbb{\tilde{E}}\left[\left|\mathcal{E}_{q,\delta_{j}}\left(\mu^{\tau,m}\left(j\right)\right)\right|^{2p}\right].
\end{align*}
Moreover, using similar arguments as in Lemma \ref{lem: Difference Moment Estimate},
for any $r\geq1$, we have that 
\begin{align*}
\mathbb{\tilde{E}}\left[\left|\mu^{\tau,m}\left(j\right)\right|^{r}\right] & \leq C\left(d_{Y},m\right)\delta^{\frac{r}{2}(1+\left|\alpha\right|+\left|\alpha\right|_{0})}\\
 & \leq C\left(d_{Y},m\right)\delta^{\frac{3}{2}r},
\end{align*}
because as $\alpha\in\mathcal{M}_{1,m-1}(S_{0}),m\geq3$ then $\left|\alpha\right|\in[2,m-1]$
and $\left|\alpha\right|_{0}\in[0,m-1]$. Then, using equation $\left(\ref{eq:EpsilonInequality}\right)$
and Remark \ref{rem: Uniform} we get that 
\begin{align*}
n^{2p-1}\sum_{j=0}^{n-1}\mathbb{\tilde{E}}\left[\left|\mathcal{E}_{q,\delta_{j}}\left(\mu^{\tau,m}\left(j\right)\right)\right|^{2p}\right] & \leq n^{2p-1}\sum_{j=0}^{n-1}\delta^{-4pq}\mathbb{\tilde{E}}\left[\left|\mu^{\tau,m}\left(j\right)\right|^{2p\left(2q+1\right)}\right]\\
 & \leq C\left(t,d_{Y},m\right)n\delta^{-\left(2p-1\right)}\delta^{-4pq}\delta^{3p(2q+1)}\\
 & \leq C\left(t,d_{Y},m\right)\delta^{p(2q+1)}.
\end{align*}
\end{proof}
\begin{lem}
\label{lem: CondExpect}Let $\theta=\{\theta_{s}\}_{s\in[0,t]}$ and
$\Psi=\{\Psi_{s}\}_{s\in[0,t]}$ be two $\mathbb{H}^{t}$-adapted
process. Then: 
\begin{enumerate}
\item \label{lem: CE1}For $\alpha\in\mathcal{M}(S_{0})$ and $0\leq s_{1}\leq s_{2}\leq s_{3}\leq t,$
we have that 
\[
\mathbb{\tilde{E}}\left[I_{\alpha}(\theta)_{s_{2},s_{3}}|\mathcal{H}_{s_{1}}^{t}\right]=\boldsymbol{1}_{\{\left\vert \alpha\right\vert =\left\vert \alpha\right\vert _{0}\}}I_{\alpha}\left(\mathbb{\tilde{E}}[\theta|\mathcal{H}_{s_{1}}^{t}]\right)_{s_{2},s_{3}}.
\]
\item \label{lem: CE2}For $\alpha\in\mathcal{M}(S_{0})$ with $\left\vert \alpha\right\vert \neq\left\vert \alpha\right\vert _{0},r\in\{1,...,d_{V}\},0\leq s_{1}\leq s_{2}\leq t$
and $0\leq s_{3}\leq s_{4}\leq t$ we have that 
\begin{align*}
\mathbb{\tilde{E}}\left[\left(\int_{s_{3}}^{s_{4}}\Psi_{s}dV_{s}^{r}\right)I_{\alpha}(\theta)_{s_{1},s_{2}}|\mathcal{Y}_{t}\right] & =\boldsymbol{1}_{\{\alpha_{\left\vert \alpha\right\vert }=0\}}\int_{s_{1}}^{s_{2}}\mathbb{\tilde{E}}\left[\left(\int_{s_{3}}^{s_{4}}\Psi_{s}dV_{s}^{r}\right)I_{\alpha-}(\theta)_{s_{1},u}|\mathcal{Y}_{t}\right]du\\
 & \quad+\boldsymbol{1}_{\{\alpha_{\left\vert \alpha\right\vert }=r\}}\int_{s_{1}\vee s_{3}}^{s_{2}\wedge s_{4}}\mathbb{\tilde{E}}\left[\Psi_{u}I_{\alpha-}(\theta)_{s_{1},u}|\mathcal{Y}_{t}\right]du.
\end{align*}
\item \label{lem: CE3}For $\alpha\in\mathcal{M}(S_{0})$ with $\left\vert \alpha\right\vert \geq2,\alpha_{\left|\alpha\right|}\neq0,\alpha_{\left|\alpha\right|-1}\neq0,r_{1},r_{2}\in\{1,...,d_{V}\},0\leq s_{1}\leq s_{2}\leq t$
and $0\leq s_{3}\leq s_{4}\leq s_{5}\leq s_{6}\leq t$ we have that
\begin{align*}
 & \mathbb{\tilde{E}}\left[\left(\int_{s_{3}}^{s_{6}}\int_{s_{3}}^{s_{5}}\Psi_{s_{4}}dV_{s_{4}}^{r_{1}}dV_{s_{5}}^{r_{2}}\right)I_{\alpha}(\theta)_{s_{1},s_{2}}|\mathcal{Y}_{t}\right]\\
 & =\boldsymbol{1}_{\{\alpha_{\left\vert \alpha\right\vert }=r_{2}\}}\int_{s_{1}\vee s_{3}}^{s_{2}\wedge s_{6}}\mathbb{\tilde{E}}\left[\left(\int_{s_{3}}^{s_{5}}\Psi_{s_{4}}dV_{s_{4}}^{r_{1}}\right)I_{\alpha-}(\theta)_{s_{1},u}|\mathcal{Y}_{t}\right]du\\
 & =\boldsymbol{1}_{\{\alpha_{\left\vert \alpha\right\vert }=r_{2},\alpha_{\left\vert \alpha\right\vert -1}=r_{1}\}}\int_{s_{1}\vee s_{3}}^{s_{2}\wedge s_{6}}\int_{s_{1}\vee s_{3}}^{u}\mathbb{\tilde{E}}\left[\Psi_{v}I_{\left(\alpha-\right)-}(\theta)_{s_{1},v}|\mathcal{Y}_{t}\right]dvdu
\end{align*}
\end{enumerate}
\end{lem}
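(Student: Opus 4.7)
The three parts of the lemma all rest on one structural fact: under $\tilde{P}$ the process $V$ remains a Brownian motion with respect to the enlarged filtration $\mathbb{H}^{t}$, because by construction $Y$ is independent of $V$ under $\tilde{P}$. Hence any Itô integral $\int \Phi\,dV^{j}$ with $j\geq 1$ and $\mathbb{H}^{t}$-adapted, integrable integrand is an $\mathbb{H}^{t}$-martingale under $\tilde{P}$; since $\mathcal{Y}_{t}\subset\mathcal{H}_{s}^{t}$ for every $s\geq 0$, its conditional expectation given $\mathcal{Y}_{t}$ (or given $\mathcal{H}_{s_{1}}^{t}$ for $s_{1}$ preceding the integration window) vanishes. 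I would combine this observation with the conditional Fubini theorem for the $dt$ components and Itô's product formula for products of stochastic integrals.

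For item \ref{lem: CE1}, I would induct on $|\alpha|$. The base case $\alpha=v$ is immediate. For the inductive step, write $I_{\alpha}(\theta)_{s_{2},s_{3}}=\int_{s_{2}}^{s_{3}} I_{\alpha-}(\theta)_{s_{2},u}\,dV_{u}^{\alpha_{|\alpha|}}$. If $\alpha_{|\alpha|}=0$ I would push the conditional expectation through the $du$-integral and invoke the inductive hypothesis on $\alpha-$, using that $|\alpha|=|\alpha|_{0}$ iff $|\alpha-|=|\alpha-|_{0}$ in this sub-case. If $\alpha_{|\alpha|}\geq 1$ the outer integral is a mean-zero $\mathbb{H}^{t}$-martingale increment, so its conditional expectation vanishes, consistent with the indicator $\boldsymbol{1}_{\{|\alpha|=|\alpha|_{0}\}}$ being $0$ in this case.

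For item \ref{lem: CE2}, the plan is a three-way case analysis on $\alpha_{|\alpha|}$. If $\alpha_{|\alpha|}=0$, rewrite $I_{\alpha}(\theta)_{s_{1},s_{2}}=\int_{s_{1}}^{s_{2}} I_{\alpha-}(\theta)_{s_{1},u}\,du$ and apply conditional Fubini directly, which produces the first indicator term. If $\alpha_{|\alpha|}=j\in\{1,\dots,d_{V}\}$ I would set
\[
A_{\tau}=\int_{s_{3}}^{\tau\wedge s_{4}}\Psi_{s}\,dV_{s}^{r},\qquad B_{\tau}=\int_{s_{1}}^{\tau\wedge s_{2}} I_{\alpha-}(\theta)_{s_{1},u}\,dV_{u}^{j},
\]
(extended by $0$ before $s_{3}$ and $s_{1}$ respectively) and apply Itô's product formula to $A_{\tau}B_{\tau}$. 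The two stochastic-integral terms $\int A\,dB$ and $\int B\,dA$ have vanishing conditional expectation given $\mathcal{Y}_{t}$ by the structural fact above, while the cross-variation contributes $\boldsymbol{1}_{\{j=r\}}\int_{s_{1}\vee s_{3}}^{s_{2}\wedge s_{4}}\Psi_{u}I_{\alpha-}(\theta)_{s_{1},u}\,du$; a final application of conditional Fubini recovers exactly the second indicator term, and the case $j\notin\{0,r\}$ yields $0$, matching both indicators vanishing on the right-hand side.

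Item \ref{lem: CE3} is then a direct iteration of item \ref{lem: CE2}. I would first view the double integral as $\int_{s_{3}}^{s_{6}}\Phi_{s_{5}}\,dV_{s_{5}}^{r_{2}}$ with $\Phi_{s_{5}}=\int_{s_{3}}^{s_{5}}\Psi_{s_{4}}\,dV_{s_{4}}^{r_{1}}$ and invoke item \ref{lem: CE2}: the hypothesis $\alpha_{|\alpha|}\neq 0$ suppresses the first term and yields the first displayed equality. A second application of item \ref{lem: CE2} to the resulting inner conditional expectation — whose terminal index is $(\alpha-)_{|\alpha-|}=\alpha_{|\alpha|-1}\neq 0$ by hypothesis — produces the second equality. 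The only real obstacle throughout is the careful bookkeeping of cases in item \ref{lem: CE2} together with the integrability checks needed to justify conditional Fubini and Itô's product formula; these are routine consequences of the $\mathbb{H}^{t}$-adaptedness of $\theta,\Psi$ and the $\mathbb{H}^{t}$-Brownian character of $V$ under $\tilde{P}$.
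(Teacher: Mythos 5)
Your proposal is correct and follows essentially the same route as the paper: a case analysis on the final index of $\alpha$, the vanishing under conditioning of stochastic integrals against $dV^{j}$, $j\geq1$ (using that $V$ stays a Brownian motion for the enlarged filtration $\mathbb{H}^{t}$ under $\tilde{P}$), conditional Fubini for the Lebesgue components, and the covariation identity for products of stochastic integrals, which is exactly the paper's ``$\mathcal{H}_{s}^{t}$-semimartingale covariation formula'' recast via It\^o's product rule. The only cosmetic differences are that you organize item (1) as an induction on $\left|\alpha\right|$ where the paper argues it directly, and that you make explicit the tower-property and integrability bookkeeping the paper leaves implicit.
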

\begin{proof}
$\left.\right.$\\
 \ref{lem: CE1}. If $\left\vert \alpha\right\vert \neq\left\vert \alpha\right\vert _{0},$
then the iterated integral $I_{\alpha}(\theta)_{s_{2},s_{3}}$ contains
a Brownian differential $dV^{r}$ and it vanishes when we take the
conditional expectation with respect to $\mathcal{H}_{s_{1}}^{t}$.
If $\left\vert \alpha\right\vert =\left\vert \alpha\right\vert _{0},$
all the differentials in the iterated integral $I_{\alpha}(\theta)_{s_{2},s_{3}}$
are Lebesgue differentials and we can write the conditional expectation
inside the inner integral. \\[2mm] \ref{lem: CE2}. Note that if
$\alpha_{\left|\alpha\right|}=0$ we can write 
\[
\mathbb{\tilde{E}}\left[\left(\int_{s_{3}}^{s_{4}}\Psi_{s}dV_{s}^{r}\right)\int_{s_{2}}^{s_{3}}I_{\alpha-}\left(\theta\right)_{s_{1},u}du|\mathcal{Y}_{t}\right]=\mathbb{\tilde{E}}\left[\int_{s_{2}}^{s_{3}}\left(\int_{s_{3}}^{s_{4}}\Psi_{s}dV_{s}^{r}\right)I_{\alpha-}\left(\theta\right)_{s_{2},u}du|\mathcal{Y}_{t}\right],
\]
because we can push the random variable $\left(\int_{s_{3}}^{s_{4}}\Psi_{s}dV_{s}^{r}\right)$
inside the Lebesgue integral. If $\alpha_{\left|\alpha\right|}\neq0$
we can write 
\begin{align*}
 & \mathbb{\tilde{E}}\left[\left(\int_{s_{3}}^{s_{4}}\Psi_{s}dV_{s}^{r}\right)\int_{s_{1}}^{s_{2}}I_{\alpha-}(\theta)_{s_{1},u}dV_{u}^{\alpha_{\left|\alpha\right|}}|\mathcal{Y}_{t}\right]\\
 & =\mathbb{\tilde{E}}\left[\mathbb{\tilde{E}}\left[\left(\int_{s_{3}}^{s_{4}}\Psi_{s}dV_{s}^{r}\right)\int_{s_{1}}^{s_{2}}I_{\alpha-}(\theta)_{s_{1},u}dV_{u}^{\alpha_{\left|\alpha\right|}}|\mathcal{H}_{0}^{t}\right]|\mathcal{Y}_{t}\right],\\
 & =\boldsymbol{1}_{\{\alpha_{\left\vert \alpha\right\vert }=r\}}\int_{s_{1}\vee s_{3}}^{s_{2}\wedge s_{4}}\mathbb{\tilde{E}}\left[\Psi_{u}I_{\alpha-}(\theta)_{s_{1},u}|\mathcal{Y}_{t}\right]du
\end{align*}
where we have just applied the $\mathcal{H}_{s}^{t}$-semimartingale
covariation formula. \\[2mm] \ref{lem: CE3}. Same reasoning as for
statement \ref{lem: CE2}. 
\end{proof}

\subsection{\label{subsec:IntegrabilityLikelihood}Integrability of the likelihood
functional and its discretizations}

In this section we state some integrability results for the likelihood
functional and its discretizations. The first result is on the integrability
of the likelihood functional. It follows from the basic fact that
any Gaussian distribution has exponential moments of all orders. 
\begin{lem}
\label{lem: Z_t^p_Integrability}Assume that $\mathbf{H}\left(1\right)$
holds. Let $p\geq1$ and $\tau$ be any partition. Then, one has that
\[
\mathbb{\tilde{E}}\left[\left|Z_{t}\right|^{p}\right]=\mathbb{\tilde{E}}\left[\exp(p\xi_{t})\right]\leq\mathbb{\tilde{E}}\left[\exp(p\left|\xi_{t}\right|)\right]<\infty,
\]
and 
\[
\mathbb{\tilde{E}}\left[\left|Z_{t}^{\tau,1}\right|^{p}\right]=\mathbb{\tilde{E}}\left[\exp(p\xi_{t}^{\tau,1})\right]<\infty.
\]
\end{lem}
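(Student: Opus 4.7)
The plan is to condition on the signal $X$ (equivalently, on $\mathcal{F}_{t}^{0,V}$) and exploit the Gaussian structure of $Y$ under $\tilde{P}$. Under $\tilde{P}$ the observation process $Y$ is a $d_{Y}$-dimensional Brownian motion independent of $X$, so conditionally on $X$ each stochastic integral $\int_{0}^{t}h_{i}(X_{s})\,dY_{s}^{i}$, for $i=1,\dots,d_{Y}$, is a centred Gaussian random variable with conditional variance $\int_{0}^{t}h_{i}^{2}(X_{s})\,ds$, and these integrals are moreover conditionally independent across $i$. Under $\mathbf{H}(1)$ every $h_{i}$ is bounded, so this conditional variance is dominated deterministically by $t\|h_{i}\|_{\infty}^{2}$.

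For the first claim I would write $\xi_{t}=\int_{0}^{t}h_{0}(X_{s})\,ds+\sum_{i=1}^{d_{Y}}\int_{0}^{t}h_{i}(X_{s})\,dY_{s}^{i}$ and apply the pointwise bound $\exp(p|\xi_{t}|)\leq\exp(p\xi_{t})+\exp(-p\xi_{t})$. The drift term contributes pointwise at most $\exp(pt\|h_{0}\|_{\infty})$, and conditioning on $X$ then applying the Gaussian moment generating function together with the conditional independence of the $Y^{i}$ yields
\[
\mathbb{\tilde{E}}\bigl[\exp(\pm p\xi_{t})\,\big|\,X\bigr]\leq\exp\Bigl(pt\|h_{0}\|_{\infty}+\tfrac{p^{2}}{2}\sum_{i=1}^{d_{Y}}\int_{0}^{t}h_{i}^{2}(X_{s})\,ds\Bigr)\leq\exp\Bigl(pt\|h_{0}\|_{\infty}+\tfrac{p^{2}d_{Y}t}{2}\|h\|_{\infty}^{2}\Bigr),
\]
which is a deterministic constant. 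Taking the unconditional expectation closes the first estimate.

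The second claim is handled in exactly the same manner. Since $\mathcal{M}_{0}(S_{0})=\{v\}$ and $I_{v}(\boldsymbol{1})_{t_{j},s}=1$, one has
\[
\xi_{t}^{\tau,1}=\sum_{i=0}^{d_{Y}}\sum_{j=0}^{n-1}h_{i}(X_{t_{j}})\bigl(Y_{t_{j+1}}^{i}-Y_{t_{j}}^{i}\bigr).
\]
The $i=0$ contribution is again deterministically bounded by $t\|h_{0}\|_{\infty}$, whereas for $i\geq 1$ the sum $\sum_{j}h_{i}(X_{t_{j}})(Y_{t_{j+1}}^{i}-Y_{t_{j}}^{i})$ is, conditional on $X$, a centred Gaussian with variance $\sum_{j}h_{i}^{2}(X_{t_{j}})\delta_{j+1}\leq t\|h_{i}\|_{\infty}^{2}$. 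The same conditioning argument therefore delivers an identical deterministic bound, crucially uniform in the partition $\tau$. No genuine obstacle arises here: the proof reduces to the tower property, the $\tilde{P}$-independence of $X$ and $Y$, the Gaussian MGF formula, and the boundedness of $h$ built into $\mathbf{H}(1)$; the only minor nuance — the absolute value in $\exp(p|\xi_{t}|)$ — is removed by the pointwise two-term bound used above.
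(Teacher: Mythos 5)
Your proposal is correct and follows essentially the same route as the paper: condition on the signal (i.e.\ on $\mathcal{F}_{t}^{0,V}$), use that under $\tilde{P}$ the integrals against $Y$ are conditionally centred Gaussian with variance bounded by $t\|h\|_{\infty}^{2}$, and apply the Gaussian exponential-moment bound. The only cosmetic difference is that you dispose of the absolute value via $e^{p|x|}\leq e^{px}+e^{-px}$ where the paper uses the exact folded-Gaussian formula $2e^{p^{2}\sigma^{2}/2}\Phi(p\sigma)$; both yield the same deterministic bound.
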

\begin{proof}
We have that 
\begin{align*}
\mathbb{\tilde{E}}\left[\exp(p\left|\xi_{t}\right|)\right] & =\mathbb{\tilde{E}}\left[\exp\left(\left|p\sum_{i=1}^{d_{Y}}\int_{0}^{t}h^{i}(X_{s})dY_{s}^{i}-\frac{p}{2}\sum_{i=1}^{d_{Y}}\int_{0}^{t}h^{i}\left(X_{s}\right)^{2}ds\right|\right)\right]\\
 & \leq\exp\left(\frac{p}{2}d_{Y}\left\Vert h\right\Vert _{\infty}^{2}t\right)\mathbb{\tilde{E}}\left[\exp\left(p\sum_{i=1}^{d_{Y}}\left|\int_{0}^{t}h^{i}(X_{s})dY_{s}^{i}\right|\right)\right].
\end{align*}
Recall that if $Z\thicksim\mathcal{N}\left(0,\sigma^{2}\right)$ under
$\tilde{P}$, then 
\[
\mathbb{\tilde{E}}\left[e^{p\left|Z\right|}\right]=2e^{\frac{p^{2}\sigma^{2}}{2}}\Phi\left(p\sigma\right),
\]
where $\Phi$ is the cumulative distribution function of a standard
normal random variable. As $Y$ is a Brownian motion independent of
$X$ under $\tilde{P}$, we have that 
\begin{align*}
 & \mathbb{\tilde{E}}\left[\exp\left(p\sum_{i=1}^{d_{Y}}\left|\int_{0}^{t}h^{i}(X_{s})dY_{s}^{i}\right|\right)\right]\\
 & =\mathbb{\tilde{E}}\left[\mathbb{\tilde{E}}\left[\exp\left(p\sum_{i=1}^{d_{Y}}\left|\int_{0}^{t}h^{i}(X_{s})dY_{s}^{i}\right|\right)|\mathcal{F}_{t}^{0,V}\right]\right]\\
 & =\mathbb{\tilde{E}}\left[2\exp\left(\frac{p^{2}}{2}\sum_{i=1}^{d_{Y}}\int_{0}^{t}h^{i}(X_{s})^{2}ds\right)\Phi\left(p\left(\sum_{i=1}^{d_{Y}}\int_{0}^{t}h^{i}(X_{s})^{2}ds\right)^{1/2}\right)\right]\\
 & \leq2\exp\left(\frac{p^{2}}{2}d_{Y}\left\Vert h\right\Vert _{\infty}^{2}t\right),
\end{align*}
and we can conclude that $\mathbb{\tilde{E}}\left[\exp(p\left|\xi_{t}\right|)\right]<\infty.$
The proof that $\mathbb{\tilde{E}}\left[\left|Z_{t}^{\tau,1}\right|^{p}\right]<\infty$
follows by similar arguments. 
\end{proof}
The following lemma ensures the $L^{p}\left(\text{\ensuremath{\Omega}}\right)$
integrability of the second order discretization of the likelihood
function, provided the discretization is done on a sufficiently fine
partition. We give a bound on the mesh of the partition in terms of
$p$, the uniform bounds on the sensor function $h$ and its derivatives
and the dimensions of the noise driving the signal and the observation
process. The proof is based on the fact that the square of a centered
Gaussian random variable has finite exponential moment of order sufficiently
small. 
\begin{lem}
\label{lem: Xi_Tau2^p_Integrability}Assume that $\mathbf{H}\left(2\right)$
holds. Let $p\geq1$ and $\tau$ be a partition with mesh size 
\[
\delta<\left(p\left\Vert Lh\right\Vert _{\infty}\sqrt{d_{Y}d_{V}}\right)^{-1},
\]
where 
\[
\left\Vert Lh\right\Vert _{\infty}\triangleq\max_{\substack{i=1,...d_{Y}\\
r=1,...,d_{V}
}
}\left\Vert L^{r}h^{i}\right\Vert _{\infty}.
\]
Then, one has that 
\[
\mathbb{\tilde{E}}\left[\left|Z_{t}^{\tau,2}\right|^{p}\right]=\mathbb{\tilde{E}}\left[\exp(p\xi^{\tau,2})\right]<\infty.
\]
\end{lem}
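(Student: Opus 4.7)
The plan is to exploit the conditional Gaussian structure of $\xi_{t}^{\tau,2}$ given $\mathcal{F}_{t}^{0,V}$. Under $\tilde{P}$ the observation $(Y^{1},\dots,Y^{d_{Y}})$ is a $d_{Y}$-dimensional Brownian motion independent of $\mathcal{F}_{t}^{0,V}$, while $Y_{s}^{0}=s$ is deterministic. Since $\mathcal{M}_{1}(S_{0})=\{v,(0),(1),\dots,(d_{V})\}$, one rewrites
\[
\xi_{t}^{\tau,2}=\sum_{i=0}^{d_{Y}}\int_{0}^{t}G_{i}(s)\,dY_{s}^{i},\qquad G_{i}(s)=h_{i}(X_{\tau(s)})+L^{0}h_{i}(X_{\tau(s)})(s-\tau(s))+\sum_{r=1}^{d_{V}}L^{r}h_{i}(X_{\tau(s)})(V_{s}^{r}-V_{\tau(s)}^{r}),
\]
with each $G_{i}$ being $\mathcal{F}_{t}^{0,V}$-measurable. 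Hence $\xi_{t}^{\tau,2}\mid\mathcal{F}_{t}^{0,V}$ is Gaussian with mean $\mu_{\tau}=\int_{0}^{t}G_{0}(s)\,ds$ and variance $\sigma_{\tau}^{2}=\sum_{i=1}^{d_{Y}}\int_{0}^{t}G_{i}(s)^{2}\,ds$, and the Gaussian moment generating function gives $\mathbb{\tilde{E}}[e^{p\xi_{t}^{\tau,2}}]=\mathbb{\tilde{E}}[\exp(p\mu_{\tau}+\tfrac{1}{2}p^{2}\sigma_{\tau}^{2})]$.

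Second, I would reduce the task to a pure Brownian-quadratic-functional estimate. By $\mathbf{H}(2)$ the functions $h_{i}$, $L^{0}h_{i}$ and $L^{r}h_{i}$ are uniformly bounded, so combining $(a+b)^{2}\le(1+\eta)a^{2}+(1+\eta^{-1})b^{2}$ with Cauchy-Schwarz,
\[
\sigma_{\tau}^{2}\le C_{1}+(1+\eta^{-1})\,d_{Y}d_{V}\|Lh\|_{\infty}^{2}\int_{0}^{t}|V_{s}-V_{\tau(s)}|^{2}\,ds,
\]
while the linear-in-$V$ part of $\mu_{\tau}$ is dominated, via Young's inequality $|x|\le\tfrac{x^{2}}{4\beta}+\beta$, by $\beta\int_{0}^{t}|V_{s}-V_{\tau(s)}|^{2}\,ds$ plus a deterministic constant, for arbitrarily small $\beta>0$. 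Putting these together one obtains the pathwise bound $p\mu_{\tau}+\tfrac{1}{2}p^{2}\sigma_{\tau}^{2}\le C_{2}+\lambda\int_{0}^{t}|V_{s}-V_{\tau(s)}|^{2}\,ds$ where $\lambda$ can be made arbitrarily close to $\tfrac{1}{2}p^{2}d_{Y}d_{V}\|Lh\|_{\infty}^{2}$ by choosing $\eta$ large and $\beta$ small.

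The final step uses the independence of Brownian increments on the disjoint intervals $[t_{j-1},t_{j}]$ to factorise
\[
\mathbb{\tilde{E}}\!\left[\exp\!\left(\lambda\int_{0}^{t}|V_{s}-V_{\tau(s)}|^{2}\,ds\right)\right]=\prod_{j=1}^{n}\mathbb{\tilde{E}}\!\left[\exp\!\left(\lambda\int_{t_{j-1}}^{t_{j}}|V_{s}-V_{t_{j-1}}|^{2}\,ds\right)\right]
\]
and, by the classical Cameron-Martin identity $\mathbb{E}\bigl[\exp(\lambda\int_{0}^{T}W_{s}^{2}\,ds)\bigr]=\cos(\sqrt{2\lambda}\,T)^{-1/2}$ applied componentwise to the $d_{V}$ independent coordinates, each factor equals $\cos(\sqrt{2\lambda}\,\delta_{j})^{-d_{V}/2}$ provided $\sqrt{2\lambda}\,\delta_{j}<\pi/2$. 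The mesh condition $\delta<(p\|Lh\|_{\infty}\sqrt{d_{Y}d_{V}})^{-1}$ yields $\sqrt{2\lambda}\,\delta<1<\pi/2$, so every factor is finite and the finite product gives the desired integrability. The main obstacle is the constant tracking in the second step: one must choose the Young parameters $\eta,\beta$ so that the effective $\lambda$ stays strictly below the Cameron-Martin threshold $\pi^{2}/(8\delta^{2})$ under the precise mesh hypothesis of the lemma; a careless split would require a stricter $\delta_{0}$.
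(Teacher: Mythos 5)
Your proposal is correct, and its backbone coincides with the paper's: condition on the $V$-filtration so that the $dY$-integrals become Gaussian, observe that the only term whose conditional variance is not deterministically bounded is the one involving $L^{r}h^{i}(X_{\tau(s)})(V_{s}^{r}-V_{\tau(s)}^{r})$, reduce everything to the exponential moment of $\int_{0}^{t}\vert V_{s}-V_{\tau(s)}\vert^{2}\,ds$, and factorise that expectation over the partition intervals using independence of Brownian increments. The two executions differ in the details, and each difference is legitimate. First, the paper splits $e^{p\xi^{\tau,2}}$ into four factors $K^{\tau,2,1},\dots,K^{\tau,2,4}$ and separates them by H\"older's inequality, which inflates the exponent on the dangerous factor to $p(1+\varepsilon)$; you instead keep $\xi^{\tau,2}$ whole, compute its full conditional mean and variance, and isolate the dominant quadratic term via the weighted inequality $(a+b)^{2}\leq(1+\eta)a^{2}+(1+\eta^{-1})b^{2}$ and Young's inequality. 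In both cases the loss is an arbitrarily small multiplicative factor on the critical coefficient, so both recover the same mesh condition $\delta<\left(p\left\Vert Lh\right\Vert_{\infty}\sqrt{d_{Y}d_{V}}\right)^{-1}$. Second, for the per-interval factor the paper bounds $\int_{t_{j-1}}^{t_{j}}(V_{s}^{1}-V_{t_{j-1}}^{1})^{2}ds\leq\delta_{j}M_{\delta_{j}}^{2}$ and integrates against the reflection-principle density of the running maximum, which gives the threshold $2\beta\delta_{j}^{2}<1$; you invoke the exact Cameron--Martin identity, whose threshold $\sqrt{2\lambda}\,\delta_{j}<\pi/2$ is strictly more generous, so your argument actually has slack to spare under the stated hypothesis. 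One small point worth adding if you write this up in full: the lemma is later used (in Corollary \ref{cor: BoundsFi}) to obtain a bound \emph{uniform} over $\tau\in\Pi(t,\delta_{0})$, and the paper's chain of inequalities delivers the explicit uniform bound $\exp\bigl(\beta\delta t/(1-2\beta\delta^{2})\bigr)$; your finite product $\prod_{j=1}^{n}\cos(\sqrt{2\lambda}\,\delta_{j})^{-d_{V}/2}$ proves finiteness for each fixed $\tau$ but should be supplemented by an estimate such as $\cos(x)^{-1}\leq\exp(Cx^{2})$ for $x$ bounded away from $\pi/2$, which turns the product into $\exp\bigl(C d_{V}\lambda\sum_{j}\delta_{j}^{2}\bigr)\leq\exp\left(Cd_{V}\lambda\delta t\right)$ and restores uniformity.
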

\begin{proof}
We can write $\exp\left(p\xi^{\tau,2}\right)\triangleq{\displaystyle \prod\limits _{i=1}^{4}}\left(K_{t}^{\tau,2,i}\right)^{p},$
where 
\begin{align*}
K_{t}^{\tau,2,1} & \triangleq\exp\left(\sum_{i=1}^{d_{Y}}\sum_{r=1}^{d_{V}}\int_{0}^{t}L^{r}h^{i}(X_{\tau(s)})(V_{s}^{r}-V_{\tau(s)}^{r})dY_{s}^{i}\right),\\
K_{t}^{\tau,2,2} & \triangleq\exp\left(\sum_{i=1}^{d_{Y}}\int_{0}^{t}\{h^{i}(X_{\tau(s)})+L^{0}h^{i}(X_{\tau(s)})(s-\tau(s))\}dY_{s}^{i}\right),\\
K_{t}^{\tau,2,3} & \triangleq\exp\left(-\frac{1}{2}\sum_{i=1}^{d_{Y}}\int_{0}^{t}\{(h^{i})^{2}\left(X_{\tau(s)}\right)+L^{0}((h^{i})^{2})(X_{\tau(s)})(s-\tau(s))\}ds\right),\\
K_{t}^{\tau,2,4} & \triangleq\exp\left(-\frac{1}{2}\sum_{i=1}^{d_{Y}}\sum_{r=1}^{d_{V}}\int_{0}^{t}L^{r}((h^{i})^{2})(X_{\tau(s)})(V_{s}^{r}-V_{\tau(s)}^{r})\}ds\right).
\end{align*}
Let $\varepsilon>0,$ then, by Hölder inequality, we have 
\[
\mathbb{\tilde{E}}\left[\exp\left(p\xi^{\tau,2}\right)\right]\leq\mathbb{\tilde{E}}\left[\left\vert K_{t}^{\tau,2,1}\right\vert ^{p(1+\varepsilon)}\right]^{\frac{1}{1+\varepsilon}}\mathbb{\tilde{E}}\left[{\displaystyle \prod\limits _{i=2}^{4}}\left\vert K_{t}^{\tau,2,i}\right\vert ^{p\frac{(1+\varepsilon)}{\varepsilon}}\right]^{\frac{\varepsilon}{1+\varepsilon}}.
\]
Hence, the result follows by showing that $K_{t}^{\tau,2,1}$ has
finite $p(1+\varepsilon)$-moment and 
\begin{equation}
\mathbb{\tilde{E}}\left[{\displaystyle \prod\limits _{i=2}^{4}}\left\vert K_{t}^{\tau,2,i}\right\vert ^{p\frac{(1+\varepsilon)}{\varepsilon}}\right]<\infty.\label{eq: ProductK}
\end{equation}
Applying Hölder inequality twice, condition $\left(\ref{eq: ProductK}\right)$
follows by showing that $K_{t}^{\tau,2,i},i=2,...,4$ have finite
moments of all orders. In what follows, let $q\geq1$ be a fixed real
constant. We start by the easiest term, $K_{t}^{\tau,2,3}.$ We have
that 
\[
\mathbb{\tilde{E}}\left[\left\vert K_{t}^{\tau,2,3}\right\vert ^{q}\right]\leq\exp\left(\frac{qd_{Y}}{2}t(\left\Vert h\right\Vert _{\infty}^{2}+\delta\left\Vert L^{0}h^{2}\right\Vert _{\infty}\right)<\infty,
\]
because $\left\Vert h\right\Vert _{\infty}^{2}$ and $\left\Vert L^{0}h^{2}\right\Vert _{\infty}=\max_{i=1,...,d_{Y}}\left\Vert L^{0}(h_{i}^{2})\right\Vert _{\infty}$
are finite due to the assumptions on $f,\sigma$ and $h.$ For the
term $K_{t}^{\tau,2,4},$ we can write 
\begin{align*}
\mathbb{\tilde{E}}\left[\left\vert K_{t}^{\tau,2,4}\right\vert ^{q}\right] & \leq\mathbb{\tilde{E}}\left[\exp\left(\frac{qd_{Y}d_{V}}{2}\left\Vert L((h)^{2})\right\Vert _{\infty}\int_{0}^{t}\left\vert V_{s}^{1}-V_{\tau(s)}^{1}\right\vert ds\right)\right]\\
 & =\mathbb{\tilde{E}}\left[\exp\left(\frac{qd_{Y}d_{V}}{2}\left\Vert L((h)^{2})\right\Vert _{\infty}\left(\int_{0}^{t}(s-\tau(s))ds\right)\left\vert V_{1}^{1}\right\vert \right)\right]\\
 & \leq\mathbb{\tilde{E}}\left[\exp\left(\frac{qd_{Y}d_{V}}{2}\left\Vert L((h)^{2})\right\Vert _{\infty}t\sqrt{\delta}\left\vert V_{1}^{1}\right\vert \right)\right]<\infty,
\end{align*}
because $\left\Vert L((h)^{2})\right\Vert _{\infty}=\max_{\substack{i=1,...d_{Y}\\
r=1,...,d_{V}
}
}\left\Vert L^{r}(h_{i}^{2})\right\Vert _{\infty}$ is finite, the law of $V_{s}^{1}-V_{\tau(s)}^{1}$ coincides with
the law of $(s-\tau(s))^{1/2}V_{1}^{1}$ by the scaling properties
of the Brownian motion and $\left\vert V_{1}^{1}\right\vert $ has
exponential moments of any order.

For the term $K_{t}^{\tau,2,2},\ $we first condition with respect
to $\mathcal{F}_{t}^{V}=\sigma(V_{s},0\leq s\leq t)$ and use the
fact that, conditionally to $\mathcal{F}_{t}^{V}$, the stochastic
integrals with respect to $Y$ are Gaussian. We get 
\begin{align*}
 & \mathbb{\tilde{E}}\left[\left\vert K_{t}^{\tau,2,2}\right\vert ^{q}\right]=\mathbb{\tilde{E}}\left[\mathbb{\tilde{E}}\left[\exp\left(q\sum_{i=1}^{d_{Y}}\int_{0}^{t}\{h^{i}(X_{\tau(s)})+L^{0}h^{i}(X_{\tau(s)})(s-\tau(s))\}dY_{s}^{i}\right)|\mathcal{F}_{t}^{V}\right]\right]\\
 & =\mathbb{\tilde{E}}\left[\exp\left(\frac{q^{2}}{2}\sum_{i=1}^{d_{Y}}\int_{0}^{t}\left\{ h^{i}(X_{\tau(s)})+L^{0}h^{i}(X_{\tau(s)})(s-\tau(s))\right\} ^{2}ds\right)\right]\\
 & =\exp(q^{2}d_{Y}t\{\left\Vert h\right\Vert _{\infty}^{2}+\left\Vert L^{0}h\right\Vert _{\infty}^{2}\})<\infty.
\end{align*}

Finally, the term $K_{t}^{\tau,2,1}$ is more delicate because, in
order to show that has finite $(p+\varepsilon)$-moment, a relationship
between the mesh of the partition $\delta$ and $p+\varepsilon$ is
needed. Proceeding as with the term $K_{t}^{\tau,2,2},$ we obtain
\begin{align*}
\mathbb{\tilde{E}}\left[\left\vert K_{t}^{\tau,2,1}\right\vert ^{p(1+\varepsilon)}\right] & =\mathbb{\tilde{E}}\left[\exp\left(p(1+\varepsilon)\sum_{i=1}^{d_{Y}}\sum_{r=1}^{d_{V}}\int_{0}^{t}L^{r}h^{i}(X_{\tau(s)})(V_{s}^{r}-V_{\tau(s)}^{r})dY_{s}^{i}\right)\right]\\
 & =\mathbb{\tilde{E}}\left[\prod_{i=1}^{d_{Y}}\mathbb{\tilde{E}}\left[\exp\left(\int_{0}^{t}p(1+\varepsilon)\sum_{r=1}^{d_{V}}L^{r}h^{i}(X_{\tau(s)})(V_{s}^{r}-V_{\tau(s)}^{r})dY_{s}^{i}\right)|\mathcal{F}_{t}^{V}\right]\right].
\end{align*}
Now, conditionally to $\mathcal{F}_{t}^{V},$ the terms in the exponential
are centered Gaussian random variables and we get that 
\begin{align*}
\mathbb{\tilde{E}}\left[\left\vert K_{t}^{\tau,2,1}\right\vert ^{p(1+\varepsilon)}\right] & =\mathbb{\tilde{E}}\left[\prod_{i=1}^{d_{Y}}\exp\left(\frac{p^{2}(1+\varepsilon)^{2}}{2}\int_{0}^{t}\left(\sum_{r=1}^{d_{V}}L^{r}h^{i}(X_{\tau(s)})(V_{s}^{r}-V_{\tau(s)}^{r})\right)^{2}ds\right)\right]\\
 & \leq\mathbb{\tilde{E}}\left[\prod_{i=1}^{d_{Y}}\exp\left(\frac{p^{2}(1+\varepsilon)^{2}d_{V}}{2}\int_{0}^{t}\left(\sum_{r=1}^{d_{V}}|L^{r}h^{i}(X_{\tau(s)})|^{2}(V_{s}^{r}-V_{\tau(s)}^{r})^{2}\right)ds\right)\right]\\
 & =\mathbb{\tilde{E}}\left[\exp\left(\frac{p^{2}(1+\varepsilon)^{2}d_{Y}d_{V}\left\Vert Lh\right\Vert _{\infty}^{2}}{2}\sum_{r=1}^{d_{V}}\int_{0}^{t}(V_{s}^{r}-V_{\tau(s)}^{r})^{2}ds\right)\right]\\
 & =\mathbb{\tilde{E}}\left[\exp\left(\frac{p^{2}(1+\varepsilon)^{2}d_{Y}d_{V}\left\Vert Lh\right\Vert _{\infty}^{2}}{2}\int_{0}^{t}(V_{s}^{1}-V_{\tau(s)}^{1})^{2}ds\right)\right]^{d_{V}}.
\end{align*}
So we need to find conditions on $\beta>0,$ such that $\mathbb{\tilde{E}}\left[\exp\left(\beta\int_{0}^{t}(V_{s}^{1}-V_{\tau(s)}^{1})^{2}ds\right)\right]<\infty.$
We can write 
\begin{align*}
\mathbb{\tilde{E}}\left[\exp\left(\beta\int_{0}^{t}(V_{s}^{1}-V_{\tau(s)}^{1})^{2}ds\right)\right] & =\mathbb{\tilde{E}}\left[\exp\left(\beta\sum_{j=1}^{n}\int_{t_{j-1}}^{t_{j}}(V_{s}^{1}-V_{t_{j-1}}^{1})^{2}ds\right)\right]\\
 & =\prod_{j=1}^{n}\mathbb{\tilde{E}}\left[\exp\left(\beta\int_{t_{j-1}}^{t_{j}}(V_{s}^{1}-V_{t_{j-1}}^{1})^{2}ds\right)\right]\\
 & \triangleq\prod_{j=1}^{n}\Theta\left(\beta,\delta_{j}\right).
\end{align*}
Denote by $M_{t}\triangleq\sup_{0\leq s\leq t}V_{s}^{1}$ and recall
that the density of $M_{t}$ is given by 
\begin{align*}
f_{M_{t}}\left(x\right) & =\frac{2}{\sqrt{2\pi t}}e^{-\frac{x^{2}}{2t}}\boldsymbol{1}_{(0,\infty)},
\end{align*}
see Karatzas and Shreve \cite{KS91}, page 96. Moreover, note that
for any $A>0,$ 
\[
\frac{2}{\sqrt{2\pi\sigma^{2}}}\int_{0}^{\infty}\exp\left\{ -A\frac{x^{2}}{2\sigma^{2}}\right\} dx=A^{-1/2}.
\]
Then, we have that 
\begin{align*}
\Theta\left(\beta,\delta_{j}\right) & \leq\mathbb{\tilde{E}}[\exp(\beta\delta_{j}M_{\delta}^{2}]=\int_{0}^{\infty}\frac{2}{\sqrt{2\pi\delta_{j}}}\exp\left\{ \beta\delta_{j}x^{2}-\frac{x^{2}}{2\delta_{j}}\right\} \\
 & =\int_{0}^{\infty}\frac{2}{\sqrt{2\pi\delta_{j}}}\exp\left\{ -\left(1-2\beta\delta_{j}^{2}\right)\frac{x^{2}}{2\delta_{j}}\right\} =\left(1-2\beta\delta_{j}^{2}\right)^{-1/2}<\infty,
\end{align*}
as long as $1-2\beta\delta_{j}^{2}>0.$ On the other hand, 
\begin{align*}
\left(1-2\beta\delta_{j}^{2}\right)^{-1} & =\sum_{k=0}^{\infty}\left(2\beta\delta_{j}^{2}\right)^{k}=1+2\beta\delta_{j}^{2}\left(\sum_{k=0}^{\infty}\left(2\beta\delta_{j}^{2}\right)^{k}\right)\\
 & \leq1+2\beta\delta_{j}^{2}\left(\sum_{k=0}^{\infty}\left(2\beta\delta^{2}\right)^{k}\right)=1+\frac{2\beta\delta_{j}^{2}}{1-2\beta\delta^{2}}\\
 & \leq\exp\left(\frac{2\beta\delta_{j}^{2}}{1-2\beta\delta^{2}}\right),
\end{align*}
and, therefore, 
\begin{align*}
\prod_{j=1}^{n}\Theta\left(\beta,\delta_{j}\right) & \leq\prod_{j=1}^{n}\exp\left(\frac{\beta\delta_{j}^{2}}{1-2\beta\delta^{2}}\right)\leq\exp\left(\frac{\beta\sum_{j=1}^{n}\delta_{j}^{2}}{1-2\beta\delta^{2}}\right)\\
 & \leq\exp\left(\frac{\beta\delta t}{1-2\beta\delta^{2}}\right)<\infty.
\end{align*}

As $\beta=\frac{p^{2}(1+\varepsilon)^{2}d_{Y}d_{V}\left\Vert Lh\right\Vert _{\infty}^{2}}{2}$
and $\varepsilon>0$ can be made arbitrary small we get the following
condition for the partition mesh $\delta<\left(p\left\Vert Lh\right\Vert _{\infty}\sqrt{d_{Y}d_{V}}\right)^{-1}.$ 
\end{proof}
We complete the section with an application of the previous two lemmas.
Note that, in order to control the high order discretizations of the
likelihood function, we reduce the problem to the control of the second
order discretization via the truncation procedure as described in
Remark \ref{rem:TruncProc}. 
\begin{cor}
\label{cor: BoundsFi}Let $\varphi\in\mathcal{B}_{P}$. One has that: 
\end{cor}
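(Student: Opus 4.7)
The plan is to reduce every bound of the form $\tilde{\mathbb{E}}[|\varphi(X_{t})Z_{t}^{\tau,m}|^{p}]$ (and the analogous quantity with $Z_{t}$) to a combination of three ingredients: polynomial moments of $X_{t}$, integrability of the true likelihood $Z_{t}$ from Lemma \ref{lem: Z_t^p_Integrability}, and integrability of the second-order discretized likelihood $Z_{t}^{\tau,2}$ from Lemma \ref{lem: Xi_Tau2^p_Integrability}. The glue that lets us absorb the high-order cases $m\geq 3$ into the second-order case is Remark \ref{rem:TruncProc}(1), namely that $|\Gamma_{q,\delta}(z)|\leq\delta$ pointwise.

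First, since $\varphi\in\mathcal{B}_{P}$ has polynomial growth and, by Remark \ref{rem: M}, $X_{t}$ has moments of all orders under $\tilde{P}$, $\varphi(X_{t})\in L^{q}(\tilde{P})$ for every $q\geq 1$. Therefore, by H\"older's inequality, each desired bound is controlled by a product of $\tilde{\mathbb{E}}[|\varphi(X_{t})|^{q_{1}}]^{1/q_{1}}$ and $\tilde{\mathbb{E}}[|Z_{t}^{\tau,m}|^{q_{2}}]^{1/q_{2}}$ with conjugate exponents, so the task reduces to uniform bounds on $L^{q}$-norms of the likelihood (or its discretization). For the true likelihood, Lemma \ref{lem: Z_t^p_Integrability} gives finiteness of $\tilde{\mathbb{E}}[|Z_{t}|^{q}]$ for every $q\geq 1$. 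For $m=1$ the same lemma covers $Z_{t}^{\tau,1}$, and for $m=2$ the bound follows directly from Lemma \ref{lem: Xi_Tau2^p_Integrability}, provided the partition mesh $\delta$ is smaller than the explicit threshold depending on $q$ and $\|Lh\|_{\infty}$; choosing $\delta_{0}$ as in Remark \ref{rem: High Order} so that the threshold holds simultaneously for all partitions in $\Pi(t,\delta_{0})$ secures uniformity in $\tau$.

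For $m\geq 3$ I would use the decomposition of Remark \ref{rem:TruncProc}(3), namely $\bar{\xi}_{t}^{\tau,m}=\xi_{t}^{\tau,2}+\sum_{j=0}^{n-1}\Gamma_{m-\frac{1}{2},\delta_{j}}(\mu^{\tau,m}(j))$. By Remark \ref{rem:TruncProc}(1) each summand is bounded in absolute value by $\delta_{j}$, hence the total truncated remainder is bounded by $\sum_{j=0}^{n-1}\delta_{j}=t$, deterministically. Consequently
\[
e^{-t}Z_{t}^{\tau,2}\leq Z_{t}^{\tau,m}\leq e^{t}Z_{t}^{\tau,2},
\]
so that $\tilde{\mathbb{E}}[|Z_{t}^{\tau,m}|^{q}]\leq e^{qt}\tilde{\mathbb{E}}[|Z_{t}^{\tau,2}|^{q}]$ for every $q\geq 1$. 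Combined with Lemma \ref{lem: Xi_Tau2^p_Integrability} on $\Pi(t,\delta_{0})$ this yields the uniform bound for all $m\geq 3$ as well. Plugging this back into the H\"older step completes the estimate for $\tilde{\mathbb{E}}[|\varphi(X_{t})Z_{t}^{\tau,m}|^{p}]$.

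The only genuine obstacle is ensuring that the mesh threshold from Lemma \ref{lem: Xi_Tau2^p_Integrability} can be made to work simultaneously for all partitions and for whatever H\"older exponent $q_{2}$ is required by the statement; this is handled, as above, by fixing $\delta_{0}$ small enough once $p$ (and $\varphi$) are given, and then restricting attention to $\tau\in\Pi(t,\delta_{0})$. Everything else is a routine application of H\"older, the pointwise truncation bound, and the previously established integrability lemmas.
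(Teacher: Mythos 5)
Your proposal is correct and follows essentially the same route as the paper: H\"older's inequality to separate $\varphi(X_{t})$ (controlled via Remark \ref{rem: M}) from the likelihood factor, Lemmas \ref{lem: Z_t^p_Integrability} and \ref{lem: Xi_Tau2^p_Integrability} for the cases $m\leq 2$, and the pointwise truncation bound $|\Gamma_{q,\delta}(z)|\leq\delta$ giving $\bar{\xi}_{t}^{\tau,m}\leq\xi_{t}^{\tau,2}+t$ to reduce $m\geq 3$ to the second-order case. Your explicit two-sided bound $e^{-t}Z_{t}^{\tau,2}\leq Z_{t}^{\tau,m}\leq e^{t}Z_{t}^{\tau,2}$ and the remark on choosing $\delta_{0}$ compatible with the H\"older exponent are just slightly more detailed versions of what the paper does.
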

\begin{enumerate}
\item \label{enu: condL}If $\mathbf{H}\left(1\right)$ holds, then there
exists $\varepsilon>0$ such that 
\begin{equation}
\mathbb{\tilde{E}}\left[\left\vert \varphi(X_{t})e^{\xi_{t}}\right\vert ^{2+\varepsilon}\right]<\infty,\label{eq: CondL}
\end{equation}
and 
\begin{equation}
\sup_{\tau\in\Pi(t)}\mathbb{\tilde{E}}\left[\left\vert \varphi(X_{t})e^{\xi_{t}^{\tau,1}}\right\vert ^{2+\varepsilon}\right]<\infty.\label{eq: CondL1Pi}
\end{equation}
\item \label{enu: condL2}If $\mathbf{H}\left(2\right)$ holds, then there
exists $\varepsilon>0$ and $\delta_{0}=\delta_{0}\left(h,f,\sigma,\right)>0$
such that 
\begin{equation}
\sup_{\tau\in\Pi(t,\delta_{0})}\mathbb{\tilde{E}}\left[\left\vert \varphi(X_{t})e^{\xi_{t}^{\tau,2}}\right\vert ^{2+\varepsilon}\right]<\infty.\label{eq: CondL2Pi}
\end{equation}
\item \label{enu: condLm}If $\mathbf{H}\left(m\right)$ with $m\geq3$
holds, then there exists $\varepsilon>0$ and $\delta_{0}>0$ such
that 
\begin{equation}
\sup_{\tau\in\Pi(t,\delta_{0})}\mathbb{\tilde{E}}\left[\left\vert \varphi(X_{t})e^{\bar{\xi}_{t}^{\tau,m}}\right\vert ^{2+\varepsilon}\right]<\infty.\label{eq: CondLmPi}
\end{equation}
\end{enumerate}
\begin{proof}
Combining Lemmas \ref{lem: Z_t^p_Integrability} and \ref{lem: Xi_Tau2^p_Integrability}
with Hölder inequality and Remark \ref{rem: M} we obtain $\left(\ref{eq: CondL}\right)$,
$\left(\ref{eq: CondL1Pi}\right)$ and $\left(\ref{eq: CondL2Pi}\right)$.
Moreover, for $m\geq3$, note that 
\begin{align*}
\bar{\xi}_{t}^{\tau,m} & =\xi_{t}^{\tau,2}+\sum_{j=0}^{n-1}\Gamma_{m-\frac{1}{2},\delta_{j}}\left(\mu^{\tau,m}\left(j\right)\right)\\
 & \leq\xi_{t}^{\tau,2}+\sum_{j=0}^{n-1}\left|\Gamma_{m-\frac{1}{2},\delta_{j}}\left(\mu^{\tau,m}\left(j\right)\right)\right|\\
 & \leq\xi_{t}^{\tau,2}+\sum_{j=0}^{n-1}\delta_{j}=\xi_{t}^{\tau,2}+t,
\end{align*}
and (\ref{eq: CondLmPi}) follows from (\ref{eq: CondL2Pi}). 
\end{proof}

\subsection{Proof of the Theorem \protect\ref{thm: Main Filtering_2}\label{sec: Proof Main Theorem}}

In this section we prove the main theorem of the paper. We start by
stating and proving two main propositions. 
\begin{prop}
\label{prop: Main1}Let $m\in\mathbb{N}$ and assume that condition
\textbf{H}$(m)$ holds and $\varphi\in C_{P}^{m+1}$. Then, there
exists a constant $C$ independent of the partition $\pi\in\Pi$ such
that 
\[
\mathbb{\tilde{E}}\left[\left\vert \mathbb{\tilde{E}}\left[(\xi_{t}-\xi_{t}^{\tau,m})\varphi(X_{t})e^{\xi_{t}}|\mathcal{Y}_{t}\right]\right\vert ^{2}\right]\leq C\delta^{2m}.
\]
\end{prop}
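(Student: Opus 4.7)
Starting from Lemma \ref{lem: Difference Moment Estimate}, one writes
$$\xi_t - \xi_t^{\tau,m} = \sum_{i=0}^{d_Y} \int_0^t B_s^i\, dY_s^i,\qquad B_s^i = \sum_{\alpha\in\mathcal{R}(\mathcal{M}_{m-1}(S_0))} I_\alpha(L^\alpha h_i(X_\cdot))_{\tau(s),s}.$$
A direct Cauchy--Schwarz bound on $\mathbb{\tilde{E}}[|\mathbb{\tilde{E}}[(\xi_t-\xi_t^{\tau,m})F|\mathcal{Y}_t]|^{2}]$ with $F := \varphi(X_t)e^{\xi_t}$, combining $\|\xi_t-\xi_t^{\tau,m}\|_4 \leq C\delta^{m/2}$ from Lemma \ref{lem: Difference Moment Estimate} and the $L^p$-integrability of $F$ from Corollary \ref{cor: BoundsFi}, yields only a bound of order $\delta^{m}$, off the target by a factor $\delta^m$. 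The missing order must come from ``hidden cancellations'' inside the $\mathcal{Y}_t$-conditional expectation: intuitively, conditionally on $\mathcal{Y}_t$ the $V$-iterated integral $I_\alpha(\cdot)_{\tau(s),s}$ should effectively behave as if its Brownian differentials $dV^r$ were replaced by Lebesgue differentials $du$, contributing a full factor $\delta^{|\alpha|}$ instead of the unconditional $\delta^{|\alpha|/2}$.

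I would realise this gain through a two-step integration-by-parts. First, since $V\perp Y$ under $\tilde{P}$ and the $dY_s^i$-increments are $\mathcal{Y}_t$-measurable, a Riemann-sum approximation along $\tau$ allows one to push the $\mathcal{Y}_t$-conditional expectation inside the outer $dY^i$-integral, obtaining
$$\mathbb{\tilde{E}}[(\xi_t-\xi_t^{\tau,m}) F\mid\mathcal{Y}_t] = \sum_{i,\alpha}\int_0^t \mathbb{\tilde{E}}[I_\alpha(L^\alpha h_i(X_\cdot))_{\tau(s),s} F\mid\mathcal{Y}_t]\, dY_s^i.$$
Second, for each inner conditional expectation one applies Malliavin duality on $V$-space (with $Y$ frozen by the conditioning): every Brownian differential $dV^r$ appearing in $I_\alpha(\cdot)_{\tau(s),s}$ is transferred onto $F$ as a Malliavin derivative $D^{V,r}$, converting the iterated $V$-integral over the simplex $\{\tau(s)\leq u_1\leq\cdots\leq u_{|\alpha|}\leq s\}$ into a pure Lebesgue integral of linear size $\delta$ in each coordinate. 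Since $|\alpha|=m$, each such term is of size $O(\delta^m)$ in $L^2(\mathcal{Y}_t)$, provided the $V$-Malliavin derivatives of $F$ up to order $m$ have finite $L^p$-norms. Combining with the outer $dY_s^i$-integration (via Malliavin duality on $Y$ or a conditional It\^o-type isometry) and then squaring yields the claimed $O(\delta^{2m})$ bound.

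The main obstacle is the rigorous implementation of the second step. It requires the Malliavin-calculus machinery of subsection \ref{subsec:Malliavin-calculus} and the Stroock--Taylor expansion of subsection \ref{subsec:MartingaleRep} to convert all iterated $V$-integrals into Lebesgue integrals of Malliavin derivatives, together with the conditional-expectation identities of Lemma \ref{lem: CondExpect} to handle the intermediate mixed products. Uniform-in-$\tau$ $L^p$-boundedness of the Malliavin derivatives of $F=\varphi(X_t)e^{\xi_t}$ is subtle because $\xi_t$ itself depends on $V$ through $h(X)$; this is where the backward-martingale tools of subsection \ref{subsec:BackwardMart} are needed to represent the conditional expectation in a form amenable to differentiation. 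The regularity assumptions in \textbf{H}$(m)$ on $f$, $\sigma$ and $h$ are calibrated precisely to underpin these estimates.
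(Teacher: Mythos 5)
Your diagnosis of where the extra factor $\delta^{m}$ must come from is correct, and your intuition that Malliavin duality converts each Brownian differential $dV^{r}$ in $I_{\alpha}$ into a Lebesgue differential (gaining half an order per differential) is exactly the mechanism the paper exploits through the Stroock--Taylor formula and Lemma \ref{lem: CondExpect}. However, the two pivotal steps of your plan are not valid as stated. First, the identity
\[
\mathbb{\tilde{E}}\Bigl[(\xi_{t}-\xi_{t}^{\tau,m})F\,\big|\,\mathcal{Y}_{t}\Bigr]=\sum_{i,\alpha}\int_{0}^{t}\mathbb{\tilde{E}}\bigl[I_{\alpha}(L^{\alpha}h_{i}(X_{\cdot}))_{\tau(s),s}F\,\big|\,\mathcal{Y}_{t}\bigr]\,dY_{s}^{i}
\]
cannot be obtained by a Riemann-sum argument: the integrand on the right is $\mathcal{Y}_{t}$-measurable, so the integral is anticipative; it is not defined as an It\^{o} integral, and under a Skorokhod or backward interpretation the interchange acquires correction terms that you have not accounted for. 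This is precisely the obstacle the paper singles out (the two iterated integrals are semimartingales with respect to different filtrations) and circumvents differently: in Lemma \ref{lem: Convenient} the martingale representation of $F$ in the $V$-variables is inserted, the cross terms over distinct subintervals are killed by conditioning, and the $dY^{i}$-integral over each subinterval is rewritten by integration by parts as $\int_{t_{j}}^{t_{j+1}}(Y_{t_{j+1}}^{i}-Y_{s}^{i})I_{\alpha-}(L^{\alpha}h_{i}(X_{\cdot}))_{\tau(s),s}dV_{s}^{\alpha_{|\alpha|}}$, which \emph{is} an $\mathbb{H}^{t}$-semimartingale and can legitimately be paired with the Stroock--Taylor kernels via Lemma \ref{lem: CondExpect}.

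Second, your closing step (``via Malliavin duality on $Y$ or a conditional It\^{o}-type isometry'') does not exist in the form you need: there is no isometry for the anticipative $dY$-integral of $\mathcal{Y}_{t}$-measurable integrands, and squaring the sum over the $n\sim\delta^{-1}$ subintervals requires controlling $n^{2}$ cross terms. The paper does this by freezing the integrands at partition points, which needs the H\"{o}lder continuity of the Clark--Ocone kernels (Lemma \ref{lem: RegularityKernels} and Remark \ref{rem: HighRegularityKernel}, a step absent from your plan), and then invoking the backward-martingale covariance estimates of Lemma \ref{lem: Main Backward}, which deliver $\delta^{2m+2}$ for the $j\neq k$ pairs and $\delta^{2m+1}$ for the $j=k$ pairs, summing to $\delta^{2m}$. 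Note also that the backward-martingale machinery is needed for exactly this final covariance computation, not (as you suggest) for the uniform $L^{p}$-bounds on the Malliavin derivatives of $\varphi(X_{t})e^{\xi_{t}}$, which follow from Lemmas \ref{lem: MomentMalliavinDerivative} and \ref{lem: BoundedIteratedMD1}.
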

\begin{proof}
By Lemma \ref{lem: Difference Moment Estimate} we can write 
\[
(\xi_{t}-\xi_{t}^{\tau,m})\varphi(X_{t})e^{\xi_{t}}=\varphi(X_{t})e^{\xi_{t}}\left(\sum_{i=0}^{d_{Y}}\int_{0}^{t}\left\{ \sum_{\alpha\in\mathcal{R}(\mathcal{M}_{m-1}(S_{0}))}I_{\alpha}(L^{\alpha}h_{i}(X_{\cdot}))_{\tau(s),s}\right\} dY_{s}^{i}\right).
\]
For $i=0$, the result follows from Lemmas \ref{lem: Alpha0=00003D00003Dm}
and \ref{lem: ds}. Recall that 
\[
\mathcal{R}\left(\mathcal{M}_{m-1}(S_{0})\right)_{k}=\{\alpha\in\mathcal{R}\left(\mathcal{M}_{m-1}(S_{0})\right):\left\vert \alpha\right\vert _{0}=k\},k=0,..,m,
\]
that is, $\mathcal{R}\left(\mathcal{M}_{m-1}(S_{0})\right)_{k}$ is
the set of multi-indices in $\mathcal{R}\left(\mathcal{M}_{m-1}(S_{0})\right)$
that contain $k$ zeros. This collection of sets are obviously a disjoint
partition of $\mathcal{R}\left(\mathcal{M}_{m-1}(S_{0})\right),$
that is, $\mathcal{R}\left(\mathcal{M}_{m-1}(S_{0})\right)={\displaystyle \biguplus\limits _{k=0}^{m}}\mathcal{R}\left(\mathcal{M}_{m-1}(S_{0})\right)_{k}.$
For $i\neq0$, we will divide the proof of the theorem in cases, depending
on $\alpha$ belonging to one of these subsets. The cases for $m\in\left\{ 1,2\right\} $
are:\\
 $\bullet$ $m=1,\alpha\in\mathcal{R}\left(\mathcal{M}_{0}(S_{0})\right)_{0}$:
Lemma \ref{lem: dY_m=00003D00003D1}. \\
 $\bullet$ $m=1,\alpha\in\mathcal{R}\left(\mathcal{M}_{0}(S_{0})\right)_{1}$:
Lemma \ref{lem: Alpha0=00003D00003Dm}. \\
 $\bullet$ $m=2,\alpha\in\mathcal{R}\left(\mathcal{M}_{1}(S_{0})\right)_{0}$:
Lemma \ref{lem: dY_m=00003D00003D2_alpha=00003D00003D1}. \\
 $\bullet$ $m=2,\alpha\in\mathcal{R}\left(\mathcal{M}_{1}(S_{0})\right)_{1}$:
Lemma \ref{lem: dY_m=00003D00003D2_alpha=00003D00003D2}. \\
 $\bullet$ $m=2,\alpha\in\mathcal{R}\left(\mathcal{M}_{1}(S_{0})\right)_{2}$:
Lemma \ref{lem: Alpha0=00003D00003Dm}.

For arbitrary $m$$>2$, the proof follows the same ideas as for $m\in\{1,2\}.$
In the case that $\alpha\in\mathcal{R}\left(\mathcal{M}_{m-1}(S_{0})\right)_{m}$
the result follows from applying Lemma \ref{lem: Alpha0=00003D00003Dm}.
For $\mathcal{R}\left(\mathcal{M}_{m-1}(S_{0})\right)_{k}$ with $k\in\{0,m-1\}$,
first one needs to use the truncated Stroock-Taylor formula of order
$k$ to express $\varphi(X_{t})e^{\xi_{t}}$ as a sum of iterated
integrals with respect to the Brownian motion. The goal is to use
the covariance between the iterated integrals in the Stroock-Taylor
expansion of $\varphi(X_{t})e^{\xi_{t}}$ and $I_{\alpha}(L^{\alpha}h_{i}(X_{\cdot}))_{\tau(s),s}$
in order to generate the right order of convergence in $\delta$.
However, this is not straightforward due to the presence of the stochastic
integral with respect to Y. The process Y as an integrator makes impossible
to use directly an integration by parts formula because the two iterated
integrals are semimartingales with respect to different filtrations.
To overcome this difficulty, the idea is to compute this covariance
along a partition. We use an integration by parts formula, in each
subinterval and only to the integral with respect to Y, to obtain
\[
\int_{t_{j}}^{t_{j+1}}I_{\alpha}(L^{\alpha}h_{i}(X_{\cdot}))_{\tau(s),s}dY_{s}^{i}=\int_{t_{j}}^{t_{j+1}}\left(Y_{t_{j+1}}^{i}-Y_{s}^{i}\right)I_{\alpha-}(L^{\alpha}h_{i}(X_{\cdot}))_{\tau(s),s}dV^{\alpha_{|\alpha|}}.
\]
The term on the right hand side in the last expression is an $\mathbb{H}^{t}$-semimartingale
and we can compute its covariation with the terms in the Stroock-Taylor
expansion of $\varphi(X_{t})e^{\xi_{t}}$, see Lemmas \ref{lem: dY_m=00003D00003D1},
\ref{lem: dY_m=00003D00003D2_alpha=00003D00003D1} and \ref{lem: dY_m=00003D00003D2_alpha=00003D00003D2}. 
\end{proof}
\begin{prop}
\label{prop: Main2}Let $m\in\mathbb{N}$ and assume that condition
\textbf{H}$(m)$ holds and $\varphi\in\mathcal{B}_{P}$. Then, there
exist $\delta_{0}>0$ and constant $C$ independent of any partition
$\pi\in\Pi\left(t,\delta_{0}\right)$ such that 
\[
\mathbb{\tilde{E}}\left[\left\vert \left\vert \varphi(X_{t})\right\vert \left(e^{\xi_{t}}+e^{\bar{\xi}_{t}^{\tau,m}}\right)(\xi_{t}-\bar{\xi}_{t}^{\tau,m})^{2}\right\vert ^{2}\right]\leq C\delta^{2m}.
\]
\end{prop}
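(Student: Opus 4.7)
The plan is to bound the $L^{2}(\tilde{P})$-norm of $A\cdot B$, where $A \triangleq |\varphi(X_{t})|(e^{\xi_{t}}+e^{\bar{\xi}_{t}^{\tau,m}})$ and $B \triangleq (\xi_{t}-\bar{\xi}_{t}^{\tau,m})^{2}$, via Hölder's inequality with conjugate exponents $q_{1},q_{2}>1$ satisfying $1/q_{1}+1/q_{2}=1$. This yields $\mathbb{\tilde{E}}[A^{2}B^{2}]\leq \mathbb{\tilde{E}}[A^{2q_{1}}]^{1/q_{1}}\mathbb{\tilde{E}}[B^{2q_{2}}]^{1/q_{2}}$, so it suffices to show that the first factor is bounded uniformly in $\tau\in\Pi(t,\delta_{0})$ while the second factor is $O(\delta^{2m})$.

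For the first factor, I would pick $q_{1}>1$ close enough to $1$ that $2q_{1}\leq 2+\varepsilon$, where $\varepsilon>0$ is the constant supplied by Corollary \ref{cor: BoundsFi}. Since $\varphi\in\mathcal{B}_{P}$, the elementary bound $(a+b)^{2q_{1}}\leq C_{q_{1}}(a^{2q_{1}}+b^{2q_{1}})$ combined with Jensen's inequality (to drop the exponent from $2+\varepsilon$ down to $2q_{1}$) and Corollary \ref{cor: BoundsFi} (items 1 and 2 for $m\leq 2$, items 1 and 3 for $m\geq 3$) together yield $\sup_{\tau\in\Pi(t,\delta_{0})}\mathbb{\tilde{E}}[A^{2q_{1}}]<\infty$.

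For the second factor, I would split $\xi_{t}-\bar{\xi}_{t}^{\tau,m}=(\xi_{t}-\xi_{t}^{\tau,m})+(\xi_{t}^{\tau,m}-\bar{\xi}_{t}^{\tau,m})$, where by (\ref{eq:BigEpsilon}) the correction equals $-\sum_{j=0}^{n-1}\mathcal{E}_{m-1/2,\delta_{j}}(\mu^{\tau,m}(j))$ (and is identically zero when $m\leq 2$). Using $(a+b)^{4q_{2}}\leq 2^{4q_{2}-1}(a^{4q_{2}}+b^{4q_{2}})$, Lemma \ref{lem: Difference Moment Estimate} applied with $2p=4q_{2}$ gives $\mathbb{\tilde{E}}[|\xi_{t}-\xi_{t}^{\tau,m}|^{4q_{2}}]\leq C\delta^{2q_{2}m}$, and Lemma \ref{lem: SumEpsilon} with $q=m-1/2$ and $2p=4q_{2}$ gives a bound of the same order $C\delta^{2q_{2}m}$ (in fact with margin to spare) for the truncation correction. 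Taking the $q_{2}$-th root, $\mathbb{\tilde{E}}[B^{2q_{2}}]^{1/q_{2}}\leq C\delta^{2m}$.

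Putting the two estimates together yields $\mathbb{\tilde{E}}[A^{2}B^{2}]\leq C\delta^{2m}$, which is the stated bound. The delicate point is ensuring that a valid pair $(q_{1},q_{2})$ exists: $q_{1}$ must be small enough that $A$ lies in $L^{2q_{1}}$ uniformly in $\tau$, while $q_{2}=q_{1}/(q_{1}-1)$ must remain finite. This is guaranteed precisely by the strict inequality $\varepsilon>0$ furnished by Corollary \ref{cor: BoundsFi}, which is where the hypothesis $\delta<\delta_{0}$ enters (through the $L^{2+\varepsilon}$ integrability of $\varphi(X_{t})e^{\bar{\xi}_{t}^{\tau,m}}$ proved there via Lemma \ref{lem: Xi_Tau2^p_Integrability}).
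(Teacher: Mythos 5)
Your proposal is correct and follows essentially the same route as the paper: Hölder's inequality to separate the exponential factor (bounded uniformly via Corollary \ref{cor: BoundsFi}) from the squared difference, which is then controlled via the decomposition (\ref{eq:BigEpsilon}) together with Lemma \ref{lem: Difference Moment Estimate} and Lemma \ref{lem: SumEpsilon} with $q=m-\frac{1}{2}$. The only cosmetic difference is that the paper fixes the Hölder exponents as $\frac{2+\varepsilon}{2}$ and $\frac{2+\varepsilon}{\varepsilon}$ outright, which makes your intermediate Jensen step unnecessary.
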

\begin{proof}
As \textbf{H}$(m)$ holds, let $\varepsilon>0$ such as in Corollary
\ref{cor: BoundsFi}. By Hölder inequality we have that 
\begin{align*}
 & \mathbb{\tilde{E}}\left[\left\vert \left\vert \varphi(X_{t})\right\vert \left(e^{\xi_{t}}+e^{\bar{\xi}_{t}^{\tau,m}}\right)(\xi_{t}-\bar{\xi}_{t}^{\tau,m})^{2}\right\vert ^{2}\right]\\
 & \leq\mathbb{\tilde{E}}\left[\left\vert \left\vert \varphi(X_{t})\right\vert \left(e^{\xi_{t}}+e^{\bar{\xi}_{t}^{\tau,m}}\right)\right\vert ^{2+\varepsilon}\right]^{\frac{2}{2+\varepsilon}}\mathbb{\tilde{E}}\left[\left\vert \xi_{t}-\bar{\xi}_{t}^{\tau,m}\right\vert ^{4\frac{2+\varepsilon}{\varepsilon}}\right]^{\frac{\varepsilon}{2+\varepsilon}},
\end{align*}
Corollary \ref{cor: BoundsFi} yields that there exists $\delta_{0}>0$
such that 
\begin{equation}
\sup_{\tau\in\Pi\left(t,\delta_{0}\right)}\mathbb{\tilde{E}}\left[\left\vert \left\vert \varphi(X_{t})\right\vert \left(e^{\xi_{t}}+e^{\bar{\xi}_{t}^{\tau,m}}\right)\right\vert ^{2+\varepsilon}\right]<\infty.\label{eq: BoundAuxiliary}
\end{equation}
On the other hand, by equation $\left(\ref{eq:BigEpsilon}\right)$,
for any $p\geq1$ we get that 
\begin{align*}
\left\vert \xi_{t}-\bar{\xi}_{t}^{\tau,m}\right\vert ^{2p} & \leq C\left\{ \left|\xi_{t}-\xi_{t}^{\tau,m}\right|^{2p}+\left|\sum_{j=0}^{n-1}\mathcal{E}_{m-\frac{1}{2}}\left(\mu^{\tau,m}\left(j\right)\right)\right|^{2p}\right\} .
\end{align*}
By Lemma \ref{lem: Difference Moment Estimate}, we obtain 
\[
\mathbb{\tilde{E}}\left[\left|\xi_{t}-\xi_{t}^{\tau,m}\right|^{2p}\right]\leq C\delta^{pm},
\]
and by Lemma \ref{lem: SumEpsilon} with $q=m-\frac{1}{2}$ we have
that 
\[
\mathbb{\tilde{E}}\left[\left\vert \sum_{j=0}^{n-1}\mathcal{E}_{m-\frac{1}{2},\delta_{j}}\left(\mu^{\tau,m}\left(j\right)\right)\right\vert ^{2p}\right]\leq C\left(t,d_{Y},p,m\right)\delta^{2pm}
\]
Hence, setting $p=2\left(2+\varepsilon\right)/\varepsilon$, we obtain
\[
\mathbb{\tilde{E}}\left[\left\vert \xi_{t}-\bar{\xi}_{t}^{\tau,m}\right\vert ^{4\left(2+\varepsilon\right)/\varepsilon}\right]^{\varepsilon/\left(2+\varepsilon\right)}\leq C\delta^{2m}.
\]
\end{proof}
We are finally ready to put everything together and deduce Theorem
\ref{thm: Main Filtering_2}. 
\begin{proof}[\textbf{Proof of Theorem \ref{thm: Main Filtering_2}}]

To get the desired rate of convergence for the unnormalised conditional
distribution $\rho_{t}^{\tau,m},$ we can write 
\begin{align*}
\rho_{t}\left(\varphi\right)-\rho_{t}^{\tau,m}\left(\varphi\right) & =\mathbb{\tilde{E}}[\varphi(X_{t})(\xi_{t}-\bar{\xi}_{t}^{\tau,m})e^{\xi_{t}}|\mathcal{Y}_{t}]\\
 & +\mathbb{\tilde{E}}\left[\varphi(X_{t})e^{\xi_{t}}-\varphi(X_{t})e^{\bar{\xi}_{t}^{\tau,m}}-\varphi(X_{t})(\xi_{t}-\bar{\xi}_{t}^{\tau,m})e^{\xi_{t}}|\mathcal{Y}_{t}\right].
\end{align*}
Using the inequality 
\[
\left\vert e^{x}-e^{y}-(x-y)e^{x}\right\vert \leq\frac{e^{x}+e^{y}}{2}(x-y)^{2},
\]
we get that 
\begin{align*}
 & \mathbb{\tilde{E}}\left[\left\vert \rho_{t}\left(\varphi\right)-\rho_{t}^{\tau,m}\left(\varphi\right)\right\vert ^{2}\right]\\
 & \leq C\left\{ \mathbb{\tilde{E}}\left[\left\vert \mathbb{\tilde{E}}\left[(\xi_{t}-\bar{\xi}_{t}^{\tau,m})\varphi(X_{t})e^{\xi_{t}}|\mathcal{Y}_{t}\right]\right\vert ^{2}\right]+\mathbb{\tilde{E}}\left[\left\vert \left\vert \varphi(X_{t})\right\vert \frac{e^{\xi_{t}}+e^{\bar{\xi}_{t}^{\tau,m}}}{2}(\xi_{t}-\bar{\xi}_{t}^{\tau,m})^{2}\right\vert ^{2}\right]\right\} .
\end{align*}
Now, Propositions \ref{prop: Main1} and \ref{prop: Main2} yield
\[
\mathbb{\tilde{E}}\left[\left\vert \rho_{t}\left(\varphi\right)-\rho_{t}^{\tau,m}\left(\varphi\right)\right\vert ^{2}\right]\leq C\delta^{2m}.
\]

To prove the rate for the normalised conditional distribution observe
that we can write 
\[
\pi_{t}^{\tau,m}\left(\varphi\right)-\pi_{t}\left(\varphi\right)=\frac{1}{\rho_{t}\left(\boldsymbol{1}\right)}\frac{\rho_{t}^{\tau,m}\left(\varphi\right)}{\rho_{t}^{\tau,m}\left(\boldsymbol{1}\right)}\left(\rho_{t}\left(\boldsymbol{1}\right)-\rho_{t}^{\tau,m}\left(\boldsymbol{1}\right)\right)+\frac{1}{\rho_{t}\left(\boldsymbol{1}\right)}\left(\rho_{t}^{\tau,m}\left(\varphi\right)-\rho_{t}\left(\varphi\right)\right).
\]
Hence, 
\begin{align*}
 & \mathbb{E}\left[\left\vert \pi_{t}\left(\varphi\right)-\pi_{t}^{\tau,m}\left(\varphi\right)\right\vert \right]\\
 & \leq C\mathbb{\tilde{E}}\left[\frac{Z_{t}}{\left\vert \rho_{t}\left(\boldsymbol{1}\right)\right\vert }\left\{ \left\vert \pi_{t}^{\tau,m}\left(\varphi\right)\right\vert \left\vert \rho_{t}\left(\boldsymbol{1}\right)-\rho_{t}^{\tau,m}\left(\boldsymbol{1}\right)\right\vert +\left\vert \rho_{t}^{\tau,m}\left(\varphi\right)-\rho_{t}\left(\varphi\right)\right\vert \right\} \right]\\
 & =C\mathbb{\tilde{E}}\left[\frac{\mathbb{\tilde{E}}[Z_{t}|\mathcal{Y}_{t}]}{\left\vert \rho_{t}\left(\boldsymbol{1}\right)\right\vert }\left\{ \left\vert \pi_{t}^{\tau,m}\left(\varphi\right)\right\vert \left\vert \rho_{t}\left(\boldsymbol{1}\right)-\rho_{t}^{\tau,m}\left(\boldsymbol{1}\right)\right\vert +\left\vert \rho_{t}^{\tau,m}\left(\varphi\right)-\rho_{t}\left(\varphi\right)\right\vert \right\} \right]\\
 & \leq C\left\{ \mathbb{\tilde{E}}\left[\left\vert \pi_{t}^{\tau,m}\left(\varphi\right)\right\vert \left\vert \rho_{t}\left(\boldsymbol{1}\right)-\rho_{t}^{\tau,m}\left(\boldsymbol{1}\right)\right\vert \right]+\mathbb{\tilde{E}}\left[\left\vert \rho_{t}^{\tau,m}\left(\varphi\right)-\rho_{t}\left(\varphi\right)\right\vert \right]\right\} .\\
 & \leq C\left\{ \mathbb{\tilde{E}}\left[\left\vert \pi_{t}^{\tau,m}\left(\varphi\right)\right\vert ^{2}\right]^{1/2}\mathbb{\tilde{E}}\left[\left\vert \rho_{t}\left(\boldsymbol{1}\right)-\rho_{t}^{\tau,m}\left(\boldsymbol{1}\right)\right\vert ^{2}\right]^{1/2}+\mathbb{\tilde{E}}\left[\left\vert \rho_{t}^{\tau,m}\left(\varphi\right)-\rho_{t}\left(\varphi\right)\right\vert ^{2}\right]^{1/2}\right\} ,
\end{align*}
where in the last inequality we have applied Hölder inequality. Combining
the bounds for the unnormalised distribution and the hypothesis on
$\pi_{t}^{\tau,m}\left(\varphi\right)$ we can conclude. 
\end{proof}

\section{\label{sec:Technical-Lemmas}Technical Lemmas}

We collate in this section the technical lemmas required to prove
the main results. We begin with some limited background material on
Malliavin Calculus (and partial Maliavin Calculus) with a view to
deduce the necessary properties of the functionals to be discretised. 

\subsection{\label{subsec:Malliavin-calculus}Malliavin calculus}

Let $B$=$\left\{ B_{t}\right\} _{t\in\left[0,T\right]}$ be a $d$-dimensional
standard Brownian motion defined on a complete probability space $\left(\Omega,\mathcal{F},P\right).$
Let $\mathcal{S}$ denote the class of smooth random variables such
that a random variable $F$$\in\mathcal{S}$ has the form 
\[
F=f\left(B_{t_{1}};...;B_{t_{n}}\right),
\]
where the function $f\left(x^{11},...,x^{d1};...;x^{1n},...,x^{dn}\right)$
belongs to $C_{b}^{\infty}\left(\mathbb{R}^{dn}\right)$ and $t_{1},...,t_{n}\in\left[0,T\right]$.
The Malliavin derivative of a smooth functional $F$ can be defined
as the $d$-dimensional stochastic processes given by 
\[
\left(DF\right)_{t}^{j}=\sum_{i=1}^{n}\frac{\partial f}{\partial x^{ji}}\left(B_{t_{1}};...;B_{t_{n}}\right)\mathbf{1}_{\left[0,t_{i}\right]}\left(t\right),
\]
for $t\in\left[0,T\right]$ and $j=1,...,d$. The derivative $DF$
can be regarded as a random variable taking values in the Hilbert
space $H=L^{2}\left(\left[0,T\right];\mathbb{R}^{d}\right)$. Noting
the isometry $L^{2}\left(\Omega\times\left[0,T\right];\mathbb{R}^{d}\right)\simeq L^{2}\left(\Omega;H\right)$
we can identify $\left(DF\right)_{t}^{j}$ as the value at time $t$
of the $j$th component of and $\mathbb{R}$$^{d}-$valued stochastic
process. We will also the notation $D_{t}^{j}F$ for $\left(DF\right)_{t}^{j}$.
One can see that the operator $D$ is closable from $L^{p}\left(\Omega\right)$
to $L$$^{p}\left(\Omega;H\right)$, $p\geq1$ and we will denote
the domain of $D$ in $L$$^{p}\left(\Omega\right)$ by $\mathbb{D}^{1,p}.$
That is, meaning that $\mathbb{D}^{1,p}$ is the closure of smooth
random variables $\mathcal{S}$ with respect to the norm 
\[
\left\Vert F\right\Vert _{\mathbb{D}^{1,p}}=\left(\mathbb{E}\left[\left|F\right|^{p}\right]+\mathbb{E}\left[\left\Vert DF\right\Vert _{H}^{p}\right]\right)^{1/p}.
\]
We define the $k$-th derivative of $F$, $D$$^{k}F$, as the $H$$^{\otimes k}$-valued
random variable 
\[
\left(D^{k}F\right)_{s_{1},...,s_{k}}^{j_{1},...,j_{k}}=\sum_{i_{1},...,i_{k}=1}^{n}\frac{\partial^{k}f}{\partial x^{j_{1}i_{1}}\cdots\partial x^{j_{k}i_{k}}}\left(B_{t_{1}};...;B_{t_{n}}\right)\mathbf{1}_{\left[0,t_{i_{1}}\right]}\left(s_{1}\right)\cdots\mathbf{1}_{\left[0,t_{i_{k}}\right]}\left(s_{k}\right),
\]
where $s_{1},...,s_{k}$$\in\left[0,T\right]$ and $j_{1},...,j_{k}=1,...,d$.
We will also write $D_{s_{1},...,s_{k}}^{j_{1},...,j_{k}}F$ for $\left(D^{k}F\right)_{s_{1},...,s_{k}}^{j_{1},...,j_{k}}$
and notice that it coincides with the iterated derivative $D_{t_{1}}^{j_{1}}\cdots D_{t_{k}}^{j_{k}}F$.
For any integer $k$$\geq1$ and any real number $p>1$ we introduce
the norm on $\mathcal{S}$ given by 
\[
\left\Vert F\right\Vert _{k,p}=\left(\mathbb{E}\left[\left|F\right|^{p}\right]+\sum_{j=1}^{k}\mathbb{E}\left[\left\Vert D^{j}F\right\Vert _{H^{\otimes j}}^{p}\right]\right)^{1/p},
\]
where 
\[
\left\Vert D^{k}F\right\Vert _{H^{\otimes k}}=\left(\sum_{j_{1}\cdots j_{k}=1}^{d}\int_{\left[0,T\right]^{k}}\left|D_{s_{1},...,s_{k}}^{j_{1},...,j_{k}}F\right|^{2}ds_{1}\cdots ds_{k}\right)^{1/2}.
\]
We will denote by $\mathbb{D}^{k,p}$ the completion of the family
of random variables $\mathcal{S}$ with respect to the norm $\left\Vert \cdot\right\Vert _{k,p}$.
We also define the space $\mathbb{D}^{k,\infty}=\bigcap_{p\geq1}\mathbb{D}^{k,p}$.
We have the following chain rule formula for the Malliavin derivative. 
\begin{prop}
\label{prop: ChainRule}Let $\varphi:\mathbb{R}^{m}\rightarrow\mathbb{R}$
be of class $C_{P}^{1}$$\left(\mathbb{R}^{m}\right)$. Suppose that
$F=\left(F^{1},...,F^{m}\right)$ is a random vector whose components
belong to $\mathbb{D}^{1,\infty}$. Then, $\varphi\left(F\right)$$\in\mathbb{D}^{1,\infty}$
and 
\[
D_{t}^{j}\varphi\left(F\right)=\sum_{i=1}^{m}\frac{\partial\varphi}{\partial x^{i}}\left(F\right)D_{t}^{j}F^{i},
\]
where $t\in\left[0,T\right]$ and $\dot{j}=1,...,d.$ 
\end{prop}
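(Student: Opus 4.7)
The plan is to establish the identity first for a smooth test class and then extend it to the general setting by a double approximation: in the random variable $F$ (via the density of $\mathcal{S}$ in $\mathbb{D}^{1,p}$) and in the function $\varphi$ (via mollification and truncation to reduce to the $C_b^\infty$ case). The role of the assumption $F^i \in \mathbb{D}^{1,\infty}$ is precisely to supply moments of every order for both $F^i$ and $\|DF^i\|_H$, which is what allows the polynomial growth of $\varphi$ and $\partial_i\varphi$ to be absorbed when passing to the limit.

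First I would dispatch the base case $\varphi \in C_b^\infty(\mathbb{R}^m)$ and $F^i \in \mathcal{S}$. In this case every $F^i$ has the form $f^i(B_{t_1},\dots,B_{t_n})$ with $f^i \in C_b^\infty$, so $\varphi(F) = \varphi(f^1,\dots,f^m)(B_{t_1},\dots,B_{t_n})$ is itself a smooth functional to which the definition of $D$ applies directly. The claimed identity then reduces to the ordinary multivariate chain rule applied to the composition $\varphi \circ (f^1,\dots,f^m)$, componentwise in $j$ and integrated against $\mathbf{1}_{[0,t_i]}(t)$.

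Next I would pass from $\mathcal{S}$ to $\mathbb{D}^{1,\infty}$ while keeping $\varphi \in C_b^\infty$. Pick sequences $F^i_n \in \mathcal{S}$ with $F^i_n \to F^i$ in $\mathbb{D}^{1,p}$ for every $p \geq 1$; such sequences exist by the very definition of $\mathbb{D}^{1,p}$. Since $\varphi$ and its first derivatives are bounded and Lipschitz, $\varphi(F_n) \to \varphi(F)$ in $L^p$ and the right-hand side $\sum_i \partial_i\varphi(F_n) D_t^j F^i_n$ converges to $\sum_i \partial_i\varphi(F) D_t^j F^i$ in $L^p(\Omega;H)$; the closability of $D$ from $L^p(\Omega)$ to $L^p(\Omega;H)$ then identifies the limit with $D\varphi(F)$ and yields the formula.

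Finally, I would remove the boundedness assumption on $\varphi$ and its derivatives. Fix a standard mollifier $\zeta_\varepsilon$ and a smooth cutoff $\chi_R$ equal to $1$ on $\{|x|\leq R\}$ and supported in $\{|x|\leq 2R\}$, and set $\varphi_{\varepsilon,R} = (\chi_R \varphi) * \zeta_\varepsilon \in C_b^\infty(\mathbb{R}^m)$. Apply the previous step to each $\varphi_{\varepsilon,R}$, then send $\varepsilon \to 0$ and $R \to \infty$. The main technical point, and what I view as the only real obstacle, is showing that the right-hand side converges in $L^p(\Omega;H)$ to $\sum_i \partial_i\varphi(F) D_t^j F^i$: here one uses that $|\partial_i\varphi_{\varepsilon,R}(x)| \leq C(1+|x|)^q$ uniformly in $\varepsilon, R$ for some $q \geq 0$ and some constant $C$ depending only on $\varphi$, together with $\mathbb{E}[(1+|F|)^{pq}\|DF^i\|_H^p] < \infty$ (which follows from $F^i \in \mathbb{D}^{1,\infty}$ and Hölder's inequality), to justify dominated convergence. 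The convergence $\varphi_{\varepsilon,R}(F) \to \varphi(F)$ in $L^p$ is handled analogously. Closability of $D$ then delivers both $\varphi(F) \in \mathbb{D}^{1,p}$ for all $p$, i.e.\ $\varphi(F) \in \mathbb{D}^{1,\infty}$, and the stated chain rule.
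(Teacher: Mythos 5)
Your proposal is correct and is essentially the paper's own argument: the paper simply defers to the proof of Proposition 1.2.3 in Nualart (mollify $\varphi$, approximate $F$ by smooth functionals, use closability of $D$) and notes that the extension from $C_b^1$ to $C_P^1$ is obtained by requiring $F\in\mathbb{D}^{1,\infty}$ and applying H\"older's inequality, which is exactly the dominated-convergence step you spell out. Your write-up just makes explicit the details the paper leaves to the reader.
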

\begin{proof}
The proof follows the same ideas as the proof of Proposition 1.2.3
in Nualart \cite{Nu06}, where is proved for $\varphi\in C_{b}^{1}\left(\mathbb{R}^{m}\right)$
and $F$$\in\mathbb{D}^{1,p}$. One can extend the result to $\varphi\in C_{P}^{1}\left(\mathbb{R}^{m}\right)$
by requiring $F\in\mathbb{D}^{1,\infty}$ and using Hölder inequality. 
\end{proof}
As a corollary of Proposition \ref{prop: ChainRule} one obtains that
the product rule and the binomial formula holds for the Malliavin
derivative of products of random variables in $\mathbb{D}^{1,\infty}$.
However, Proposition \ref{prop: ChainRule} does not apply to the
exponential function. In order to show that the likelihood functional
$e^{\xi_{t}}$ is smooth in the Malliavin sense we need the following
lemma.
\begin{lem}
\label{lem: MallDerExponential}Let $F$$\in\mathbb{D}^{1,\infty}$
and such that 
\begin{equation}
\mathbb{E}\left[\exp\left(p\left|F\right|\right)\right]<\infty,\label{eq:ExpAbsMoments}
\end{equation}
for all $p\geq1.$ Then $G=e^{F}$$\in\mathbb{D}^{1,\infty}$ and
\begin{equation}
D_{t}^{j}G=GD_{t}^{j}F,\label{eq:DerivExpo}
\end{equation}
where $t\in\left[0,T\right]$ and $\dot{j}=1,...,d.$ 
\end{lem}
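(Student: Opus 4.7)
The plan is to approximate $e^{x}$ by a sequence of $C_{b}^{1}$ (hence $C_{P}^{1}$) functions, apply the chain rule from Proposition \ref{prop: ChainRule} to each approximant, and pass to the limit using the closability of the Malliavin derivative together with the exponential integrability hypothesis $\left(\ref{eq:ExpAbsMoments}\right)$.

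Concretely, I would fix a cutoff $\chi\in C_{c}^{\infty}(\mathbb{R})$ with $\chi\equiv1$ on $[-1,1]$, $0\le\chi\le1$, and $\chi\equiv0$ outside $[-2,2]$, and set $\phi_{n}(x)=e^{x}\chi(x/n)$. Then $\phi_{n}\in C_{b}^{1}\subset C_{P}^{1}$, with the uniform (in $n$) pointwise bounds $|\phi_{n}(x)|\le e^{|x|}$ and $|\phi_{n}'(x)|\le(1+\|\chi'\|_{\infty})e^{|x|}$, while $\phi_{n}(x)\to e^{x}$ and $\phi_{n}'(x)\to e^{x}$ as $n\to\infty$ for every $x\in\mathbb{R}$. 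Proposition \ref{prop: ChainRule} then yields $\phi_{n}(F)\in\mathbb{D}^{1,\infty}$ with $D_{t}^{j}\phi_{n}(F)=\phi_{n}'(F)D_{t}^{j}F$.

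Next I would verify the two convergences needed to invoke closability. First, $\phi_{n}(F)\to e^{F}$ in $L^{p}(\Omega)$ for every $p\ge1$ by dominated convergence, since $|\phi_{n}(F)-e^{F}|^{p}\le 2^{p}e^{p|F|}$ and $\mathbb{E}[e^{p|F|}]<\infty$ by $\left(\ref{eq:ExpAbsMoments}\right)$. Second, for the derivatives I would estimate
\[
\mathbb{E}\bigl[\|(\phi_{n}'(F)-e^{F})DF\|_{H}^{p}\bigr]\le\mathbb{E}\bigl[|\phi_{n}'(F)-e^{F}|^{2p}\bigr]^{1/2}\mathbb{E}\bigl[\|DF\|_{H}^{2p}\bigr]^{1/2}
\]
by H\"older's inequality. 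The second factor is finite because $F\in\mathbb{D}^{1,\infty}$, and the first factor tends to zero by dominated convergence using $|\phi_{n}'(F)-e^{F}|^{2p}\le C\,e^{2p|F|}$ together once more with $\left(\ref{eq:ExpAbsMoments}\right)$.

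Since $D\colon L^{p}(\Omega)\to L^{p}(\Omega;H)$ is closable for every $p>1$, these two convergences force $e^{F}\in\mathbb{D}^{1,p}$ with $D_{t}^{j}e^{F}=e^{F}D_{t}^{j}F$ for all $p\ge 1$, which is precisely $e^{F}\in\mathbb{D}^{1,\infty}$ together with $\left(\ref{eq:DerivExpo}\right)$. The main delicate point is the uniform domination of $\phi_{n}'(F)$ by an integrable envelope: this is why the hypothesis asks for exponential moments of all orders and not merely integrability of $e^{F}$, so that the Cauchy--Schwarz (and more generally H\"older) splitting above can absorb the factor $\|DF\|_{H}$ with its own $L^{q}$ norms for arbitrarily large $q$.
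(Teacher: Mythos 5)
Your proof is correct and follows essentially the same strategy as the paper's: approximate the exponential by functions to which Proposition \ref{prop: ChainRule} applies, verify $L^{p}$-convergence of the approximants and of their derivatives via dominated convergence (using the exponential-moment hypothesis \eqref{eq:ExpAbsMoments}) together with a Cauchy--Schwarz splitting against $\left\Vert DF\right\Vert _{H}$, and conclude by closability of $D$. The only difference is the choice of approximating sequence --- the paper uses the Taylor partial sums $G_{n}=\sum_{k=0}^{n}F^{k}/k!$, for which $DG_{n}=G_{n-1}DF$ falls out directly, whereas you use the smooth truncations $e^{x}\chi(x/n)$; both are dominated by $e^{\left|F\right|}$ and the two arguments are otherwise interchangeable.
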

\begin{proof}
Define $G_{n}=\sum_{k=0}^{n}\frac{F^{k}}{k!}.$ As $F$$\in\mathbb{D}^{1,\infty},$
Proposition \ref{prop: ChainRule} yields that $G_{n}\in\mathbb{D}^{1,\infty}$
and 
\[
DG_{n}=\sum_{k=1}^{n}\frac{kF^{k-1}}{k!}DF=G_{n-1}DF.
\]
In order to prove that $G\in\mathbb{D}^{1,\infty}$ and that the identity
$\left(\ref{eq:DerivExpo}\right)$ is satisfied, it suffices to show
that for all $p\geq1$ one has that $G_{n}$ converges to $G$ in
$L^{p}\left(\Omega\right)$ and 
\[
\mathbb{E}\left[\left\Vert DG-DG_{n}\right\Vert _{H}^{p}\right]\longrightarrow0,
\]
when $n$ tends to infinity. Note that 
\begin{align*}
\mathbb{E}\left[\left\Vert DG-DG_{n}\right\Vert _{H}^{p}\right] & =\mathbb{E}\left[\left|G-G_{n-1}\right|^{p}\left|\int_{0}^{T}\left|D_{t}F\right|^{2}dt\right|^{p/2}\right]\\
 & \leq\mathbb{E}\left[\left|G-G_{n-1}\right|^{2p}\right]^{1/2}\mathbb{E}\left[\left\Vert DF\right\Vert _{L^{2}\left(\left[0,T\right]\right)}^{2p}\right]^{1/2}.
\end{align*}
Hence, the problem is reduced to show that $G_{n}$ converges to $G$
in $L$$^{p}$ for all $p$$\geq1$. Equivalently, defining 
\[
G_{n}^{c}:=G-G_{n}=\sum_{k=n+1}^{\infty}\frac{F^{k}}{k!},
\]
it suffices to prove that $G_{n}^{c}$ converges to $0$ in $L$$^{p}$
for all $p$$\geq1$. Clearly, $G_{n}^{c}$ converges to $0$ almost
surely and, thanks to assumption $\left(\ref{eq:ExpAbsMoments}\right)$,
the dominated convergence theorem yields that $G_{n}^{c}$ also converges
to 0 in $L^{p}\left(\Omega\right)$ for all $p$$\geq1$. 
\end{proof}
We also have the following relationship between the conditional expectation
and the Malliavin derivative. 
\begin{lem}
\label{lem:CondExpMallDer}Let $F$$\in\mathbb{D}^{1,2}$ and $\mathbb{F}=\left\{ \mathcal{F}_{t}\right\} _{t\in\left[0,T\right]}$
be the P-augmented natural filtration generated by $B$. Then $\mathbb{E}\left[F|\mathcal{F}_{t}\right]\in\mathbb{D}^{1,2}$
and $D_{s}^{j}\mathbb{E}\left[F|\mathcal{F}_{t}\right]=\mathbb{E}\left[D_{s}^{j}F|\mathcal{F}_{t}\right]\mathbf{1}_{\left[0,t\right]}\left(s\right)$$,j=1,...,d.$ 
\end{lem}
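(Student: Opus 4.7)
The plan is to proceed in the standard two-step fashion: verify the identity directly on the dense class $\mathcal{S}$ of smooth cylindrical random variables, and then use the closability of the Malliavin derivative operator together with the $L^2$-contractivity of conditional expectation to extend to all of $\mathbb{D}^{1,2}$.

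For the smooth case, I would take $F = f(B_{t_1},\dots,B_{t_n})\in\mathcal{S}$ and exploit the fact that each increment $B_{t_i} - B_{t_i\wedge t}$ is independent of $\mathcal{F}_t$. Writing $B_{t_i} = B_{t_i\wedge t} + (B_{t_i} - B_{t_i\wedge t})$ and integrating out the increments against the appropriate Gaussian density, one obtains
\[
\mathbb{E}[F\,|\,\mathcal{F}_t]=\tilde f\bigl(B_{t_1\wedge t},\dots,B_{t_n\wedge t}\bigr),
\]
where $\tilde f(x_1,\dots,x_n)=\mathbb{E}[f(x_1+h_1,\dots,x_n+h_n)]$ with $h_i=B_{t_i}-B_{t_i\wedge t}$. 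Since $f\in C_b^\infty$, differentiation under the integral sign gives $\tilde f\in C_b^\infty$, so $\mathbb{E}[F\,|\,\mathcal{F}_t]\in\mathcal{S}$. Applying the definition of the Malliavin derivative to this representation and commuting the partial derivative with the Gaussian expectation produces exactly $\mathbb{E}[D_s^j F\,|\,\mathcal{F}_t]\mathbf{1}_{[0,t]}(s)$, because the indicator factors $\mathbf{1}_{[0,t_i]}(s)$ get multiplied by $\mathbf{1}_{[0,t]}(s)$, yielding $\mathbf{1}_{[0,t_i\wedge t]}(s)$, which is precisely what the direct computation of $D_s^j\tilde f(B_{t_1\wedge t},\dots,B_{t_n\wedge t})$ produces.

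For the extension to $F\in\mathbb{D}^{1,2}$, I would pick a sequence $F_n\in\mathcal{S}$ with $F_n\to F$ in $\mathbb{D}^{1,2}$. Conditional expectation is an $L^2(\Omega)$-contraction, hence $\mathbb{E}[F_n\,|\,\mathcal{F}_t]\to\mathbb{E}[F\,|\,\mathcal{F}_t]$ in $L^2(\Omega)$. The same contractivity, applied componentwise on $L^2(\Omega;H)$, gives
\[
\mathbb{E}[DF_n\,|\,\mathcal{F}_t]\mathbf{1}_{[0,t]}\longrightarrow\mathbb{E}[DF\,|\,\mathcal{F}_t]\mathbf{1}_{[0,t]}\qquad\text{in }L^2(\Omega;H).
\]
By the smooth case, the left-hand side equals $D\mathbb{E}[F_n\,|\,\mathcal{F}_t]$. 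Since the operator $D$ is closable from $L^2(\Omega)$ to $L^2(\Omega;H)$, we conclude that $\mathbb{E}[F\,|\,\mathcal{F}_t]\in\mathbb{D}^{1,2}$ and $D\mathbb{E}[F\,|\,\mathcal{F}_t]=\mathbb{E}[DF\,|\,\mathcal{F}_t]\mathbf{1}_{[0,t]}$, as required.

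The main obstacle is the bookkeeping in the smooth case: one must correctly identify the representation $\mathbb{E}[F\,|\,\mathcal{F}_t]=\tilde f(B_{t_1\wedge t},\dots,B_{t_n\wedge t})$ and keep careful track of which time indices in the indicator functions get truncated by $t$, so that the identity $\mathbf{1}_{[0,t_i]}(s)\mathbf{1}_{[0,t]}(s)=\mathbf{1}_{[0,t_i\wedge t]}(s)$ emerges naturally. Once this is done, the rest is routine approximation and closability.
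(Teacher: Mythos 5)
Your argument is correct and is precisely the standard proof of Proposition 1.2.8 in Nualart's book, which is all the paper itself invokes (its ``proof'' is a one-line citation to that proposition). The two steps you outline --- the explicit computation on smooth cylindrical functionals via independence of the increments $B_{t_i}-B_{t_i\wedge t}$ from $\mathcal{F}_t$, followed by the density/closability extension using the $L^{2}$-contractivity of conditional expectation on $L^{2}(\Omega)$ and $L^{2}(\Omega;H)$ --- are exactly the ingredients of the cited result, so there is nothing to add.
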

\begin{proof}
The lemma is a particular case of Proposition 1.2.8 in Nualart \cite{Nu06}. 
\end{proof}
The following is an important result regarding the Malliavin differentiability
of the solution of a stochastic differential equation. 
\begin{lem}
\label{lem: MomentMalliavinDerivative}If $X_{t}\in\mathbb{R}^{n}$
is the solution to 
\[
X_{t}=x+\int_{0}^{t}V_{0}\left(X_{s}\right)ds+\int_{0}^{t}V\left(X_{s}\right)dB_{s},
\]
where the components of V$_{0}$ and $V$ are $m$-times continuously
differentiable with bounded derivatives of order greater or equal
than one and $B_{t}=(B_{t}^{1},...,B_{t}^{d})$ is a $d$-dimensional
Brownian motion. Then, $X_{t}^{i}\in\mathbb{D}^{m,\infty},t\in[0,T],i=1,...,n.$
Furthermore, for any $p\geq1$ one has that 
\[
\sup_{r_{1},r_{2},...,r_{k}\in[0,T]}\mathbb{E}\left[\sup_{r_{1}\vee r_{2}\vee\cdots\vee r_{k}\leq t\leq T}\left\vert D_{r_{1},r_{2},...,r_{k}}^{j_{1},j_{2},...,j_{k}}X_{t}^{i}\right\vert ^{p}\right]<\infty,
\]
for all $p\geq1,$$i=1,...,n,$ $j_{k}\in\left\{ 1,...,d\right\} $
and $1\leq k\leq m$. 
\end{lem}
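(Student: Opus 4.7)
The plan is to proceed by induction on the order $k$ of differentiation. The strategy is classical (cf.\ Theorem 2.2.1 in Nualart \cite{Nu06}): at each level one derives a linear SDE for the corresponding iterated Malliavin derivative, applies the Burkholder--Davis--Gundy (BDG) inequality to the martingale part, and closes the resulting moment estimate via Gronwall's lemma. The uniformity in $(r_1,\ldots,r_k)$ claimed in the statement comes from the fact that the initial condition of each linear SDE is bounded, while its coefficients depend only on $X_s$ and not on the $r_i$'s.

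For the base case $k=1$, I would first establish $X_t^i\in\mathbb{D}^{1,\infty}$ by Picard iteration, verifying that the iterates converge in $\mathbb{D}^{1,p}$ for every $p\geq1$; the limit satisfies, for $r\leq t$,
\[
D_r^j X_t^i = V^{i,j}(X_r) + \int_r^t \partial V_0(X_s)\cdot D_r^j X_s\,ds + \sum_{\ell=1}^d\int_r^t \partial V^\ell(X_s)\cdot D_r^j X_s\,dB_s^\ell,
\]
and vanishes for $r>t$, where $V^{i,j}$ denotes the $(i,j)$-entry of $V$. Taking $p$-th moments of $\sup_{t\in[r,T]}|D_r^j X_t^i|$, applying BDG to the stochastic integral, bounding $\partial V_0$ and $\partial V^\ell$ by their uniform norms, and invoking Gronwall yields a bound uniform in $r$ because $V^{i,j}$ is bounded.

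For the inductive step, assume the statement holds up to order $k-1$. Differentiating the SDE for $D^{k-1}X_t$ once more, using an iterated form of the chain rule (Proposition \ref{prop: ChainRule}) together with Lemma \ref{lem:CondExpMallDer}-style commutation arguments, the $k$-th iterated Malliavin derivative satisfies a linear SDE
\[
D_{r_1,\ldots,r_k}^{j_1,\ldots,j_k}X_t = \Phi_t^{j_1,\ldots,j_k}(r_1,\ldots,r_k) + \int_{r_*}^t A_s\,D_{r_1,\ldots,r_k}^{j_1,\ldots,j_k}X_s\,ds + \int_{r_*}^t B_s\,D_{r_1,\ldots,r_k}^{j_1,\ldots,j_k}X_s\,dB_s,
\]
where $r_* = r_1\vee\cdots\vee r_k$, the coefficients $A_s, B_s$ involve $\partial V_0$, $\partial V$ evaluated at $X_s$ (hence bounded), and $\Phi_t^{j_1,\ldots,j_k}$ is a finite sum of products of lower-order iterated Malliavin derivatives $D^{k'}X$ with $k'<k$, multiplied by higher partial derivatives of $V_0,V$ (bounded by \textbf{H}$(m)$). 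By the inductive hypothesis and repeated Hölder, $\Phi_t^{j_1,\ldots,j_k}$ has finite $L^p$-norm of its $\sup_{t\in[r_*,T]}$ uniformly in $(r_1,\ldots,r_k)$. Then BDG plus Gronwall close the estimate.

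The principal obstacle is the combinatorial bookkeeping: writing $\Phi_t^{j_1,\ldots,j_k}$ explicitly requires a Faà di Bruno--type expansion that is cumbersome. However, only the structural form is needed---a finite sum of products of bounded functions of $X_s$ and lower-order iterated Malliavin derivatives---so Hölder/BDG/Gronwall close the loop without the explicit combinatorics. A minor technical point is justifying the Picard approximation at level $k$: one shows inductively that the Picard iterates $X_t^{(n)}$ are Cauchy in $\mathbb{D}^{k,p}$ by exploiting the same linear-SDE structure for the derivatives of the iterates, which is a routine adaptation of the argument used at $k=1$.
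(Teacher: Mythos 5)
The paper disposes of this lemma with a one-line citation to Nualart \cite{Nu06}, Theorems 2.2.1 and 2.2.2, whose proofs are precisely the induction you outline (Picard iterates Cauchy in $\mathbb{D}^{k,p}$, a linear SDE for each iterated derivative with inhomogeneous term built from lower-order derivatives, then BDG and Gronwall), so your argument is essentially the paper's proof. One small correction: the hypothesis only bounds the derivatives of $V_{0}$ and $V$ of order greater or equal to one, so $V^{i,j}$ itself need not be bounded; the uniformity in $r$ of the base case comes instead from the linear growth of $V$ together with $\sup_{s\leq T}\mathbb{E}\left[\left|X_{s}\right|^{p}\right]<\infty$, which gives $\sup_{r\in[0,T]}\mathbb{E}\left[\left|V^{i,j}(X_{r})\right|^{p}\right]<\infty$ for the initial condition of the linear SDE.
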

\begin{proof}
See Nualart \cite{Nu06}, Theorem 2.2.1. and 2.2.2.
\end{proof}
\begin{rem}
\label{rem:CommuteDandY}We will be using a variation of the classical
Malliavin calculus known as \emph{partial Malliavin calculus}. This
calculus was introduced in Kusuoka and Stroock \cite{KuStr84} and
Nualart and Zakai \cite{NuZa89} with a view towards its application
to the stochastic filtering problem, see also Tanaka \cite{Ta14}.
The idea is to consider only the Malliavin derivative operator with
respect some of the components of the Brownian motion $B$. In our
setting $B$=$\left(V,Y\right)$ is a $d_{V}+d_{Y}$-dimensional Brownian
motion under $\tilde{P}$ and the Malliavin differentiation will be
only with respect to the Brownian motion $V$. The main consequence
of this approach is that the Malliavin derivative with respect to
$V$ commutes with the stochastic integral with respect to $Y$. 
\end{rem}
\begin{lem}
\label{lem: BoundedIteratedMD1}Let $m\in\mathbb{N}$ and assume that
\textbf{H}$(m)$ holds and $\varphi\in C_{P}^{m+1}$. Then, the random
variable $\varphi(X_{t})e^{\xi_{t}}$ belongs to $\mathbb{D}^{m+1,\infty}.$
Moreover, 
\[
\sup_{r_{1},...,r_{\left\vert \alpha\right\vert }\in[0,t]}\mathbb{\tilde{E}}\left[\left\vert D_{r_{1},...,r_{_{\left\vert \alpha\right\vert }}}^{\alpha_{1},...,\alpha_{\left\vert \alpha\right\vert }}(\varphi(X_{t})e^{\xi_{t}})\right\vert ^{p}\right]<\infty,
\]
for all $p\geq1$ and $\alpha\in\mathcal{M}_{m+1}(S_{1}).$ 
\end{lem}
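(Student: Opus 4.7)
The plan is to decompose the analysis of $\varphi(X_t)e^{\xi_t}$ into three independent ingredients and reassemble them via the Leibniz and chain rules. Specifically, I will establish: (i) $X_t\in\mathbb{D}^{m+1,\infty}$ with $\sup_{r_1,\ldots,r_k\in[0,t]}\tilde{\mathbb{E}}\bigl[\bigl|D^{\alpha_1,\ldots,\alpha_k}_{r_1,\ldots,r_k}X_t^i\bigr|^p\bigr]<\infty$ for every $p\geq 1$, every coordinate $i$, and every multi-index in $\mathcal{M}_{m+1}(S_1)$; (ii) $\xi_t\in\mathbb{D}^{m+1,\infty}$ with the analogous uniform $L^p$-moment bounds on its iterated Malliavin derivatives; and (iii) an iterated form of Lemma \ref{lem: MallDerExponential} giving $e^{\xi_t}\in\mathbb{D}^{m+1,\infty}$ with uniform moment bounds on its iterated derivatives. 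With these three pieces in hand, the Leibniz rule applied to the product $\varphi(X_t)\cdot e^{\xi_t}$, combined with Proposition \ref{prop: ChainRule} applied to $\varphi(X_t)$ --- legitimate since $\varphi\in C_P^{m+1}$ and $X_t$ has moments of all orders together with its Malliavin derivatives --- produces the desired conclusion.

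Step (i) is immediate from Lemma \ref{lem: MomentMalliavinDerivative} once we verify the regularity of the coefficients. Under $\mathbf{H}(m)$ we have $f\in C_b^{2\vee(2m-1)}$ and $\sigma\in C_b^{2m}$, both of which contain $C_b^{m+1}$ (for $m=1$ this reads $f,\sigma\in C_b^{2}$; for $m\geq 2$ the inequality $2m-1\geq m+1$ suffices). For step (ii) I invoke the partial Malliavin calculus of Remark \ref{rem:CommuteDandY}: differentiation in the $V$-directions commutes with the $dY$-stochastic integral, so iterating Proposition \ref{prop: ChainRule} on $h_i(X_s)$ (legitimate since $h_i\in C_b^{2m+1}$) yields
\[
D^{j_1,\ldots,j_k}_{r_1,\ldots,r_k}\xi_t=\sum_{i=0}^{d_Y}\int_{r_1\vee\cdots\vee r_k}^{t}D^{j_1,\ldots,j_k}_{r_1,\ldots,r_k}\bigl(h_i(X_s)\bigr)\,dY^i_s,
\]
where the integrand is a Fa\`a~di~Bruno polynomial in bounded derivatives of $h_i$ and iterated Malliavin derivatives of $X_s$ of order at most $m+1$. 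The It\^o isometry / Burkholder--Davis--Gundy inequality (for $i\geq 1$) together with Jensen's inequality (for $i=0$) transfer the uniform $L^p$ bounds from (i) to $\xi_t$, while the adaptedness of the integrand accounts for the $r_1\vee\cdots\vee r_k$ lower limit of integration.

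For step (iii) I extend Lemma \ref{lem: MallDerExponential} by induction on $k$. A Fa\`a~di~Bruno expansion gives
\[
D^{j_1,\ldots,j_k}_{r_1,\ldots,r_k}\bigl(e^{\xi_t}\bigr)=e^{\xi_t}\sum_{\pi\in\mathcal{P}(k)}\prod_{B\in\pi}D^{(j_l)_{l\in B}}_{(r_l)_{l\in B}}\xi_t,
\]
summed over set partitions $\mathcal{P}(k)$ of $\{1,\ldots,k\}$. Because $e^{\xi_t}$ has moments of all orders by Lemma \ref{lem: Z_t^p_Integrability} and every factor on the right enjoys uniform $L^p$ bounds by (ii), H\"older's inequality closes the estimate at each inductive step. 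Combining with (i) and the product rule then produces the asserted supremum bound for $\varphi(X_t)e^{\xi_t}$. The main obstacle is essentially bookkeeping: no individual step is conceptually deep, but one must carefully organize the iterated chain- and product-rule expansions so that H\"older's inequality can be applied at each stage using only the already-established uniform bounds on $D^\beta X_t$, on $D^\gamma\xi_t$, and on $e^{\xi_t}$. A secondary technical point is justifying the commutation of $D$ with the $dY^i$ integrals up to the required order; this is standard, by approximating the integrand in $L^2(\Omega;H^{\otimes k})$ by simple $\mathbb{H}^t$-adapted processes and passing to the limit.
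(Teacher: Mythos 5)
Your proposal is correct and follows essentially the same route as the paper's proof: reduce to uniform $L^p$ bounds on the iterated Malliavin derivatives of $X_t$ (via Lemma \ref{lem: MomentMalliavinDerivative}), of $\xi_t$ (commuting $D$ with the $dY$ integral as in Remark \ref{rem:CommuteDandY}, then Burkholder--Davis--Gundy and Jensen), and of $e^{\xi_t}$ (exponential moments from Lemma \ref{lem: Z_t^p_Integrability} plus a Fa\`a di Bruno expansion), and reassemble with the Leibniz rule and generalized H\"older inequality. Your set-partition form of Fa\`a di Bruno is just the paper's partial Bell polynomial expansion in different notation, so there is no substantive difference.
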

\begin{proof}
To ease the notation we are only going to give the proof for $d_{V}=d_{Y}=d_{X}=1.$
We will also use the notation $D_{r_{1},...,r_{k}}^{k}F=D_{r_{1},...,r_{k}}^{1,\overset{k}{\overbrace{...}},1}F$.
Lemma \ref{lem: MomentMalliavinDerivative} yields that $X_{t}\in\mathbb{D}^{1,\infty}$.
Applying iteratively Proposition \ref{prop: ChainRule} we obtain
that $\varphi(X_{t})\in\mathbb{D}^{m+1,\infty}$ and $h$$(X_{t})\in\mathbb{D}^{m+1,\infty}$.
Taking into account Remark \ref{rem:CommuteDandY}, we have that $\xi_{t}\in\mathbb{D}^{m+1,\infty}$.
Moreover, thanks to Lemma \ref{lem: Z_t^p_Integrability}, we can
apply iteratively Lemma \ref{lem:CondExpMallDer} and conclude that
$e^{\xi_{t}}\in\mathbb{D}^{m+1,\infty}$. For any $\alpha\in\mathcal{M}_{m+1}(S_{1}),$
by Leibniz's rule, we can write 
\begin{align*}
D_{r_{1},...,r_{_{\left\vert \alpha\right\vert }}}^{\alpha_{1},...,\alpha_{\left\vert \alpha\right\vert }}\left(\varphi(X_{t})e^{\xi_{t}}\right) & =D_{r_{1},...,r_{_{\left\vert \alpha\right\vert }}}^{\left\vert \alpha\right\vert }\left(\varphi(X_{t})e^{\xi_{t}}\right)\\
 & =\sum_{k=0}^{\left\vert \alpha\right\vert }\binom{\left\vert \alpha\right\vert }{k}\left(D_{r_{1},...,r_{k}}^{k}\varphi(X_{t})\right)(D_{r_{1},...,r_{\left\vert \alpha\right\vert -k}}^{\left\vert \alpha\right\vert -k}e^{\xi_{t}}),
\end{align*}
and applying Schwartz's inequality one has that 
\begin{align*}
\mathbb{\tilde{E}}\left[\left\vert D_{r_{1},...,r_{_{\left\vert \alpha\right\vert }}}^{\left\vert \alpha\right\vert }\left(\varphi(X_{t})e^{\xi_{t}}\right)\right\vert ^{p}\right] & \leq C\sum_{k=0}^{\left\vert \alpha\right\vert }\binom{\left\vert \alpha\right\vert }{k}\mathbb{\tilde{E}}\left[\left\vert \left(D_{r_{1},...,r_{k}}^{k}\varphi(X_{t})\right)(D_{r_{1},...,r_{\left\vert \alpha\right\vert -k}}^{\left\vert \alpha\right\vert -k}e^{\xi_{t}})\right\vert ^{p}\right]\\
 & \leq C\sum_{k=0}^{\left\vert \alpha\right\vert }\binom{\left\vert \alpha\right\vert }{k}\mathbb{\tilde{E}}\left[\left\vert D_{r_{1},...,r_{k}}^{k}\varphi(X_{t})\right\vert ^{2p}\right]^{1/2}\mathbb{\tilde{E}}\left[\left\vert D_{r_{1},...,r_{\left\vert \alpha\right\vert -k}}^{\left\vert \alpha\right\vert -k}e^{\xi_{t}}\right\vert ^{2p}\right]^{1/2}.
\end{align*}
Hence, the result follows if we show that 
\begin{align}
\sup_{r_{1},...,r_{k}\in[0,t]}\mathbb{\tilde{E}}\left[|D_{r_{1},...,r_{k}}^{k}\varphi(X_{t})|^{p}\right] & <\infty,\quad0\leq k\leq\left\vert \alpha\right\vert ,\label{eq: MDF}\\
\sup_{r_{1},...,r_{k}\in[0,t]}\mathbb{\tilde{E}}[|D_{r_{1},...,r_{k}}^{k}e^{\xi_{t}}|^{p}] & <\infty,\quad0\leq k\leq\left\vert \alpha\right\vert ,\label{eq: MDG}
\end{align}
for any $p\geq1$.

\emph{\lyxarrow{}Proof of (\ref{eq: MDF})}$:$

If $k=0,$ using that $\mathbf{H}\left(m\right)$ holds and $\varphi\in C_{P}^{m+1}$,
we have that $\mathbb{\tilde{E}}[|\varphi(X_{t})|^{p}]<\infty$, by
Remark \ref{rem: M}. If $1\leq k\leq\left\vert \alpha\right\vert ,$
we use Faà di Bruno's formula to obtain an expression for $D_{r_{1},...,r_{k}}^{k}\varphi(X_{t})$
in terms of the so called partial Bell polynomials, which are given
by 
\[
B_{k,a}(x_{1},...,x_{k})=\sum_{(j_{1},...,j_{k})\in\Lambda(k,a)}\frac{k!}{j_{1}!\left(1!\right)^{j_{1}}j_{2}!\left(2!\right)^{j_{2}}\cdots j_{k}!(k!)^{j_{k}}}x_{1}^{j_{1}}x_{2}^{j_{2}}\cdots x_{k}^{j_{k}},
\]
where $1\leq a\leq k$ and 
\[
\Lambda(k,a)=\{(j_{1},...,j_{k})\in\mathbb{Z}_{+}^{k}:j_{1}+2j_{2}+\cdots+kj_{k}=k,j_{1}+j_{2}+\cdots+j_{k}=a\}.
\]
In particular, we have that 
\[
D_{r_{1},...,r_{k}}^{k}\varphi(X_{t})=\sum_{a=1}^{k}\varphi^{(a)}(X_{t})B_{k,a}(D_{r_{1}}^{1}X_{t},D_{r_{1},r_{2}}^{2}X_{t},...,D_{r_{1},...,r_{k}}^{k}X_{t}).
\]
Hence, for any $p\geq1,$ applying Cauchy-Schwarz inequality we get
\begin{align*}
 & \mathbb{\tilde{E}}[|D_{r_{1},...,r_{k}}^{k}\varphi(X_{t})|^{p}]\\
 & \leq C\sum_{a=1}^{k}\mathbb{\tilde{E}}[|\varphi^{(a)}(X_{t})B_{k,a}(D_{r_{1}}^{1}X_{t},D_{r_{1},r_{2}}^{2}X_{t},...,D_{r_{1},...,r_{k}}^{k}X_{t})|^{p}]\\
 & \leq C\sum_{a=1}^{k}\mathbb{\tilde{E}}[|\varphi^{(a)}(X_{t})|^{2p}]^{1/2}\mathbb{\tilde{E}}[|B_{k,a}(D_{r_{1}}^{1}X_{t},D_{r_{1},r_{2}}^{2}X_{t},...,D_{r_{1},...,r_{k}}^{k}X_{t})|^{2p}]^{1/2}.
\end{align*}
The terms $\mathbb{\tilde{E}}[\left\vert \varphi^{(a)}(X_{t})\right\vert ^{2p}]<\infty,a=1,...,k,$
due to Remark \ref{rem: M} combined with that $\mathbf{H}\left(m\right)$
holds and $\varphi\in C_{P}^{m+1}.$ On the other hand, using the
generalized version of Hölder's inequality, Lemma \ref{lem: Generalized Holder},
we can bound 
\[
\mathbb{\tilde{E}}[|B_{k,a}(D_{r_{1}}^{1}X_{t},D_{r_{1},r_{2}}^{2}X_{t},...,D_{r_{1},...,r_{k}}^{k}X_{t})|^{2p}],\quad1\leq a\leq k,
\]
by a sum of products of expectations of powers of Malliavin derivatives
of $X$ of different orders. Combining this bound with Lemma \ref{lem: MomentMalliavinDerivative}
we get that the integrability condition $\left(\ref{eq: MDF}\right)$
is satisfied.

\emph{\lyxarrow{}Proof of (\ref{eq: MDG})}$:$

If $k=0,$ we have that $\mathbb{\tilde{E}}\left[|e^{\xi_{t}}|^{p}\right]<\infty$
due to Lemma \ref{lem: Z_t^p_Integrability}. If $1\leq k\leq\left\vert \alpha\right\vert ,$
using again Faà di Bruno's formula we get 
\begin{align*}
D_{r_{1},...,r_{k}}^{k}e^{\xi_{t}} & =\sum_{a=1}^{k}\left.\frac{d^{a}}{dx^{a}}e^{x}\right\vert _{x=\xi_{t}}B_{k,a}(D_{r_{1}}^{1}\xi_{t},D_{r_{1},r_{2}}^{2}\xi_{t},...,D_{r_{1},...,r_{k}}^{k}\xi_{t})\\
 & =\sum_{a=1}^{k}\exp(\xi_{t})B_{k,a}(D_{r_{1}}^{1}\xi_{t},D_{r_{1},r_{2}}^{2}\xi_{t},...,D_{r_{1},...,r_{k}}^{k}\xi_{t}).
\end{align*}
We can repeat exactly the same arguments as in the proof of $\left(\ref{eq: MDF}\right),$
due to the fact that by Lemma \ref{lem: Z_t^p_Integrability} $e^{\xi_{t}}$
has moment of all orders, provided we can show that 
\begin{equation}
\sup_{r_{1},...,r_{a}\in[0,t]}\mathbb{\tilde{E}}[|D_{r_{1},...,r_{a}}^{a}\xi_{t}|^{p}]<\infty,\quad1\leq a\leq k,\label{eq: MDXi}
\end{equation}
for any $p\geq1.$ As noted in Remark \ref{rem:CommuteDandY}, the
Malliavin derivative commute with the stochastic integral with respect
to $Y$ and we can write 
\begin{align*}
D_{r_{1},...,r_{a}}^{a}\xi_{t} & =D_{r_{1},...,r_{a}}^{a}\left(\int_{0}^{t}h(X_{s})dY_{s}-\frac{1}{2}\int_{0}^{t}h^{2}(X_{s})ds\right)\\
 & =\int_{0}^{t}D_{r_{1},...,r_{a}}^{a}h(X_{s})dY_{s}-\frac{1}{2}\int_{0}^{t}D_{r_{1},...,r_{a}}^{a}\left(h^{2}(X_{s})\right)ds.
\end{align*}

Hence, by Burkholder-Davis-Gundy inequality and Jensen's inequality,
we get for any $p\geq1$ that 
\begin{align*}
 & \mathbb{\tilde{E}}[|D_{r_{1},...,r_{a}}^{a}\xi_{t}|^{2p}]\\
 & \leq C\left\{ \mathbb{\tilde{E}}\left[\left\vert \int_{0}^{t}D_{r_{1},...,r_{a}}^{a}h(X_{s})dY_{s}\right\vert ^{2p}\right]+\mathbb{\tilde{E}}\left[\left\vert \int_{0}^{t}D_{r_{1},...,r_{a}}^{a}\left(h^{2}(X_{s})\right)ds\right\vert ^{2p}\right]\right\} \\
 & \leq C\left\{ \mathbb{\tilde{E}}\left[\int_{0}^{t}\left\vert D_{r_{1},...,r_{a}}^{a}h(X_{s})\right\vert ^{2p}ds\right]+\mathbb{\tilde{E}}\left[\int_{0}^{t}\left\vert D_{r_{1},...,r_{a}}^{a}\left(h^{2}(X_{s})\right)\right\vert ^{2p}ds\right]\right\} \\
 & \leq C\left\{ A_{1}+A_{2}\right\} .
\end{align*}
Applying Faà di Bruno formula we can write 
\begin{align*}
A_{1} & \leq C\sum_{l=1}^{a}\int_{0}^{t}\mathbb{\tilde{E}}[\left\vert h^{(l)}(X_{s})B_{a,l}(D_{r_{1}}^{1}X_{s},D_{r_{1},r_{2}}^{2}X_{s},...,D_{r_{1},...,r_{a}}^{a}X_{s})\right\vert ^{2p}]ds\\
 & \leq C\left\Vert h\right\Vert _{\infty,a}^{q}\sum_{l=1}^{a}\int_{0}^{t}\mathbb{\tilde{E}}[\left\vert B_{a,l}(D_{r_{1}}^{1}X_{s},D_{r_{1},r_{2}}^{2}X_{s},...,D_{r_{1},...,r_{a}}^{a}X_{s})\right\vert ^{2p}]ds,
\end{align*}
where 
\[
\left\Vert h\right\Vert _{\infty,a}\triangleq\sum_{i=0}^{d_{Y}}\sum_{l=0}^{a}\left\Vert h_{i}^{(l)}\right\Vert _{\infty}<\infty,
\]
because $\mathbf{H}\left(m\right)$ holds. Therefore, using the generalized
version of Hölder inequality, Lemma \ref{lem: Generalized Holder},
and Lemma \ref{lem: MomentMalliavinDerivative} we get $A_{1}<\infty$.
We can repeat the same argument for $A_{2}$ and obtain $\left(\ref{eq: MDXi}\right).$ 
\end{proof}

\subsection{\label{subsec:MartingaleRep}Martingale representations and Clark-Ocone
formula}

In this section we recall the Clark-Ocone formula. This formula relates
the kernels in the Itô martingale representation of Malliavin differentiable
functionals with the Malliavin derivatives of such functionals. We
present a truncated version of the well known Stroock-Taylor formula,
see Stroock \cite{Stro87}, that can be seen as an extension of the
Clark-Ocone formula and it will be essential in deducing several conditional
expectation estimates (see Section \ref{subsec:CondExpEst}). We also
show that, if the coefficients $f$ and $\sigma$ of of the SDE modeling
the signal, the sensor function $h$ and the test function $\varphi$
are regular enough with bounded derivatives then, the kernels in the
truncated Stroock-Taylor formula for $\varphi\left(X_{t}\right)e^{\xi_{t}}$
satisfy a uniform integrability property. Finally, we show that those
kernels also satisfy a Hölder continuity property. 
\begin{thm}
\label{thm: Integral Representation}Let $F\in L^{2}(\Omega,\mathcal{H}_{t}^{t},\tilde{P})$
\textbf{.} Then, $F$ admits the following martingale representation
\[
F=\mathbb{\tilde{E}}[F|\mathcal{H}_{0}^{t}]+\sum_{r=1}^{d_{V}}\int_{0}^{t}J_{s}^{r}dV_{s}^{r},
\]
where $J^{r}=\{J_{s}^{r},s\in\left[0,t\right]\},r=1,...,d_{V}$ are
$\mathcal{H}_{s}^{t}$-progressively measurable processes such that
\[
\mathbb{\tilde{E}}\left[\int_{0}^{t}\left\vert J_{s}^{r}\right\vert ^{2}ds_{1}\right]<\infty,\quad r=1,...,d_{V}.
\]
Moreover, if $F$$\in\mathbb{D}^{1,2}$ then 
\[
J_{s}^{r}=\mathbb{\tilde{E}}[D_{s}^{r}F|\mathcal{H}_{s}^{t}],\quad s\in\left[0,t\right],
\]
which is known as the Clark-Ocone formula. 
\end{thm}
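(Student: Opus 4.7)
The strategy is to reduce the statement to the classical Clark--Ocone formula for the Brownian motion $V$, exploiting the fact that under $\tilde{P}$ the Brownian motions $V$ and $Y$ are independent and $X_0$ is independent of both. Since $\mathcal{Y}_t$ does not vary in $s$ and is, by independence, independent of $\mathcal{F}^{0,V}_\infty$ under $\tilde{P}$, the filtration $\mathbb{H}^t = \mathbb{F}^{0,V} \vee \mathcal{Y}_t$ is an initial enlargement of the Brownian filtration of $V$ by a $\sigma$-algebra independent of it. A standard application of Jacod's criterion, which is trivially satisfied in the independent case, then yields that $V$ remains a $d_V$-dimensional Brownian motion relative to $\mathbb{H}^t$ under $\tilde{P}$. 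This is the essential preliminary step.

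Once $V$ has been identified as an $\mathbb{H}^t$-Brownian motion, I would look at the square-integrable $\mathbb{H}^t$-martingale $M_s \triangleq \tilde{\mathbb{E}}[F \mid \mathcal{H}^t_s]$, $s\in[0,t]$. Conditioning on the $\mathcal{H}^t_0$-measurable pair $(X_0,\mathcal{Y}_t)$ places us in the classical setting of the martingale representation theorem for the Brownian filtration generated by $V$. This yields, for almost every realisation of $(X_0,Y)$, kernels $J^r_s$ representing $F$ as $\tilde{\mathbb{E}}[F\mid\mathcal{H}^t_0] + \sum_{r=1}^{d_V} \int_0^t J^r_s \, dV^r_s$; joint measurability of $(\omega,s)\mapsto J^r_s$ is obtained via approximation by cylindrical functionals, and the required square-integrability $\sum_{r} \tilde{\mathbb{E}}\int_0^t |J^r_s|^2 ds < \infty$ follows from the It\^{o} isometry applied under $\mathbb{H}^t$ combined with $F\in L^2$.

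For the Clark--Ocone identification, assuming in addition $F\in\mathbb{D}^{1,2}$, I would first verify the formula $J^r_s = \tilde{\mathbb{E}}[D^r_s F \mid \mathcal{H}^t_s]$ on the dense subclass of smooth cylindrical functionals of $V$ with measurable dependence on $(X_0, Y)$. For such $F$ the identity reduces to the classical finite-dimensional Clark--Ocone formula applied conditionally on $\sigma(X_0)\vee\mathcal{Y}_t$. The crucial point is that, in the partial Malliavin framework (Remark \ref{rem:CommuteDandY}), the derivative $D^r$ commutes with conditioning on $\mathcal{Y}_t$, so that the conditional identity integrates correctly against the law of $(X_0,Y)$. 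The extension to the whole space $\mathbb{D}^{1,2}$ is then by density: both sides of the identity are continuous in $F$ for the $\mathbb{D}^{1,2}$ norm, the left-hand side via the It\^{o} isometry and Lemma \ref{lem:CondExpMallDer}, and the right-hand side via the $L^2$-boundedness of conditional expectation.

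The main technical obstacle is the careful justification of the predictable representation property of $V$ with respect to the initially enlarged filtration $\mathbb{H}^t$; once this is granted, everything else is a routine transfer of the classical Wiener-space results. The independence of $V$ and $(Y,X_0)$ under $\tilde{P}$ makes this enlargement essentially trivial: the brackets $\langle V^r, V^{r'}\rangle_s = \delta_{rr'}\, s$ are preserved, no drift correction appears, and the Malliavin calculus for $V$ relative to $\mathbb{H}^t$ coincides with the one relative to $\mathbb{F}^{0,V}$ after integrating against the marginal law of $(X_0,Y)$.
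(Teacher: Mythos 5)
Your proposal is correct and follows essentially the same route as the paper, which delegates the argument to Lemma 17 of \cite{Cris11} and Proposition 1.3.14 of \cite{Nu06}: since $\sigma(X_{0})\vee\mathcal{Y}_{t}$ is independent of $V$ under $\tilde{P}$, the filtration $\mathbb{H}^{t}$ is an initial enlargement under which $V$ stays a Brownian motion, the representation follows from the totality in $L^{2}(\Omega,\mathcal{H}_{t}^{t},\tilde{P})$ of products of $\mathcal{H}_{0}^{t}$-measurable variables with cylindrical functionals of $V$, and the Clark--Ocone identification of the kernels is obtained on that dense class and extended by continuity of both sides in the $\mathbb{D}^{1,2}$-norm. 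The only point worth flagging is that the ``for almost every realisation of $(X_{0},Y)$'' phrasing should indeed be replaced, as you then do, by the approximation argument, since joint progressive measurability of the kernels does not come for free from an $\omega$-wise application of the classical theorem.
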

\begin{proof}
The proof is similar to that of Lemma 17 in Crisan \cite{Cris11}
and the proof of the Clark-Ocone formula can be found in Nualart \cite{Nu06},
Proposition 1.3.14. 
\end{proof}
By applying Theorem \ref{thm: Integral Representation} to the kernels
$J^{r},r=1,...,d_{V}$ one can get the following result. 
\begin{thm}[Stroock-Taylor formula of order m]
\label{thm: ST formula}Assume that $F\in L^{2}(\Omega,\mathcal{H}_{t}^{t},\tilde{P}).$
Then, for $m\in\mathbb{N}$ we can write 
\[
F=\sum_{\beta\in\mathcal{M}_{m-1}(S_{1})}I_{\beta}\left(\mathbb{\tilde{E}}\left[J_{s_{1},...,s_{\left\vert \beta\right\vert }}^{\beta}|\mathcal{H}_{0}^{t}\right]\right){}_{0,t}+\sum_{\beta\in\mathcal{R}\left(\mathcal{M}_{m-1}(S_{1})\right)}I_{\beta}\left(J_{s_{1},...,s_{\left\vert \beta\right\vert }}^{\beta}\right){}_{0,t},
\]
where the kernels $J_{s_{1},...,s_{\left\vert \beta\right\vert }}^{\beta}$
for $\beta\in\mathcal{M}_{m}(S_{1})$ are obtained from the martingale
representation of $J_{s_{2},...,s_{\left\vert \beta\right\vert }}^{-\beta}$,
that is, they satisfy the following relationship

\begin{align*}
J^{v} & \triangleq F,\\
J_{s_{2},...,s_{\left\vert \beta\right\vert }}^{-\beta} & =\mathbb{\tilde{E}}\left[J_{s_{2},...,s_{\left\vert \beta\right\vert }}^{-\beta}|\mathcal{H}_{0}^{t}\right]+\sum_{\beta_{1}=1}^{d_{V}}\int_{0}^{s_{2}}J_{s_{1},...,s_{\left\vert \beta\right\vert }}^{\beta_{1}*\left(-\beta\right)}dV_{s_{1}}^{\beta_{1}}.
\end{align*}
Moreover, if $\varphi(X_{t})e^{\xi_{t}}$$\in\mathbb{D}^{m,2}$ then
\[
J_{s_{1},...,s_{\left\vert \beta\right\vert }}^{\beta}=\mathbb{\tilde{E}}\left[D_{s_{1},...,s_{\left\vert \beta\right\vert }}^{\beta}F|\mathcal{H}_{s_{1}}^{t}\right],\quad\beta\in\mathcal{M}_{m}(S_{1}).
\]
\end{thm}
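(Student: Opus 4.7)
The plan is to proceed by induction on $m$, at each step applying the martingale representation of Theorem \ref{thm: Integral Representation} to the currently deepest kernel. The base case $m=1$ is immediate: Theorem \ref{thm: Integral Representation} applied to $F\in L^{2}(\Omega,\mathcal{H}_{t}^{t},\tilde{P})$ yields
\[
F=\mathbb{\tilde{E}}[F\mid\mathcal{H}_{0}^{t}]+\sum_{r=1}^{d_{V}}\int_{0}^{t}J_{s_{1}}^{r}\,dV_{s_{1}}^{r},
\]
and, with $J^{v}\triangleq F$ and the convention $I_{v}(h_{\cdot})_{0,t}=h_{t}$, this matches the claimed identity since $\mathcal{M}_{0}(S_{1})=\{v\}$ and $\mathcal{R}(\mathcal{M}_{0}(S_{1}))=\{(1),\dots,(d_{V})\}$; the length-one kernels are thus identified with the MRT integrands, in accordance with the recursive definition given in the theorem statement.

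For the inductive step, assume the decomposition holds at order $m$. Each remainder term $I_{\beta}(J^{\beta})_{0,t}$ with $\beta\in\mathcal{R}(\mathcal{M}_{m-1}(S_{1}))$ contains the length-$m$ kernel $J^{\beta}_{s_{1},\dots,s_{m}}$, which is in $L^{2}$ and $\mathcal{H}_{s_{1}}^{t}$-measurable by construction. Applying Theorem \ref{thm: Integral Representation} once more to it produces MRT integrands that, by the recursive definition, are precisely the kernels $J^{\beta_{0}*\beta}$ for $\beta_{0}\in\{1,\dots,d_{V}\}$:
\[
J^{\beta}_{s_{1},\dots,s_{m}}=\mathbb{\tilde{E}}[J^{\beta}_{s_{1},\dots,s_{m}}\mid\mathcal{H}_{0}^{t}]+\sum_{\beta_{0}=1}^{d_{V}}\int_{0}^{s_{1}}J^{\beta_{0}*\beta}_{s_{0},s_{1},\dots,s_{m}}\,dV^{\beta_{0}}_{s_{0}}.
\]
Inserting this into $I_{\beta}(J^{\beta})_{0,t}$ and exchanging the order of stochastic integration (a standard stochastic Fubini on $L^{2}$, progressively measurable integrands) gives
\[
I_{\beta}(J^{\beta})_{0,t}=I_{\beta}\bigl(\mathbb{\tilde{E}}[J^{\beta}\mid\mathcal{H}_{0}^{t}]\bigr)_{0,t}+\sum_{\beta_{0}=1}^{d_{V}}I_{\beta_{0}*\beta}(J^{\beta_{0}*\beta})_{0,t}.
\]
Summing over $\beta\in\mathcal{R}(\mathcal{M}_{m-1}(S_{1}))$, combining with the first sum from the inductive hypothesis, and invoking the identity $\mathcal{M}_{m-1}(S_{1})\uplus\mathcal{R}(\mathcal{M}_{m-1}(S_{1}))=\mathcal{M}_{m}(S_{1})$ together with the bijection $(\beta_{0},\beta)\mapsto\beta_{0}*\beta$ between $\{1,\dots,d_{V}\}\times\mathcal{R}(\mathcal{M}_{m-1}(S_{1}))$ and $\mathcal{R}(\mathcal{M}_{m}(S_{1}))$, regroups the terms into the required decomposition at order $m+1$.

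For the identification of the kernels under $\varphi(X_{t})e^{\xi_{t}}\in\mathbb{D}^{m,2}$, the plan is a parallel induction using the Clark-Ocone part of Theorem \ref{thm: Integral Representation}. The base $|\beta|=1$ is Clark-Ocone itself. For the inductive step, assuming $J^{-\beta}_{s_{2},\dots,s_{|\beta|}}=\mathbb{\tilde{E}}[D^{-\beta}_{s_{2},\dots,s_{|\beta|}}F\mid\mathcal{H}_{s_{2}}^{t}]$, one would apply Clark-Ocone to this random variable (its Malliavin differentiability being inherited from $F\in\mathbb{D}^{|\beta|,2}$), commute $D^{\beta_{1}}_{s_{1}}$ past $\mathbb{\tilde{E}}[\cdot\mid\mathcal{H}_{s_{2}}^{t}]$ via Lemma \ref{lem:CondExpMallDer} (the indicator $\mathbf{1}_{[0,s_{2}]}(s_{1})$ equals one on the domain $s_{1}\in[0,s_{2}]$), and close with the tower property for $s_{1}\leq s_{2}$ to reach $J^{\beta_{1}*(-\beta)}_{s_{1},\dots,s_{|\beta|}}=\mathbb{\tilde{E}}[D^{\beta_{1}*(-\beta)}_{s_{1},\dots,s_{|\beta|}}F\mid\mathcal{H}_{s_{1}}^{t}]$. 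The main technical care will lie in bookkeeping the joint $(\omega,s_{1},\dots,s_{k})$-measurability of the kernels so that Theorem \ref{thm: Integral Representation} applies legitimately at each level and the stochastic Fubini step is valid, together with checking that Lemma \ref{lem:CondExpMallDer} survives in the partial Malliavin setting (Remark \ref{rem:CommuteDandY}); both issues propagate routinely through the recursion once the $L^{2}$ output of the MRT is carried through.
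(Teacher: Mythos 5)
Your proposal is correct and follows essentially the same route as the paper: induction on $m$, applying the martingale representation theorem (Theorem \ref{thm: Integral Representation}) to the kernels indexed by the remainder set and regrouping via $\mathcal{M}_{m}(S_{1})=\mathcal{M}_{m-1}(S_{1})\biguplus\mathcal{R}\left(\mathcal{M}_{m-1}(S_{1})\right)$ and concatenation, with the Clark--Ocone identification of the kernels handled by a parallel induction using Lemma \ref{lem:CondExpMallDer} and the tower property (which the paper leaves as ``a straightforward induction''). The only superfluous element is the appeal to a stochastic Fubini: since the new representation is substituted at the innermost level of the iterated integral, the new integral simply nests one level deeper and no interchange of integration order is required.
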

\begin{proof}
We prove the result by induction. For $m=1$, the result is precisely
Theorem \ref{thm: Integral Representation}. We assume that the result
holds for $m-1\ge0$ and prove that this implies that it also holds
for $m$. By the induction hypothesis we have that 
\[
F=\sum_{\beta\in\mathcal{M}_{m-2}(S_{1})}I_{\beta}\left(\mathbb{\tilde{E}}\left[J_{s_{1},...,s_{\left\vert \beta\right\vert }}^{\beta}|\mathcal{H}_{0}^{t}\right]\right){}_{0,t}+\sum_{\beta\in\mathcal{R}\left(\mathcal{M}_{m-2}(S_{1})\right)}I_{\beta}\left(J_{s_{1},...,s_{\left\vert \beta\right\vert }}^{\beta}\right){}_{0,t}.
\]
Applying Theorem \ref{thm: Integral Representation} to $J_{s_{1},...,s_{\left\vert \beta\right\vert }}^{\beta},\beta\in\mathcal{R}\left(\mathcal{M}_{m-2}(S_{1})\right)$
we get 
\begin{align*}
F & =\sum_{\beta\in\mathcal{M}_{m-2}(S_{1})}I_{\beta}\left(\mathbb{\tilde{E}}\left[J_{s_{1},...,s_{\left\vert \beta\right\vert }}^{\beta}|\mathcal{H}_{0}^{t}\right]\right){}_{0,t}+\sum_{\beta\in\mathcal{R}\left(\mathcal{M}_{m-2}(S_{1})\right)}I_{\beta}\left(\mathbb{\tilde{E}}\left[J_{s_{1},...,s_{\left\vert \beta\right\vert }}^{\beta}|\mathcal{H}_{0}^{t}\right]\right){}_{0,t}\\
 & +\sum_{\beta\in\mathcal{R}\left(\mathcal{M}_{m-2}(S_{1})\right)}\sum_{r=1}^{d_{V}}I_{r*\beta}\left(J_{s,s_{1},...,s_{\left\vert \beta\right\vert }}^{r*\beta}\right){}_{0,t}\\
 & =\sum_{\beta\in\mathcal{M}_{m-1}(S_{1})}I_{\beta}\left(\mathbb{\tilde{E}}\left[J_{s_{1},...,s_{\left\vert \beta\right\vert }}^{\beta}|\mathcal{H}_{0}^{t}\right]\right){}_{0,t}+\sum_{\beta\in\mathcal{R}\left(\mathcal{M}_{m-1}(S_{1})\right)}I_{\beta}\left(J_{s_{1},...,s_{\left\vert \beta\right\vert }}^{\beta}\right){}_{0,t},
\end{align*}
where in the last equality we have used that 
\[
\mathcal{M}_{m-1}(S_{1})=\mathcal{M}_{m-2}(S_{1})\biguplus\mathcal{R}\left(\mathcal{M}_{m-2}(S_{1})\right),
\]
the definitions of $\mathcal{R}\left(\mathcal{M}_{m-1}(S_{1})\right)$
and the concatenation of multi-indices.

The Clark-Ocone representation of the kernels also follows from a
straightforward induction. 
\end{proof}
\begin{prop}
\label{prop: UnifBoundKernels}Let $m\in\mathbb{N}$ and assume that\textbf{\ H}$(m)$
holds and $\varphi\in C_{P}^{m+1}$. Then, the kernels $J_{s_{1},...,s_{\left\vert \beta\right\vert }}^{\beta},$
$\beta\in\mathcal{M}_{m+1}(S_{1})$ appearing in the Stroock-Taylor
formula of order $m+1$ for $\varphi(X_{t})e^{\xi_{t}}$ satisfy 
\[
\sup_{0\leq s_{1}<\cdots<s_{\left|\beta\right|}\leq t}\mathbb{\tilde{E}}\left[\left\vert J_{s_{1},...,s_{\left|\beta\right|}}^{\beta}\right\vert ^{p}\right]<\infty,
\]
for $p\geq1$ 
\end{prop}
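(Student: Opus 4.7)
The plan is to bootstrap the moment estimates on the iterated Malliavin derivatives of $\varphi(X_{t})e^{\xi_{t}}$ already established in Lemma \ref{lem: BoundedIteratedMD1} through the Clark-Ocone identification of the kernels provided by Theorem \ref{thm: ST formula}. The result is essentially an easy corollary of these two ingredients, so the proof amounts to packaging them correctly.

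First, I would note that Lemma \ref{lem: BoundedIteratedMD1} gives $\varphi(X_{t})e^{\xi_{t}}\in\mathbb{D}^{m+1,\infty}\subset\mathbb{D}^{m+1,2}$ under \textbf{H}$(m)$ and $\varphi\in C_{P}^{m+1}$. This is precisely the regularity required to invoke the Stroock-Taylor formula at order $m+1$, so that for every $\beta\in\mathcal{M}_{m+1}(S_{1})$ the Clark-Ocone representation
\[
J_{s_{1},\ldots,s_{\left\vert \beta\right\vert }}^{\beta}=\mathbb{\tilde{E}}\!\left[D_{s_{1},\ldots,s_{\left\vert \beta\right\vert }}^{\beta}\!\left(\varphi(X_{t})e^{\xi_{t}}\right)\,\big|\,\mathcal{H}_{s_{1}}^{t}\right]
\]
is available. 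I would then apply the conditional Jensen inequality to the convex function $x\mapsto\left\vert x\right\vert ^{p}$ for $p\geq1$, take (unconditional) expectations and use the tower property to deduce
\[
\mathbb{\tilde{E}}\!\left[\left\vert J_{s_{1},\ldots,s_{\left\vert \beta\right\vert }}^{\beta}\right\vert ^{p}\right]\leq\mathbb{\tilde{E}}\!\left[\left\vert D_{s_{1},\ldots,s_{\left\vert \beta\right\vert }}^{\beta}\!\left(\varphi(X_{t})e^{\xi_{t}}\right)\right\vert ^{p}\right].
\]
Taking the supremum over $0\leq s_{1}<\cdots<s_{\left\vert \beta\right\vert }\leq t$ on both sides and invoking the uniform moment bound of Lemma \ref{lem: BoundedIteratedMD1} closes the argument.

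The only point that deserves a sanity check is that the Stroock-Taylor expansion is applied at the maximal order $m+1$, which requires $\mathbb{D}^{m+1,2}$-regularity of $\varphi(X_{t})e^{\xi_{t}}$; this is exactly what the derivative counts imposed on $f,\sigma,h$ by \textbf{H}$(m)$, together with $\varphi\in C_{P}^{m+1}$, are tailored to deliver through Lemma \ref{lem: BoundedIteratedMD1}. Consequently I do not foresee any substantive obstacle: the heavy lifting has been done upstream, and the proposition follows by Clark-Ocone plus a one-line conditional Jensen estimate.
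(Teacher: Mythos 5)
Your proposal is correct and follows exactly the route of the paper's own (one-line) proof: identify the kernels via the Clark--Ocone representation in Theorem \ref{thm: ST formula}, apply conditional Jensen's inequality, and conclude with the uniform moment bounds of Lemma \ref{lem: BoundedIteratedMD1}. Nothing to add.
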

\begin{proof}
It is a straightforward combination of Theorem \ref{thm: ST formula},
Jensen's inequality for conditional expectations and Lemma \ref{lem: BoundedIteratedMD1}. 
\end{proof}
\begin{lem}
\label{lem: RegularityKernels}Assume that\textbf{ H}$(1)$ holds
and $\varphi\in C_{P}^{2}$. Then, the kernels $J^{r}=\{J_{s}^{r},s\in\left[0,t\right]\},r=1,...,d_{V}$
in the martingale representation of $\varphi(X_{t})e^{\xi_{t}}$ satisfy
the following Hölder continuity property: 
\[
\mathbb{\tilde{E}}\left[\left\vert J_{s}^{r}-J_{u}^{r}\right\vert ^{2p}\right]\leq C\left(s-u\right)^{p},\qquad0\leq u\leq s\leq t,
\]
for $p\geq1.$ 
\end{lem}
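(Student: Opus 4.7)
The plan is to use the Clark--Ocone formula (second statement of Theorem~\ref{thm: Integral Representation}) to represent the kernels as $J_s^r = \mathbb{\tilde{E}}[D_s^r F \mid \mathcal{H}_s^t]$ with $F = \varphi(X_t)e^{\xi_t}$, which belongs to $\mathbb{D}^{2,\infty}$ under $\mathbf{H}(1)$ and $\varphi\in C_P^{2}$ by Lemma~\ref{lem: BoundedIteratedMD1} applied with $m=1$. I would then decompose
\[
J_s^r - J_u^r = \underbrace{\mathbb{\tilde{E}}\bigl[D_s^r F - D_u^r F \mid \mathcal{H}_s^t\bigr]}_{=:A_{u,s}} \; + \; \underbrace{\mathbb{\tilde{E}}\bigl[D_u^r F \mid \mathcal{H}_s^t\bigr] - \mathbb{\tilde{E}}\bigl[D_u^r F \mid \mathcal{H}_u^t\bigr]}_{=:B_{u,s}}
\]
and estimate each summand separately.

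For the martingale increment $B_{u,s}$ (note that $v\mapsto \mathbb{\tilde{E}}[D_u^r F \mid \mathcal{H}_v^t]$ is an $\mathbb{H}^t$-martingale for $v\geq u$, since under $\tilde P$ the processes $V$ and $Y$ are independent), applying the Clark--Ocone formula once more to $D_u^r F \in \mathbb{D}^{1,\infty}$ yields
\[
B_{u,s} = \sum_{r'=1}^{d_V}\int_u^s \mathbb{\tilde{E}}\bigl[D_{w,u}^{r',r} F \mid \mathcal{H}_w^t\bigr]\,dV_w^{r'}.
\]
Burkholder--Davis--Gundy, followed by H\"older in the time variable and Jensen's inequality for the inner conditional expectation, then gives
\[
\mathbb{\tilde{E}}\bigl[|B_{u,s}|^{2p}\bigr] \leq C\,(s-u)^{p-1}\int_u^s \sup_{r'}\mathbb{\tilde{E}}\bigl[|D_{w,u}^{r',r} F|^{2p}\bigr]\,dw \leq C\,(s-u)^p,
\]
where the uniform bound on the second-order Malliavin derivative of $F$ is provided by Lemma~\ref{lem: BoundedIteratedMD1} with $m=1$.

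For $A_{u,s}$, Jensen's inequality reduces the task to bounding $\mathbb{\tilde{E}}[|D_s^r F - D_u^r F|^{2p}]$. The product rule (Proposition~\ref{prop: ChainRule}) together with Lemma~\ref{lem: MallDerExponential} gives
\[
D_s^r F = \bigl[\varphi'(X_t) D_s^r X_t + \varphi(X_t) D_s^r \xi_t\bigr]\,e^{\xi_t}.
\]
Subtracting the corresponding expression at $u$, using Lemma~\ref{lem: Z_t^p_Integrability} together with Remark~\ref{rem: M} to control the exponential and polynomial-growth factors in $L^{q}(\tilde P)$ for some $q>2p$, and applying H\"older, the problem reduces to the two estimates
\[
\mathbb{\tilde{E}}\bigl[|D_s^r X_t - D_u^r X_t|^{q}\bigr] \leq C(s-u)^{q/2}, \qquad \mathbb{\tilde{E}}\bigl[|D_s^r \xi_t - D_u^r \xi_t|^{q}\bigr] \leq C(s-u)^{q/2}.
\]
The first is the classical Hölder regularity in the initial time of the variation equation satisfied by $D_s^r X_t$ on $[s,t]$: after isolating the boundary term $\sigma_{\cdot,r}(X_s)-\sigma_{\cdot,r}(X_u)$ and the drift/diffusion contributions on $[u,s]$ (both of order $(s-u)^{1/2}$), Gr\"onwall's lemma on $[s,t]$ together with Lemma~\ref{lem: MomentMalliavinDerivative} closes the estimate with constants uniform in $(u,s,t)$. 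The second estimate is obtained by commuting $D$ with the $dY$-integral (Remark~\ref{rem:CommuteDandY}), applying the chain rule to $h_i(X_v)$, and then using BDG together with the already-proved regularity of $D_\cdot^r X_v$.

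The technically delicate step is this Hölder control of $s\mapsto D_s^r X_t$ in $L^{q}(\tilde P)$, since the ``initial condition'' of the variation equation shifts with $s$ and one needs to keep the Gr\"onwall constants uniform on $[0,t]$; once it is in place, adding the bounds for $A_{u,s}$ and $B_{u,s}$ delivers the claimed $C(s-u)^p$ estimate on $\mathbb{\tilde E}[|J_s^r-J_u^r|^{2p}]$.
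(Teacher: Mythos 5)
Your proposal is correct and follows essentially the same route as the paper: the same Clark--Ocone representation, the same splitting of $J_s^r-J_u^r$ into a derivative-difference term and a martingale-increment term, the same product-rule reduction of the first term to the H\"older continuity of $s\mapsto D_s^r X_t$ in $L^q(\tilde P)$, and the same BDG argument for the second term (the paper bounds the representation kernels via Proposition \ref{prop: UnifBoundKernels}, which is the same uniform second-derivative bound you invoke through Lemma \ref{lem: BoundedIteratedMD1}). No gaps.
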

\begin{proof}
The idea is to use the Clark-Ocone formula, Theorem \ref{thm: Integral Representation}.
That is, one has the following representation 
\[
J_{s}^{r}=\mathbb{\tilde{E}}\left[D_{s}^{r}\left\{ \varphi\left(X_{t}\right)e^{\xi_{t}}\right\} |\mathcal{H}_{s}^{t}\right],\qquad0\leq s\leq t,
\]
where $D_{s}^{r}$ denotes the Malliavin derivative with respect to
$V^{r}.$ Hence, we can write 
\begin{align*}
J_{s}^{r}-J_{u}^{r} & =\mathbb{\tilde{E}}\left[D_{s}^{r}\left\{ \varphi\left(X_{t}\right)e^{\xi_{t}}\right\} -D_{u}^{r}\left\{ \varphi\left(X_{t}\right)e^{\xi_{t}}\right\} |\mathcal{H}_{s}^{t}\right]\\
 & \quad+\mathbb{\tilde{E}}\left[D_{u}^{r}\left\{ \varphi\left(X_{t}\right)e^{\xi_{t}}\right\} |\mathcal{H}_{s}^{t}\right]-\mathbb{\tilde{E}}\left[D_{u}^{r}\left\{ \varphi\left(X_{t}\right)e^{\xi_{t}}\right\} |\mathcal{H}_{u}^{t}\right]\\
 & \triangleq A_{1}+A_{2}.
\end{align*}
For the term $A_{1},$ note that we can write 
\begin{align*}
 & D_{s}^{r}\left\{ \varphi\left(X_{t}\right)e^{\xi_{t}}\right\} -D_{u}^{r}\left\{ \varphi\left(X_{t}\right)e^{\xi_{t}}\right\} \\
 & =e^{\xi_{t}}\left(D_{s}^{r}\varphi\left(X_{t}\right)-D_{u}^{r}\varphi\left(X_{t}\right)\right)+\varphi\left(X_{t}\right)\left(D_{s}^{r}e^{\xi_{t}}-D_{u}^{r}e^{\xi_{t}}\right)\\
 & =e^{\xi_{t}}\sum_{j=1}^{d_{X}}\partial_{j}\varphi\left(X_{t}\right)\left(D_{s}^{r}X_{t}^{j}-D_{u}^{r}X_{t}^{j}\right)+e^{\xi_{t}}\varphi\left(X_{t}\right)\left(D_{s}^{r}\xi_{t}-D_{u}^{r}\xi_{t}\right),
\end{align*}
and 
\begin{align*}
D_{s}^{r}\xi_{t}-D_{u}^{r}\xi_{t} & =\sum_{i=1}^{d_{Y}}\sum_{k=1}^{d_{X}}\int_{0}^{t}\partial_{k}h^{i}\left(X_{v}\right)\left(D_{s}^{r}X_{v}^{j}-D_{u}^{r}X_{v}^{j}\right)dY_{v}^{i}\\
 & \quad+\frac{1}{2}\sum_{i=1}^{d_{Y}}\sum_{k=1}^{d_{X}}\int_{0}^{t}\partial_{k}\{h^{i}\left(X_{v}\right)^{2}\}\left(D_{s}^{r}X_{v}^{j}-D_{u}^{r}X_{v}^{j}\right)dv.
\end{align*}
Hence, the result follows from the fact that 
\[
\mathbb{\tilde{E}}\left[\left\vert D_{s}^{r}X_{v}^{j}-D_{u}^{r}X_{v}^{j}\right\vert ^{2p}\right]\leq C\left(s-u\right)^{p},
\]
$D_{s}^{r}X_{t}^{j}$ satisfies an evolution equation drive by a Brownian
motion, see Section 2.2.2 in Nualart \cite{Nu06}. For the term $A_{2}$
the result follows from the martingale representation theorem, Theorem
\ref{thm: Integral Representation}, applied to the random variable
$D_{u}\left\{ \varphi\left(X_{t}\right)e^{\xi_{t}}\right\} \in L^{2}\left(\Omega,\mathcal{H}_{t}^{t},\tilde{P}\right)$
which yields 
\[
D_{u}^{r}\left\{ \varphi\left(X_{t}\right)e^{\xi_{t}}\right\} =\mathbb{\tilde{E}}\left[\text{\ensuremath{D_{u}^{r}\left\{ \varphi\left(X_{t}\right)e^{\xi_{t}}\right\} }\ensuremath{|}}\mathcal{H}_{0}^{t}\right]+\sum_{r_{1}=1}^{d_{V}}\int_{0}^{t}G_{v}^{r_{1}}dV_{v}^{r_{1}},
\]
and, hence, $A_{2}=\sum_{r_{1}=1}^{d_{V}}\int_{u}^{s}G_{v}^{r_{1}}dV_{v}^{r_{1}}$
and 
\begin{align*}
\mathbb{\tilde{E}}\left[\left|A_{2}\right|^{2p}\right] & \leq C\sum_{r_{1}=1}^{d_{V}}\mathbb{\tilde{E}}\left[\left|\int_{u}^{s}\left|G_{v}^{r_{1}}\right|^{2}dv\right|^{p}\right]\\
 & \leq C\left(s-u\right)^{p-1}\sum_{r_{1}=1}^{d_{V}}\int_{u}^{s}\mathbb{\tilde{E}}\left[\left|G_{v}^{r_{1}}\right|^{2p}\right]dv\\
 & \leq C\left(s-u\right)^{p}\sum_{r_{1}=1}^{d_{V}}\sup_{0\leq v\leq t}\mathbb{\tilde{E}}\left[\left|G_{v}^{r_{1}}\right|^{2p}\right]\\
 & \leq C\left(s-u\right)^{p},
\end{align*}
where in the last inequality we have used Proposition \ref{prop: UnifBoundKernels}. 
\end{proof}
\begin{rem}
\label{rem: HighRegularityKernel}If \textbf{H}$(m)$ holds and $\varphi\in C_{P}^{m+1}$,
using the same reasonings as in Lemma \ref{lem: RegularityKernels},
one can show that the kernels $J^{\beta},\beta\in\mathcal{M}_{m}\left(S_{1}\right)$
in the Stroock-Taylor formula for $\varphi(X_{t})e^{\xi_{t}}$ satisfy
the following Hölder continuity property: 
\[
\mathbb{\tilde{E}}\left[\left\vert J_{s_{1},...,s,...s_{\left|\beta\right|}}^{\beta}-J_{s_{1},...,s_{i-1},u,s_{i+1},...s_{\left|\beta\right|}}^{\beta}\right\vert ^{2p}\right]\leq C\left|s-u\right|^{p},
\]
for $p\geq1,$ $s_{i-1}\leq u\leq s\leq s_{i+1}$, $i=2,...,m-1$. 
\end{rem}

\subsection{\label{subsec:BackwardMart}Backward martingales estimates}

In this section we start reviewing some basic concepts of backward
Itô integration that can be found, for instance, in Pardoux and Protter
\cite{ParPro87}, Bensoussan \cite{Ben92} and Applebaum \cite{App09}.
Then we compute some technical estimates related to products of backward
Itô integrals and backward stochastic exponentials that will be useful
in the next section.

We know that under $\tilde{P}$ the observation process $Y$ is a
Brownian motion with respect to the filtration $\mathcal{\mathbb{Y}}$.
For fixed $t\geq0,$ we can consider the process $\overleftarrow{Y}=\{\overleftarrow{Y}_{s}\triangleq Y_{s}-Y_{t}\}_{0\leq s\leq t}$
which is a Brownian motion with respect to the backward filtration
\[
\mathcal{\mathbb{Y}}^{t}=\left\{ \mathcal{Y}_{s}^{t}\triangleq\sigma\left(\overleftarrow{Y}_{u},s\leq u\leq t\right)\vee\mathcal{N}\right\} _{0\leq s\leq t},
\]
where $\mathcal{N}$ are all the $P$-null sets of $(\Omega,\mathcal{F},P)$.
We can also consider the filtration $\mathbb{Y}^{0,V}=\left\{ \mathcal{Y}_{s}^{0,V}\triangleq\mathcal{F}_{t}^{0,V}\vee\mathcal{Y}_{s}\right\} _{0\leq s\leq t}$
and the backward filtration $\mathbb{Y}^{0,V,t}=\{\mathcal{Y}_{s}^{0,V,t}\triangleq\mathcal{F}_{t}^{0,V}\vee\mathcal{Y}_{s}^{t}\}$$_{0\leq s\leq t}$.
As $X$$_{0}$ and $V$ are independent of $Y$ under $\tilde{P}$,
we also have that $Y$ is a $\mathbb{Y}^{0,V}$-Brownian motion and
$\overleftarrow{Y}$ is $\mathbb{Y}^{0,V,t}$-Brownian motion.

If $\eta=\left\{ \eta_{s}^{1},...,\eta_{s}^{d_{Y}}\right\} _{0\leq s\leq t}$
is a square integrable measurable process adapted to $\mathcal{\mathbb{Y}}^{0,V,t}$
we can define the backward Itô integral of $\eta$ with respect to
$\overleftarrow{Y}$ by 
\begin{align*}
\int_{s}^{t}\eta_{u}d\overleftarrow{Y}_{u} & \triangleq\sum_{i=1}^{d_{Y}}\int_{s}^{t}\eta_{u}^{i}d\overleftarrow{Y^{i}}_{u}\\
 & \triangleq L^{2}(\tilde{P})-\lim_{\tau\in\Pi\left(t\right),\left|\tau\right|\rightarrow0}\sum_{i=1}^{d_{Y}}\sum_{j=0}^{n-1}\eta_{t_{j+1}}\left(\overleftarrow{Y^{i}}_{t_{j+1}\lor s}-\overleftarrow{Y^{i}}_{t_{j}\lor s}\right),\\
 & =L^{2}(\tilde{P})-\lim_{\tau\in\Pi\left(t\right),\left|\tau\right|\rightarrow0}\sum_{i=1}^{d_{Y}}\sum_{j=0}^{n-1}\eta_{t_{j+1}}\left(Y_{t_{j+1}\lor s}^{i}-Y_{t_{j}\lor s}^{i}\right).
\end{align*}

\begin{rem}
\label{rem: Backward Ito integral}If a square integrable process
$\theta=\left\{ \theta_{u}\right\} _{0\leq u\leq t}$ is simultaneously
adapted to $\mathbb{Y}^{0,V}$and $\mathcal{\mathbb{Y}}^{0,V,t}$
both, the Itô and the backward Itô integrals, can be defined over
the same interval but, in general, they will be different. However,
if $\theta_{u}$ is measurable with respect to respect to $\mathcal{Y}_{0}^{0,V}=\mathcal{Y}_{t}^{0,V,t}=\mathcal{F}_{t}^{0,V}$
for all $0\leq u\leq t$, then both integrals coincide. In fact, they
coincide with the Stratonovich integral, see Pardoux and Protter \cite{ParPro87}.
This means that in the statement of Lemma \ref{lem: Main Backward}
we can change all backward Itô integrals by Itô integrals and the
estimates will hold true. However, in the proof of Lemma \ref{lem: Main Backward}
we use the properties of the backward integral and for that reason
we keep the notation of backward integration. 
\end{rem}
The backward Itô integral is analogous to the Itô integral. In particular,
the backward Itô integral has zero expectation and it is a backward
martingale with respect to $\mathcal{\mathbb{Y}}^{0,V,t}$, that is
\[
\mathbb{\tilde{E}}\left[\int_{s_{1}}^{t}\eta_{u}d\overleftarrow{Y}_{u}|\mathcal{Y}_{s_{2}}^{0,V,t}\right]=\int_{s_{2}}^{t}\eta_{u}d\overleftarrow{Y}_{u},\quad0\leq s_{1}<s_{2}\leq t.
\]
A backward Itô process is a process of the following form 
\[
Z_{s}=Z_{t}+\int_{s}^{t}\nu_{u}du+\int_{s}^{t}\eta_{u}d\overleftarrow{Y}_{u},\quad0\leq s\leq t,
\]
where $\nu$ and $\eta$ are two square integrable, measurable and
$\mathcal{\mathbb{Y}}^{0,V,t}$-adapted processes of the appropriate
dimensions. For backward Itô processes and $f$$\in C^{1,2}\left(\left(0,t\right)\times\mathbb{R}^{d_{Z}};\mathbb{R}\right)$
we have the following Itô formula, see Bensoussan \cite{Ben92}, 
\begin{align*}
f\left(s,Z_{s}\right) & =f\left(t,Z_{t}\right)+\int_{s}^{t}\left\{ -\partial_{t}f\left(u,Z_{u}\right)+\nu_{u}Df\left(u,Z_{u}\right)+\frac{1}{2}\text{tr}\left(D^{2}f\left(u,Z_{u}\right)\eta_{u}\eta_{u}^{T}\right)\right\} du\\
 & \quad+\int_{s}^{t}Df\left(u,Z_{u}\right)\eta_{u}d\overleftarrow{Y}_{u},
\end{align*}
where $D$ and $D^{2}$ stand for the gradient and the Hessian, respectively,
with respect to the space variables. As a corollary, one gets the
integration by parts formula 
\begin{align}
\left(\int_{s}^{t}\eta_{u}^{i}d\overleftarrow{Y^{i}}_{u}\right)\left(\int_{s}^{t}\eta_{u}^{j}d\overleftarrow{Y^{j}}_{u}\right) & =\int_{s}^{t}\left(\int_{u}^{t}\eta_{v}^{i}d\overleftarrow{Y^{i}}_{v}\right)\eta_{u}^{j}d\overleftarrow{Y^{j}}_{u}\label{eq: Backward_IBP}\\
 & \quad+\int_{s}^{t}\left(\int_{u}^{t}\eta_{v}^{j}d\overleftarrow{Y^{j}}_{v}\right)\eta_{u}^{i}d\overleftarrow{Y^{i}}_{u}\nonumber \\
 & \quad+\mathbf{1}_{\{i=j\}}\int_{s}^{t}\eta_{u}^{i}\eta_{u}^{j}du.\nonumber 
\end{align}
and 
\begin{equation}
\tilde{\mathbb{E}}\left[\left(\int_{s}^{t}\eta_{u}^{i}d\overleftarrow{Y^{i}}_{u}\right)\left(\int_{s}^{t}\eta_{u}^{j}d\overleftarrow{Y^{j}}_{u}\right)\right]=\mathbf{1}_{\{i=j\}}\tilde{\mathbb{E}}\left[\int_{s}^{t}\eta_{u}^{i}\eta_{u}^{j}du\right].\label{eq: EquBackMartVariation}
\end{equation}
Let $\phi\in\mathcal{B}_{b}\left(\mathbb{R}^{d_{X}};\mathbb{R}^{d_{Y}}\right)$
be a bounded measurable function and let $M^{t}\left(\phi\right)=\left\{ M_{s}^{t}\left(\phi\right)\right\} _{0\leq s\leq t}$
be the process 
\[
M_{s}^{t}\left(\phi\right)=\exp\left(\sum_{i=1}^{d_{Y}}\int_{s}^{t}\phi_{i}(X_{u})d\overleftarrow{Y_{u}^{i}}-\frac{1}{2}\sum_{i=1}^{d_{Y}}\int_{s}^{t}\phi_{i}^{2}\left(X_{u}\right)du\right).
\]
It is easy to show that $M^{t}$$\left(\phi\right)$ is a backward
martingale with respect to $\mathcal{\mathbb{Y}}^{0,V,t}$ and applying
the backward Itô formula one finds the following formula for the increments
of $M^{t}$$\left(\phi\right)$ 
\begin{equation}
M_{s_{1}}^{t}\left(\phi\right)=M_{s_{2}}^{t}\left(\phi\right)+\sum_{i=1}^{d_{Y}}\int_{s_{1}}^{s_{2}}M_{u}^{t}\left(\phi\right)\phi_{i}\left(X_{u}\right)d\overleftarrow{Y_{u}^{i}}.\label{eq: EquBackMartIncrem}
\end{equation}
Moreover, by the same reasoning as in Lemma \ref{lem: Z_t^p_Integrability},
we have for all $p\geq1$ 
\begin{equation}
\mathbb{\tilde{E}}\left[\left|M_{s}^{t}\left(\phi\right)\right|^{p}\right]<\infty.\label{eq:MomentsMts}
\end{equation}

\begin{lem}
\label{lem: Backward Martingale}Let $0\leq s_{1}\leq s_{2}\leq s_{3}\leq t$,
$\Psi$ be $\mathcal{Y}_{s_{3}}^{0,V,t}$-measurable random variable
and $\theta=\left\{ \theta_{u}\right\} _{0\leq u\leq t}$ a square
integrable and measurable process such that $\theta_{u}$ is measurable
with respect to $\mathcal{Y}_{s_{3}}^{0,V,t}$ for all $s_{2}\leq u\leq s_{3}$.
Then, 
\[
\mathbb{\tilde{E}}\left[\Psi M_{s_{1}}^{t}\left(\phi\right)\int_{s_{2}}^{s_{3}}\theta_{u}d\overleftarrow{Y_{u}^{i}}\right]=\mathbb{\tilde{E}}\left[\Psi M_{s_{3}}^{t}\left(\phi\right)\int_{s_{2}}^{s_{3}}\phi_{i}\left(X_{u}\right)\theta_{u}du\right].
\]
\end{lem}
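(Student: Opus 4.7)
The plan is to expand $M_{s_1}^t(\phi)$ around $M_{s_3}^t(\phi)$ using the backward-It\^o increment formula (\ref{eq: EquBackMartIncrem}),
\[
M_{s_1}^t(\phi) = M_{s_3}^t(\phi) + \sum_{j=1}^{d_Y} \int_{s_1}^{s_3} M_u^t(\phi)\,\phi_j(X_u)\,d\overleftarrow{Y_u^j},
\]
and to show, after substitution into the left-hand side, that the only surviving contribution is the quadratic covariation of the two backward-It\^o integrals supported on $[s_2,s_3]$. A Fubini step combined with the backward-martingale property of $M^t(\phi)$ will then move $M_u^t(\phi)$ (inside the resulting time-integral) to $M_{s_3}^t(\phi)$, producing the right-hand side.

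I would carry this out in three stages. First, the $M_{s_3}^t(\phi)$-contribution vanishes: since $\Psi M_{s_3}^t(\phi)$ is $\mathcal{Y}_{s_3}^{0,V,t}$-measurable, conditioning on $\mathcal{Y}_{s_3}^{0,V,t}$ and using that the backward-It\^o integral $\int_{s_2}^{s_3}\theta_u\,d\overleftarrow{Y_u^i}$ has vanishing conditional expectation given $\mathcal{Y}_{s_3}^{0,V,t}$, the term drops out. Second, splitting $\int_{s_1}^{s_3} = \int_{s_1}^{s_2}+\int_{s_2}^{s_3}$, the $[s_1,s_2]$-piece also vanishes: the multiplicative factor $\Psi\int_{s_2}^{s_3}\theta_u\,d\overleftarrow{Y_u^i}$ is $\mathcal{Y}_{s_2}^{0,V,t}$-measurable, and $\mathbb{\tilde{E}}\!\left[\int_{s_1}^{s_2}M_u^t(\phi)\phi_j(X_u)\,d\overleftarrow{Y_u^j}\,\big|\,\mathcal{Y}_{s_2}^{0,V,t}\right]=0$ by the backward-martingale property again.

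The heart of the proof is then the remaining term
\[
\sum_{j=1}^{d_Y}\mathbb{\tilde{E}}\!\left[\Psi\Big(\int_{s_2}^{s_3}\theta_u\,d\overleftarrow{Y_u^i}\Big)\Big(\int_{s_2}^{s_3}M_u^t(\phi)\phi_j(X_u)\,d\overleftarrow{Y_u^j}\Big)\right].
\]
To it I would apply the backward integration-by-parts formula (\ref{eq: Backward_IBP}) to the product inside the expectation. Because $\Psi\in\mathcal{Y}_{s_3}^{0,V,t}\subset\mathcal{Y}_u^{0,V,t}$ for $u\le s_3$, $\Psi$ can be pushed inside the two iterated backward-It\^o integrals produced by (\ref{eq: Backward_IBP}), making them backward-adapted; they are then backward martingales that vanish at $s_3$, hence have zero expectation. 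Only the diagonal ($j=i$) covariation term survives, yielding
\[
\mathbb{\tilde{E}}\!\left[\Psi\int_{s_2}^{s_3}\theta_u M_u^t(\phi)\phi_i(X_u)\,du\right].
\]
By Fubini and the backward-martingale identity $\mathbb{\tilde{E}}[M_u^t(\phi)\,|\,\mathcal{Y}_{s_3}^{0,V,t}]=M_{s_3}^t(\phi)$ for $u\le s_3$---together with the $\mathcal{Y}_{s_3}^{0,V,t}$-measurability of $\Psi$, $\theta_u$ and $\phi_i(X_u)$ for $u\in[s_2,s_3]$---the factor $M_u^t(\phi)$ can be replaced inside the time-integral by $M_{s_3}^t(\phi)$, and the expression reassembles into the right-hand side of the claim.

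The main obstacle will be the careful bookkeeping of adaptedness and integrability in the backward integration-by-parts step: one has to verify that integrands such as $\Psi\big(\int_u^{s_3}\theta_v\,d\overleftarrow{Y_v^i}\big) M_u^t(\phi)\phi_j(X_u)$ are indeed $\mathcal{Y}_u^{0,V,t}$-adapted and lie in the relevant $L^2$-space, so that the backward-It\^o integrals are genuine backward martingales and Fubini applies. This follows from the assumed square-integrability of $\theta$, the moment bound (\ref{eq:MomentsMts}) on $M_u^t(\phi)$, and the boundedness of $\phi$, but it is where the detailed work lies.
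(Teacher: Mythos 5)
Your proposal is correct and follows essentially the same route as the paper's proof: the only cosmetic difference is that the paper first conditions on $\mathcal{Y}_{s_{2}}^{0,V,t}$ to replace $M_{s_{1}}^{t}(\phi)$ by $M_{s_{2}}^{t}(\phi)$ and then applies the increment formula on $[s_{2},s_{3}]$, whereas you expand over $[s_{1},s_{3}]$ and kill the $[s_{1},s_{2}]$ piece afterwards by the same backward-martingale conditioning. The remaining steps (vanishing of the $M_{s_{3}}^{t}(\phi)$ term, extraction of the diagonal covariation via backward integration by parts with $\Psi$ pushed inside, and the final replacement of $M_{u}^{t}(\phi)$ by $M_{s_{3}}^{t}(\phi)$ through conditioning on $\mathcal{Y}_{s_{3}}^{0,V,t}$) coincide with the paper's argument.
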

\begin{proof}
By the backward martingale properties of $M_{s}^{t}\left(\phi\right)$
and equation $\left(\ref{eq: EquBackMartIncrem}\right)$ we can write
\begin{align*}
\mathbb{\tilde{E}}\left[\Psi M_{s_{1}}^{t}\left(\phi\right)\int_{s_{2}}^{s_{3}}\theta_{u}d\overleftarrow{Y_{u}^{i}}\right] & =\mathbb{\tilde{E}}\left[\Psi\mathbb{\tilde{E}}\left[M_{s_{1}}^{t}\left(\phi\right)|\mathcal{Y}_{s_{2}}^{0,V,t}\right]\int_{s_{2}}^{s_{3}}\theta_{u}d\overleftarrow{Y_{u}^{i}}\right]\\
 & =\mathbb{\tilde{E}}\left[\Psi M_{s_{2}}^{t}\left(\phi\right)\int_{s_{2}}^{s_{3}}\theta_{u}d\overleftarrow{Y_{u}^{i}}\right]\\
 & =\mathbb{\tilde{E}}\left[\Psi M_{s_{3}}^{t}\left(\phi\right)\int_{s_{2}}^{s_{3}}\theta_{u}d\overleftarrow{Y_{u}^{i}}\right]\\
 & \quad+\sum_{i_{1}=1}^{d_{Y}}\mathbb{\tilde{E}}\left[\Psi\left(\int_{s_{2}}^{s_{3}}M_{u}^{t}\left(\phi\right)\phi_{i_{1}}\left(X_{u}\right)d\overleftarrow{Y_{u}^{i_{1}}}\right)\left(\int_{s_{2}}^{s_{3}}\theta_{u}d\overleftarrow{Y_{u}^{i}}\right)\right]
\end{align*}
Next, note that 
\[
\mathbb{\tilde{E}}\left[\Psi M_{s_{3}}^{t}\left(\phi\right)\int_{s_{2}}^{s_{3}}\theta_{u}d\overleftarrow{Y_{u}^{i}}\right]=\mathbb{\tilde{E}}\left[\Psi M_{s_{3}}^{t}\left(\phi\right)\mathbb{\tilde{E}}\left[\int_{s_{2}}^{s_{3}}\theta_{u}d\overleftarrow{Y_{u}^{i}}|\mathcal{Y}_{s_{3}}^{0,V,t}\right]\right]=0,
\]
and using equation $\left(\ref{eq: EquBackMartVariation}\right)$
we have 
\begin{align*}
 & \mathbb{\tilde{E}}\left[\Psi\left(\int_{s_{2}}^{s_{3}}M_{u}^{t}\left(\phi\right)\phi_{i_{1}}\left(X_{u}\right)d\overleftarrow{Y_{u}^{i_{1}}}\right)\left(\int_{s_{2}}^{s_{3}}\theta_{u}d\overleftarrow{Y_{u}^{i}}\right)\right]\\
 & =\mathbf{1}_{\left\{ i_{1}=i\right\} }\mathbb{\tilde{E}}\left[\Psi\left(\int_{s_{2}}^{s_{3}}M_{u}^{t}\left(\phi\right)\phi_{i_{1}}\left(X_{u}\right)\theta_{u}du\right)\right]\\
 & =\mathbf{1}_{\left\{ i_{1}=i\right\} }\mathbb{\tilde{E}}\left[\Psi\left(\int_{s_{2}}^{s_{3}}\mathbb{\tilde{E}}\left[M_{u}^{t}\left(\phi\right)|\mathcal{Y}_{s_{3}}^{0,V,t}\right]\phi_{i_{1}}\left(X_{u}\right)\theta_{u}du\right)\right]\\
 & =\mathbf{1}_{\left\{ i_{1}=i\right\} }\mathbb{\tilde{E}}\left[\Psi M_{s_{3}}^{t}\left(\phi\right)\left(\int_{s_{2}}^{s_{3}}\phi_{i_{1}}\left(X_{u}\right)\theta_{u}du\right)\right].
\end{align*}
Hence, the result follows. 
\end{proof}
\begin{lem}
\label{lem: Backward Martingale II}Let $0\leq s_{1}\leq s_{2}\leq s_{3}\leq t$,
$\Psi$ be $\mathcal{Y}_{s_{3}}^{0,V,t}$-measurable random variable
and $\theta^{1}=\left\{ \theta_{u}^{1}\right\} _{0\leq u\leq t}$
and $\theta^{1}=\left\{ \theta_{u}^{1}\right\} _{0\leq u\leq t}$
be two square integrable measurable processes such that $\theta_{u}^{1}$
and $\theta_{u}^{2}$ are also measurable with respect to $\mathcal{Y}_{s_{3}}^{0,V,t}$
for all $s_{2}\leq u\leq s_{3}$. Then, 
\begin{align*}
 & \mathbb{\tilde{E}}\left[\Psi M_{s_{1}}^{t}\left(\phi\right)\left(\int_{s_{2}}^{s_{3}}\theta_{u}^{1}d\overleftarrow{Y_{u}^{i_{1}}}\right)\left(\int_{s_{2}}^{s_{3}}\theta_{u}^{2}d\overleftarrow{Y_{u}^{i_{2}}}\right)\right]\\
 & =\mathbb{\tilde{E}}\left[\Psi M_{s_{3}}^{t}\left(\phi\right)\int_{s_{2}}^{s_{3}}\phi_{i_{1}}\left(X_{u}\right)\theta_{u}^{1}\left(\int_{u}^{s_{3}}\phi_{i_{2}}\left(X_{v}\right)\theta_{v}^{2}dv\right)du\right]\\
 & \quad+\mathbb{\tilde{E}}\left[\Psi M_{s_{3}}^{t}\left(\phi\right)\int_{s_{2}}^{s_{3}}\phi_{i_{2}}\left(X_{u}\right)\theta_{u}^{2}\left(\int_{u}^{s_{3}}\phi_{i_{1}}\left(X_{v}\right)\theta_{v}^{1}dv\right)du\right]\\
 & \quad+\mathbf{1}_{\left\{ i_{1}=i_{2}\right\} }\mathbb{\tilde{E}}\left[\Psi M_{s_{3}}^{t}\left(\phi\right)\int_{s_{2}}^{s_{3}}\theta_{u}^{1}\theta_{u}^{2}du\right].
\end{align*}
\end{lem}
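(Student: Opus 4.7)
The plan is to reduce the statement to two successive applications of Lemma \ref{lem: Backward Martingale}, via a double use of the backward integration by parts formula $\left(\ref{eq: Backward_IBP}\right)$ together with careful conditioning on the backward filtration.

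First, I apply $\left(\ref{eq: Backward_IBP}\right)$ to the product of the two backward It\^o integrals inside the expectation, which yields the two iterated backward integrals
\begin{align*}
A & = \int_{s_2}^{s_3}\left(\int_u^{s_3}\theta_v^1\, d\overleftarrow{Y_v^{i_1}}\right)\theta_u^2\, d\overleftarrow{Y_u^{i_2}}, \\
B & = \int_{s_2}^{s_3}\left(\int_u^{s_3}\theta_v^2\, d\overleftarrow{Y_v^{i_2}}\right)\theta_u^1\, d\overleftarrow{Y_u^{i_1}},
\end{align*}
plus the Lebesgue covariation term $\mathbf{1}_{\{i_1=i_2\}}\int_{s_2}^{s_3}\theta_u^1\theta_u^2\, du$. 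Since $\Psi$ and this covariation term are both $\mathcal{Y}_{s_3}^{0,V,t}$-measurable, the backward martingale identity $\mathbb{\tilde{E}}[M_{s_1}^t(\phi)\,|\,\mathcal{Y}_{s_3}^{0,V,t}]=M_{s_3}^t(\phi)$ immediately produces the third summand of the right hand side.

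For the term $A$ I mimic the proof of Lemma \ref{lem: Backward Martingale}. The integrand $\tilde{\theta}_u\triangleq\left(\int_u^{s_3}\theta_v^1\, d\overleftarrow{Y_v^{i_1}}\right)\theta_u^2$ is $\mathcal{Y}_u^{0,V,t}$-measurable and hence $\mathcal{Y}_{s_2}^{0,V,t}$-measurable for $u\in[s_2,s_3]$, so conditioning on $\mathcal{Y}_{s_2}^{0,V,t}$ collapses $M_{s_1}^t(\phi)$ to $M_{s_2}^t(\phi)$; expanding $M_{s_2}^t(\phi)=M_{s_3}^t(\phi)+\sum_r\int_{s_2}^{s_3}M_u^t(\phi)\phi_r(X_u)\, d\overleftarrow{Y_u^r}$ via $\left(\ref{eq: EquBackMartIncrem}\right)$, the piece carrying $M_{s_3}^t(\phi)$ vanishes upon conditioning on $\mathcal{Y}_{s_3}^{0,V,t}$ because $\int_{s_2}^{s_3}\tilde{\theta}_u\, d\overleftarrow{Y_u^{i_2}}$ is a backward martingale increment with terminal value zero. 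A second application of $\left(\ref{eq: Backward_IBP}\right)$ to the remaining cross-product of backward integrals produces two new iterated backward integrals (each of which is killed in expectation against the $\mathcal{Y}_{s_3}^{0,V,t}$-measurable $\Psi$ by the same backward martingale argument) together with the $r=i_2$ Lebesgue covariation term
\begin{align*}
\mathbb{\tilde{E}}\left[\Psi\int_{s_2}^{s_3}M_u^t(\phi)\phi_{i_2}(X_u)\theta_u^2\left(\int_u^{s_3}\theta_v^1\, d\overleftarrow{Y_v^{i_1}}\right)du\right].
\end{align*}
Swapping the outer Lebesgue integral with the expectation by Fubini, and applying Lemma \ref{lem: Backward Martingale} for each fixed $u$ with $\Psi'=\Psi\phi_{i_2}(X_u)\theta_u^2$ and time triple $(u,u,s_3)$ converts the inner backward integral into $\int_u^{s_3}\phi_{i_1}(X_v)\theta_v^1\, dv$ and promotes $M_u^t(\phi)$ to $M_{s_3}^t(\phi)$, reproducing the second summand of the claim. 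The symmetric treatment of $B$ yields the first summand.

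The main obstacle is the bookkeeping of backward measurability: the iterated backward integrands are only $\mathcal{Y}_u^{0,V,t}$-measurable and not $\mathcal{Y}_{s_3}^{0,V,t}$-measurable, so Lemma \ref{lem: Backward Martingale} cannot be applied to $A$ or $B$ as a black box, and the pull-out and conditioning manoeuvres must be performed against the correct sigma-algebra at each step. The cancellation of the parasitic iterated backward integrals generated by the two uses of $\left(\ref{eq: Backward_IBP}\right)$ relies on combining the backward martingale property of $M^t(\phi)$ with that of backward It\^o integrals in tandem, and all the integrability required for Fubini and for the intermediate H\"older steps is supplied by $\left(\ref{eq:MomentsMts}\right)$ together with standard $L^p$ bounds on backward iterated integrals.
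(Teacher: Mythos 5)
Your proposal is correct and follows essentially the same route as the paper: a first application of the backward integration by parts formula $\left(\ref{eq: Backward_IBP}\right)$ to split the product into two iterated backward integrals plus the covariation term, the backward martingale property of $M^{t}\left(\phi\right)$ for the latter, and then Fubini combined with Lemma \ref{lem: Backward Martingale} applied at each fixed $u$ to convert the inner backward integrals into Lebesgue integrals against $\phi\left(X\right)$. Your explicit unpacking of why Lemma \ref{lem: Backward Martingale} cannot be invoked as a black box on the iterated integrals (the integrand is only $\mathcal{Y}_{u}^{0,V,t}$-measurable) is exactly the content the paper compresses into ``using the same reasonings as in Lemma \ref{lem: Backward Martingale}''.
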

\begin{proof}
Using the integration by parts formula $\left(\ref{eq: Backward_IBP}\right)$
we can write 
\begin{align*}
\mathbb{\tilde{E}}\left[\Psi M_{s_{1}}^{t}\left(\phi\right)\left(\int_{s_{2}}^{s_{3}}\theta_{u}^{1}d\overleftarrow{Y_{u}^{i_{1}}}\right)\left(\int_{s_{2}}^{s_{3}}\theta_{u}^{2}d\overleftarrow{Y_{u}^{i_{2}}}\right)\right] & =\mathbb{\tilde{E}}\left[\Psi M_{s_{1}}^{t}\left(\phi\right)\int_{s_{2}}^{s_{3}}\theta_{u}^{1}\left(\int_{u}^{s_{3}}\theta_{v}^{2}d\overleftarrow{Y_{v}^{i_{2}}}\right)d\overleftarrow{Y_{u}^{i_{1}}}\right]\\
 & \quad+\mathbb{\tilde{E}}\left[\Psi M_{s_{1}}^{t}\left(\phi\right)\int_{s_{2}}^{s_{3}}\theta_{u}^{2}\left(\int_{u}^{s_{3}}\theta_{v}^{1}d\overleftarrow{Y_{v}^{i_{1}}}\right)d\overleftarrow{Y_{u}^{i_{2}}}\right]\\
 & \quad+\mathbf{1}_{\left\{ i_{1}=i_{2}\right\} }\mathbb{\tilde{E}}\left[\Psi M_{s_{1}}^{t}\left(\phi\right)\int_{s_{2}}^{s_{3}}\theta_{u}^{1}\theta_{u}^{2}du\right].
\end{align*}
Then, using the same reasonings as in Lemma \ref{lem: Backward Martingale}
we get that 
\begin{align*}
\mathbb{\tilde{E}}\left[\Psi M_{s_{1}}^{t}\left(\phi\right)\int_{s_{2}}^{s_{3}}\theta_{u}^{1}\left(\int_{u}^{s_{3}}\theta_{v}^{2}d\overleftarrow{Y_{v}^{i_{2}}}\right)d\overleftarrow{Y_{u}^{i_{1}}}\right] & =\mathbb{\tilde{E}}\left[\Psi\int_{s_{2}}^{s_{3}}M_{u}^{t}\left(\phi\right)\phi_{i_{1}}\left(X_{u}\right)\theta_{u}^{1}\left(\int_{u}^{s_{3}}\theta_{v}^{2}d\overleftarrow{Y_{v}^{i_{2}}}\right)du\right]
\end{align*}
Next, by Fubini's theorem, Lemma \ref{lem: Backward Martingale} and
Fubini's theorem again we obtain 
\begin{align*}
 & \mathbb{\tilde{E}}\left[\Psi\int_{s_{2}}^{s_{3}}M_{u}^{t}\left(\phi\right)\phi_{i_{1}}\left(X_{u}\right)\theta_{u}^{1}\left(\int_{u}^{s_{3}}\theta_{v}^{2}d\overleftarrow{Y_{v}^{i_{2}}}\right)du\right]\\
 & =\int_{s_{2}}^{s_{3}}\mathbb{\tilde{E}}\left[\Psi\phi_{i_{1}}\left(X_{u}\right)\theta_{u}^{1}M_{u}^{t}\left(\phi\right)\left(\int_{u}^{s_{3}}\theta_{v}^{2}d\overleftarrow{Y_{v}^{i_{2}}}\right)\right]du\\
 & =\int_{s_{2}}^{s_{3}}\mathbb{\tilde{E}}\left[\Psi\phi_{i_{1}}\left(X_{u}\right)\theta_{u}^{1}M_{s_{3}}^{t}\left(\phi\right)\left(\int_{u}^{s_{3}}\phi_{i_{2}}\left(X_{u}\right)\theta_{v}^{2}dv\right)\right]du\\
 & =\mathbb{\tilde{E}}\left[\Psi M_{s_{3}}^{t}\left(\phi\right)\int_{s_{2}}^{s_{3}}\phi_{i_{1}}\left(X_{u}\right)\theta_{u}^{1}\left(\int_{u}^{s_{3}}\phi_{i_{2}}\left(X_{u}\right)\theta_{v}^{2}dv\right)\right]du
\end{align*}
By symmetry we get an analogous expression for the term $\int_{s_{2}}^{s_{3}}\theta_{u}^{2}\left(\int_{u}^{s_{3}}\theta_{v}^{1}d\overleftarrow{Y_{v}^{i_{1}}}\right)d\overleftarrow{Y_{u}^{i_{2}}}$
. Finally, for the last term we only need to take conditional expectation
with respect to $\mathcal{Y}_{s_{3}}^{0,V,t}$ and use that $M_{s}^{t}\left(\phi\right)$
is a $\mathbb{Y}^{0,V,t}$-martingale. 
\end{proof}
The next lemma is a well known generalization of Hölder's inequality. 
\begin{lem}[Generalized Hölder's inequality]
\label{lem: Generalized Holder}Let $p_{i}>1,i=1,...,m$ such that
$\sum_{i=1}^{m}\frac{1}{p_{i}}=1,$ and $X_{i}\in L^{p_{i}}(\Omega,\mathcal{F},\tilde{P}),i=1,...,m$.
Then, 
\[
\tilde{\mathbb{E}}\left[\left|\prod_{i=1}^{m}X_{i}\right|\right]\leq\prod_{i=1}^{m}\tilde{\mathbb{E}}\left[\left|X_{i}\right|^{p_{i}}\right]^{1/p_{i}}<\infty.
\]
\end{lem}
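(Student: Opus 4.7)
The plan is to argue by induction on $m$, with the classical Hölder inequality as the base case. For $m=2$, we have $1/p_1 + 1/p_2 = 1$ and the inequality $\tilde{\mathbb{E}}[|X_1 X_2|] \le \tilde{\mathbb{E}}[|X_1|^{p_1}]^{1/p_1} \tilde{\mathbb{E}}[|X_2|^{p_2}]^{1/p_2}$ is the usual Hölder inequality, which itself follows from Young's elementary inequality $ab \le a^{p}/p + b^{q}/q$ applied to the normalised variables $a = |X_1|/\|X_1\|_{p_1}$, $b=|X_2|/\|X_2\|_{p_2}$ and taking expectations. Finiteness of the right-hand side is immediate from the assumption $X_i \in L^{p_i}(\Omega,\mathcal F,\tilde P)$.

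For the inductive step, assume the statement holds for any $m-1$ exponents. Given $p_1,\dots,p_m>1$ with $\sum_{i=1}^{m} 1/p_i =1$, set $q \triangleq p_m/(p_m-1)$ so that $1/q + 1/p_m = 1$. Applying the base case to the pair $\prod_{i=1}^{m-1}|X_i|$ and $|X_m|$ with exponents $q$ and $p_m$ gives
\[
\tilde{\mathbb{E}}\!\left[\left|\prod_{i=1}^{m} X_i\right|\right] \le \tilde{\mathbb{E}}\!\left[\prod_{i=1}^{m-1}|X_i|^{q}\right]^{1/q}\tilde{\mathbb{E}}[|X_m|^{p_m}]^{1/p_m}.
\]
Next, define $q_i \triangleq p_i/q$ for $i=1,\dots,m-1$. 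A direct computation shows $\sum_{i=1}^{m-1} 1/q_i = q\sum_{i=1}^{m-1} 1/p_i = q(1-1/p_m)=1$, so the inductive hypothesis applies to the variables $|X_i|^{q} \in L^{q_i}$ with these exponents, yielding
\[
\tilde{\mathbb{E}}\!\left[\prod_{i=1}^{m-1} |X_i|^{q}\right] \le \prod_{i=1}^{m-1} \tilde{\mathbb{E}}[|X_i|^{q q_i}]^{1/q_i} = \prod_{i=1}^{m-1} \tilde{\mathbb{E}}[|X_i|^{p_i}]^{q/p_i}.
\]
Taking the $1/q$ power and substituting back yields the claim.

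There is really no major obstacle here; the result is classical. The only point to get right is the bookkeeping with the conjugate exponent $q$, in particular checking that the exponents $q_i = p_i/q$ satisfy $\sum 1/q_i =1$, so that the inductive hypothesis is applicable. Finiteness of the bound is preserved at each step by the $L^{p_i}$ assumptions, so one concludes both the inequality and the $<\infty$ assertion simultaneously.
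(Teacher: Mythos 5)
Your proof is correct: the paper states this lemma as well known and gives no proof of its own, and your induction on $m$ via the conjugate exponent $q=p_m/(p_m-1)$ is the standard textbook argument. The only point you leave implicit is that the rescaled exponents $q_i=p_i/q$ satisfy $q_i>1$ (which is needed to invoke the inductive hypothesis); this holds because for each $i\leq m-1$ one has $1/p_i<\sum_{j=1}^{m-1}1/p_j=1/q$, hence $p_i>q$.
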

\begin{lem}
\label{lem: Main Backward}Let $\tau=\left\{ 0=t_{0}<t_{1}<\cdots<t_{n}=t\right\} $
be a partition of $\left[0,t\right]$, $\phi\in\mathcal{B}_{b}\left(\mathbb{R}^{d_{X}};\mathbb{R}^{d_{Y}}\right)$,
$\Upsilon$$\in L^{p}(\Omega,\mathcal{Y}_{t}^{0,V,t},\tilde{P})$
for any $p\geq1$, $\beta_{s},$ be a deterministic processes satisfying
\begin{equation}
\left|\beta_{s}\mathbf{1}_{\left[t_{j},t_{j+1}\right]}\left(s\right)\right|\leq\delta^{m}\label{eq: BoundBeta}
\end{equation}
for some $m\in\mathbb{N}$ and $\theta^{1},\theta^{2},\kappa^{1},\kappa^{2},$
be stochastic processes measurable with respect to $\mathcal{Y}_{t}^{0,V,t}$,
such that 
\begin{align}
\sup_{0\leq s\leq t}\mathbb{\tilde{E}}\left[\left|\theta_{s}^{j}\right|^{p}\right] & <\infty,\quad j=1,2,\label{eq: MomentsTheta}\\
\sup_{0\leq s\leq t}\mathbb{\tilde{E}}\left[\left|\kappa_{s}^{j}\right|^{p}\right] & <\infty,\quad j=1,2,\nonumber 
\end{align}
for any $p\geq1$. Then: 
\begin{enumerate}
\item For $i\in\left\{ 1,...,d_{Y}\right\} $, we have that 
\[
\left|\mathbb{\tilde{E}}\left[\Upsilon M_{0}^{t}\left(\phi\right)\left(\int_{t_{j}}^{t_{j+1}}\beta_{s}d\overleftarrow{Y_{s}^{i_{1}}}\right)\left(\int_{t_{k}}^{t_{k+1}}\beta_{s}d\overleftarrow{Y_{s}^{i_{1}}}\right)\right]\right|\leq C\left\{ \mathbf{1}_{\left\{ j\neq k\right\} }\delta^{2m+2}+\mathbf{1}_{\left\{ j=k\right\} }\delta^{2m+1}\right\} .
\]
\item For $i,i_{1}\in\left\{ 1,...,d_{Y}\right\} $, we have that 
\begin{align*}
 & \left|\mathbb{\tilde{E}}\left[\Upsilon M_{0}^{t}\left(\phi\right)\left(\int_{t_{k+1}}^{t}\theta_{s}^{1}d\overleftarrow{Y_{s}^{i_{1}}}\right)\left(\int_{t_{j}}^{t_{j+1}}\beta_{s}d\overleftarrow{Y_{s}^{i}}\right)\left(\int_{t_{k}}^{t_{k+1}}\beta_{s}d\overleftarrow{Y_{s}^{i}}\right)\right]\right|\\
 & \leq C\left\{ \mathbf{1}_{\left\{ j\neq k\right\} }\delta^{2m+2}+\mathbf{1}_{\left\{ j=k\right\} }\delta^{2m+1}\right\} .
\end{align*}
\item For $i,i_{1},a_{1}\in\left\{ 1,...,d_{Y}\right\} $, we have that
\begin{align*}
 & \left|\mathbb{\tilde{E}}\left[\Upsilon M_{0}^{t}\left(\phi\right)\left(\int_{t_{j+1}}^{t}\theta_{s}^{1}d\overleftarrow{Y_{s}^{i_{1}}}\right)\left(\int_{t_{k+1}}^{t}\kappa_{s}^{1}d\overleftarrow{Y_{s}^{a_{1}}}\right)\left(\int_{t_{j}}^{t_{j+1}}\beta_{s}d\overleftarrow{Y_{s}^{i}}\right)\left(\int_{t_{k}}^{t_{k+1}}\beta_{s}d\overleftarrow{Y_{s}^{i}}\right)\right]\right|\\
 & \leq C\left\{ \mathbf{1}_{\left\{ j\neq k\right\} }\delta^{2m+2}+\mathbf{1}_{\left\{ j=k\right\} }\delta^{2m+1}\right\} .
\end{align*}
\item For $i,i_{1},i_{2}a_{1},a_{2}\in\left\{ 1,...,d_{Y}\right\} $, we
have that 
\begin{align*}
 & \left|\mathbb{\tilde{E}}\left[\Upsilon M_{0}^{t}\left(\phi\right)\prod_{l=1}^{2}\left\{ \left(\int_{t_{j+1}}^{t}\theta_{s}^{l}d\overleftarrow{Y_{s}^{i_{l}}}\right)\left(\int_{t_{k+1}}^{t}\kappa_{s}^{l}d\overleftarrow{Y_{s}^{a_{l}}}\right)\right\} \left(\int_{t_{j}}^{t_{j+1}}\beta_{s}d\overleftarrow{Y_{s}^{i}}\right)\left(\int_{t_{k}}^{t_{k+1}}\beta_{s}d\overleftarrow{Y_{s}^{i}}\right)\right]\right|\\
 & \leq C\left\{ \mathbf{1}_{\left\{ j\neq k\right\} }\delta^{2m+2}+\mathbf{1}_{\left\{ j=k\right\} }\delta^{2m+1}\right\} .
\end{align*}
\end{enumerate}
\end{lem}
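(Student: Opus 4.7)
The approach is to reduce each of the four statements to a product of deterministic-size Lebesgue factors by systematically applying Lemmas \ref{lem: Backward Martingale} and \ref{lem: Backward Martingale II} to the two short $\beta$-integrals. These lemmas convert a backward stochastic integral paired with $M_{s_1}^t(\phi)$ into a Lebesgue integral paired with $M_{s_3}^t(\phi)\phi_i(X_u)\,du$; once both $\beta$-integrals have been so converted, the pointwise bound $|\beta_s \mathbf{1}_{[t_j,t_{j+1}]}(s)| \leq \delta^m$ together with the boundedness of $\phi$ yields a factor of size $O(\delta^{m+1})$ per interval of length $\delta$. For statement (1), I first reduce to the case $j \leq k$ by symmetry. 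If $j<k$, I apply Lemma \ref{lem: Backward Martingale} with $s_1 = 0$, $s_2 = t_j$, $s_3 = t_{j+1}$, taking $\Psi = \Upsilon \int_{t_k}^{t_{k+1}} \beta_s d\overleftarrow{Y_s^i}$: this is admissible because $\Upsilon$ is $\mathcal{Y}_t^{0,V,t}=\mathcal{F}_t^{0,V}$-measurable and the $k$-integral is $\mathcal{Y}_{t_k}^{0,V,t}$-measurable, with $\mathcal{Y}_{t_k}^{0,V,t} \subseteq \mathcal{Y}_{t_{j+1}}^{0,V,t}$ because the backward filtration decreases in $s$. A second application with $s_1 = t_{j+1}$, $s_2 = t_k$, $s_3 = t_{k+1}$ eliminates the $k$-integral, producing
\[
\mathbb{\tilde{E}}\!\left[\Upsilon\, M_{t_{k+1}}^t(\phi)\!\left(\int_{t_j}^{t_{j+1}}\!\!\phi_i(X_u)\beta_u\,du\right)\!\!\left(\int_{t_k}^{t_{k+1}}\!\!\phi_i(X_s)\beta_s\,ds\right)\right],
\]
which is bounded by $C\delta^{2m+2}$ after H\"older and \eqref{eq:MomentsMts}. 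For $j=k$, I apply Lemma \ref{lem: Backward Martingale II} with $i_1=i_2=i$, $\theta^1=\theta^2=\beta$: the two cross terms each yield $O(\delta^{2m+2})$ by the same argument, while the diagonal term $\mathbb{\tilde{E}}[\Upsilon M_{t_{j+1}}^t(\phi) \int_{t_j}^{t_{j+1}} \beta_u^2\,du]$ contributes the dominant $C\delta\cdot\delta^{2m} = C\delta^{2m+1}$.

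Statements (2), (3) and (4) proceed by exactly the same two-step elimination of the $\beta$-integrals, with the long integrals $\int_{t_{j+1}}^t \theta^l_s d\overleftarrow{Y_s^{i_l}}$ and $\int_{t_{k+1}}^t \kappa^l_s d\overleftarrow{Y_s^{a_l}}$ absorbed into the factor $\Psi$. The key observation is that a long integral over $[t_{l+1},t]$ is $\mathcal{Y}_{t_{l+1}}^{0,V,t}$-measurable, hence lies in every backward $\sigma$-algebra with a smaller index, so the $\Psi$-measurability hypotheses of Lemmas \ref{lem: Backward Martingale} and \ref{lem: Backward Martingale II} remain satisfied throughout (again assuming WLOG the ordering of $j$ and $k$ that makes the earliest long-integral left endpoint $\leq t_{j+1}\wedge t_{k+1}$). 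After both conversions, the generalized H\"older inequality (Lemma \ref{lem: Generalized Holder}) splits the bound into the product of the deterministic-size Lebesgue factors, which contributes $O(\delta^{2m+2})$ for $j\neq k$ and $O(\delta^{2m+1})$ for $j=k$, and the expectation of the remaining random factors, which is finite because $\Upsilon\in L^p$ for all $p$ by hypothesis, $M^t(\phi)$ has all moments by \eqref{eq:MomentsMts}, and the long integrals have all moments by the Burkholder--Davis--Gundy inequality combined with the uniform $L^p$-bounds \eqref{eq: MomentsTheta} on $\theta^l$ and $\kappa^l$.

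The main technical obstacle is primarily bookkeeping rather than conceptual: one must choose the order in which the $\beta$-integrals are eliminated so that the measurability of $\Psi$ is preserved at each step. Because the backward filtration $\mathcal{Y}_s^{0,V,t}$ is \emph{decreasing} in $s$, a stochastic integral over a forward-time interval with a smaller left endpoint lies in a \emph{larger} $\sigma$-algebra; consequently the $\beta$-integral with the smaller left endpoint must be eliminated \emph{first}, with the other $\beta$-integral and all the long integrals (whose left endpoints are $\geq t_{j+1}$) absorbed into $\Psi$. Once this ordering is respected, the remainder of the argument is a routine sequence of H\"older estimates controlled by moment bounds already established earlier in the paper.
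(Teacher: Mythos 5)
Your overall strategy (reduce to Lebesgue factors via Lemmas \ref{lem: Backward Martingale} and \ref{lem: Backward Martingale II}, then apply H\"older) is the right one, and your treatment of statement (1) and of the diagonal case $j=k$ is sound (for $j=k$ you use Lemma \ref{lem: Backward Martingale II} on the two $\beta$-integrals where the paper simply uses Cauchy--Schwarz together with the bound $\mathbb{\tilde{E}}[|\int_{t_j}^{t_{j+1}}\beta_s d\overleftarrow{Y_s^i}|^p]\le\delta^{(m+\frac12)p}$; both give $\delta^{2m+1}$). However, there is a genuine gap in the off-diagonal case of statements (2)--(4): your claim that after the first elimination ``the $\Psi$-measurability hypotheses \ldots remain satisfied throughout'' is false. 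The lemmas require $\Psi$ to be $\mathcal{Y}_{s_3}^{0,V,t}$-measurable where $s_3$ is the \emph{right} endpoint of the interval being processed, and since the backward $\sigma$-algebra $\mathcal{Y}_{s}^{0,V,t}$ \emph{shrinks} as $s$ increases, a backward integral $\int_{a}^{t}\cdot\,d\overleftarrow{Y}$ belongs to $\mathcal{Y}_{s_3}^{0,V,t}$ only if $a\ge s_3$. In statement (4) with $j<k$, once you have eliminated the $[t_j,t_{j+1}]$ integral, the second elimination over $[t_k,t_{k+1}]$ requires the long integrals $\int_{t_{j+1}}^{t}\theta_s^l\,d\overleftarrow{Y_s^{i_l}}$ to be $\mathcal{Y}_{t_{k+1}}^{0,V,t}$-measurable, which they are not (they involve the increments of $Y$ on $[t_{j+1},t_{k+1}]$). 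No ``WLOG ordering'' of $j$ and $k$ avoids this: in statements (3) and (4) there are long integrals anchored at both $t_{j+1}$ and $t_{k+1}$, and in statement (2) the case $j>k$ already exhibits the problem.

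This obstruction is precisely what the bulk of the paper's proof is devoted to. The paper splits each offending long integral along the partition points, $\int_{t_{j+1}}^{t}=\int_{t_{j+1}}^{t_k}+\int_{t_k}^{t_{k+1}}+\int_{t_{k+1}}^{t}$, producing nine terms $A_1,\dots,A_9$, and then treats four distinct subcases: the pieces supported on $[t_{k+1},t]$ can be absorbed into $\Psi$ and eliminated as you propose; the pieces supported on $[t_k,t_{k+1}]$ must be kept as a \emph{second} integrand in Lemma \ref{lem: Backward Martingale II} alongside the $\beta$-integral over the same interval (contributing an extra factor $\delta^{1/2}$ via the BDG estimate rather than a full conversion); and the pieces supported on $[t_{j+1},t_k]$ require a separate prior application of Lemma \ref{lem: Backward Martingale II} to the pair of $\theta$-pieces before the $[t_k,t_{k+1}]$ integral can be handled. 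Your proposal omits this decomposition entirely, so as written the argument for the $j\ne k$ bound $C\delta^{2m+2}$ in statements (2)--(4) does not go through.
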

\begin{proof}
The full proof of the lemma is lengthy and depends on applying Lemmas
\ref{lem: Backward Martingale} and \ref{lem: Backward Martingale II}
repeatedly. We do not present it in full, but only write in detail
the proof of the statement $\left(4\right)$, the others being similar
and easier. Note that by the assumptions on $\Upsilon$,$\beta^{j}$
and $\theta^{j}$ and $\phi$ the expectations in the statement of
the lemma are finite. We start with some preliminary estimations.
In what follows $C\left(t\right)$,$C$$\left(\phi\right)$and $C\left(t,\phi\right)$
will denote constants that only depends on $t$, on $\phi$ and on
$t$ and $\phi$, respectively. For any $p$$\geq1$ we have 
\begin{itemize}
\item Let $0\leq s\leq t$, then 
\begin{align*}
\mathbb{\tilde{E}}\left[\left|M_{s}^{t}\left(\phi\right)\right|^{p}\right] & =\mathbb{\tilde{E}}\left[M_{s}^{t}\left(p\phi\right)\exp\left(\frac{p^{2}-p}{2}\sum_{i=1}^{d_{Y}}\int_{s}^{t}\phi_{i}^{2}\left(X_{u}\right)du\right)\right]\\
 & \leq\exp\left(d_{Y}\left\Vert \phi\right\Vert _{\infty}^{2}\frac{p^{2}-p}{2}\left(t-s\right)\right),
\end{align*}
where we have used that for $\phi\in\mathcal{B}_{b}\left(\mathbb{R}^{d_{X}};\mathbb{R}^{d_{Y}}\right)$
one has that $M_{s}^{t}\left(\phi\right)$ is a backward martingale
with expectation equal to one. The previous estimate yields that 
\begin{equation}
\sup_{0\leq s\leq t}\mathbb{\tilde{E}}\left[\left|M_{s}^{t}\left(\phi\right)\right|^{p}\right]\leq\exp\left(d_{Y}\left\Vert \phi\right\Vert _{\infty}^{2}\frac{p^{2}-p}{2}t\right)\leq C\left(t,\phi\right).\label{eq:BE_M}
\end{equation}
\item Let $0\leq s_{1}\leq s_{2}\leq t$, then 
\begin{align}
\mathbb{\tilde{E}}\left[\left|\int_{s_{1}}^{s_{2}}\theta_{u}\overleftarrow{Y_{u}^{i}}\right|^{p}\right] & \leq\mathbb{\tilde{E}}\left[\left(\int_{s_{1}}^{s_{2}}\left|\theta_{u}\right|^{2}du\right)^{p/2}\right]\nonumber \\
 & \leq\mathbb{\tilde{E}}\left[\left(s_{2}-s_{1}\right)^{p/2-1}\left(\int_{s_{1}}^{s_{2}}\left|\theta_{u}\right|^{p}du\right)\right]\nonumber \\
 & \leq\sup_{0\leq s\leq t}\mathbb{\tilde{E}}\left[\left|\theta_{s}\right|^{p}\right]\left(s_{2}-s_{1}\right)^{p/2}\nonumber \\
 & \leq C\left(s_{2}-s_{1}\right)^{p/2},\label{eq: BE_theta_delta}
\end{align}
where we have used the Burkholder-Davis-Gundy inequality for backward
martingales, Jensen's inequality and Fubini's theorem. The previous
estimate yields that 
\begin{equation}
\sup_{0\leq s_{1}\leq s_{2}\leq t}\mathbb{\tilde{E}}\left[\left|\int_{s_{1}}^{s_{2}}\theta_{u}\overleftarrow{Y_{u}^{i}}\right|^{p}\right]\leq\sup_{0\leq s\leq t}\mathbb{\tilde{E}}\left[\left|\theta_{s}\right|^{p}\right]t^{p/2}\leq C\left(t\right).\label{eq: BE_theta}
\end{equation}
Moreover, using Jensen's inequality and Fubini's theorem we have that
\begin{align*}
\mathbb{\tilde{E}}\left[\left|\int_{s_{1}}^{s_{2}}\phi_{i}\left(X_{s}\right)\theta du\right|^{p}\right] & \leq\left\Vert \phi\right\Vert _{\infty}^{p}\mathbb{\tilde{E}}\left[\left|\int_{s_{1}}^{s_{2}}\left|\theta\right|du\right|^{p}\right]\\
 & \leq\left\Vert \phi\right\Vert _{\infty}^{p}\mathbb{\tilde{E}}\left[\left(s_{2}-s_{1}\right)^{p-1}\int_{s_{1}}^{s_{2}}\left|\theta\right|du\right]\\
 & \leq\left\Vert \phi\right\Vert _{\infty}^{p}\sup_{0\leq s\leq t}\mathbb{\tilde{E}}\left[\left|\theta_{s}\right|^{p}\right]\left(s_{2}-s_{1}\right)^{p}.
\end{align*}
The previous estimate yields that 
\begin{equation}
\mathbb{\tilde{E}}\left[\left|\int_{s_{1}}^{s_{2}}\phi_{i}\left(X_{s}\right)\theta du\right|^{p}\right]\leq\left\Vert \phi\right\Vert _{\infty}^{p}\sup_{0\leq s\leq t}\mathbb{\tilde{E}}\left[\left|\theta_{s}\right|^{p}\right]t^{p/2}\leq C\left(t,\phi\right).\label{eq: E_FiTheta}
\end{equation}
\item Let $0\leq s_{1}\leq s_{2}\leq t$, then similar reasonings as in
the previous point and hypothesis $\left(\ref{eq: BoundBeta}\right)$
give 
\begin{align}
\mathbb{\tilde{E}}\left[\left|\int_{s_{1}}^{s_{2}}\beta_{u}\overleftarrow{Y_{u}^{i}}\right|^{p}\right] & \leq\mathbb{\tilde{E}}\left[\left(\int_{s_{1}}^{s_{2}}\left|\beta_{u}\right|^{2}du\right)^{p/2}\right]\nonumber \\
 & \leq\left(s_{2}-s_{1}\right)^{p/2-1}\int_{s_{1}}^{s_{2}}\left|\beta_{u}\right|^{p}du\nonumber \\
 & \leq t^{p/2-1}\sum_{j=1}^{n}\int_{t_{j-1}}^{t_{j}}\left|\beta_{u}\right|^{p}du\nonumber \\
 & \leq t^{p/2}\delta^{mp}\leq C\left(t\right)\label{eq: EB_Beta}
\end{align}
Moreover, if $s_{1}=t_{j}$ and $s_{2}=t_{j+1}$for some $j\in\{0,...,n-1\}$
we can conclude using hypothesis $\left(\ref{eq: BoundBeta}\right)$
that 
\begin{equation}
\mathbb{\tilde{E}}\left[\left|\int_{s_{1}}^{s_{2}}\beta_{u}\overleftarrow{Y_{u}^{i}}\right|^{p}\right]\leq\delta^{p/2-1}\int_{t_{j}}^{t_{j+1}}\delta^{mp}du\leq\delta^{\left(m+\frac{1}{2}\right)p}.\label{eq: EB_Beta_delta}
\end{equation}
Finally, 
\begin{equation}
\mathbb{E}\left[\left|\int_{t_{j}}^{t_{j+1}}\phi_{i}\left(X_{s}\right)\beta_{s}ds\right|^{p}\right]\leq\left\Vert \phi\right\Vert _{\infty}^{p}\delta^{\left(m+1\right)p}=C\left(\phi\right)\delta^{\left(m+1\right)p}\label{eq: E_Beta_Fi}
\end{equation}
\end{itemize}
\textbf{\lyxarrow{}Case $j$$=k$:}

Using Cauchy-Schwarz inequality, Lemma \ref{lem: Generalized Holder}
and inequalities $\left(\ref{eq:BE_M}\right)$, $\left(\ref{eq: BE_theta}\right)$
and $\left(\ref{eq: EB_Beta_delta}\right)$ we can write 
\begin{align*}
 & \mathbb{\tilde{E}}\left[\Upsilon M_{0}^{t}\left(\phi\right)\prod_{l=1}^{2}\left\{ \left(\int_{t_{j+1}}^{t}\theta_{s}^{l}d\overleftarrow{Y_{s}^{i_{l}}}\right)\left(\int_{t_{j+1}}^{t}\kappa_{s}^{l}d\overleftarrow{Y_{s}^{a_{l}}}\right)\right\} \left(\int_{t_{j}}^{t_{j+1}}\beta_{s}d\overleftarrow{Y_{s}^{i}}\right)^{2}\right]\\
 & \leq\mathbb{\tilde{E}}\left[\left|\Upsilon M_{0}^{t}\left(\phi\right)\prod_{l=1}^{2}\left\{ \left(\int_{t_{j+1}}^{t}\theta_{s}^{l}d\overleftarrow{Y_{s}^{i_{l}}}\right)\left(\int_{t_{j+1}}^{t}\kappa_{s}^{l}d\overleftarrow{Y_{s}^{a_{l}}}\right)\right\} \right|^{2}\right]^{1/2}\mathbb{\tilde{E}}\left[\left(\int_{t_{j}}^{t_{j+1}}\beta_{s}d\overleftarrow{Y_{s}^{i_{1}}}\right)^{4}\right]^{1/2}\\
 & \leq\left(\mathbb{\tilde{E}}\left[\left|\Upsilon\right|^{12}\right]\mathbb{\tilde{E}}\left[\left|M_{0}^{t}\left(\phi\right)\right|^{12}\right]\prod_{l=1}^{2}\mathbb{\tilde{E}}\left[\left|\int_{t_{j+1}}^{t}\theta_{s}^{l}d\overleftarrow{Y_{s}^{i_{l}}}\right|^{12}\right]\mathbb{\tilde{E}}\left[\left|\int_{t_{j+1}}^{t}\kappa_{s}^{l}d\overleftarrow{Y_{s}^{a_{l}}}\right|^{12}\right]\right)^{1/12}\delta^{\frac{1}{2}\left(m+\frac{1}{2}\right)4}\\
 & \leq C\left(t\right)\delta^{2m+1}
\end{align*}
\textbf{\lyxarrow{}Case $j$$<k$:}

Using Lemma \ref{lem: Backward Martingale}, we can write 
\begin{align*}
 & \mathbb{\tilde{E}}\left[\Upsilon M_{0}^{t}\left(\phi\right)\prod_{l=1}^{2}\left\{ \left(\int_{t_{j+1}}^{t}\theta_{s}^{l}d\overleftarrow{Y_{s}^{i_{l}}}\right)\left(\int_{t_{k+1}}^{t}\kappa_{s}^{l}d\overleftarrow{Y_{s}^{a_{l}}}\right)\right\} \left(\int_{t_{j}}^{t_{j+1}}\beta_{s}d\overleftarrow{Y_{s}^{i}}\right)\left(\int_{t_{k}}^{t_{k+1}}\beta_{s}d\overleftarrow{Y_{s}^{i}}\right)\right]\\
 & =\mathbb{\tilde{E}}\left[\Upsilon\left(\int_{t_{k}}^{t_{k+1}}\beta_{s}d\overleftarrow{Y_{s}^{i}}\right)\prod_{l=1}^{2}\left\{ \left(\int_{t_{j+1}}^{t}\theta_{s}^{l}d\overleftarrow{Y_{s}^{i_{l}}}\right)\left(\int_{t_{k+1}}^{t}\kappa_{s}^{l}d\overleftarrow{Y_{s}^{a_{l}}}\right)\right\} \right.\\
 & \qquad\left.\times M_{t_{j+1}}^{t}\left(\phi\right)\left(\int_{t_{j}}^{t_{j+1}}\phi_{i}\left(X_{s}\right)\beta_{s}ds\right)\right]\\
 & =\mathbb{\tilde{E}}\left[\Upsilon_{1}M_{t_{j+1}}^{t}\left(\phi\right)\prod_{l=1}^{2}\left\{ \left(\int_{t_{j+1}}^{t}\theta_{s}^{l}d\overleftarrow{Y_{s}^{i_{l}}}\right)\right\} \right]\\
 & \triangleq\sum_{i=1}^{9}A_{i},
\end{align*}
Where 
\[
\Upsilon_{1}=\Upsilon\left(\int_{t_{k}}^{t_{k+1}}\beta_{s}d\overleftarrow{Y_{s}^{i}}\right)\left(\int_{t_{j}}^{t_{j+1}}\phi_{i}\left(X_{s}\right)\beta_{s}ds\right)\prod_{l=1}^{2}\left(\int_{t_{k+1}}^{t}\kappa_{s}^{l}d\overleftarrow{Y_{s}^{a_{l}}}\right),
\]
and 
\begin{align*}
A_{1} & =\mathbb{\tilde{E}}\left[\Upsilon_{1}M_{t_{j+1}}^{t}\left(\phi\right)\left(\int_{t_{j+1}}^{t_{k}}\theta_{s}^{1}d\overleftarrow{Y_{s}^{i_{1}}}\right)\left(\int_{t_{j+1}}^{t_{k}}\theta_{s}^{2}d\overleftarrow{Y_{s}^{i_{2}}}\right)\right]\\
A_{2} & =\mathbb{\tilde{E}}\left[\Upsilon_{1}M_{t_{j+1}}^{t}\left(\phi\right)\left(\int_{t_{j+1}}^{t_{k}}\theta_{s}^{1}d\overleftarrow{Y_{s}^{i_{1}}}\right)\left(\int_{t_{k}}^{t_{k+1}}\theta_{s}^{2}d\overleftarrow{Y_{s}^{i_{2}}}\right)\right]\\
A_{3} & =\mathbb{\tilde{E}}\left[\Upsilon_{1}M_{t_{j+1}}^{t}\left(\phi\right)\left(\int_{t_{j+1}}^{t_{k}}\theta_{s}^{1}d\overleftarrow{Y_{s}^{i_{1}}}\right)\left(\int_{t_{k+1}}^{t}\theta_{s}^{2}d\overleftarrow{Y_{s}^{i_{2}}}\right)\right]\\
A_{4} & =\mathbb{\tilde{E}}\left[\Upsilon_{1}M_{t_{j+1}}^{t}\left(\phi\right)\left(\int_{t_{k}}^{t_{k+1}}\theta_{s}^{1}d\overleftarrow{Y_{s}^{i_{1}}}\right)\left(\int_{t_{j+1}}^{t_{k}}\theta_{s}^{2}d\overleftarrow{Y_{s}^{i_{2}}}\right)\right]\\
A_{5} & =\mathbb{\tilde{E}}\left[\Upsilon_{1}M_{t_{j+1}}^{t}\left(\phi\right)\left(\int_{t_{k}}^{t_{k+1}}\theta_{s}^{1}d\overleftarrow{Y_{s}^{i_{1}}}\right)\left(\int_{t_{k}}^{t_{k+1}}\theta_{s}^{2}d\overleftarrow{Y_{s}^{i_{2}}}\right)\right]\\
A_{6} & =\mathbb{\tilde{E}}\left[\Upsilon_{1}M_{t_{j+1}}^{t}\left(\phi\right)\left(\int_{t_{k}}^{t_{k+1}}\theta_{s}^{1}d\overleftarrow{Y_{s}^{i_{1}}}\right)\left(\int_{t_{k+1}}^{t}\theta_{s}^{2}d\overleftarrow{Y_{s}^{i_{2}}}\right)\right]\\
A_{7} & =\mathbb{\tilde{E}}\left[\Upsilon_{1}M_{t_{j+1}}^{t}\left(\phi\right)\left(\int_{t_{k+1}}^{t}\theta_{s}^{1}d\overleftarrow{Y_{s}^{i_{1}}}\right)\left(\int_{t_{j+1}}^{t_{k}}\theta_{s}^{2}d\overleftarrow{Y_{s}^{i_{2}}}\right)\right]\\
A_{8} & =\mathbb{\tilde{E}}\left[\Upsilon_{1}M_{t_{j+1}}^{t}\left(\phi\right)\left(\int_{t_{k+1}}^{t}\theta_{s}^{1}d\overleftarrow{Y_{s}^{i_{1}}}\right)\left(\int_{t_{k}}^{t_{k+1}}\theta_{s}^{2}d\overleftarrow{Y_{s}^{i_{2}}}\right)\right]\\
A_{9} & =\mathbb{\tilde{E}}\left[\Upsilon_{1}M_{t_{j+1}}^{t}\left(\phi\right)\left(\int_{t_{k+1}}^{t}\theta_{s}^{1}d\overleftarrow{Y_{s}^{i_{1}}}\right)\left(\int_{t_{k+1}}^{t}\theta_{s}^{2}d\overleftarrow{Y_{s}^{i_{2}}}\right)\right]
\end{align*}
The treatment of some of the terms is completely analogous. We distinguish
four subcases:

\textbf{\lyxarrow{}\lyxarrow{}Subcase} $1$: This subcase covers
term $A_{9}.$ We apply Lemma \ref{lem: Backward Martingale II} to
write 
\begin{align*}
A_{9} & =\mathbb{\tilde{E}}\left[\Upsilon_{1}M_{t_{j+1}}^{t}\left(\phi\right)\left(\int_{t_{k+1}}^{t}\theta_{s}^{1}d\overleftarrow{Y_{s}^{i_{1}}}\right)\left(\int_{t_{k+1}}^{t}\theta_{s}^{2}d\overleftarrow{Y_{s}^{i_{2}}}\right)\right]\\
 & =\mathbb{\tilde{E}}\left[\Gamma_{1}M_{t_{k+1}}^{t}\left(\phi\right)\left(\int_{t_{j}}^{t_{j+1}}\phi_{i}\left(X_{s}\right)\beta_{s}ds\right)\left(\int_{t_{k}}^{t_{k+1}}\phi_{i}\left(X_{s}\right)\beta_{s}ds\right)\right],
\end{align*}
where 
\[
\Gamma_{1}\triangleq\Upsilon\prod_{l=1}^{2}\left(\int_{t_{k+1}}^{t}\kappa_{s}^{l}d\overleftarrow{Y_{s}^{a_{l}}}\right)\left(\int_{t_{k+1}}^{t}\theta_{s}^{l}d\overleftarrow{Y_{s}^{i_{l}}}\right).
\]
Hence, using Lemma \ref{lem: Generalized Holder} and inequality $\left(\ref{eq: BE_theta}\right)$we
have, for $p\geq1,$ that 
\[
\mathbb{E}\left[\left|\Gamma_{1}\right|^{p}\right]\leq C\left(t\right),
\]
and using Lemma \ref{lem: Generalized Holder} and inequalities $\left(\ref{eq:BE_M}\right)$
and $\left(\ref{eq: E_Beta_Fi}\right)$ we obtain 
\begin{align*}
\left|A_{9}\right| & \leq\mathbb{\tilde{E}}\left[\left|\Gamma_{1}\right|^{4}\right]^{1/4}\mathbb{\tilde{E}}\left[\left|M_{t_{k+1}}^{t}\left(\phi\right)\right|^{4}\right]^{1/4}\mathbb{E}\left[\left|\int_{t_{j}}^{t_{j+1}}\phi_{i}\left(X_{s}\right)\beta_{s}ds\right|^{4}\right]^{1/4}\\
 & \qquad\times\mathbb{E}\left[\left|\int_{t_{k}}^{t_{k+1}}\phi_{i}\left(X_{s}\right)\beta_{s}ds\right|^{4}\right]^{1/4}\\
 & \leq C\left(t,\phi\right)\delta^{2m+2}.
\end{align*}

\textbf{\lyxarrow{}\lyxarrow{}Subcase} $2$: The terms $A_{2},A_{4},A_{5},A_{6}$
and $A_{8}$ are treated analogously. We will write the proof for
$A_{2}.$ By Lemma \ref{lem: Backward Martingale II}, we can write
\begin{align*}
A_{2} & =\mathbb{\tilde{E}}\left[\Upsilon_{1}M_{t_{j+1}}^{t}\left(\phi\right)\left(\int_{t_{j+1}}^{t_{k}}\theta_{s}^{1}d\overleftarrow{Y_{s}^{i_{1}}}\right)\left(\int_{t_{k}}^{t_{k+1}}\theta_{s}^{2}d\overleftarrow{Y_{s}^{i_{2}}}\right)\right]\\
 & =\mathbb{\tilde{E}}\left[\Gamma_{2}M_{t_{j+1}}^{t}\left(\phi\right)\left(\int_{t_{j}}^{t_{j+1}}\phi_{i}\left(X_{s}\right)\beta_{s}ds\right)\left(\int_{t_{k}}^{t_{k+1}}\beta_{s}d\overleftarrow{Y_{s}^{i}}\right)\left(\int_{t_{k}}^{t_{k+1}}\theta_{s}^{2}d\overleftarrow{Y_{s}^{i_{2}}}\right)\right],
\end{align*}
where 
\[
\Gamma_{2}\triangleq\Upsilon\left(\int_{t_{j+1}}^{t_{k}}\theta_{s}^{1}d\overleftarrow{Y_{s}^{i_{1}}}\right)\left(\int_{t_{k}}^{t_{k+1}}\theta_{s}^{2}d\overleftarrow{Y_{s}^{i_{2}}}\right)\prod_{l=1}^{2}\left(\int_{t_{k+1}}^{t}\kappa_{s}^{l}d\overleftarrow{Y_{s}^{a_{l}}}\right).
\]
Hence, using Lemma \ref{lem: Generalized Holder} and inequality $\left(\ref{eq: BE_theta}\right)$we
have, for $p\geq1,$ that 
\[
\mathbb{E}\left[\left|\Gamma_{2}\right|^{p}\right]\leq C\left(t\right),
\]
and using Lemma \ref{lem: Generalized Holder} and inequalities $\left(\ref{eq:BE_M}\right)$,$\left(\ref{eq: E_Beta_Fi}\right)$,
$\left(\ref{eq: EB_Beta_delta}\right)$ and $\left(\ref{eq: BE_theta_delta}\right)$
we obtain 
\begin{align*}
\left|A_{2}\right| & \leq\mathbb{E}\left[\left|\Gamma_{2}\right|^{5}\right]^{1/5}\mathbb{E}\left[\left|M_{t_{j+1}}^{t}\left(\phi\right)\right|^{5}\right]^{1/5}\mathbb{E}\left[\left|\int_{t_{j}}^{t_{j+1}}\phi_{i}\left(X_{s}\right)\beta_{s}ds\right|^{5}\right]^{1/5}\\
 & \qquad\times\mathbb{E}\left[\left|\int_{t_{k}}^{t_{k+1}}\beta_{s}d\overleftarrow{Y_{s}^{i}}\right|^{5}\right]^{1/5}\mathbb{E}\left[\left|\int_{t_{k}}^{t_{k+1}}\theta_{s}^{2}d\overleftarrow{Y_{s}^{i_{2}}}\right|^{5}\right]^{1/5}\\
 & \leq C\left(t,\phi\right)\delta^{\left(m+1\right)}\delta^{\left(m+\frac{1}{2}\right)}\delta^{1/2}=C\left(t,\phi\right)\delta^{2m+2}.
\end{align*}

\textbf{\lyxarrow{}\lyxarrow{}Subcase} 3: The terms $A_{3}$and
$A_{7}$ are treated analogously. We will write the proof for $A_{3}$.
We apply Lemma \ref{lem: Backward Martingale II} twice to write 
\begin{align*}
A_{3} & =\mathbb{\tilde{E}}\left[\Upsilon_{1}M_{t_{j+1}}^{t}\left(\phi\right)\left(\int_{t_{j+1}}^{t_{k}}\theta_{s}^{1}d\overleftarrow{Y_{s}^{i_{1}}}\right)\left(\int_{t_{k+1}}^{t}\theta_{s}^{2}d\overleftarrow{Y_{s}^{i_{2}}}\right)\right]\\
 & =\mathbb{\tilde{E}}\left[\Gamma_{3}\left(\int_{t_{k}}^{t_{k+1}}\beta_{s}d\overleftarrow{Y_{s}^{i}}\right)M_{t_{k}}^{t}\left(\phi\right)\left(\int_{t_{j+1}}^{t_{k}}\phi_{i_{1}}\left(X_{s}\right)\theta_{s}^{1}ds\right)\left(\int_{t_{j}}^{t_{j+1}}\phi_{i}\left(X_{s}\right)\beta_{s}ds\right)\right]\\
 & =\mathbb{\tilde{E}}\left[\Gamma_{3}\left(\int_{t_{j+1}}^{t_{k}}\phi_{i_{1}}\left(X_{s}\right)\theta_{s}^{1}ds\right)M_{t_{k+1}}^{t}\left(\phi\right)\left(\int_{t_{k}}^{t_{k+1}}\phi_{i}\left(X_{s}\right)\beta_{s}ds\right)\left(\int_{t_{j}}^{t_{j+1}}\phi_{i}\left(X_{s}\right)\beta_{s}ds\right)\right]
\end{align*}
where 
\[
\Gamma_{3}=\Upsilon\left(\int_{t_{k+1}}^{t}\theta_{s}^{2}d\overleftarrow{Y_{s}^{i_{2}}}\right)\prod_{l=1}^{2}\left(\int_{t_{k+1}}^{t}\kappa_{s}^{l}d\overleftarrow{Y_{s}^{a_{l}}}\right).
\]
Using Lemma \ref{lem: Generalized Holder} and inequalities $\left(\ref{eq: BE_theta}\right)$
and $\left(\ref{eq: E_FiTheta}\right)$ we have, for $p\geq1,$ that
\[
\mathbb{E}\left[\left|\Gamma_{3}\left(\int_{t_{j+1}}^{t_{k}}\phi_{i_{1}}\left(X_{s}\right)\theta_{s}^{1}ds\right)\right|^{p}\right]\leq C\left(t,\phi\right),
\]
and using Lemma \ref{lem: Generalized Holder} and inequalities $\left(\ref{eq:BE_M}\right)$,$\left(\ref{eq: E_Beta_Fi}\right)$,
$\left(\ref{eq: EB_Beta_delta}\right)$ and $\left(\ref{eq: BE_theta_delta}\right)$
\begin{align*}
\left|A_{3}\right| & \leq\mathbb{\tilde{E}}\left[\left|\Gamma_{3}\left(\int_{t_{j+1}}^{t_{k}}\phi_{i_{1}}\left(X_{s}\right)\theta_{s}^{1}ds\right)\right|^{4}\right]^{1/4}\mathbb{\tilde{E}}\left[\left|M_{t_{k+1}}^{t}\left(\phi\right)\right|^{5}\right]^{1/5}\\
 & \qquad\times\mathbb{\tilde{E}}\left[\left|\int_{t_{k}}^{t_{k+1}}\phi_{i}\left(X_{s}\right)\beta_{s}ds\right|^{4}\right]^{1/4}\mathbb{\tilde{E}}\left[\left|\int_{t_{j}}^{t_{j+1}}\phi_{i}\left(X_{s}\right)\beta_{s}ds\right|^{4}\right]^{1/4}\\
 & \leq C\left(t,\phi\right)\delta^{m+1}\delta^{m+1}=C\delta^{2m+1}.
\end{align*}

\textbf{\lyxarrow{}\lyxarrow{}Subcase} 4: This subcase corresponds
to the term $A_{1}.$ Applying Lemma \ref{lem: Backward Martingale II}
and Lemma \ref{lem: Backward Martingale} we can write 
\begin{align*}
A_{1} & =\mathbb{\tilde{E}}\left[\Upsilon_{1}M_{t_{j+1}}^{t}\left(\phi\right)\left(\int_{t_{j+1}}^{t_{k}}\theta_{s}^{1}d\overleftarrow{Y_{s}^{i_{1}}}\right)\left(\int_{t_{j+1}}^{t_{k}}\theta_{s}^{2}d\overleftarrow{Y_{s}^{i_{2}}}\right)\right]\\
 & =\mathbb{\tilde{E}}\left[\Gamma_{4}\Gamma_{5}M_{t_{k}}^{t}\left(\phi\right)\left(\int_{t_{k}}^{t_{k+1}}\beta_{s}d\overleftarrow{Y_{s}^{i}}\right)\left(\int_{t_{j}}^{t_{j+1}}\phi_{i}\left(X_{s}\right)\beta_{s}ds\right)\right]\\
 & =\mathbb{\tilde{E}}\left[\Gamma_{4}\Gamma_{5}M_{t_{k+1}}^{t}\left(\phi\right)\left(\int_{t_{k}}^{t_{k+1}}\phi_{i}\left(X_{s}\right)\beta_{s}ds\right)\left(\int_{t_{j}}^{t_{j+1}}\phi_{i}\left(X_{s}\right)\beta_{s}ds\right)\right]
\end{align*}
where 
\begin{align*}
\Gamma_{4} & =\Upsilon\prod_{l=1}^{2}\left(\int_{t_{k+1}}^{t}\kappa_{s}^{l}d\overleftarrow{Y_{s}^{a_{l}}}\right)\\
\Gamma_{5} & =\mathbf{1}_{\left\{ i_{1}=i_{2}\right\} }\int_{t_{j+1}}^{t_{k}}\theta_{s}^{1}\theta_{s}^{2}du\\
 & \qquad+\int_{t_{j+1}}^{t_{k}}\phi_{i_{1}}\left(X_{u}\right)\theta_{u}^{1}\left(\int_{u}^{t_{k}}\phi_{i_{2}}\left(X_{v}\right)\theta_{v}^{2}dv\right)du\\
 & \qquad+\int_{t_{j+1}}^{t_{k}}\phi_{i_{2}}\left(X_{u}\right)\theta_{u}^{2}\left(\int_{u}^{t_{k}}\phi_{i_{1}}\left(X_{v}\right)\theta_{v}^{1}dv\right)du
\end{align*}
To finish the proof one follows the same reasonings in the previous
subcase taking into account that $\Gamma_{4}$ and $\Gamma_{5}$ have
moments of all orders that only depend on $t$ and $\phi$.

\textbf{\lyxarrow{}Case $j$$>k$:}

This is completely symmetric to the previous case by swapping the
role of the $\theta'$s by the $\kappa'$s. 
\end{proof}
\begin{rem}
\label{rem: BackwardEstimate}Under the assumptions of Lemma \ref{lem: Main Backward}
one can prove that for any $q\in\{1,...,m\}$ and $i,i_{1},...,i_{q}a_{1},...,a_{q}\in\left\{ 1,...,d_{Y}\right\} $,
we have that 
\begin{align*}
 & \left|\mathbb{\tilde{E}}\left[\Upsilon M_{0}^{t}\left(\phi\right)\prod_{l=1}^{q}\left\{ \left(\int_{t_{j+1}}^{t}\theta_{s}^{l}d\overleftarrow{Y_{s}^{i_{l}}}\right)\left(\int_{t_{k+1}}^{t}\kappa_{s}^{l}d\overleftarrow{Y_{s}^{a_{l}}}\right)\right\} \left(\int_{t_{j}}^{t_{j+1}}\beta_{s}d\overleftarrow{Y_{s}^{i}}\right)\left(\int_{t_{k}}^{t_{k+1}}\beta_{s}d\overleftarrow{Y_{s}^{i}}\right)\right]\right|\\
 & \leq C\left\{ \mathbf{1}_{\left\{ j\neq k\right\} }\delta^{2m+2}+\mathbf{1}_{\left\{ j=k\right\} }\delta^{2m+1}\right\} .
\end{align*}
\end{rem}

\subsection{\label{subsec:CondExpEst}Conditional expectation estimates}

In this subsection we will show the main estimates for conditional
expectations with respect to the observation filtration that will
allow the proof of our result. Throughout this section we assume that
$\varphi\in\mathcal{B}_{P}$, which ensures that Corollary \ref{cor: BoundsFi}
holds, and $m\in\mathbb{N}$. 
\begin{lem}
\label{lem: Alpha0=00003D00003Dm}Assume that\textbf{ H}$(m)$ holds.
For $\alpha\in\mathcal{R}\left(\mathcal{M}_{m-1}(S_{0})\right)$ with
$\left\vert \alpha\right\vert _{0}=m$ and $i\in\left\{ 0,1,...,d_{Y}\right\} $
we have 
\[
\mathbb{\tilde{E}}\left[\mathbb{\tilde{E}}\left[\varphi(X_{t})e^{\xi_{t}}\int_{0}^{t}I_{\alpha}(L^{\alpha}h_{0}(X_{\cdot}))_{\tau(s),s}dY_{s}^{i}|\mathcal{Y}_{t}\right]^{2}\right]\leq C\delta^{2m}.
\]
\end{lem}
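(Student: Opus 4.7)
The plan is to exploit the fact that when $\alpha\in\mathcal{R}(\mathcal{M}_{m-1}(S_{0}))$ satisfies $|\alpha|_{0}=m=|\alpha|$, every component of $\alpha$ equals $0$, so $I_{\alpha}(L^{\alpha}h_{0}(X_{\cdot}))_{\tau(s),s}$ is a pure $m$-fold Lebesgue integral over $[\tau(s),s]$. Consequently its integrand is pathwise of size $O(\delta^{m})$ (times a supremum of $L^{\alpha}h_{0}(X_{\cdot})$), and the required $L^{2}$ estimate should come from a direct Cauchy--Schwarz/BDG argument, without invoking the Stroock--Taylor expansion of $\varphi(X_{t})e^{\xi_{t}}$ or the backward-martingale machinery of Lemma \ref{lem: Main Backward}.

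First I apply Jensen's inequality to the inner conditional expectation to obtain
\[
\mathbb{\tilde{E}}\!\left[\left|\mathbb{\tilde{E}}\!\left[\varphi(X_{t})e^{\xi_{t}}\!\int_{0}^{t}\!I_{\alpha}(L^{\alpha}h_{0}(X_{\cdot}))_{\tau(s),s}dY_{s}^{i}\Big|\mathcal{Y}_{t}\right]\right|^{2}\right]\le\mathbb{\tilde{E}}\!\left[|\varphi(X_{t})e^{\xi_{t}}|^{2}\left|\int_{0}^{t}\!I_{\alpha}(L^{\alpha}h_{0}(X_{\cdot}))_{\tau(s),s}dY_{s}^{i}\right|^{2}\right].
\]
A further Cauchy--Schwarz splits the right-hand side as
\[
\mathbb{\tilde{E}}\bigl[|\varphi(X_{t})|^{4}e^{4\xi_{t}}\bigr]^{1/2}\,\mathbb{\tilde{E}}\!\left[\left|\int_{0}^{t}I_{\alpha}(L^{\alpha}h_{0}(X_{\cdot}))_{\tau(s),s}dY_{s}^{i}\right|^{4}\right]^{1/2}.
\]
The first factor is finite and independent of $\tau$ by Lemma \ref{lem: Z_t^p_Integrability} together with the polynomial growth of $\varphi$ and Remark \ref{rem: M} (combined by H\"older's inequality).

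For the second factor, when $i\geq 1$ the Burkholder--Davis--Gundy inequality and Jensen reduce it to $C\,t\!\int_{0}^{t}\mathbb{\tilde{E}}\bigl[|I_{\alpha}(L^{\alpha}h_{0}(X_{\cdot}))_{\tau(s),s}|^{4}\bigr]ds$. Lemma \ref{lem: Moments Iterated Integral} with $p=2$ then gives
\[
\mathbb{\tilde{E}}\bigl[|I_{\alpha}(L^{\alpha}h_{0}(X_{\cdot}))_{\tau(s),s}|^{4}\bigr]\le C\,(s-\tau(s))^{2(|\alpha|+|\alpha|_{0})}=C\,(s-\tau(s))^{4m}\le C\delta^{4m},
\]
since precisely in the present case $|\alpha|+|\alpha|_{0}=2m$. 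Hence the fourth moment of the stochastic integral is bounded by $Ct^{2}\delta^{4m}$, and its square root is $C\delta^{2m}$. The case $i=0$ is handled identically, replacing BDG by Jensen on a Lebesgue integral.

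I do not anticipate a genuine obstacle: the worst-case exponent in Lemma \ref{lem: Moments Iterated Integral} is realised exactly when every index of $\alpha$ is $0$, so the required rate is already encoded in the single iterated-integral moment bound and no delicate cancellation mechanism (of the type used for $|\alpha|_{0}<m$) is needed here.
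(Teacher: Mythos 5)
Your proposal is correct and follows essentially the same route as the paper's proof: conditional Jensen to drop the conditioning, H\"older to separate $\varphi(X_{t})e^{\xi_{t}}$ from the stochastic integral, then Burkholder--Davis--Gundy (or Jensen when $i=0$) together with Lemma \ref{lem: Moments Iterated Integral} and the identity $|\alpha|+|\alpha|_{0}=2m$. The only cosmetic difference is your choice of H\"older exponents ($4$ and $4$ rather than the paper's $2+\varepsilon$ and $2(2+\varepsilon)/\varepsilon$), which is harmless here since under \textbf{H}$(m)$ the factor $\varphi(X_{t})e^{\xi_{t}}$ has moments of all orders.
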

\begin{proof}
Using Jensen inequality, Hölder inequality and Burkholder-Davis-Gundy
inequality, if $i\neq0$, or Jensen inequality, if $i=0$, we get
\begin{align*}
 & \mathbb{\tilde{E}}\left[\left\vert \mathbb{\tilde{E}}\left[\varphi(X_{t})e^{\xi_{t}}\int_{0}^{t}I_{\alpha}(L^{\alpha}h_{i}(X_{\cdot}))_{\tau(s),s}dY_{s}^{i}|\mathcal{Y}_{t}\right]\right\vert ^{2}\right]\\
 & \leq\mathbb{\tilde{E}}\left[\left\vert \varphi(X_{t})e^{\xi_{t}}\int_{0}^{t}I_{\alpha}(L^{\alpha}h_{i}(X_{\cdot}))_{\tau(s),s}dY_{s}^{i}\right\vert ^{2}\right]\\
 & \leq\mathbb{\tilde{E}}\left[\left\vert \varphi(X_{t})e^{\xi_{t}}\right\vert ^{2+\varepsilon}\right]^{\frac{2}{2+\varepsilon}}\mathbb{\tilde{E}}\left[\left\vert \int_{0}^{t}I_{\alpha}(L^{\alpha}h_{i}(X_{\cdot}))_{\tau(s),s}dY_{s}^{i}\right\vert ^{2\frac{2+\varepsilon}{\varepsilon}}\right]^{\frac{\varepsilon}{2+\varepsilon}}\\
 & \leq C\left\Vert \varphi(X_{t})e^{\xi_{t}}\right\Vert _{2+\varepsilon}^{2}\mathbb{\tilde{E}}\left[\left\vert \int_{0}^{t}\left\vert I_{\alpha}(L^{\alpha}h_{i}(X_{\cdot}))_{\tau(s),s}\right\vert ^{2}ds\right\vert ^{\frac{2+\varepsilon}{\varepsilon}}\right]^{\frac{\varepsilon}{2+\varepsilon}}.
\end{align*}
Next, using Jensen inequality again, Fubini's Theorem, Assumption
\textbf{H}$(m)$, Lemma \ref{lem: Moments Iterated Integral} and
that $|\alpha|+|\alpha|_{0}=2m$ we get 
\begin{align*}
 & \mathbb{\tilde{E}}\left[\left\vert \int_{0}^{t}\left\vert I_{\alpha}(L^{\alpha}h_{i}(X_{\cdot}))_{\tau(s),s}\right\vert ^{2}ds\right\vert ^{\frac{2+\varepsilon}{\varepsilon}}\right]\\
 & \leq C\mathbb{\tilde{E}}\left[\left\vert I_{\alpha}(L^{\alpha}h_{i}(X_{\cdot}))_{\tau(s),s}\right\vert ^{2\frac{2+\varepsilon}{\varepsilon}}\right]ds\\
 & \leq C\int_{0}^{t}\mathbb{\tilde{E}}[\sup_{\tau(s)\leq u\leq s}\left\vert L^{\alpha}h_{i}(X_{u})\right\vert ^{2\frac{2+\varepsilon}{\varepsilon}}]\left(s-\tau(s)\right)^{\frac{2+\varepsilon}{\varepsilon}\{|\alpha|+|\alpha|_{0}\}}ds\\
 & \leq C\delta^{2m\frac{2+\varepsilon}{\varepsilon}},
\end{align*}
from which follows the result. 
\end{proof}
\begin{lem}
\label{lem: ds}Assume that \textbf{H}$(m)$ holds. For $\alpha\in\mathcal{R}\left(\mathcal{M}_{m-1}(S_{0})\right)$
with $\left\vert \alpha\right\vert _{0}\neq m$ 
\[
\mathbb{\tilde{E}}\left[\mathbb{\tilde{E}}\left[\varphi(X_{t})e^{\xi_{t}}\int_{0}^{t}I_{\alpha}(L^{\alpha}h_{0}(X_{\cdot}))_{\tau(s),s}ds|\mathcal{Y}_{t}\right]^{2}\right]\leq C\delta^{2m}
\]
\end{lem}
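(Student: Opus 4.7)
The approach combines a Fubini reduction (using that the outer differential is $dY^0_s=ds$, hence deterministic) with a covariation analysis based on the partial Stroock--Taylor expansion of $\varphi(X_t)e^{\xi_t}$. The underlying idea is that the assumption $|\alpha|_0\neq m$ guarantees that $I_\alpha(L^\alpha h_0)_{\tau(s),s}$ contains at least one Brownian differential, which provides the hook to extract additional orders of $\delta$ from the conditional expectation given $\mathcal{Y}_t$.

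First, since the outer integration is Lebesgue, Fubini permits exchanging it with the conditional expectation, and Cauchy--Schwarz gives
\[
\mathbb{\tilde{E}}\Bigl[\Bigl|\mathbb{\tilde{E}}\Bigl[\varphi(X_t)e^{\xi_t}\!\!\int_0^t\!\! I_\alpha(L^\alpha h_0(X_\cdot))_{\tau(s),s}\,ds\,\Big|\,\mathcal{Y}_t\Bigr]\Bigr|^2\Bigr]\le t\int_0^t\mathbb{\tilde{E}}\Bigl[\bigl|\mathbb{\tilde{E}}\bigl[\varphi(X_t)e^{\xi_t}\,I_\alpha(L^\alpha h_0(X_\cdot))_{\tau(s),s}\,\big|\,\mathcal{Y}_t\bigr]\bigr|^2\Bigr]\,ds.
\]
It therefore suffices to prove the uniform pointwise bound $\mathbb{\tilde{E}}\bigl[\bigl|\mathbb{\tilde{E}}[\varphi(X_t)e^{\xi_t}I_\alpha(L^\alpha h_0)_{\tau(s),s}\mid\mathcal{Y}_t]\bigr|^2\bigr]\le C\delta^{2m}$ for every $s\in[0,t]$.

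Fix such an $s$, write $F:=\varphi(X_t)e^{\xi_t}$, and set $k:=m-|\alpha|_0\ge 1$, the number of Brownian differentials in $I_\alpha$. Since $|\alpha|\neq|\alpha|_0$, Lemma \ref{lem: CondExpect}(\ref{lem: CE1}) yields $\mathbb{\tilde{E}}[I_\alpha(L^\alpha h_0)_{\tau(s),s}\mid\mathcal{H}^t_{\tau(s)}]=0$, so the $\mathcal{H}^t_{\tau(s)}$-measurable component of $F$ contributes nothing. I apply the Stroock--Taylor formula (Theorem \ref{thm: ST formula}) at order $k+1$ to $F$ on the interval $[\tau(s),t]$, writing $F-\mathbb{\tilde{E}}[F\mid\mathcal{H}^t_{\tau(s)}]$ as a sum of iterated $V$-integrals $I_\beta$ of lengths $1\le|\beta|\le k+1$, with kernels possessing finite moments of all orders by Lemma \ref{lem: BoundedIteratedMD1} and Proposition \ref{prop: UnifBoundKernels}.

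For each summand, I compute $\mathbb{\tilde{E}}[I_\beta\cdot I_\alpha(L^\alpha h_0)_{\tau(s),s}\mid\mathcal{Y}_t]$ by iterating the covariation identities of Lemma \ref{lem: CondExpect}(\ref{lem: CE2})--(\ref{lem: CE3}). The Brownian indices of $\beta$ pair with those of $\alpha$ via It\^o isometry on the intersection $[\tau(s),s]$; any Brownian direction of $\alpha$ that remains unmatched by $\beta$ is killed through Lemma \ref{lem: CondExpect}(\ref{lem: CE1}), forcing $|\beta|\ge k$ for a non-vanishing contribution. For each surviving term, the $|\alpha|_0$ Lebesgue differentials of $I_\alpha$ each yield a factor $\le\delta$, the $k$ matched Brownian pairs on $[\tau(s),s]$ each produce another factor $\le\delta$, and the remaining (unmatched) $V$-integrals of $I_\beta$ stay controlled by the kernel moment bounds. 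Combining the contributions gives $\|\mathbb{\tilde{E}}[F\,I_\alpha(L^\alpha h_0)_{\tau(s),s}\mid\mathcal{Y}_t]\|_2\le C\delta^{|\alpha|_0+k}=C\delta^m$, hence $\mathbb{\tilde{E}}[|\cdot|^2]\le C\delta^{2m}$, and integrating in $s$ preserves the rate.

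The main technical obstacle is the combinatorial bookkeeping in the covariation step: one must track exactly how the Brownian multi-index of $\beta$ matches against the non-zero entries of $\alpha$ (both in type and position), and in particular one must verify that the stochastic remainder $|\beta|=k+1$, whose kernel is not $\mathcal{H}^t_{\tau(s)}$-measurable, still delivers a contribution of the correct order $\delta^m$ once the matched pairs are integrated out. The uniform moment bounds of Proposition \ref{prop: UnifBoundKernels} together with the H\"older continuity of the kernels (Lemma \ref{lem: RegularityKernels} and Remark \ref{rem: HighRegularityKernel}) are the essential inputs for controlling this remainder.
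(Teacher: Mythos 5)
Your proposal is correct and follows essentially the same route as the paper: Fubini in $s$, a Stroock--Taylor expansion of $\varphi(X_t)e^{\xi_t}$ to an order matching the number $k=m-\left\vert\alpha\right\vert_0$ of Brownian differentials in $I_\alpha$, annihilation of the under-expanded terms via Lemma \ref{lem: CondExpect} (\ref{lem: CE1}) and pairing of the rest via Lemma \ref{lem: CondExpect} (\ref{lem: CE2})--(\ref{lem: CE3}), with the uniform kernel moment bounds closing the estimate at rate $\delta^{\left\vert\alpha\right\vert_0+k}=\delta^{m}$; the paper writes this out only for $m\in\{1,2\}$ and your bookkeeping for general $m$ is the intended extension. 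The only superfluous ingredient is the H\"older continuity of the kernels (Lemma \ref{lem: RegularityKernels}), which is needed for the $dY^{i}$ lemmas but not here, since the Lebesgue outer differential already supplies a full power of $\delta$ and crude moment bounds on the remainder kernels suffice.
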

\begin{proof}
We will give only the proof for the case $m\in\{1,2\}.$ The proof
for $m>2$ follows the same ideas but it is tedious to write down
and we leave it to the reader. We split the proof depending on $\left\vert \alpha\right\vert _{0}$,
the number of zeros in $\alpha$. If $m=1,\left\vert \alpha\right\vert _{0}\in\{0\}$
and if $m=2,\left\vert \alpha\right\vert _{0}\in\{0,1\}.$ We group
the three cases into two: $\left\vert \alpha\right\vert _{0}=m-1$
and $\left\vert \alpha\right\vert _{0}=0,$ (of course the two overlap
when $m=1$).

Assume that $\left\vert \alpha\right\vert _{0}=m-1.$ Then, using
Theorem \ref{thm: Integral Representation} we can write 
\begin{align*}
 & \mathbb{\tilde{E}}\left[\varphi(X_{t})e^{\xi_{t}}\int_{0}^{t}I_{\alpha}(L^{\alpha}h_{i}(X_{\cdot}))_{\tau(s),s}ds|\mathcal{Y}_{t}\right]\\
 & =\int_{0}^{t}\mathbb{\tilde{E}}\left[\varphi(X_{t})e^{\xi_{t}}I_{\alpha}(L^{\alpha}h_{i}(X_{\cdot}))_{\tau(s),s}|\mathcal{Y}_{t}\right]ds\\
 & =\int_{0}^{t}\mathbb{\tilde{E}}\left[\mathbb{\tilde{E}}\left[\varphi(X_{t})e^{\xi_{t}}|\mathcal{H}_{0}^{t}\right]I_{\alpha}(L^{\alpha}h_{i}(X_{\cdot}))_{\tau(s),s}|\mathcal{Y}_{t}\right]ds\\
 & \quad+\sum_{r=1}^{d_{V}}\int_{0}^{t}\mathbb{\tilde{E}}\left[\left(\int_{0}^{t}J_{u}^{r}dV_{u}^{r}\right)I_{\alpha}(L^{\alpha}h_{i}(X_{\cdot}))_{\tau(s),s}|\mathcal{Y}_{t}\right]ds\\
 & =\int_{0}^{t}\mathbb{\tilde{E}}\left[\varphi(X_{t})e^{\xi_{t}}|\mathcal{H}_{0}^{t}\right]\mathbb{\tilde{E}}\left[\mathbb{\tilde{E}}\left[I_{\alpha}(L^{\alpha}h_{i}(X_{\cdot}))_{\tau(s),s}|\mathcal{H}_{0}^{t}\right]|\mathcal{Y}_{t}\right]ds\\
 & \quad+\sum_{r=1}^{d_{V}}\int_{0}^{t}\mathbb{\tilde{E}}\left[\mathbb{\tilde{E}}\left[\left(\int_{0}^{t}J_{u}^{r}dV_{u}^{r}\right)I_{\alpha}(L^{\alpha}h_{i}(X_{\cdot}))_{\tau(s),s}|\mathcal{H}_{0}^{t}\right]|\mathcal{Y}_{t}\right]ds.
\end{align*}
Moreover, by Lemma \ref{lem: CondExpect} (\ref{lem: CE1}), we get
$\mathbb{\tilde{E}}\left[I_{\alpha}(L^{\alpha}h_{0}(X_{\cdot}))_{\tau(s),s}|\mathcal{H}_{0}^{t}\right]=0$
and, by Lemma \ref{lem: CondExpect} (\ref{lem: CE2}), for $r=1,...,d_{V}$
we have 
\begin{align*}
 & \mathbb{\tilde{E}}\left[\left(\int_{0}^{t}J_{u}^{r}dV_{u}^{r}\right)I_{\alpha}(L^{\alpha}h_{0}(X_{\cdot}))_{\tau(s),s}|\mathcal{Y}_{t}\right]\\
 & =\boldsymbol{1}_{\{\alpha_{m}=0\}}\int_{\tau(s)}^{s}\mathbb{\tilde{E}}\left[\left(\int_{\tau(s)}^{s}J_{v}^{r}dV_{v}^{r}\right)I_{\alpha-}(L^{\alpha}h_{i}(X_{\cdot}))_{\tau(s),u}|\mathcal{Y}_{t}\right]du\\
 & \quad+\boldsymbol{1}_{\{\alpha_{m}=r\}}\int_{\tau(s)}^{s}\mathbb{\tilde{E}}\left[J_{u}^{r}I_{\alpha-}(L^{\alpha}h_{i}(X_{\cdot}))_{\tau(s),u}|\mathcal{Y}_{t}\right]du.
\end{align*}
Next, using Jensen's inequality, Cauchy-Schwartz inequality, Itô isometry,
Lemma \ref{lem: Moments Iterated Integral} and Remark \ref{rem: M}
we get 
\begin{align*}
 & \mathbb{\tilde{E}}\left[\left\vert \int_{\tau(s)}^{s}\mathbb{\tilde{E}}\left[\left(\int_{\tau(s)}^{s}J_{v}^{r}dV_{v}^{r}\right)I_{\alpha-}(L^{\alpha}h_{i}(X_{\cdot}))_{\tau(s),u}|\mathcal{Y}_{t}\right]du\right\vert ^{2}\right]\\
 & \leq C\delta\int_{\tau(s)}^{s}\mathbb{\tilde{E}}\left[\mathbb{\tilde{E}}\left[\left(\int_{\tau(s)}^{s}J_{v}^{r}dV_{v}^{r}\right)I_{\alpha-}(L^{\alpha}h_{i}(X_{\cdot}))_{\tau(s),u}|\mathcal{Y}_{t}\right]^{2}\right]du\\
 & \leq C\delta\int_{\tau(s)}^{s}\mathbb{\tilde{E}}\left[\mathbb{\tilde{E}}\left[\left\vert \int_{\tau(s)}^{s}J_{v}^{r}dV_{v}^{r}\right\vert ^{2}|\mathcal{Y}_{t}\right]\mathbb{\tilde{E}}\left[\left\vert I_{\alpha-}(L^{\alpha}h_{i}(X_{\cdot}))_{\tau(s),u}\right\vert ^{2}|\mathcal{Y}_{t}\right]\right]du\\
 & \leq C\delta\int_{\tau(s)}^{s}\mathbb{\tilde{E}}\left[\left\vert \int_{\tau(s)}^{s}J_{v}^{r}dV_{v}^{r}\right\vert ^{2}\right]\mathbb{\tilde{E}}\left[\left\vert I_{\alpha-}(L^{\alpha}h_{i}(X_{\cdot}))_{\tau(s),u}\right\vert ^{2}\right]du\\
 & \leq C\delta\int_{\tau(s)}^{s}\int_{\tau(s)}^{s}\mathbb{\tilde{E}}\left[\left\vert J_{v}^{r}\right\vert ^{2}\right]dv(u-\tau(s))^{\left\vert \alpha-\right\vert +\left\vert \alpha-\right\vert _{0}}\mathbb{\tilde{E}}\left[\sup_{0\leq v\leq t}\left\vert L^{\alpha}h_{i}(X_{v})\right\vert ^{2}\right]du\\
 & \leq C\delta^{\left\vert \alpha-\right\vert +\left\vert \alpha-\right\vert _{0}+3}=C\delta^{m-1+m-2+3}=C\delta^{2m},
\end{align*}
and using similar reasonings we get 
\begin{align*}
\mathbb{\tilde{E}}\left[\left\vert \int_{\tau(s)}^{s}\mathbb{\tilde{E}}\left[J_{u}^{r}I_{\alpha-}(L^{\alpha}h_{i}(X_{\cdot}))_{\tau(s),u}|\mathcal{Y}_{t}\right]du\right\vert ^{2}\right] & \leq C\delta^{\left\vert \alpha-\right\vert +\left\vert \alpha-\right\vert _{0}+2}\\
 & =C\delta^{m-1+m-1+2}=C\delta^{2m},
\end{align*}
and the result for the case $\left\vert \alpha\right\vert _{0}=m-1$
follows.

The last case is $\left\vert \alpha\right\vert _{0}=0$ and $m=2.$
Applying Theorem \ref{thm: Integral Representation} we can write
\begin{align*}
 & \mathbb{\tilde{E}}\left[\varphi(X_{t})e^{\xi_{t}}\int_{0}^{t}I_{\alpha}(L^{\alpha}h_{i}(X_{\cdot}))_{\tau(s),s}ds|\mathcal{Y}_{t}\right]\\
 & =\int_{0}^{t}\mathbb{\tilde{E}}\left[\mathbb{\tilde{E}}\left[\varphi(X_{t})e^{\xi_{t}}|\mathcal{H}_{0}^{t}\right]I_{\alpha}(L^{\alpha}h_{i}(X_{\cdot}))_{\tau(s),s}|\mathcal{Y}_{t}\right]ds\\
 & \quad+\sum_{r=1}^{d_{V}}\int_{0}^{t}\mathbb{\tilde{E}}\left[\left(\int_{0}^{t}\mathbb{\tilde{E}}\left[J_{u}^{r}|\mathcal{H}_{0}^{t}\right]dV_{u}^{r}\right)I_{\alpha}(L^{\alpha}h_{i}(X_{\cdot}))_{\tau(s),s}|\mathcal{Y}_{t}\right]ds\\
 & \quad+\sum_{r_{1},r_{2}=1}^{d_{V}}\int_{0}^{t}\mathbb{\tilde{E}}\left[\left(\int_{0}^{t}\int_{0}^{u_{2}}J_{u_{1},u_{2}}^{r_{1},r_{2}}dV_{u_{1}}^{r_{1}}dV_{u_{2}}^{r_{2}}\right)I_{\alpha}(L^{\alpha}h_{i}(X_{\cdot}))_{\tau(s),s}|\mathcal{Y}_{t}\right]ds.\\
 & \triangleq A_{1}+\sum_{r=1}^{d_{V}}A_{2}(r)+\sum_{r_{1},r_{2}=1}^{d_{V}}A_{3}\left(r_{1},r_{2}\right).
\end{align*}
Applying Lemma \ref{lem: CondExpect} (\ref{lem: CE1}), we see that
the term $A_{1}$ vanishes. Applying Lemma \ref{lem: CondExpect}
(\ref{lem: CE2}) and, then, Lemma \ref{lem: CondExpect} (\ref{lem: CE1}),
for $r=1,...,d_{V}$, we can write 
\begin{align*}
A_{2}\left(r\right) & =\mathbf{1}_{\left\{ \alpha_{2}=r\right\} }\int_{0}^{t}\int_{\tau(s)}^{s}\mathbb{\tilde{E}}\left[\mathbb{\tilde{E}}\left[J_{u}^{r}|\mathcal{H}_{0}^{t}\right]I_{\alpha_{1}}(L^{\alpha}h_{i}(X_{\cdot}))_{\tau(s),u}|\mathcal{Y}_{t}\right]duds\\
 & =\mathbf{1}_{\left\{ \alpha_{2}=r\right\} }\int_{0}^{t}\int_{\tau(s)}^{s}\mathbb{\tilde{E}}\left[\mathbb{\tilde{E}}\left[J_{u}^{r}|\mathcal{H}_{0}^{t}\right]\mathbb{\tilde{E}}\left[I_{\alpha_{1}}(L^{\alpha}h_{i}(X_{\cdot}))_{\tau(s),u}|\mathcal{H}_{0}^{t}\right]|\mathcal{Y}_{t}\right]duds\\
 & =\mathbf{1}_{\left\{ \alpha_{2}=r,\left|\alpha_{1}\right|=\left|\alpha_{1}\right|_{0}\right\} }\int_{0}^{t}\int_{\tau(s)}^{s}\mathbb{\tilde{E}}\left[\mathbb{\tilde{E}}\left[J_{u}^{r}|\mathcal{H}_{0}^{t}\right]\right.\\
 & \quad\times\left.I_{\alpha_{1}}\left(\mathbb{\tilde{E}}\left[L^{\alpha}h_{i}(X_{\cdot})|\mathcal{H}_{0}^{t}\right]\right){}_{\tau(s),u}|\mathcal{Y}_{t}\right]duds
\end{align*}
which is equal to zero because $1=\left|\alpha_{1}\right|\neq\left|\alpha_{1}\right|_{0}=0.$
Applying Lemma \ref{lem: CondExpect} (\ref{lem: CE3}), for $r_{1},r_{2}=1,...,d_{V}$
we can write 
\begin{align*}
A_{3}\left(r_{1},r_{2}\right) & =\boldsymbol{1}_{\{\alpha_{2}=r_{2},\alpha_{1}=r_{1}\}}\int_{0}^{t}\int_{\tau(s)}^{s}\int_{\tau(s)}^{u_{2}\wedge u}\mathbb{\tilde{E}}\left[J_{v,u}^{r_{1},r_{2}}I_{\left(\alpha-\right)-}(L^{\alpha}h_{i}(X_{\cdot}))_{\tau(s),v}|\mathcal{Y}_{t}\right]dvduds\\
 & =\boldsymbol{1}_{\{\alpha_{2}=r_{2},\alpha_{1}=r_{1}\}}\int_{0}^{t}\int_{\tau(s)}^{s}\int_{\tau(s)}^{u}\mathbb{\tilde{E}}\left[J_{v,u}^{r_{1},r_{2}}L^{\alpha}h_{i}(X_{v})|\mathcal{Y}_{t}\right]dvduds\\
 & \leq\boldsymbol{1}_{\{\alpha_{2}=r_{2},\alpha_{1}=r_{1}\}}\int_{0}^{t}\int_{\tau(s)}^{s}\int_{\tau(s)}^{u}\mathbb{\tilde{E}}\left[\mathbb{\tilde{E}}\left[\left|J_{v,u}^{r_{1},r_{2}}\right|^{2}|\mathcal{H}_{0}^{t}\right]^{1/2}\right.\\
 & \quad\times\left.\mathbb{\tilde{E}}\left[\left|L^{\alpha}h_{i}(X_{v})\right|^{2}|\mathcal{H}_{0}^{t}\right]^{1/2}|\mathcal{Y}_{t}\right]dvduds,
\end{align*}
Hence, using Jensen's inequality, Cauchy-Schwartz inequality and Remark
\ref{rem: M} we have 
\begin{align*}
\mathbb{\tilde{E}}\left[\left|A_{3}\left(r_{1},r_{2}\right)\right|^{2}\right] & \leq C\boldsymbol{1}_{\{\alpha_{2}=r_{2},\alpha_{1}=r_{1}\}}\delta^{2}\int_{0}^{t}\int_{\tau(s)}^{s}\int_{\tau(s)}^{u}\mathbb{\tilde{E}}\left[\mathbb{\tilde{E}}\left[\left|J_{v,u}^{r_{1},r_{2}}\right|^{2}|\mathcal{H}_{0}^{t}\right]\right.\\
 & \quad\times\left.\mathbb{\tilde{E}}\left[\left|L^{\alpha}h_{i}(X_{v})\right|^{2}|\mathcal{H}_{0}^{t}\right]\right]dvduds\\
 & \leq C\boldsymbol{1}_{\{\alpha_{2}=r_{2},\alpha_{1}=r_{1}\}}\delta^{2}\int_{0}^{t}\int_{\tau(s)}^{s}\int_{\tau(s)}^{u}\mathbb{\tilde{E}}\left[\left|J_{v,u}^{r_{1},r_{2}}\right|^{2}\right]dvduds\\
 & \leq C\boldsymbol{1}_{\{\alpha_{2}=r_{2},\alpha_{1}=r_{1}\}}\delta^{4}=C\boldsymbol{1}_{\{\alpha_{2}=r_{2},\alpha_{1}=r_{1}\}}\delta^{2m},
\end{align*}
and we can conclude. 
\end{proof}
\begin{lem}
\label{lem: Convenient}Let $m\in\{1,2\}$ and assume that \textbf{H}$(m)$
holds. For $\alpha\in\mathcal{R}\left(\mathcal{M}_{m-1}(S_{0})\right)$,
$\left\vert \alpha\right\vert _{0}\neq\left|\alpha\right|$ and $i\neq0$,
we can write 
\end{lem}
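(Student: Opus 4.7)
The plan is to use stochastic Fubini together with an integration by parts in each subinterval $[t_j,t_{j+1}]$ in order to rewrite the outer $dY^i$-integral in a form in which the observation differential is exchanged for an $\mathcal{H}^t$-adapted differential $dV^{\alpha_{|\alpha|}}$. The condition $|\alpha|_0\neq|\alpha|$ guarantees that at least one index of $\alpha$ is non-zero, which is precisely what makes this exchange possible; when $\alpha_{|\alpha|}\neq 0$ the rewriting is a direct integration by parts, and when $\alpha_{|\alpha|}=0$ one first strips off the leading Lebesgue differentials and applies Fubini before integrating by parts with respect to the first non-zero inner differential.

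The concrete steps I would carry out are as follows. First, split the integral over $[0,t]$ along the partition as $\int_0^t\cdots dY_s^i=\sum_{j=0}^{n-1}\int_{t_j}^{t_{j+1}}I_\alpha(L^\alpha h_i(X_\cdot))_{\tau(s),s}\,dY_s^i$, so that on each piece $\tau(s)=t_j$ is constant. Second, using the definition of the iterated integral and the fact that $Y$ is a $\tilde P$-Brownian motion independent of $V$, invoke a stochastic Fubini theorem (in the spirit of Veretennikov--Krylov) to interchange the outer $dY^i_s$-integration with the inner $dV$-integrations of $I_\alpha$. Third, identify in each subinterval the innermost differential that comes from $V^r$ with $r\neq 0$ (this exists by $|\alpha|_0\neq|\alpha|$) and apply It\^{o}'s product formula / integration by parts to it jointly with the $dY^i$-integral: since $Y^i$ and $V^r$ are orthogonal continuous martingales under $\tilde P$, the covariation term vanishes, and the remaining two boundary--adapted terms give the desired representation with the factor $(Y^i_{t_{j+1}}-Y^i_s)$ in front of $I_{\alpha-}(L^\alpha h_i(X_\cdot))_{t_j,s}\,dV_s^{\alpha_{|\alpha|}}$.

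The main obstacle is bookkeeping rather than analysis: one must be careful that the stochastic Fubini is legitimately applied, which requires the square-integrability of the integrand uniformly in the outer variable. This is supplied by Lemma~\ref{lem: Moments Iterated Integral} together with Remark~\ref{rem: M}, which guarantee that $\mathbb{\tilde E}[|I_{\alpha-}(L^\alpha h_i(X_\cdot))_{t_j,s}|^p]<\infty$ for every $p\geq 1$ uniformly in $s\in[t_j,t_{j+1}]$, so the hypotheses of the stochastic Fubini theorem are comfortably met. A secondary subtlety is handling the case $\alpha_{|\alpha|}=0$: here one must commute the terminal Lebesgue differentials past the $dY^i$-integral (an ordinary Fubini applied conditionally on $V$ works) until a non-zero inner differential is exposed, at which point the integration-by-parts argument above applies. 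Once this is done, summing over $j$ yields the convenient representation, which is then the precise form required by Lemmas~\ref{lem: dY_m=00003D00003D1}, \ref{lem: dY_m=00003D00003D2_alpha=00003D00003D1} and \ref{lem: dY_m=00003D00003D2_alpha=00003D00003D2} to compute covariations against the Stroock--Taylor expansion of $\varphi(X_t)e^{\xi_t}$.
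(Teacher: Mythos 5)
Your integration-by-parts step on each subinterval is indeed the final move in the paper's argument: the It\^{o} product rule applied to $(Y^i_{\cdot}-Y^i_{t_j})\,I_\alpha(L^\alpha h_i(X_\cdot))_{t_j,\cdot}$, with vanishing covariation because $Y$ is independent of $V$ under $\tilde P$, turns $\int_{t_j}^{t_{j+1}}I_\alpha(L^\alpha h_i(X_\cdot))_{\tau(s),s}\,dY_s^i$ into $\int_{t_j}^{t_{j+1}}(Y^i_{t_{j+1}}-Y^i_s)\,I_{\alpha-}(L^\alpha h_i(X_\cdot))_{\tau(s),s}\,dV_s^{\alpha_{|\alpha|}}$. (This works uniformly; your separate treatment of the case $\alpha_{|\alpha|}=0$ is unnecessary, since the covariation with a Lebesgue differential vanishes trivially.)

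However, the proposal misses the main content of the lemma. The identities \eqref{eq: Convenient_1} and \eqref{eq: Convenient_2} are not rewritings of $\int_0^t I_\alpha\,dY^i$ alone: they express the conditional expectation of the \emph{product} with $\varphi(X_t)e^{\xi_t}$, and the kernels $J^r_s$ (respectively $J^{r_1,r_2}_{s_1,s_2}$) appearing on the right-hand sides come from the martingale representation of $\varphi(X_t)e^{\xi_t}$ (Theorem \ref{thm: Integral Representation}, respectively the order-two Stroock--Taylor formula, Theorem \ref{thm: ST formula}). The proof must (i) substitute $\varphi(X_t)e^{\xi_t}=\mathbb{\tilde{E}}[\varphi(X_t)e^{\xi_t}|\mathcal{H}_0^t]+\sum_{r}\int_0^t J^r_u\,dV^r_u$ into the product, and (ii) show that after conditioning only the diagonal terms survive: the $\mathcal{H}_0^t$-measurable term dies because $\mathbb{\tilde{E}}[I_\alpha(L^\alpha h_i(X_\cdot))_{t_{j},s}|\mathcal{H}_0^t]=0$, and the off-diagonal products pairing $[t_{j_2},t_{j_2+1}]$ with $[t_{j_1},t_{j_1+1}]$, $j_1\neq j_2$, die by conditioning on $\mathcal{H}^t_{t_{j_1}}$ or $\mathcal{H}^t_{t_{j_2}}$. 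This is also where the hypothesis $|\alpha|_0\neq|\alpha|$ actually enters, via Lemma \ref{lem: CondExpect} (\ref{lem: CE1}) --- not, as you suggest, to make the integration by parts possible, which it is for any $\alpha$. Your proposed stochastic Fubini interchange of the outer $dY^i$ with the inner $dV$ integrations does not produce the interval-by-interval pairing with $\int_{t_j}^{t_{j+1}}J^r_s\,dV^r_s$ that the statement asserts, so as written the argument cannot reach the claimed identities.
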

\begin{align}
 & \mathbb{\tilde{E}}\left[\varphi(X_{t})e^{\xi_{t}}\int_{0}^{t}I_{\alpha}(L^{\alpha}h_{i}(X_{\cdot}))_{\tau(s),s}dY_{s}^{i}|\mathcal{Y}_{t}\right]\nonumber \\
 & =\sum_{r=1}^{d_{V}}\sum_{j=0}^{n-1}\mathbb{\tilde{E}}\left[\left(\int_{t_{j}}^{t_{j+1}}J_{s}^{r}dV_{s}^{r}\right)\left(\int_{t_{j}}^{t_{j+1}}\left(Y_{t_{j+1}}^{i}-Y_{t_{j}}^{i}\right)I_{\alpha-}(L^{\alpha}h_{i}(X_{\cdot}))_{\tau(s),s}dV_{s}^{\alpha_{|\alpha|}}\right)|\mathcal{Y}_{t}\right],\label{eq: Convenient_1}
\end{align}
and 
\begin{align}
 & \mathbb{\tilde{E}}\left[\varphi(X_{t})e^{\xi_{t}}\int_{0}^{t}I_{\alpha}(L^{\alpha}h_{i}(X_{\cdot}))_{\tau(s),s}dY_{s}^{i}|\mathcal{Y}_{t}\right]\nonumber \\
 & =\sum_{r_{1}=1}^{d_{V}}\sum_{j=0}^{n-1}\mathbb{\tilde{E}}\left[\left(\int_{t_{j}}^{t_{j+1}}\mathbb{\tilde{E}}\left[J_{s}^{r_{1}}|\mathcal{H}_{0}^{t}\right]dV_{s}^{r_{1}}\right)\right.\label{eq: Convenient_2}\\
 & \quad\times\left.\left(\int_{t_{j}}^{t_{j+1}}\left(Y_{t_{j+1}}^{i}-Y_{t_{j}}^{i}\right)I_{\alpha-}(L^{\alpha}h_{i}(X_{\cdot}))_{\tau(s),s}dV_{s}^{\alpha_{|\alpha|}}\right)|\mathcal{Y}_{t}\right]\nonumber \\
 & \quad+\sum_{r_{1},r_{2}=1}^{d_{V}}\sum_{j=0}^{n-1}\mathbb{\tilde{E}}\left[\left(\int_{t_{j}}^{t_{j+1}}\int_{0}^{s_{2}}J_{s_{1},s_{2}}^{r_{1},r_{2}}dV_{s_{1}}^{r_{1}}dV_{s_{2}}^{r_{2}}\right)\right.\nonumber \\
 & \quad\times\left.\left(\int_{t_{j}}^{t_{j+1}}\left(Y_{t_{j+1}}^{i}-Y_{t_{j}}^{i}\right)I_{\alpha-}(L^{\alpha}h_{i}(X_{\cdot}))_{\tau(s),s}dV_{s}^{\alpha_{|\alpha|}}\right)|\mathcal{Y}_{t}\right].\nonumber 
\end{align}

\begin{proof}
Note that, as $\left\vert \alpha\right\vert _{0}\neq\left|\alpha\right|,$
by Lemma \ref{lem: CondExpect} $\left(\ref{lem: CE1}\right)$ we
have that if $0\leq u\leq v\leq w\leq t$ then 
\begin{equation}
\mathbb{\tilde{E}}\left[I_{\alpha}(L^{\alpha}h_{i}(X_{\cdot}))_{v,w}|\mathcal{H}_{u}^{t}\right]=0.\label{eq: NullCondExp}
\end{equation}
Using Theorem \ref{thm: Integral Representation} we can write 
\begin{align*}
 & \varphi(X_{t})e^{\xi_{t}}\int_{0}^{t}I_{\alpha}(L^{\alpha}h_{i}(X_{\cdot}))_{\tau(s),s}dY_{s}^{i}\\
 & =\varphi(X_{t})e^{\xi_{t}}\sum_{j_{1}=0}^{n-1}\int_{t_{j_{1}}}^{t_{j_{1}+1}}I_{\alpha}(L^{\alpha}h_{i}(X_{\cdot}))_{t_{j_{1}},s}dY_{s}^{i}\\
 & =\mathbb{\tilde{E}}\left[\varphi(X_{t})e^{\xi_{t}}|\mathcal{H}_{0}^{t}\right]\sum_{j_{1}=0}^{n-1}\int_{t_{j_{1}}}^{t_{j_{1}+1}}I_{\alpha}(L^{\alpha}h_{i}(X_{\cdot}))_{t_{j_{1}},s}dY_{s}^{i}\\
 & \quad+\sum_{r=1}^{d_{V}}\sum_{j_{1},j_{2}=0}^{n-1}\left(\int_{t_{j_{2}}}^{t_{j_{2}+1}}J_{s}^{r}dV_{s}^{r}\right)\left(\int_{t_{j_{1}}}^{t_{j_{1}+1}}I_{\alpha}(L^{\alpha}h_{i}(X_{\cdot}))_{t_{j_{1}},s}dY_{s}^{i}\right).
\end{align*}
Next, for $j_{1}\geq0$ we get, using equation $\left(\ref{eq: NullCondExp}\right)$
that 
\begin{align*}
 & \mathbb{\tilde{E}}\left[\mathbb{\tilde{E}}\left[\varphi(X_{t})e^{\xi_{t}}|\mathcal{H}_{0}^{t}\right]\int_{t_{j_{1}}}^{t_{j_{1}+1}}I_{\alpha}(L^{\alpha}h_{i}(X_{\cdot}))_{t_{j_{1}},s}dY_{s}^{i}|\mathcal{H}_{0}^{t}\right]\\
 & =\mathbb{\tilde{E}}\left[\varphi(X_{t})e^{\xi_{t}}|\mathcal{H}_{0}^{t}\right]\int_{t_{j_{1}}}^{t_{j_{1}+1}}\mathbb{\tilde{E}}\left[I_{\alpha}(L^{\alpha}h_{i}(X_{\cdot}))_{t_{j_{1}},s}|\mathcal{H}_{0}^{t}\right]dY_{s}^{i}=0.
\end{align*}
Moreover, for $r=1,...,d_{V},$ if $j_{2}>j_{1}$ we get that 
\begin{align*}
 & \mathbb{\tilde{E}}\left[\left(\int_{t_{j_{2}}}^{t_{j_{2}+1}}J_{s}^{r}dV_{s}^{r}\right)\left(\int_{t_{j_{1}}}^{t_{j_{1}+1}}I_{\alpha}(L^{\alpha}h_{i}(X_{\cdot}))_{t_{j_{1}},s}dY_{s}^{i}\right)|\mathcal{H}_{t_{j_{2}}}^{t}\right]\\
 & =\left(\int_{t_{j_{1}}}^{t_{j_{1}+1}}I_{\alpha}(L^{\alpha}h_{i}(X_{\cdot}))_{t_{j_{1}},s}dY_{s}^{i}\right)\mathbb{\tilde{E}}\left[\int_{t_{j_{2}}}^{t_{j_{2}+1}}J_{s}^{r}dV_{s}^{r}|\mathcal{H}_{t_{j_{2}}}^{t}\right]=0,
\end{align*}
and if $j_{2}<j_{1}$ we get that 
\begin{align*}
 & \mathbb{\tilde{E}}\left[\left(\int_{t_{j_{2}}}^{t_{j_{2}+1}}J_{s}^{r}dV_{s}^{r}\right)\left(\int_{t_{j_{1}}}^{t_{j_{1}+1}}I_{\alpha}(L^{\alpha}h_{i}(X_{\cdot}))_{t_{j_{1}},s}dY_{s}^{i}\right)|\mathcal{H}_{t_{j_{1}}}^{t}\right]\\
 & =\left(\int_{t_{j_{2}}}^{t_{j_{2}+1}}J_{s}^{r}dV_{s}^{r}\right)\int_{t_{j_{1}}}^{t_{j_{1}+1}}\mathbb{\tilde{E}}\left[I_{\alpha}(L^{\alpha}h_{i}(X_{\cdot}))_{t_{j_{1}},s}|\mathcal{H}_{t_{j_{1}}}^{t}\right]dY_{s}^{i}=0.
\end{align*}
Hence, using the tower property of the conditional expectation we
can write 
\begin{align}
 & \mathbb{\tilde{E}}\left[\varphi(X_{t})e^{\xi_{t}}\int_{0}^{t}I_{\alpha}(L^{\alpha}h_{i}(X_{\cdot}))_{\tau(s),s}dY_{s}^{i}|\mathcal{Y}_{t}\right]\nonumber \\
 & =\sum_{r=1}^{d_{V}}\mathbb{\tilde{E}}\left[\sum_{j=0}^{n-1}\left(\int_{t_{j}}^{t_{j+1}}J_{s}^{r}dV_{s}^{r}\right)\left(\int_{t_{j}}^{t_{j+1}}I_{\alpha}(L^{\alpha}h_{i}(X_{\cdot}))_{\tau(s),s}dY_{s}^{i}\right)|\mathcal{Y}_{t}\right].\label{eq: Convenient Intermediate}
\end{align}
By integration by parts formula for $\mathcal{F}_{s}^{V,0}\vee\mathcal{Y}_{s}$-semimartingales
we have 
\begin{align*}
\int_{t_{j}}^{t_{j+1}}I_{\alpha}(L^{\alpha}h_{i}(X_{\cdot}))_{\tau(s),s}dY_{s}^{i} & =\left(Y_{t_{j+1}}^{i}-Y_{t_{j}}^{i}\right)I_{\alpha}(L^{\alpha}h_{i}(X_{\cdot}))_{t_{j},t_{j+1}}\\
 & \quad-\int_{t_{j}}^{t_{j+1}}\left(Y_{s}^{i}-Y_{t_{j}}^{i}\right)I_{\alpha-}(L^{\alpha}h_{i}(X_{\cdot}))_{\tau(s),s}dV_{s}^{\alpha_{1}}.
\end{align*}
Moreover, we can rewrite the right hand side of the previous equality
as a well defined $\mathcal{H}_{s}^{t}$-iterated integral and obtain
\begin{align*}
\int_{t_{j}}^{t_{j+1}}I_{\alpha}(L^{\alpha}h_{i}(X_{\cdot}))_{\tau(s),s}dY_{s}^{i} & =\int_{t_{j}}^{t_{j+1}}\left(Y_{t_{j+1}}^{i}-Y_{s}^{i}\right)I_{\alpha-}(L^{\alpha}h_{i}(X_{\cdot}))_{\tau(s),s}dV^{\alpha_{|\alpha|}},
\end{align*}
which combined with equation $\left(\ref{eq: Convenient Intermediate}\right)$
gives equation $\left(\ref{eq: Convenient_1}\right)$. Finally, using
Theorem \ref{thm: ST formula} with $k$=1 and repeating the same
reasonings as before we get equation $\left(\ref{eq: Convenient_2}\right)$. 
\end{proof}
\begin{lem}
\label{lem: dY_m=00003D00003D1}Assume that \textbf{H}$(1)$ holds
and $\varphi\in C_{P}^{2}$. For $\alpha\in\mathcal{R}\left(\mathcal{M}_{0}(S_{0})\right)$
with $\left\vert \alpha\right\vert _{0}\neq1$ and $i\neq0$ we have
that 
\begin{align*}
\mathbb{\tilde{E}}\left[\mathbb{\tilde{E}}\left[\varphi(X_{t})e^{\xi_{t}}\int_{0}^{t}I_{\alpha}(L^{\alpha}h_{i}(X_{\cdot}))_{\tau(s),s}dY_{s}^{i}|\mathcal{Y}_{t}\right]^{2}\right] & \leq C\delta^{2}.
\end{align*}
\end{lem}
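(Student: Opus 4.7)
Since $\alpha\in\mathcal{R}(\mathcal{M}_0(S_0))$ forces $|\alpha|=1$, and the condition $|\alpha|_0\neq 1$ then gives $|\alpha|_0=0$, we must have $\alpha=(r)$ for some $r\in\{1,\ldots,d_V\}$. In particular $\alpha-=v$, $\alpha_{|\alpha|}=r$, and the inner iterated integral reduces to $I_\alpha(L^r h_i(X_\cdot))_{\tau(s),s}=\int_{\tau(s)}^{s}L^r h_i(X_u)\,dV_u^r$. The first step of the plan is to apply Lemma \ref{lem: Convenient}, equation (\ref{eq: Convenient_1}), to the functional $F=\varphi(X_t)e^{\xi_t}\int_0^t I_\alpha(L^r h_i(X_\cdot))_{\tau(s),s}\,dY_s^i$, with Clark--Ocone kernels $J^{r_1}$ for $\varphi(X_t)e^{\xi_t}$ supplied by Theorem \ref{thm: Integral Representation} and controlled by Proposition \ref{prop: UnifBoundKernels}. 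This rewrites $\mathbb{\tilde{E}}[F\,|\,\mathcal{Y}_t]$ as a single sum over partition intervals of $\mathcal{Y}_t$-conditional expectations of products of two $V$-stochastic integrals, each localized to $[t_j,t_{j+1}]$.

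To bound the resulting $L^2(\tilde{P})$-norm I use the identity $\mathbb{\tilde{E}}\bigl[|\mathbb{\tilde{E}}[F|\mathcal{Y}_t]|^2\bigr]=\mathbb{\tilde{E}}\bigl[F\cdot\mathbb{\tilde{E}}[F|\mathcal{Y}_t]\bigr]$ and expand both factors, so that after the per-interval integration-by-parts already carried out in Lemma \ref{lem: Convenient} a double sum indexed by $(j,k)$ appears. Since $h_i(X_\cdot)$ is $\mathcal{F}_t^{0,V}$-measurable, Remark \ref{rem: Backward Ito integral} identifies $e^{\xi_t}=M_0^t(h)$, so the typical summand takes the form
\[
\mathbb{\tilde{E}}\!\left[\varphi(X_t)\,M_0^t(h)\,\prod_{\ell\in\{j,k\}}\!\left(\int_{t_\ell}^{t_{\ell+1}}\!J_s^{r_\ell}dV_s^{r_\ell}\right)\!\left(\int_{t_\ell}^{t_{\ell+1}}\!(Y_{t_{\ell+1}}^i-Y_s^i)L^r h_i(X_s)\,dV_s^r\right)\right].
\]

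Rewriting $Y_{t_{\ell+1}}^i-Y_s^i=\int_s^{t_{\ell+1}}d\overleftarrow{Y}_u^i$ turns the outer factors into backward $Y$-integrals over $[s,t_{\ell+1}]\subset[t_\ell,t_{\ell+1}]$, so each expectation becomes of the type handled by Lemmas \ref{lem: Backward Martingale}, \ref{lem: Backward Martingale II} and \ref{lem: Main Backward}; the conditional Itô isometry in $V$ (valid because $Y\perp V$ under $\tilde{P}$) collapses the two $V$-integrals on each $[t_\ell,t_{\ell+1}]$ into a Lebesgue integral paying a factor $\delta$, and the two backward $Y$-integrals contribute a $\delta^{1/2}$ each, with the backward-martingale property of $M_0^t(h)$ gaining an extra $\delta$ whenever the intervals are disjoint. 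The outcome is a bound of order $\delta^{2m+1}=\delta^{3}$ on each diagonal term ($j=k$) and $\delta^{2m+2}=\delta^{4}$ on each off-diagonal term ($j\neq k$); summing, with $n\leq Ct/\delta$ from (\ref{eq: UnifPartition2}), gives $n\delta^{3}+n^{2}\delta^{4}\leq C\delta^{2}=C\delta^{2m}$, as required. The main obstacle is carrying out the matching with the backward-martingale template: the $V$-stochastic integrals must be eliminated by the conditional Itô isometry so that only the small backward $Y$-increments remain to feed into Lemma \ref{lem: Main Backward}; once that reduction is made, the decisive $\delta$-gain for $j\neq k$ follows from the backward martingale property of $M_0^t(h)$ exactly as in the proof of that lemma.
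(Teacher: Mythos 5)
Your opening and closing moves match the paper's: reduce to $\alpha=(r)$, apply Lemma \ref{lem: Convenient} (equation (\ref{eq: Convenient_1})), collapse the two $V$-integrals on each subinterval via the conditional covariation formula (Lemma \ref{lem: CondExpect}(\ref{lem: CE2})), and finish with the $n\delta^{3}+n^{2}\delta^{4}\leq C\delta^{2}$ count driven by Lemma \ref{lem: Main Backward}. The middle of the argument, however, has a genuine gap. After the isometry step the generic term is
\[
\mathbb{\tilde{E}}\Bigl[\textstyle\int_{t_{j}}^{t_{j+1}}J_{s}^{r}\,(Y_{t_{j+1}}^{i}-Y_{s}^{i})\,L^{r}h_{i}(X_{s})\,ds\,\big|\,\mathcal{Y}_{t}\Bigr],
\]
and this is \emph{not} yet "of the type handled by" Lemmas \ref{lem: Backward Martingale}, \ref{lem: Backward Martingale II} and \ref{lem: Main Backward}: that lemma requires the $Y$-increment to appear as a backward integral of a \emph{deterministic} $\beta$ with $|\beta|\leq\delta^{m}$, and the accompanying processes $\theta,\kappa$ to be $\mathcal{F}_{t}^{0,V}$-measurable. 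Here $J_{s}^{r}$ is a time-dependent random kernel (measurable with respect to $\mathcal{H}_{s}^{t}$, not $\mathcal{F}_{t}^{0,V}$) and $L^{r}h_{i}(X_{s})$ varies in $s$. The paper bridges this by the decomposition $B_{1}+B_{2}+B_{3}$: freezing $J_{s}^{r}\to J_{\eta(s)}^{r}$ and $L^{r}h_{i}(X_{s})\to L^{r}h_{i}(X_{\tau(s)})$, controlling the two error terms by the H\"older continuity of the Clark--Ocone kernels (Lemma \ref{lem: RegularityKernels}) and of $X$ together with the $\delta^{1/2}$ size of the $Y$-increment (a crude single-sum bound suffices there, no off-diagonal gain needed), and only then rewriting $\int_{t_{j}}^{t_{j+1}}(Y_{t_{j+1}}^{i}-Y_{s}^{i})\,ds=\int_{t_{j}}^{t_{j+1}}(s-t_{j})\,dY_{s}^{i}$ to obtain a deterministic $\beta_{s}^{j}=s-t_{j}$, unpacking $J_{t_{j+1}}^{r}=\mathbb{\tilde{E}}[D_{t_{j+1}}^{r}(\varphi(X_{t})e^{\xi_{t}})|\mathcal{H}_{t_{j+1}}^{t}]$ via Clark--Ocone and the Malliavin product rule to produce the $M_{0}^{t}(2h)$, $\theta$ and $\kappa$ structure that Lemma \ref{lem: Main Backward} actually accepts. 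None of these steps appear in your proposal, and you never invoke Lemma \ref{lem: RegularityKernels}, without which the $B_{1}$-type error cannot be controlled.

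A secondary but real defect: your displayed "typical summand" contains $\varphi(X_{t})M_{0}^{t}(h)$ \emph{and} two Clark--Ocone $J$-integrals simultaneously. Squaring the representation from Lemma \ref{lem: Convenient} yields two $J$-integral pairs and no residual $\varphi(X_{t})e^{\xi_{t}}$ factor, while pairing $F$ against $\mathbb{\tilde{E}}[F|\mathcal{Y}_{t}]$ yields one of each; your display double-counts. The factor $M_{0}^{t}(2h)$ (not $M_{0}^{t}(h)$) only emerges later in the paper, after the product $e^{\xi_{t}}\cdot e^{\xi_{t}}$ is formed inside the double sum and the deterministic compensator is split off.
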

\begin{proof}
We divide the proof into several steps.

\textbf{Step 1}. First we will find a more convenient expression for
\[
\mathbb{\tilde{E}}\left[\varphi(X_{t})e^{\xi_{t}}\int_{0}^{t}I_{\alpha}(L^{\alpha}h_{i}(X_{\cdot}))_{\tau(s),s}dY_{s}^{i}|\mathcal{Y}_{t}\right].
\]
Recall that $\alpha\in\mathcal{R}\left(\mathcal{M}_{0}(S_{0})\right)$
with $\left\vert \alpha\right\vert _{0}\neq1$ concides with the set
of multiindices $\alpha=\left(\alpha_{1}\right)$ with $\alpha_{1}\in\left\{ 1,...,d_{V}\right\} $.
Using Lemma \ref{lem: Convenient}, equation $\left(\ref{eq: Convenient_1}\right)$,
and taking into account that $I_{\alpha-}(L^{\alpha}h_{i}(X_{\cdot}))_{\tau(s),s}=L^{\alpha_{1}}h_{i}(X_{s}),$
we can write 
\begin{align*}
 & \mathbb{\tilde{E}}\left[\varphi(X_{t})e^{\xi_{t}}\int_{0}^{t}I_{\alpha}(L^{\alpha}h_{i}(X_{\cdot}))_{\tau(s),s}dY_{s}^{i}|\mathcal{Y}_{t}\right]\\
 & =\sum_{r=1}^{d_{V}}\sum_{j=0}^{n-1}\mathbb{\tilde{E}}\left[\left(\int_{t_{j}}^{t_{j+1}}J_{s}^{r}dV_{s}^{r}\right)\left(\int_{t_{j}}^{t_{j+1}}\left(Y_{t_{j+1}}^{i}-Y_{t_{j}}^{i}\right)L^{\alpha_{1}}h_{i}(X_{s})dV_{s}^{\alpha_{1}}\right)|\mathcal{Y}_{t}\right]
\end{align*}
Next, by Lemma \ref{lem: CondExpect} (\ref{lem: CE2}) we get that
\begin{align*}
 & \mathbb{\tilde{E}}\left[\varphi(X_{t})e^{\xi_{t}}\int_{0}^{t}I_{\alpha}(L^{\alpha}h_{i}(X_{\cdot}))_{\tau(s),s}dY_{s}^{i}|\mathcal{Y}_{t}\right]\\
 & =\sum_{r=1}^{d_{V}}\boldsymbol{1}_{\left\{ \alpha_{1}=r\right\} }\mathbb{\tilde{E}}\left[\sum_{j=0}^{n-1}\int_{t_{j}}^{t_{j+1}}J_{s}^{r}\left(Y_{t_{j+1}}^{i}-Y_{s}^{i}\right)L^{\alpha_{1}}h_{i}(X_{s})ds|\mathcal{Y}_{t}\right]\\
 & =\sum_{r=1}^{d_{V}}\boldsymbol{1}_{\left\{ \alpha_{1}=r\right\} }\mathbb{\tilde{E}}\left[\int_{0}^{t}J_{s}^{r}\left(Y_{\eta(s)}^{i}-Y_{s}^{i}\right)L^{\alpha_{1}}h_{i}(X_{s})ds|\mathcal{Y}_{t}\right]\\
 & =\sum_{r=1}^{d_{V}}\boldsymbol{1}_{\left\{ \alpha_{1}=r\right\} }\left(B_{1}\left(r\right)+B_{2}\left(r\right)+B_{3}\left(r\right)\right),
\end{align*}
where 
\begin{align*}
B_{1}\left(r\right) & \triangleq\mathbb{\tilde{E}}\left[\int_{0}^{t}\left(J_{s}^{r}-J_{\eta(s)}^{r}\right)\left(Y_{\eta(s)}^{i}-Y_{s}^{i}\right)L^{r}h_{i}(X_{s})ds|\mathcal{Y}_{t}\right],\\
B_{2}\left(r\right) & \triangleq\mathbb{\tilde{E}}\left[\int_{0}^{t}J_{\eta(s)}^{r}\left(Y_{\eta(s)}^{i}-Y_{s}^{i}\right)\left(L^{r}h_{i}(X_{s})-L^{r}h_{i}(X_{\tau(s)})\right)ds|\mathcal{Y}_{t}\right],\\
B_{3}\left(r\right) & \triangleq\mathbb{\tilde{E}}\left[\int_{0}^{t}J_{\eta(s)}^{r}L^{r}h_{i}(X_{\tau(s)})\left(Y_{\eta(s)}^{i}-Y_{s}^{i}\right)ds|\mathcal{Y}_{t}\right].
\end{align*}

\textbf{Step 2}. Next, we prove the result for $B_{1}\left(r\right).$
Applying Jensen inequality, Cauchy-Schwarz inequality, Hôlder inequality,
Remark \ref{rem: M}, that $Y$$^{i}$ is a Brownian motion under
$\tilde{P}$ and Lemma \ref{lem: RegularityKernels} we have that
\begin{align*}
\mathbb{\tilde{E}}\left[\left|B_{1}\left(r\right)\right|^{2}\right] & \leq C(t)\int_{0}^{t}\mathbb{\tilde{E}}\left[\mathbb{\tilde{E}}\left[\left(J_{s}^{r}-J_{\eta(s)}^{r}\right)^{2}\left(Y_{\eta(s)}^{i}-Y_{s}^{i}\right)^{2}|\mathcal{Y}_{t}\right]\mathbb{\tilde{E}}\left[\left|L^{r}h_{i}(X_{s})\right|^{2}|\mathcal{Y}_{t}\right]\right]ds\\
 & \leq C(t)\int_{0}^{t}\mathbb{\tilde{E}}\left[\left(J_{s}^{r}-J_{\eta(s)}^{r}\right)^{2}\left(Y_{\eta(s)}^{i}-Y_{s}^{i}\right)^{2}\right]ds\\
 & \leq C(t)\int_{0}^{t}\mathbb{\tilde{E}}\left[\left(J_{s}^{r}-J_{\eta(s)}^{r}\right)^{2+\varepsilon}\right]^{2/(2+\varepsilon)}\mathbb{\tilde{E}}\left[\left(Y_{\eta(s)}^{i}-Y_{s}^{i}\right)^{2\frac{2+\varepsilon}{\varepsilon}}\right]^{\varepsilon/(2+\varepsilon)}ds\\
 & \leq C(t)\delta\int_{0}^{t}\mathbb{\tilde{E}}\left[\left(J_{s}^{r}-J_{\eta(s)}^{r}\right)^{2+\varepsilon}\right]^{2/(2+\varepsilon)}ds\\
 & \leq C\left(t\right)t\delta^{2}.
\end{align*}

\textbf{Step 3}. Here, we prove the result for $B_{2}\left(r\right).$
Applying Jensen inequality and Cauchy-Schwarz inequality we get 
\begin{align*}
\mathbb{\tilde{E}}\left[\left|B_{2}\left(r\right)\right|^{2}\right] & \leq C(t)\int_{0}^{t}\mathbb{\tilde{E}}\left[\mathbb{\tilde{E}}\left[\left|J_{\eta(s)}^{r}\left(Y_{\eta(s)}^{i}-Y_{s}^{i}\right)\right|^{2}|\mathcal{Y}_{t}\right]\mathbb{\tilde{E}}\left[\left|L^{r}h_{i}(X_{s})-L^{r}h_{i}(X_{\tau(s)})\right|^{2}|\mathcal{Y}_{t}\right]\right]ds\\
 & \leq C(t)\int_{0}^{t}\mathbb{\tilde{E}}\left[\left|J_{\eta(s)}^{r}\left(Y_{\eta(s)}^{i}-Y_{s}^{i}\right)\right|^{2}\right]\mathbb{\tilde{E}}\left[\left|L^{r}h_{i}(X_{s})-L^{r}h_{i}(X_{\tau(s)})\right|^{2}\right]ds.
\end{align*}
Applying Hölder inequality and Proposition \ref{prop: UnifBoundKernels}
we can conclude that 
\begin{align*}
\mathbb{\tilde{E}}\left[\left|J_{\eta(s)}^{r}\left(Y_{\eta(s)}^{i}-Y_{s}^{i}\right)\right|^{2}\right] & \leq\mathbb{\tilde{E}}\left[\left|J_{\eta(s)}^{r}\right|^{2+\varepsilon}\right]^{2/(2+\varepsilon)}\mathbb{\tilde{E}}\left[\left|\left(Y_{\eta(s)}^{i}-Y_{s}^{i}\right)\right|^{2\frac{2+\varepsilon}{\varepsilon}}\right]^{\varepsilon/\left(2+\varepsilon\right)}\\
 & \leq\delta\sup_{0\leq s\leq t}\mathbb{\tilde{E}}\left[\left|J_{s}^{r}\right|^{2+\varepsilon}\right]^{2/(2+\varepsilon)}\leq C\delta.
\end{align*}
On the other hand, we can write 
\[
L^{r}h_{i}(X_{s})-L^{r}h_{i}(X_{\tau(s)})=\int_{\tau\left(s\right)}^{s}L^{(0,r)}h_{i}(X_{u})du+\sum_{r_{1}=1}^{d_{V}}\int_{\tau\left(s\right)}^{s}L^{(r_{1},r)}h_{i}(X_{u})dV_{u}^{r_{1}}.
\]
As the worst rate is achieved by the terms with the stochastic integral,
it suffices to show that 
\[
\mathbb{\tilde{E}}\left[\left|\int_{\tau\left(s\right)}^{s}L^{(r_{1},r)}h_{i}(X_{u})dV_{u}^{r_{1}}\right|^{2}\right]\leq C\delta,
\]
which easily follows by Itô isometry and Remark \ref{rem: M}.

\textbf{Step 4}. Finally, we prove the result for $B_{3}\left(r\right).$
We can write 
\begin{align*}
B_{3}\left(r\right) & =\sum_{j=0}^{n-1}\mathbb{\tilde{E}}\left[\int_{t_{j}}^{t_{j+1}}J_{t_{j+1}}^{r}L^{r}h_{i}(X_{t_{j}})\left(Y_{t_{j+1}}^{i}-Y_{s}^{i}\right)ds|\mathcal{Y}_{t}\right]\\
 & =\sum_{j=0}^{n-1}\mathbb{\tilde{E}}\left[J_{t_{j+1}}^{r}L^{r}h_{i}(X_{t_{j}})\int_{t_{j}}^{t_{j+1}}\left(Y_{t_{j+1}}^{i}-Y_{s}^{i}\right)ds|\mathcal{Y}_{t}\right]\\
 & =\sum_{j=0}^{n-1}\mathbb{\tilde{E}}\left[J_{t_{j+1}}^{r}L^{r}h_{i}(X_{t_{j}})\int_{t_{j}}^{t_{j+1}}\left(s-t_{j}\right)dY_{s}^{i}|\mathcal{Y}_{t}\right]\\
 & \triangleq\sum_{j=0}^{n-1}\mathbb{\tilde{E}}\left[J_{t_{j+1}}^{r}L^{r}h_{i}(X_{t_{j}})\int_{t_{j}}^{t_{j+1}}\beta_{s}^{j}dY_{s}^{i}|\mathcal{Y}_{t}\right]
\end{align*}
Moreover, 
\[
J_{t_{j+1}}^{r}=\mathbb{\tilde{E}}\left[D_{t_{j+1}}^{r}\left[\varphi\left(X_{t}\right)e^{\xi_{t}}\right]|\mathcal{H}_{t_{j+1}}^{t}\right],
\]
by the Clark-Ocone formula. Using the \textcolor{black}{product formula}
for the Malliavin derivative, we get 
\begin{align*}
D_{t_{j+1}}^{r}\left[\varphi\left(X_{t}\right)e^{\xi_{t}}\right] & =e^{\xi_{t}}D_{t_{j+1}}^{r}\varphi\left(X_{t}\right)+\varphi\left(X_{t}\right)D_{t_{j+1}}^{r}e^{\xi_{t}}
\end{align*}
Therefore, using the tower property of the conditional expectation
and the previous expression for the Malliavin derivative, we have
\begin{align*}
 & \mathbb{\tilde{E}}\left[J_{t_{j+1}}^{r}L^{r}h_{i}(X_{t_{j}})\int_{t_{j}}^{t_{j+1}}\beta_{s}^{j}dY_{s}^{i}|\mathcal{Y}_{t}\right]\\
 & =\mathbb{\tilde{E}}\left[\mathbb{\tilde{E}}\left[D_{t_{j+1}}^{r}\left[\varphi\left(X_{t}\right)e^{\xi_{t}}\right]|\mathcal{H}_{t_{j+1}}^{t}\right]L^{r}h_{i}(X_{t_{j}})\int_{t_{j}}^{t_{j+1}}\beta_{s}^{j}dY_{s}^{i}|\mathcal{Y}_{t}\right]\\
 & =\mathbb{\tilde{E}}\left[e^{\xi_{t}}D_{t_{j+1}}^{r}\varphi\left(X_{t}\right)L^{r}h_{i}(X_{t_{j}})\int_{t_{j}}^{t_{j+1}}\beta_{s}^{j}dY_{s}^{i}|\mathcal{Y}_{t}\right]\\
 & \quad+\mathbb{\tilde{E}}\left[\varphi\left(X_{t}\right)D_{t_{j+1}}^{r}e^{\xi_{t}}L^{r}h_{i}(X_{t_{j}})\int_{t_{j}}^{t_{j+1}}\beta_{s}^{j}dY_{s}^{i}|\mathcal{Y}_{t}\right].
\end{align*}
Then, 
\begin{align*}
\mathbb{\tilde{E}}\left[\left|B_{3}\left(r\right)\right|^{2}\right] & \leq\mathbb{\tilde{E}}\left[\left|\sum_{j=0}^{n-1}\mathbb{\tilde{E}}\left[J_{t_{j+1}}^{r}L^{r}h_{i}(X_{t_{j}})\int_{t_{j}}^{t_{j+1}}\beta_{s}^{j}dY_{s}^{i}|\mathcal{Y}_{t}\right]\right|^{2}\right]\\
 & \leq2\mathbb{\tilde{E}}\left[\left|\sum_{j=0}^{n-1}e^{\xi_{t}}D_{t_{j+1}}^{r}\varphi\left(X_{t}\right)L^{r}h_{i}(X_{t_{j}})\int_{t_{j}}^{t_{j+1}}\beta_{s}^{j}dY_{s}^{i}\right|^{2}\right]\\
 & \quad+2\mathbb{\tilde{E}}\left[\left|\sum_{j=0}^{n-1}\varphi\left(X_{t}\right)D_{t_{j+1}}^{r}e^{\xi_{t}}L^{r}h_{i}(X_{t_{j}})\int_{t_{j}}^{t_{j+1}}\beta_{s}^{j}dY_{s}^{i}\right|^{2}\right]\\
 & \triangleq2A_{1}\left(r\right)+2A_{2}\left(r\right).
\end{align*}
Next, note that 
\begin{align*}
D_{t_{j+1}}^{r}e^{\xi_{t}} & =e^{\xi_{t}}\left\{ \sum_{k=1}^{d_{Y}}\int_{0}^{t}D_{t_{j+1}}^{r}h_{k}(X_{s})dY_{s}^{k}-\frac{1}{2}\sum_{k=1}^{d_{Y}}\int_{0}^{t}D_{t_{j+1}}^{r}\left[h_{k}(X_{s})^{2}\right]ds\right\} \\
 & =e^{\xi_{t}}\left\{ \sum_{k=1}^{d_{Y}}\int_{t_{j+1}}^{t}D_{t_{j+1}}^{r}h_{k}(X_{s})dY_{s}^{k}-\frac{1}{2}\sum_{k=1}^{d_{Y}}\int_{t_{j+1}}^{t}D_{t_{j+1}}^{r}\left[h_{k}(X_{s})^{2}\right]ds\right\} \\
 & \triangleq e^{\xi_{t}}\left\{ \sum_{k=1}^{d_{Y}}\int_{t_{j+1}}^{t}\alpha_{s}^{j,k,1}dY_{s}^{k}-\frac{1}{2}\sum_{k=1}^{d_{Y}}\int_{t_{j+1}}^{t}\alpha_{s}^{j,k,2}ds\right\} ,
\end{align*}
where we have used that $D_{u}^{r}h_{k}(X_{s})=0,s<u<t$. In addition,
note that 
\[
e^{2\xi_{t}}=M_{0}^{t}\left(2h\right)\exp\left(\sum_{i=1}^{d_{Y}}\int_{0}^{t}h_{i}^{2}\left(X_{u}\right)du\right),
\]
where 
\[
M_{s}^{t}\left(h\right)=\exp\left(\sum_{i=1}^{d_{Y}}\int_{s}^{t}h_{i}(X_{u})dY_{u}^{i}-\frac{1}{2}\sum_{i=1}^{d_{Y}}\int_{s}^{t}h_{i}^{2}\left(X_{u}\right)du\right),
\]
is an exponential martingale. Defining 
\[
\Gamma(j_{1},j_{2})\triangleq D_{t_{j_{1}+1}}^{r}\varphi\left(X_{t}\right)D_{t_{j_{2}+1}}^{r}\varphi\left(X_{t}\right)L^{r}h_{i}(X_{t_{j_{1}}})L^{r}h_{i}(X_{t_{j_{2}}})\exp\left(\sum_{i=1}^{d_{Y}}\int_{0}^{t}h_{i}^{2}\left(X_{u}\right)du\right),
\]
and 
\[
\varLambda(j_{1},j_{2})\triangleq\varphi\left(X_{t}\right)^{2}L^{r}h_{i}(X_{t_{j_{1}}})L^{r}h_{i}(X_{t_{j_{2}}})\exp\left(\sum_{i=1}^{d_{Y}}\int_{0}^{t}h_{i}^{2}\left(X_{u}\right)du\right),
\]
we can write 
\[
A_{1}\left(r\right)=\sum_{j_{1},j_{2}=0}^{n-1}\mathbb{\tilde{E}}\left[\Gamma(j_{1},j_{2})M_{0}^{t}\left(2h\right)\int_{t_{j_{1}}}^{t_{j_{1}+1}}\beta_{s}^{j_{1}}dY_{s}^{i}\int_{t_{j_{2}}}^{t_{j_{2}+1}}\beta_{s}^{j_{2}}dY_{s}^{i}\right],
\]
and 
\[
A_{2}\left(r\right)=\sum_{k_{1},k_{2}=1}^{d_{Y}}A_{2,1}\left(r,k_{1},k_{2}\right)-\frac{1}{2}A_{2,2}\left(r,k_{1},k_{2}\right)-\frac{1}{2}A_{2,3}\left(r,k_{1},k_{2}\right)+\frac{1}{4}A_{2,4}\left(r,k_{1},k_{2}\right),
\]
where 
\begin{align*}
A_{2,1}\left(r,k_{1},k_{2}\right) & \triangleq\sum_{j_{1},j_{2}=0}^{n-1}\mathbb{\tilde{E}}\left[\varLambda(j_{1},j_{2})M_{0}^{t}\left(2h\right)\int_{t_{j_{1}+1}}^{t}\alpha_{s}^{j_{1},k_{1},1}dY_{s}^{k_{1}}\int_{t_{j_{2}+1}}^{t}\alpha_{s}^{j_{2},k_{2},1}dY_{s}^{k_{2}}\right.\\
 & \quad\times\left.\int_{t_{j_{1}}}^{t_{j_{1}+1}}\beta_{s}^{j_{1}}dY_{s}^{i}\int_{t_{j_{2}}}^{t_{j_{2}+1}}\beta_{s}^{j_{2}}dY_{s}^{i}\right],\\
A_{2,2}\left(r,k_{1},k_{2}\right) & \triangleq\sum_{j_{1},j_{2}=0}^{n-1}\mathbb{\tilde{E}}\left[\varLambda(j_{1},j_{2})M_{0}^{t}\left(2h\right)\int_{t_{j_{1}+1}}^{t}\alpha_{s}^{j_{1},k_{1},1}dY_{s}^{k_{1}}\int_{t_{j_{2}+1}}^{t}\alpha_{s}^{j_{2},k_{2},2}ds\right.\\
 & \quad\times\left.\int_{t_{j_{1}}}^{t_{j_{1}+1}}\beta_{s}^{j_{1}}dY_{s}^{i}\int_{t_{j_{2}}}^{t_{j_{2}+1}}\beta_{s}^{j_{2}}dY_{s}^{i}\right],\\
A_{2,3}\left(r,k_{1},k_{2}\right) & \triangleq\sum_{j_{1},j_{2}=0}^{n-1}\mathbb{\tilde{E}}\left[\varLambda(j_{1},j_{2})M_{0}^{t}\left(2h\right)\int_{t_{j_{1}+1}}^{t}\alpha_{s}^{j_{1},k_{1},2}ds\int_{t_{j_{2}+1}}^{t}\alpha_{s}^{j_{2},k_{2},1}dY_{s}^{k_{2}}\right.\\
 & \quad\times\left.\int_{t_{j_{1}}}^{t_{j_{1}+1}}\beta_{s}^{j_{1}}dY_{s}^{i}\int_{t_{j_{2}}}^{t_{j_{2}+1}}\beta_{s}^{j_{2}}dY_{s}^{i}\right],\\
A_{2,4}\left(r,k_{1},k_{2}\right) & \triangleq\sum_{j_{1},j_{2}=0}^{n-1}\mathbb{\tilde{E}}\left[\varLambda(j_{1},j_{2})M_{0}^{t}\left(2h\right)\int_{t_{j_{1}+1}}^{t}\alpha_{s}^{j_{1},k_{1},2}ds\int_{t_{j_{2}+1}}^{t}\alpha_{s}^{j_{2},k_{2},2}ds\right.\\
 & \quad\times\left.\int_{t_{j_{1}}}^{t_{j_{1}+1}}\beta_{s}^{j_{1}}dY_{s}^{i}\int_{t_{j_{2}}}^{t_{j_{2}+1}}\beta_{s}^{j_{2}}dY_{s}^{i}\right].
\end{align*}
The result follows by applying Lemma \ref{lem: Main Backward}, taking
into account Remark \ref{rem: Backward Ito integral}, to the terms
$A_{1},$$A_{2,1},A_{2,2},$$A_{2,3}$ and $A_{2,4}$. 
\end{proof}
\begin{lem}
\label{lem: dY_m=00003D00003D2_alpha=00003D00003D1}Assume that \textbf{H}$(2)$
holds and $\varphi\in C_{P}^{3}$. For $\alpha\in\mathcal{R}\left(\mathcal{M}_{1}(S_{0})\right)$
with $\left\vert \alpha\right\vert _{0}=1$ and $i\neq0$ we have
that 
\begin{align*}
\mathbb{\tilde{E}}\left[\mathbb{\tilde{E}}\left[\varphi(X_{t})e^{\xi_{t}}\int_{0}^{t}I_{\alpha}(L^{\alpha}h_{i}(X_{\cdot}))_{\tau(s),s}dY_{s}^{i}|\mathcal{Y}_{t}\right]^{2}\right] & \leq C\delta^{4}.
\end{align*}
\end{lem}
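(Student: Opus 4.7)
The argument closely parallels that of Lemma~\ref{lem: dY_m=00003D00003D1}, but now using the second-order Stroock--Taylor expansion and the four-factor estimates of Lemma~\ref{lem: Main Backward}. Since $\alpha \in \mathcal{R}(\mathcal{M}_{1}(S_{0}))$ with $|\alpha|_{0}=1$, we have either $\alpha = (0,r)$ or $\alpha = (r,0)$ for some $r \in \{1,\ldots,d_{V}\}$. I would first apply Lemma~\ref{lem: Convenient}, equation~(\ref{eq: Convenient_2}), to decompose
\begin{equation*}
\mathbb{\tilde{E}}\left[\varphi(X_{t})e^{\xi_{t}}\int_{0}^{t}I_{\alpha}(L^{\alpha}h_{i}(X_{\cdot}))_{\tau(s),s}dY_{s}^{i}\mid\mathcal{Y}_{t}\right] = T_{1} + T_{2},
\end{equation*}
where $T_{1}$ carries the first-order Stroock--Taylor kernel $\mathbb{\tilde{E}}[J_{s}^{r_{1}}\mid\mathcal{H}_{0}^{t}]$ and $T_{2}$ the second-order kernel $J_{s_{1},s_{2}}^{r_{1},r_{2}}$. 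In both subcases of $\alpha$, the second integrand in~(\ref{eq: Convenient_2}) contains the factor $I_{\alpha-}(L^{\alpha}h_{i}(X_{\cdot}))_{\tau(s),s}$, which is of $L^{p}$-order $\delta$ (since $|\alpha-|+|\alpha-|_{0}=2$), and the weight $(Y_{t_{j+1}}^{i}-Y_{s}^{i})$ is of order $\delta^{1/2}$; after the $\mathcal{H}^{t}$-semimartingale covariation collapses one of the differentials via Lemma~\ref{lem: CondExpect} and a Fubini manipulation, one arrives at an effective deterministic weight $\beta_{s}$ satisfying $|\beta_{s}\mathbf{1}_{[t_{j},t_{j+1}]}(s)| \leq C\delta^{2}$, which matches the parameter $m=2$ required by Lemma~\ref{lem: Main Backward}.

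Next I would telescope exactly as in the proof of Lemma~\ref{lem: dY_m=00003D00003D1}. For $T_{1}$ I apply Lemma~\ref{lem: CondExpect}~(\ref{lem: CE2}), isolating the contribution $r_{1}=\alpha_{|\alpha|}$ and writing $T_{1}$ as a sum over $j$ of integrals on $[t_{j},t_{j+1}]$ of the product of $\mathbb{\tilde{E}}[J_{s}^{r}\mid\mathcal{H}_{0}^{t}]$, $(Y_{\eta(s)}^{i}-Y_{s}^{i})$ and $L^{\alpha}h_{i}(X_{u})$ factors. I then split $T_{1}$ into three pieces $B_{1}^{(1)},B_{2}^{(1)},B_{3}^{(1)}$: $B_{1}^{(1)}$ replaces $J_{s}^{r}$ by $J_{t_{j+1}}^{r}$, and its remainder is controlled using the H\"older estimate of Remark~\ref{rem: HighRegularityKernel}; $B_{2}^{(1)}$ replaces $L^{\alpha}h_{i}(X_{u})$ by $L^{\alpha}h_{i}(X_{t_{j}})$, and its remainder is controlled via a one-step It\^o--Taylor expansion of the increment; $B_{3}^{(1)}$ is the main ``partition-point'' term. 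The same decomposition is applied to $T_{2}$ using Lemma~\ref{lem: CondExpect}~(\ref{lem: CE3}), producing analogous remainders $B_{1}^{(2)},B_{2}^{(2)}$ and a main term $B_{3}^{(2)}$. Combining Proposition~\ref{prop: UnifBoundKernels} with H\"older's inequality, each remainder $B_{1}^{(k)},B_{2}^{(k)}$ has $L^{2}$-norm bounded by $C\delta^{2}$, hence contributes at most $C\delta^{4}$ to the squared $L^{2}$-estimate.

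For the main terms $B_{3}^{(k)}$, I would apply the Clark--Ocone formula of Theorem~\ref{thm: Integral Representation} to express $J_{t_{j+1}}^{r}$ (and, for $T_{2}$, $J_{t_{j+1}}^{r_{1},r_{2}}$ obtained from the second-order Stroock--Taylor formula) as conditional Malliavin derivatives of $\varphi(X_{t})e^{\xi_{t}}$, expand via the product and chain rules of Proposition~\ref{prop: ChainRule} and Lemma~\ref{lem: MallDerExponential}, and rewrite the exponential factors in terms of the backward exponential martingale $M_{0}^{t}(2h)$ exactly as in Step~4 of the proof of Lemma~\ref{lem: dY_m=00003D00003D1}. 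After a Fubini reshuffling, the remaining time integrals are recast as backward It\^o integrals $\int_{t_{j}}^{t_{j+1}}\beta_{s}\,d\overleftarrow{Y_{s}^{i}}$ with deterministic $\beta_{s}$ of size $\leq C\delta^{2}$ (typically of the form $\beta_{s}=(s-t_{j})^{2}/2$). Statements~(3) and~(4) of Lemma~\ref{lem: Main Backward}, applied with parameter $m=2$, then bound each summand of $\mathbb{\tilde{E}}[|B_{3}^{(k)}|^{2}]$ by $C\delta^{5}$ on the diagonal $j=k$ and $C\delta^{6}$ off-diagonal; summing over the $n \leq Ct\delta^{-1}$ intervals (cf.\ Remark~\ref{rem: Uniform}) gives $n\delta^{5}+n^{2}\delta^{6} \leq C\delta^{4}$, which is the claimed bound.

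The hardest part is the bookkeeping. The second-order Stroock--Taylor expansion, combined with the product and chain rules applied to $D_{\cdot}\{\varphi(X_{t})e^{\xi_{t}}\}$, produces a proliferation of cross terms, each of which must be matched with one of the templates of Lemma~\ref{lem: Main Backward}, and one must carefully verify that the auxiliary processes $\theta^{l},\kappa^{l}$ arising from the expansion are $\mathbb{Y}^{0,V,t}$-adapted and satisfy the moment condition~(\ref{eq: MomentsTheta}). In particular, the second-order kernel $J^{r_{1},r_{2}}$ appearing in $T_{2}$ must itself be represented via a further application of the Clark--Ocone formula to yield integrands of the correct adaptedness before the backward estimates can be invoked. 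Once this combinatorial structure is in place, the backward estimates of Lemma~\ref{lem: Main Backward} deliver the target $\delta^{2m}=\delta^{4}$ rate without further work.
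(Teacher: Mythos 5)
Your overall template---reduce to a partition-indexed covariance via Lemma \ref{lem: Convenient}, collapse differentials with Lemma \ref{lem: CondExpect}, freeze the kernels and $L^{\alpha}h_{i}$ at partition points, control the remainders by the H\"older continuity of the kernels, and estimate the main term through Clark--Ocone and Lemma \ref{lem: Main Backward} with $\beta_{s}^{j}\sim\delta^{2}$---is the right one, and the second half of your argument matches the paper's, which simply states that the proof ``follows by similar reasonings as in Lemma \ref{lem: dY_m=00003D00003D1}''. However, your opening move is not the paper's and, as written, does not go through. The paper starts from the \emph{first-order} representation \eqref{eq: Convenient_1}, not \eqref{eq: Convenient_2}: since $\left\vert \alpha\right\vert _{0}=1$, exactly one of the two differentials in $I_{\alpha}$ is a Lebesgue one, and a single application of Lemma \ref{lem: CondExpect} (\ref{lem: CE2}) already produces the terms $A_{1}(r)=\mathbb{\tilde{E}}[\int_{0}^{t}(Y_{\eta(s)}^{i}-Y_{s}^{i})J_{s}^{r}(\int_{\tau(s)}^{s}L^{\alpha}h_{i}(X_{u})du)ds|\mathcal{Y}_{t}]$ and $A_{2}(r)$; the extra order of $\delta$ is supplied by the inner Lebesgue integral, which is what turns the effective weight into $\beta_{s}^{j}=(s-t_{j})^{2}/2\leq\delta^{2}$. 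No second-order kernel is needed here.

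The concrete failure in your version is the claim that $T_{2}$ can be handled ``using Lemma \ref{lem: CondExpect} (\ref{lem: CE3})'': that statement requires $\alpha_{\left\vert \alpha\right\vert }\neq0$ \emph{and} $\alpha_{\left\vert \alpha\right\vert -1}\neq0$, which is precisely what fails when $\left\vert \alpha\right\vert _{0}=1$; this is why the paper reserves \eqref{eq: Convenient_2} together with (\ref{lem: CE3}) for Lemma \ref{lem: dY_m=00003D00003D2_alpha=00003D00003D2}, where $\left\vert \alpha\right\vert _{0}=0$. If you insist on starting from \eqref{eq: Convenient_2}, you can only peel one differential with (\ref{lem: CE2}) and are left with the unpaired stochastic integral $\int_{0}^{u}J_{s_{1},u}^{r_{1},r_{2}}dV_{s_{1}}^{r_{1}}$ inside the conditional expectation; a crude bound on that term yields only $\delta^{3}$ after squaring, so it would need its own $B_{1},B_{2},B_{3}$ analysis, which you do not supply. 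A smaller slip: $\left\vert \alpha-\right\vert +\left\vert \alpha-\right\vert _{0}=2$ holds only in the subcase $\alpha=(0,r)$; for $\alpha=(r,0)$ it equals $1$, and the missing power of $\delta$ comes instead from the outer differential being $ds$.
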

\begin{proof}
The proof of this lemma is analogous to the proof of Lemma \ref{lem: dY_m=00003D00003D1}.
Using Lemma \ref{lem: Convenient}, we can write 
\begin{align*}
 & \mathbb{\tilde{E}}\left[\varphi(X_{t})e^{\xi_{t}}\int_{0}^{t}I_{\alpha}(L^{\alpha}h_{i}(X_{\cdot}))_{\tau(s),s}dY_{s}^{i}|\mathcal{Y}_{t}\right]\\
 & =\sum_{r=1}^{d_{V}}\sum_{j=0}^{n-1}\mathbb{\tilde{E}}\left[\left(\int_{t_{j}}^{t_{j+1}}J_{s}^{r}dV_{s}^{r}\right)\left(\int_{t_{j}}^{t_{j+1}}\left(Y_{t_{j+1}}^{i}-Y_{t_{j}}^{i}\right)I_{\alpha-}(L^{\alpha}h_{i}(X_{\cdot}))_{\tau(s),s}dV_{s}^{\alpha_{|\alpha|}}\right)|\mathcal{Y}_{t}\right]\\
 & \triangleq\sum_{r=1}^{d_{V}}A\left(r\right).
\end{align*}
Therefore, by Lemma \ref{lem: CondExpect} $\left(\ref{lem: CE2}\right)$,
we have that 
\begin{align*}
 & A\left(r\right)\\
 & =\sum_{j=0}^{n-1}\mathbb{\tilde{E}}\left[\left(\int_{t_{j}}^{t_{j+1}}J_{s}^{r}dV_{s}^{r}\right)\left(\int_{t_{j}}^{t_{j+1}}\left(Y_{t_{j+1}}^{i}-Y_{s}^{i}\right)\left(\int_{t_{j}}^{s}L^{\alpha}h_{i}\left(X_{u}\right)dV_{u}^{\alpha_{1}}\right)dV_{s}^{\alpha_{2}}\right)|\mathcal{Y}_{t}\right]\\
 & =\mathbf{1}_{\left\{ \alpha_{1}=0,\alpha_{2}=r\right\} }\mathbb{\tilde{E}}\left[\left(\int_{0}^{t}\left(Y_{\eta(s)}^{i}-Y_{s}^{i}\right)J_{s}^{r}\left(\int_{\tau(s)}^{s}L^{\alpha}h_{i}\left(X_{u}\right)du\right)ds\right)|\mathcal{Y}_{t}\right]\\
 & \quad+\mathbf{1}_{\left\{ \alpha_{1}=r,\alpha_{2}=0\right\} }\mathbb{\tilde{E}}\left[\left(\int_{0}^{t}\left(Y_{\eta(s)}^{i}-Y_{s}^{i}\right)\left(\int_{\tau(s)}^{s}J_{u}^{r}L^{\alpha}h_{i}\left(X_{u}\right)du\right)ds\right)|\mathcal{Y}_{t}\right]\\
 & \triangleq\mathbf{1}_{\left\{ \alpha_{1}=0,\alpha_{2}=r\right\} }A_{1}\left(r\right)+\mathbf{1}_{\left\{ \alpha_{1}=r,\alpha_{2}=0\right\} }A_{2}\left(r\right).
\end{align*}
Next, the proof follows by similar reasonings as in Lemma \ref{lem: dY_m=00003D00003D1}. 
\end{proof}
\begin{lem}
\label{lem: dY_m=00003D00003D2_alpha=00003D00003D2}Assume that \textbf{H}$(2)$
holds and $\varphi\in C_{P}^{3}$. For $\alpha\in\mathcal{R}\left(\mathcal{M}_{1}(S_{0})\right)$
with $\left\vert \alpha\right\vert _{0}=0$ and $i\neq0$ we have
that 
\begin{align*}
\mathbb{\tilde{E}}\left[\mathbb{\tilde{E}}\left[\varphi(X_{t})e^{\xi_{t}}\int_{0}^{t}I_{\alpha}(L^{\alpha}h_{i}(X_{\cdot}))_{\tau(s),s}dY_{s}^{i}|\mathcal{Y}_{t}\right]^{2}\right] & \leq C\delta^{4}.
\end{align*}
\end{lem}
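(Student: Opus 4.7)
The plan is to follow the same template as Lemma \ref{lem: dY_m=00003D00003D1} and Lemma \ref{lem: dY_m=00003D00003D2_alpha=00003D00003D1}, with the second identity of Lemma \ref{lem: Convenient} (equation \eqref{eq: Convenient_2}) now playing the leading role, since $\left\vert\alpha\right\vert_{0}=0\neq\left\vert\alpha\right\vert=2$ forces the deeper Stroock--Taylor expansion. First, I would write $\alpha=(\alpha_{1},\alpha_{2})$ with $\alpha_{1},\alpha_{2}\in\{1,\dots,d_{V}\}$ and apply \eqref{eq: Convenient_2} to decompose the inner conditional expectation as $T_{1}+T_{2}$, where $T_{1}$ collects the terms involving $\mathbb{\tilde{E}}\left[J_{s}^{r_{1}}\vert\mathcal{H}_{0}^{t}\right]$ and $T_{2}$ collects those involving the second-order Clark--Ocone kernels $J_{s_{1},s_{2}}^{r_{1},r_{2}}$.

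I would then show $T_{1}=0$ by conditioning further on $\mathcal{H}_{0}^{t}\supseteq\mathcal{Y}_{t}$: the first factor is $\mathcal{H}_{0}^{t}$-measurable, the $Y$-increments are $\mathcal{Y}_{t}$-measurable, and the remaining inner integral $I_{\alpha-}(L^{\alpha}h_{i}(X_{\cdot}))_{\tau(s),s}=\int_{\tau(s)}^{s}L^{\alpha}h_{i}(X_{u})\,dV_{u}^{\alpha_{1}}$ has zero conditional expectation given $\mathcal{H}_{0}^{t}$ by Lemma \ref{lem: CondExpect}\eqref{lem: CE1}, because $\alpha_{1}\neq 0$. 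By the tower property, $T_{1}$ vanishes identically.

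For $T_{2}$, I would apply Lemma \ref{lem: CondExpect}\eqref{lem: CE3} to the product of the two $dV\,dV$ iterated integrals; this forces the index matching $(r_{1},r_{2})=(\alpha_{1},\alpha_{2})$ and replaces the four stochastic integrations by two iterated Lebesgue integrations, leaving $(Y_{t_{j+1}}^{i}-Y_{s_{2}}^{i})$ outside the $dV$-integrals. At this point I would mimic Step~4 of Lemma \ref{lem: dY_m=00003D00003D1}: use the Clark--Ocone formula from Theorem \ref{thm: ST formula} to write $J_{s_{1},s_{2}}^{\alpha_{1},\alpha_{2}}=\mathbb{\tilde{E}}\left[D_{s_{1},s_{2}}^{\alpha_{1},\alpha_{2}}\{\varphi(X_{t})e^{\xi_{t}}\}\vert\mathcal{H}_{s_{1}}^{t}\right]$, expand the iterated Malliavin derivative via Proposition \ref{prop: ChainRule} and the product rule (treating each factor of $\xi_{t}$ that produces one further $dY$-integral after differentiation), and express $e^{2\xi_{t}}$ as $M_{0}^{t}(2h)$ times a bounded $X$-functional. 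This converts the square $\mathbb{\tilde{E}}[T_{2}^{2}]$ into a finite sum of expressions of precisely the type treated in Lemma \ref{lem: Main Backward} and its higher-order counterpart in Remark \ref{rem: BackwardEstimate}, with $\beta_{s}^{j}$ corresponding to the deterministic time increments that satisfy $|\beta_{s}^{j}\mathbf{1}_{[t_{j},t_{j+1}]}(s)|\leq C\delta^{m}$ for the correct power $m$.

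With these ingredients in place, the backward-martingale bounds deliver, for each pair of intervals $(j,k)$, an estimate of the form $C\{\mathbf{1}_{\{j\neq k\}}\delta^{2m+2}+\mathbf{1}_{\{j=k\}}\delta^{2m+1}\}$, and summing $n\delta^{2m+1}+n^{2}\delta^{2m+2}\leq C\delta^{2m}$ yields the desired $\delta^{4}$ rate. The main obstacle is bookkeeping: the second-order Malliavin derivative of $\varphi(X_{t})e^{\xi_{t}}$ produces numerous cross-terms (derivatives of $\varphi$ at $X_{t}$ coupling with single, double and quadruple $dY$-integrals coming from differentiating $e^{\xi_{t}}$ twice), and one must verify that each resulting product fits one of the four cases of Lemma \ref{lem: Main Backward} (or its extension in Remark \ref{rem: BackwardEstimate}) with the correct $\theta^{l}$, $\kappa^{l}$ and $\beta^{j}$ assignments, so that the $\delta^{2m+1}/\delta^{2m+2}$ dichotomy is available uniformly across all terms.
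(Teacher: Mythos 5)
Your overall architecture coincides with the paper's: decompose the inner conditional expectation via equation (\ref{eq: Convenient_2}), kill the first-order Clark--Ocone term, reduce the second-order term with Lemma \ref{lem: CondExpect} (\ref{lem: CE3}), and finish by expanding the iterated Malliavin derivative and invoking Lemma \ref{lem: Main Backward}. Two concrete gaps remain.

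First, your argument that $T_{1}=0$ is not valid as written: the ``first factor'' $\int_{t_{j}}^{t_{j+1}}\mathbb{\tilde{E}}\left[J_{s}^{r_{1}}|\mathcal{H}_{0}^{t}\right]dV_{s}^{r_{1}}$ is a stochastic integral over $\left[t_{j},t_{j+1}\right]$ and is \emph{not} $\mathcal{H}_{0}^{t}$-measurable, so it cannot be pulled out of a conditional expectation given $\mathcal{H}_{0}^{t}$. One must first apply the covariation formula, Lemma \ref{lem: CondExpect} (\ref{lem: CE2}), to the product of the two outer $dV$-integrals, converting it into a single Lebesgue integral of the product of the integrands; only then are the surviving factors $\mathbb{\tilde{E}}\left[J_{s}^{r_{1}}|\mathcal{H}_{0}^{t}\right]$ and $Y_{t_{j+1}}^{i}-Y_{s}^{i}$ measurable with respect to the conditioning $\sigma$-algebra, and the innermost $dV^{\alpha_{1}}$-integral annihilates the term. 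The conclusion is correct, but the covariation step is not optional.

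Second, and more substantively: after Lemma \ref{lem: CondExpect} (\ref{lem: CE3}) you are left with $\mathbb{\tilde{E}}\left[\int_{0}^{t}\left(Y_{\eta(s)}^{i}-Y_{s}^{i}\right)\left(\int_{\tau(s)}^{s}J_{u,s}^{r_{1},r_{2}}L^{\alpha}h_{i}\left(X_{u}\right)du\right)ds|\mathcal{Y}_{t}\right]$, whose inner kernel is \emph{random}, whereas Lemma \ref{lem: Main Backward} requires a deterministic $\beta$ with $\left|\beta_{s}\mathbf{1}_{\left[t_{j},t_{j+1}\right]}\left(s\right)\right|\leq\delta^{m}$. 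To reach that form one must first freeze $J_{u,s}^{r_{1},r_{2}}$ at $\left(t_{j+1},t_{j+1}\right)$ and $L^{\alpha}h_{i}\left(X_{u}\right)$ at $X_{t_{j}}$, so that the inner integral becomes $J_{t_{j+1},t_{j+1}}^{r_{1},r_{2}}L^{\alpha}h_{i}(X_{t_{j}})\left(s-t_{j}\right)^{2}/2$ with the deterministic $\beta_{s}^{j}=\left(s-t_{j}\right)^{2}/2\leq\delta^{2}$. The three replacement errors (the paper's $B_{1},B_{2},B_{3}$) are not among the Malliavin-derivative cross-terms your ``bookkeeping'' caveat anticipates: they are controlled by the H\"older continuity of the Stroock--Taylor kernels (Lemma \ref{lem: RegularityKernels} and Remark \ref{rem: HighRegularityKernel}) and of $L^{\alpha}h_{i}\left(X_{\cdot}\right)$, an ingredient you never invoke. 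Each such error does contribute only $O\left(\delta^{4}\right)$ to the squared $L^{2}$-norm, so the estimate closes, but without this intermediate decomposition the reduction to Lemma \ref{lem: Main Backward} does not go through.
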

\begin{proof}
We divide the proof into several steps.

\textbf{Step 1}. Using Lemma \ref{lem: Convenient}, we can write
\begin{align*}
 & \mathbb{\tilde{E}}\left[\varphi(X_{t})e^{\xi_{t}}\int_{0}^{t}I_{\alpha}(L^{\alpha}h_{i}(X_{\cdot}))_{\tau(s),s}dY_{s}^{i}|\mathcal{Y}_{t}\right]\\
 & =\sum_{r_{1}=1}^{d_{V}}\sum_{j=0}^{n-1}\mathbb{\tilde{E}}\left[\left(\int_{t_{j}}^{t_{j+1}}\mathbb{\tilde{E}}\left[J_{s}^{r_{1}}|\mathcal{H}_{0}^{t}\right]dV_{s}^{r_{1}}\right)\right.\\
 & \quad\times\left.\left(\int_{t_{j}}^{t_{j+1}}\left(Y_{t_{j+1}}^{i}-Y_{t_{j}}^{i}\right)I_{\alpha-}(L^{\alpha}h_{i}(X_{\cdot}))_{\tau(s),s}dV_{s}^{\alpha_{|\alpha|}}\right)|\mathcal{Y}_{t}\right]\\
 & \quad+\sum_{r_{1},r_{2}=1}^{d_{V}}\sum_{j=0}^{n-1}\mathbb{\tilde{E}}\left[\left(\int_{t_{j}}^{t_{j+1}}\int_{0}^{s_{2}}J_{s_{1},s_{2}}^{r_{1},r_{2}}dV_{s_{1}}^{r_{1}}dV_{s_{2}}^{r_{2}}\right)\right.\\
 & \quad\times\left.\left(\int_{t_{j}}^{t_{j+1}}\left(Y_{t_{j+1}}^{i}-Y_{t_{j}}^{i}\right)I_{\alpha-}(L^{\alpha}h_{i}(X_{\cdot}))_{\tau(s),s}dV_{s}^{\alpha_{|\alpha|}}\right)|\mathcal{Y}_{t}\right]\\
 & \triangleq\sum_{r_{1}=1}^{d_{V}}\sum_{j=0}^{n-1}A\left(r_{1},j\right)+\sum_{r_{1,}r_{2}=1}^{d_{V}}\sum_{j=0}^{n-1}A\left(r_{1},r_{2},j\right).
\end{align*}
Therefore, by Lemma \ref{lem: CondExpect} $\left(\ref{lem: CE2}\right)$,
we have that 
\begin{align*}
 & A\left(r_{1},j\right)\\
 & =\mathbb{\tilde{E}}\left[\left(\int_{t_{j}}^{t_{j+1}}\mathbb{\tilde{E}}\left[J_{s}^{r_{1}}|\mathcal{H}_{0}^{t}\right]dV_{s}^{r_{1}}\right)\left(\int_{t_{j}}^{t_{j+1}}\left(Y_{t_{j+1}}^{i}-Y_{s}^{i}\right)\left(\int_{t_{j}}^{s}L^{\alpha}h_{i}\left(X_{u}\right)dV_{u}^{\alpha_{1}}\right)dV_{s}^{\alpha_{2}}\right)|\mathcal{Y}_{t}\right]\\
 & =\mathbf{1}_{\left\{ \alpha_{2}=r_{1}\right\} }\mathbb{\tilde{E}}\left[\left(\int_{t_{j}}^{t_{j+1}}\mathbb{\tilde{E}}\left[J_{s}^{r_{1}}|\mathcal{H}_{0}^{t}\right]\left(Y_{t_{j+1}}^{i}-Y_{s}^{i}\right)\left(\int_{t_{j}}^{s}L^{\alpha}h_{i}\left(X_{u}\right)dV_{u}^{\alpha_{1}}\right)ds\right)|\mathcal{Y}_{t}\right]\\
 & =\mathbf{1}_{\left\{ \alpha_{2}=r_{1}\right\} }\mathbb{\tilde{E}}\left[\left(\int_{t_{j}}^{t_{j+1}}\mathbb{\tilde{E}}\left[J_{s}^{r_{1}}|\mathcal{H}_{0}^{t}\right]\left(Y_{t_{j+1}}^{i}-Y_{s}^{i}\right)\mathbb{\tilde{E}}\left[\left(\int_{t_{j}}^{s}L^{\alpha}h_{i}\left(X_{u}\right)dV_{u}^{\alpha_{1}}\right)|\mathcal{H}_{t_{j}}^{t}\right]ds\right)|\mathcal{Y}_{t}\right]\\
 & =0.
\end{align*}
and, by Lemma \ref{lem: CondExpect} $\left(\ref{lem: CE3}\right)$
and Lemma \ref{lem: CondExpect} $\left(\ref{lem: CE2}\right)$, we
obtain 
\begin{align*}
 & A\left(r_{1},r_{2},j\right)\\
 & =\mathbb{\tilde{E}}\left[\left(\int_{t_{j}}^{t_{j+1}}\int_{0}^{s_{2}}J_{s_{1},s_{2}}^{r_{1},r_{2}}dV_{s_{1}}^{r_{1}}dV_{s_{2}}^{r_{2}}\right)\left(\int_{t_{j}}^{t_{j+1}}\left(Y_{t_{j+1}}^{i}-Y_{s}^{i}\right)\left(\int_{t_{j}}^{s}L^{\alpha}h_{i}\left(X_{u}\right)dV_{u}^{\alpha_{1}}\right)dV_{s}^{\alpha_{2}}\right)|\mathcal{Y}_{t}\right]\\
 & =\mathbf{1}_{\left\{ \alpha_{2}=r_{2}\right\} }\mathbb{\tilde{E}}\left[\left(\int_{t_{j}}^{t_{j+1}}\left(Y_{t_{j+1}}^{i}-Y_{s}^{i}\right)\left(\int_{0}^{s}J_{s_{1},s}^{r_{1},r_{2}}dV_{s_{1}}^{r_{1}}\right)\left(\int_{t_{j}}^{s}L^{\alpha}h_{i}\left(X_{u}\right)dV_{u}^{\alpha_{1}}\right)ds\right)|\mathcal{Y}_{t}\right]\\
 & =\mathbf{1}_{\left\{ \alpha_{2}=r_{2},\alpha_{1}=r_{1}\right\} }\mathbb{\tilde{E}}\left[\left(\int_{t_{j}}^{t_{j+1}}\left(Y_{t_{j+1}}^{i}-Y_{s}^{i}\right)\left(\int_{t_{j}}^{s}J_{u,s}^{r_{1},r_{2}}L^{\alpha}h_{i}\left(X_{u}\right)du\right)ds\right)|\mathcal{Y}_{t}\right].
\end{align*}
Hence, we can write 
\begin{align*}
 & \mathbb{\tilde{E}}\left[\varphi(X_{t})e^{\xi_{t}}\int_{0}^{t}I_{\alpha}(L^{\alpha}h_{i}(X_{\cdot}))_{\tau(s),s}dY_{s}^{i}|\mathcal{Y}_{t}\right]\\
 & =\sum_{r_{1,}r_{2}=1}^{d_{V}}\mathbf{1}_{\left\{ \alpha_{2}=r_{2},\alpha_{1}=r_{1}\right\} }\mathbb{\tilde{E}}\left[\left(\int_{0}^{t}\left(Y_{\eta(s)}^{i}-Y_{s}^{i}\right)\left(\int_{\tau(s)}^{s}J_{u,s}^{r_{1},r_{2}}L^{\alpha}h_{i}\left(X_{u}\right)du\right)ds\right)|\mathcal{Y}_{t}\right]\\
 & =\sum_{r_{1,}r_{2}=1}^{d_{V}}\mathbf{1}_{\left\{ \alpha_{2}=r_{2},\alpha_{1}=r_{1}\right\} }\left(B_{1}\left(r_{1},r_{2}\right)+B_{2}\left(r_{1},r_{2}\right)+B_{3}\left(r_{1},r_{2}\right)+B_{4}\left(r_{1},r_{2}\right)\right),
\end{align*}
where 
\begin{align*}
B_{1}\left(r_{1},r_{2}\right) & =\mathbb{\tilde{E}}\left[\left(\int_{0}^{t}\left(Y_{\eta(s)}^{i}-Y_{s}^{i}\right)\left(\int_{\tau(s)}^{s}\left(J_{u,s}^{r_{1},r_{2}}-J_{s,s}^{r_{1},r_{2}}\right)L^{\alpha}h_{i}\left(X_{u}\right)du\right)ds\right)|\mathcal{Y}_{t}\right],\\
B_{2}\left(r_{1},r_{2}\right) & =\mathbb{\tilde{E}}\left[\left(\int_{0}^{t}\left(Y_{\eta(s)}^{i}-Y_{s}^{i}\right)J_{s,s}^{r_{1},r_{2}}\left(\int_{\tau(s)}^{s}\left(L^{\alpha}h_{i}\left(X_{u}\right)-L^{\alpha}h_{i}\left(X_{\tau(s)}\right)\right)du\right)ds\right)|\mathcal{Y}_{t}\right]\\
B_{3}\left(r_{1},r_{2}\right) & =\mathbb{\tilde{E}}\left[\left(\int_{0}^{t}\left(Y_{\eta(s)}^{i}-Y_{s}^{i}\right)\left(J_{s,s}^{r_{1},r_{2}}-J_{\eta\left(s\right),\eta\left(s\right)}^{r_{1},r_{2}}\right)L^{\alpha}h_{i}\left(X_{\tau(s)}\right)\left(\int_{\tau(s)}^{s}du\right)ds\right)|\mathcal{Y}_{t}\right]\\
B_{4}\left(r_{1},r_{2}\right) & =\mathbb{\tilde{E}}\left[\left(\int_{0}^{t}\left(Y_{\eta(s)}^{i}-Y_{s}^{i}\right)J_{\eta\left(s\right),\eta\left(s\right)}^{r_{1},r_{2}}L^{\alpha}h_{i}\left(X_{\tau(s)}\right)\left(\int_{\tau(s)}^{s}du\right)ds\right)|\mathcal{Y}_{t}\right]
\end{align*}

\textbf{Step 2}. That the terms $B_{1}\left(r_{1},r_{2}\right),B_{2}\left(r_{1},r_{2}\right)$
and $B_{3}\left(r_{1},r_{2}\right)$ have the right order is deduced
analogously to the \textbf{Steps 2} and \textbf{3} in Lemma \ref{lem: dY_m=00003D00003D1}.

\textbf{Step 3}. Finally, we prove the result for $B_{4}\left(r_{1},r_{2}\right).$
We can write 
\begin{align*}
B_{4}\left(r_{1},r_{2}\right) & =\sum_{j=0}^{n-1}\mathbb{\tilde{E}}\left[\int_{t_{j}}^{t_{j+1}}J_{t_{j+1},t_{j+1}}^{r_{1},r_{2}}L^{\left(r_{1},r_{2}\right)}h_{i}\left(X_{t_{j}}\right)\left(\int_{t_{j}}^{s}du\right)\left(Y_{t_{j+1}}^{i}-Y_{s}^{i}\right)ds|\mathcal{Y}_{t}\right]\\
 & =\sum_{j=0}^{n-1}\mathbb{\tilde{E}}\left[J_{t_{j+1},t_{j+1}}^{r_{1},r_{2}}L^{\left(r_{1},r_{2}\right)}h_{i}\left(X_{t_{j}}\right)\int_{t_{j}}^{t_{j+1}}\left(\int_{t_{j}}^{s}du\right)\left(Y_{t_{j+1}}^{i}-Y_{s}^{i}\right)ds|\mathcal{Y}_{t}\right]\\
 & =\sum_{j=0}^{n-1}\mathbb{\tilde{E}}\left[J_{t_{j+1},t_{j+1}}^{r_{1},r_{2}}L^{\left(r_{1},r_{2}\right)}h_{i}(X_{t_{j}})\int_{t_{j}}^{t_{j+1}}\frac{\left(s-t_{j}\right)^{2}}{2}dY_{s}^{i}|\mathcal{Y}_{t}\right]\\
 & \triangleq\sum_{j=0}^{n-1}\mathbb{\tilde{E}}\left[J_{t_{j+1},t_{j+1}}^{r_{1},r_{2}}L^{\left(r_{1},r_{2}\right)}h_{i}(X_{t_{j}})\int_{t_{j}}^{t_{j+1}}\beta_{s}^{j}dY_{s}^{i}|\mathcal{Y}_{t}\right],
\end{align*}
where $\left|\beta_{s}^{j}\right|\leq\delta^{2}.$ Moreover, 
\[
J_{t_{j+1},t_{j+1}}^{r_{1},r_{2}}=\mathbb{\tilde{E}}\left[D_{t_{j+1},t_{j+1}}^{r_{1},r_{2}}\left\{ \varphi\left(X_{t}\right)e^{\xi_{t}}\right\} |\mathcal{H}_{t_{j}}^{t}\right],
\]
by the Clark-Ocone formula. Using the \textcolor{black}{definition
of the iterated Malliavin derivative and the product formula}, we
get 
\begin{align*}
D_{t_{j+1},t_{j+1}}^{r_{1},r_{2}}\left\{ \varphi\left(X_{t}\right)e^{\xi_{t}}\right\}  & =D_{t_{j+1}}^{r_{1}}\left(D_{t_{j+1}}^{r_{2}}\left\{ \varphi\left(X_{t}\right)e^{\xi_{t}}\right\} \right)\\
 & =D_{t_{j+1}}^{r_{1}}\left(e^{\xi_{t}}D_{t_{j+1}}^{r_{2}}\varphi\left(X_{t}\right)+\varphi\left(X_{t}\right)D_{t_{j+1}}^{r_{2}}e^{\xi_{t}}\right)\\
 & =D_{t_{j+1}}^{r_{1}}e^{\xi_{t}}D_{t_{j+1}}^{r_{2}}\varphi\left(X_{t}\right)+e^{\xi_{t}}D_{t_{j+1},t_{j+1}}^{r_{1},r_{2}}\varphi\left(X_{t}\right)\\
 & +D_{t_{j+1}}^{r_{1}}\varphi\left(X_{t}\right)D_{t_{j+1}}^{r_{2}}e^{\xi_{t}}+\varphi\left(X_{t}\right)D_{t_{j+1},t_{j+1}}^{r_{1},r_{2}}e^{\xi_{t}}
\end{align*}
Reasoning as in Step 4 of Lemma \ref{lem: dY_m=00003D00003D1}, we
get that 
\begin{align*}
\mathbb{\tilde{E}}\left[\left|B_{4}\left(r_{1},r_{2}\right)\right|^{2}\right] & \leq\mathbb{\tilde{E}}\left[\left|\sum_{j=0}^{n-1}\mathbb{\tilde{E}}\left[J_{t_{j+1},t_{j+1}}^{r_{1},r_{2}}L^{\left(r_{1},r_{2}\right)}h_{i}(X_{t_{j}})\int_{t_{j}}^{t_{j+1}}\beta_{s}^{j}dY_{s}^{i}|\mathcal{Y}_{t}\right]\right|^{2}\right]\\
 & \leq C\mathbb{\tilde{E}}\left[\left|\sum_{j=0}^{n-1}D_{t_{j+1}}^{r_{1}}e^{\xi_{t}}D_{t_{j+1}}^{r_{2}}\varphi\left(X_{t}\right)L^{\left(r_{1},r_{2}\right)}h_{i}(X_{t_{j}})\int_{t_{j}}^{t_{j+1}}\beta_{s}^{j}dY_{s}^{i}\right|^{2}\right]\\
 & \quad+C\mathbb{\tilde{E}}\left[\left|\sum_{j=0}^{n-1}e^{\xi_{t}}D_{t_{j+1},t_{j+1}}^{r_{1},r_{2}}\varphi\left(X_{t}\right)L^{\left(r_{1},r_{2}\right)}h_{i}(X_{t_{j}})\int_{t_{j}}^{t_{j+1}}\beta_{s}^{j}dY_{s}^{i}\right|^{2}\right]\\
 & \quad+C\mathbb{\tilde{E}}\left[\left|\sum_{j=0}^{n-1}D_{t_{j+1}}^{r_{1}}\varphi\left(X_{t}\right)D_{t_{j+1}}^{r_{2}}e^{\xi_{t}}L^{\left(r_{1},r_{2}\right)}h_{i}(X_{t_{j}})\int_{t_{j}}^{t_{j+1}}\beta_{s}^{j}dY_{s}^{i}\right|^{2}\right]\\
 & \quad+C\mathbb{\tilde{E}}\left[\left|\sum_{j=0}^{n-1}\varphi\left(X_{t}\right)D_{t_{j+1},t_{j+1}}^{r_{1},r_{2}}e^{\xi_{t}}L^{\left(r_{1},r_{2}\right)}h_{i}(X_{t_{j}})\int_{t_{j}}^{t_{j+1}}\beta_{s}^{j}dY_{s}^{i}\right|^{2}\right]\\
 & \triangleq C\left\{ F_{1}\left(r_{1},r_{2}\right)+F_{2}\left(r_{1},r_{2}\right)+F_{3}\left(r_{1},r_{2}\right)+F_{4}\left(r_{1},r_{2}\right)\right\} .
\end{align*}
The term $F_{2}\left(r_{1},r_{2}\right)$ is analogous to the term
$A$$_{1}\left(r\right)$ in Lemma \ref{lem: dY_m=00003D00003D1}
and the terms $F_{2}\left(r_{1},r_{2}\right)$ and $F_{3}\left(r_{1},r_{2}\right)$
are analogous to the term $A_{2}\left(r\right)$ in Lemma \ref{lem: dY_m=00003D00003D1}.
For the term $F_{4}\left(r_{1},r_{2}\right)$ we have that 
\begin{align*}
D_{t_{j+1},t_{j+1}}^{r_{1},r_{2}}e^{\xi_{t}} & =D_{t_{j+1}}^{r_{1}}\left\{ e^{\xi_{t}}\left\{ \sum_{k=1}^{d_{Y}}\int_{t_{j+1}}^{t}D_{t_{j+1}}^{r_{2}}h_{k}(X_{s})dY_{s}^{k}-\frac{1}{2}\sum_{k=1}^{d_{Y}}\int_{t_{j+1}}^{t}D_{t_{j+1}}^{r_{2}}\left[h_{k}(X_{s})^{2}\right]ds\right\} \right\} \\
 & =e^{\xi_{t}}\left\{ \sum_{k=1}^{d_{Y}}\int_{t_{j+1}}^{t}D_{t_{j+1}}^{r_{1}}h_{k}(X_{s})dY_{s}^{k}-\frac{1}{2}\sum_{k=1}^{d_{Y}}\int_{t_{j+1}}^{t}D_{t_{j+1}}^{r_{1}}\left[h_{k}(X_{s})^{2}\right]ds\right\} \\
 & \qquad\times\left\{ \sum_{k=1}^{d_{Y}}\int_{t_{j+1}}^{t}D_{t_{j+1}}^{r_{2}}h_{k}(X_{s})dY_{s}^{k}-\frac{1}{2}\sum_{k=1}^{d_{Y}}\int_{t_{j+1}}^{t}D_{t_{j+1}}^{r_{2}}\left[h_{k}(X_{s})^{2}\right]ds\right\} \\
 & +e^{\xi_{t}}\left\{ \sum_{k=1}^{d_{Y}}\int_{t_{j+1}}^{t}D_{t_{j+1},t_{j+1}}^{r_{1},r_{2}}h_{k}(X_{s})dY_{s}^{k}-\frac{1}{2}\sum_{k=1}^{d_{Y}}\int_{t_{j+1}}^{t}D_{t_{j+1},t_{j+1}}^{r_{1},r_{2}}\left[h_{k}(X_{s})^{2}\right]ds\right\} 
\end{align*}
All the terms obtained in the previous expression can be dealt analogously
to the terms in Lemma \ref{lem: dY_m=00003D00003D1} except the terms
\[
G\left(j,r_{1},r_{2}\right)\triangleq e^{\xi_{t}}\sum_{k_{1},k_{2}=1}^{d_{Y}}\left(\int_{t_{j+1}}^{t}D_{t_{j+1}}^{r_{1}}h_{k_{1}}(X_{s})dY_{s}^{k_{1}}\right)\left(\int_{t_{j+1}}^{t}D_{t_{j+1}}^{r_{2}}h_{k_{2}}(X_{s})dY_{s}^{k_{2}}\right).
\]
Let 
\[
H\triangleq\mathbb{\tilde{E}}\left[\left|\sum_{j=0}^{n-1}\varphi\left(X_{t}\right)G\left(j,r_{1},r_{2}\right)L^{\left(r_{1},r_{2}\right)}h_{i}(X_{t_{j}})\int_{t_{j}}^{t_{j+1}}\beta_{s}^{j}dY_{s}^{i}\right|^{2}\right].
\]
Defining 
\[
\varLambda(j_{1},j_{2})\triangleq\varphi\left(X_{t}\right)^{2}L^{\left(r_{1},r_{2}\right)}h_{i}(X_{t_{j_{1}}})L^{\left(r_{1},r_{2}\right)}h_{i}(X_{t_{j_{2}}})\exp\left(\sum_{i=1}^{d_{Y}}\int_{0}^{t}h_{i}^{2}\left(X_{u}\right)du\right),
\]
we can write 
\begin{align*}
H & =\sum_{k_{1},...,k_{4}=1}^{d_{Y}}\sum_{j_{1},j_{2}=0}^{n-1}\mathbb{\tilde{E}}\left[\varLambda(j_{1},j_{2})M_{0}^{t}\left(2h\right)\right.\\
 & \qquad\times\left(\int_{t_{j_{1}+1}}^{t}D_{t_{j_{1}+1}}^{r_{1}}h_{k_{1}}(X_{s})dY_{s}^{k_{1}}\right)\left(\int_{t_{j_{1}+1}}^{t}D_{t_{j_{1}+1}}^{r_{2}}h_{k_{2}}(X_{s})dY_{s}^{k_{2}}\right)\\
 & \qquad\times\left(\int_{t_{j_{2}+1}}^{t}D_{t_{j_{2}+1}}^{r_{1}}h_{k_{3}}(X_{s})dY_{s}^{k_{3}}\right)\left(\int_{t_{j_{2}+1}}^{t}D_{t_{j_{2}+1}}^{r_{2}}h_{k_{4}}(X_{s})dY_{s}^{k_{4}}\right)\\
 & \qquad\times\left.\left(\int_{t_{j_{1}}}^{t_{j_{1}+1}}\beta_{s}^{j_{1}}dY_{s}^{i}\right)\left(\int_{t_{j_{2}}}^{t_{j_{2}+1}}\beta_{s}^{j_{2}}dY_{s}^{i}\right)\right],
\end{align*}
and the result follows from Lemma \ref{lem: Main Backward} and Remark
\ref{rem: Backward Ito integral}. 
\end{proof}
\begin{rem}
Following Remarks \ref{rem: HighRegularityKernel} and \ref{rem: BackwardEstimate},
the results in Lemmas \ref{lem: dY_m=00003D00003D2_alpha=00003D00003D1}
and \ref{lem: dY_m=00003D00003D2_alpha=00003D00003D2} can be extended
analogously to $m>2$ and $\alpha\in\mathcal{R}\left(\mathcal{M}_{m-1}(S_{0})\right)$
with $\left\vert \alpha\right\vert _{0}\in\{0,...,m-1\}$ without
any additional difficulties. 
\end{rem}

\end{document}